\numberwithin{equation}{section}
\newtheorem{theorem}{Theorem}[section]
\newtheorem{corollary}[theorem]{Corollary}
\newtheorem{lemma}[theorem]{Lemma}
\newtheorem{prop}[theorem]{Proposition}
\theoremstyle{definition}
\newtheorem{remark}[theorem]{Remark}
\theoremstyle{definition}
\theoremstyle{definition}
\def\dashint{\operatorname%
	{\,\,\text{\bf-}\kern-.98em\DOTSI\intop\ilimits@}}
\def\\det{\text{\det}}
\def\.5{\frac{1}{2}}
\newcommand{\RN}[1]{%
	\textup{\uppercase\expandafter{\romannumeral#1}}%
}
\renewcommand{\epsilon}{\varepsilon}
\newcounter{marnote}
\begin{document}
	
		\title[Stress concentration problem in Navier-Stokes flow]{Stress concentration between two adjacent rigid particles in Navier-Stokes flow}
	
	\author[H.G. Li]{Haigang Li}
	\address[H.G. Li]{School of Mathematical Sciences, Beijing Normal University, Laboratory of Mathematics and Complex Systems, Ministry of Education, Beijing 100875, China.}
	\email{hgli@bnu.edu.cn}
	
   \author[P.H. Zhang]{PeiHao Zhang}
	\address[P.H. Zhang]{School of Mathematical Sciences, Beijing Normal University, Laboratory of Mathematics and Complex Systems, Ministry of Education, Beijing 100875, China.}
	\email{phzhang@mail.bnu.edu.cn}


	\date{\today} 
	
\begin{abstract}
In this paper we investigate the stress concentration problem that occurs when two convex rigid particles are closely immersed in a fluid flow. The governing equations for the fluid flow are the stationary incompressible Navier-Stokes equations. We establish precise upper bounds for the gradients and second-order derivatives of the fluid velocity as the distance between particles approaches zero, in dimensions two and three. The optimality of these blow-up rates of the gradients is demonstrated by deriving corresponding lower bounds. New difficulties arising from the nonlinear term in the Navier-Stokes equations is overcome. Consequently, the blow up rates of the Cauchy stress are studied as well.  
\end{abstract}

	\maketitle

\section{Introduction}

\subsection{Formulation of Fluid–Rigid Particle System}

The main objective of this paper is to investigate the phenomenon of stress concentration between two closely adjacent rigid particles immersed in a fluid flow, with the governing equations for the fluid being the stationary incompressible Navier-Stokes system. 

Let $D\subset\mathbb{R}^{d}$ be an open bounded set, where $d=2,3$. We denote by $D_{1}\subset{D}$ and $D_{2}\subset{D}$ the domains occupied by two rigid particles and by $\Omega:=D\setminus\overline{D_1\cup D_2}$ the domain occupied by the fluid. In order to investigate the interaction between these two adjacent rigid particles, we assume that the distance $\epsilon:=\mbox{dist}(D_{1},D_{2})$ is sufficiently small, and $\overline{D}_{1},\overline{D}_{2}\subset D$ are located far from the boundary $\partial D$, specifically, $\mbox{dist}(D_{1}\cup D_{2},\partial D)>\kappa_{0}$ where $\kappa_{0}$ is a positive constant independent of $\varepsilon$. 

The full system of equations that model the suspension problem with two adjacent rigid particles in a stationary Navier-Stokes flow can be written as follows:
\begin{equation}\label{sto}
	\left\{\begin{split}
\mu\, \Delta{\bf u}&=\nabla p+{\bf u}\cdot\nabla{\bf u},\quad\nabla\cdot {\bf u}=0,&\hbox{in}&~~\Omega,\\
{\bf u}|_{+}&={\bf u}|_{-}&\hbox{on}&~~\partial{D_{i}},~i=1,2,\\
{\bf e}({\bf u})&=0 &\hbox{in}&~~D_{i},~i=1,2,\\
{\bf u}&={\boldsymbol\varphi} &\hbox{on}&~~\partial{D},
\end{split}\right.
\end{equation}
where the constant $\mu>0$ represents the viscosity of the fluid, ${\bf u}$ is the fluid velocity, $p$ is the pressure, the strain tensor ${\bf e}({\bf u})$ is defined as the symmetric part of $\nabla{\bf u}$, and the subscript $\pm$ indicates the limit from outside and inside the domain, respectively. Furthermore, we assume that each rigid particle is in equilibrium, implying that the total force and torque on each boundary are zero (see \cite{AKKY}), that is, 
\begin{align}\label{sto-2}
 \int_{\partial{D}_{i}} {\boldsymbol\psi}_j\cdot \sigma[{\bf u}, p]\nu=0,~i=1,2,~j=1,\dots\frac{d(d+1)}{2}.
\end{align}
Here $\sigma[{\bf u},p]=2\mu\, {\bf e}({\bf u})-p\,\mathbb{I}$ represents the Cauchy stress tensor and $\mathbb{I}$ is the identity matrix, $\nu$ is the  unit outer normal vector of $D_{i}$, and $\{{\boldsymbol\psi}_{j}\}_{j=1}^{d(d+1)/2}=\{\boldsymbol{e}_{i},~x_{k}\boldsymbol{e}_{l}-x_{l}\boldsymbol{e}_{k}~|~1\leq\,i\leq\,d,~1\leq\,l<k\leq\,d\}$ is a basis of
$$\Psi:=\Big\{{\boldsymbol\psi}\in C^{1}(\mathbb{R}^{d};\mathbb{R}^{d})~|~e({\boldsymbol\psi}):=\frac{1}{2}(\nabla{\boldsymbol\psi}+(\nabla{\boldsymbol\psi})^{\mathrm{T}})=0\Big\},$$
the linear space of rigid displacements in $\mathbb{R}^{d}$, where $\boldsymbol{e}_{1},\dots,\boldsymbol{e}_{d}$ is the standard basis of $\mathbb{R}^{d}$.  Since $D$ is bounded, it follows from the divergence-free condition $\nabla\cdot{\bf u}=0$ and Gauss's theorem that the prescribed boundary velocity field ${\boldsymbol\varphi}$ satisfies the compatibility condition:
\begin{equation}\label{compatibility}
\int_{\partial{D}}{\boldsymbol\varphi}\cdot \nu\,=0.
\end{equation}
The existence and uniqueness of a weak solution to the stationary Navier-Stokes equation can be deduced from the work of Ladyzhenskaya \cite{Lady}, by employing an integral variational formulation and the Riesz representation theorem.  For a comprehensive exploration of the steady Navier-Stokes equations, the interested readers can refer to the monograph of Galdi \cite{GaldiBook}.

\subsection{Motivation and Background}\label{subsec1.1} 

The problem of describing the interactions among densely packed particles is one of the most challenging problems in the analysis of fluid-solid structure. This problem has been extensively studied in fluid mechanics and mathematical literature. Many results have been achieved to analyze the effective viscosity of the mixture and to understand the close-to-contact motion of particles. For instance, refer to \cite{DG} for the effective viscosity in the dilute regime in the framework of homogenization theory, and to \cite{HS} for a model of  fluid behavior with an incompressible Stokes system, where the author showed that solutions are global and no collision occurs between the spheres in finite time. There are also many  results that do not allow contact between particles by assuming that the distance between particles is not relatively small, see \cite{AGKL,BMT,GH}.

To precisely characterize this singularity of the solution, the main objective of this paper is to establish pointwise derivative estimates of the solutions in the narrow region between two neighboring particles, as the distance $\varepsilon$ tends to zero, rather than just the $L^{2}$ estimates as previously done. A recent significant advancement on this topic was made by Ammari, Kang, Kim, and Yu \cite{AKKY}. In a two-dimensional steady Stokes system, they initially derived an asymptotic representation formula for the stress and successfully captured the singularity of the stress, by employing the method of the bipolar coordinates for two adjacent disks $D_{1}$ and $D_{2}$ and showed the blow up rate of $|\nabla {\bf u}|$ is of order $\varepsilon^{-1/2}$ in dimension two. While, the study of general shape inclusions and higher dimensions cases is quite interesting and challenging, as Kang mentioned in his ICM talk \cite{K}. Fortunately, the first author and Xu \cite{LX,LX2} resolved this problem in previous work \cite{BLL,BLL2} and proved that the optimal blow up rates of the gradient of the velocity field are of order $\varepsilon^{-1/2}$ for $d=2$ and $(\varepsilon|\log\varepsilon|)^{-1}$ for $d=3$, for general convex inclusions $D_{1}$ and $D_{2}$. 

It is worth noting that these optimal blow up rates are consistent with those found in the perfect conductivity problem \cite{BLY,AKL,KangLY,AKL3} and the linear elasticity problem with hard inclusions \cite{KY,BLL,BLL2,Li2021}. The background equations in these cases are all linear, including the Laplace equation, the Lam\'e system and the Stokes equations. In problem \eqref{sto}, the presence of the nonlinear term ${\bf u}\cdot\nabla{\bf u}$ in the Navier-Stokes equations undoubtedly introduces new challenges and difficulties in analysis compared to the Stokes equations. 

The novelty of this paper is our attempt to deal with the nonlinear Navier-Stokes equations by apply the iteration method, initially  developed for studying linear elasticity problems \cite{BLL,BLL2} and Stokes equations \cite{LX,LX2}.  Undoubtedly, this requires overcoming several new difficulties and involves more intricate analysis. To the best of our knowledge, this is the first result regarding stress estimates between two adjacent rigid bodies in the stationary Navier-Stokes flow. We establish pointwise upper bounds for the gradients and second order derivatives of the fluid velocity ${\bf u}$, as well as corresponding  estimates for the pressure $p$ and its gradient $\nabla p$. Furthermore, we derive a lower bound for the gradient at the narrowest points between two particles, demonstrating the optimality of the blow up rates of the gradient of ${\bf u}$. This method is applicable to inclusions of arbitrary shape and in all dimensions. 

Due to the similarity in mechanics, this work is partially inspired by the study of physical field enhancement between two adjacent inclusions in composites, particularly in the context of electrostatic and elasto-static. This is a booming field with numerous interesting works, for example, on the conductivity problem \cite{AKL,KLiY,KY,KangLY,BT,BV,DLY1,LiYang,Ben}, and on the linear elasticity problem \cite{BJL,LLBY}. While, the suspension problem in complex fluids is one of the oldest problems in fluid mechanics, with pioneering contribution from Stokes \cite{Stokes}, Kirchhoff \cite{Kirchhoff}, and Jeffery \cite{Jeffery}. It is closely connected to Reynolds's  hydrodynamic lubrication theory  \cite{Reynolds1886}.  In the past century, Grubin proposed a coupling between Reynolds's fluid lubrication theory and Hertz's elastic contact theory, which has proven to be a powerful tool for solving intricate contact surface lubrication problems, see \cite{Oscar1987,Cheng1967}. 

This present paper would be the first step to address other problems of physical interest, such as the dynamics of multiple solid bodies in a fluid flow. In a forthcoming work, we shall address the evolution of two adjacent rigid bodies in a viscous incompressible fluid and establish a transformation mechanism between stress concentration and the rotation of particles. For the case of a single rigid solid, G\'erard-Varet and Hillairet \cite{Hill5} studied the motion of a rigid solid inside an incompressible Navier-Stokes flow in a bounded domain and proved the existence of weak solutions of Leray type, up to collision, in three dimensions. Muha, Ne\v{c}asov\'{a} and Rado\v{s}evi\'c study the regularity of weak solution to the fluid-rigid body problem with a single particle, assuming that the rigid body does not contact with the boundary and that the acceleration of the rigid body is bounded. For more interesting works on this topic, one can refer to \cite{GPG,EMT,SanMartin,Gunzburger,Hill2,DM1,DK,CN,Weinberger} and the references therein. 
  
\subsection{Assumptions on Our Domain}
Before stating our main results precisely, we will first fix the domain and notation. After a rotation of coordinates if necessary, let us denote by $D_{1}^{0}$ and $D_{2}^{0}$ a pair of relatively convex subdomains of $D$ that touch at the origin, and satisfy
   \begin{equation*}
   	D_{1}^{0}\subset\{(x', x_{d})\in\mathbb R^{d}|~ x_{d}>0\},\quad D_{2}^{0}\subset\{(x', x_{d})\in\mathbb R^{d}|~ x_{d}<0\},
   \end{equation*}
with $\{x_d=0\}$ being their common tangent plane. Here and throughout this paper, we use prime superscripts to represent $(n-1)$-dimensional variables and domains, such as $x'$ and $B'$. We translate $D_{i}^{0}$ ($i=1,\, 2$) by $\pm\frac{\varepsilon}{2}$ along the $x_{d}$-axis in the following manner:
   \begin{equation*}
   	D_{1}^{\varepsilon}:=D_{1}^{0}+(0',\frac{\varepsilon}{2})\quad \text{and}\quad D_{2}^{\varepsilon}:=D_{2}^{0}+(0',-\frac{\varepsilon}{2}).
   \end{equation*}
For simplicity of notation, we denote $D_{i}:=D_{i}^{\varepsilon}$, $i=1,2$, by dropping the superscript $\varepsilon$. Because we shall establish estimates for second-order derivatives of ${\bf u}$, we assume that our domains are of class $C^3$ and that the $C^{3}$ norms of $\partial{D}_1,\partial{D}_2$ and $\partial{D}$ are bounded by a positive constant $\kappa_{1}$, which is independent of $\varepsilon$. 

There exists a constant $R$, independent of $\varepsilon$, such that the portions of $\partial D_1$ and $\partial D_2$ near the origin can be expressed, respectively, by
 \begin{equation*}
 	x_d=\frac{\varepsilon}{2}+h_1(x')\quad\text{and}\quad x_d=-\frac{\varepsilon}{2}-h_2(x'),\quad \text{for}~ |x'|\leq 2R,
 \end{equation*}
 where $h_1,h_2\in C^{3}(B'_{2R}(0'))$ satisfy 
 \begin{align}
 	&-\frac{\varepsilon}{2}-h_{2}(x') <\frac{\varepsilon}{2}+h_{1}(x'),\quad\mbox{for}~~ |x'|\leq 2R,\label{h1-h2}\\
 	&h_{1}(0')=h_2(0')=0,\quad \partial_{x'} h_{1}(0')=\partial_{x'}h_2(0')=0,\label{h1h1}\\
 	&h_{1}(x')=h_2(x')=\frac{\kappa}{2}|x'|^{2}+O(|x'|^{3}),\quad\mbox{for}~~|x'|<2R,\label{h1h14}
 \end{align}
 with a constant $\kappa>0$. 
Throughout this paper, we say a constant $C$ is {\em{universal}}  if it depends only on $d,\mu,\kappa_{0},\kappa_{1}$, and $\kappa$, but independent of $\varepsilon$. For the sake of clarity, we focus only on dimensions $d=2,3$, these two important and physically relevant dimensions. We first sate our main results in dimension three in the following. 
 
 \subsection{Upper Bound Estimates in 3D} \label{sub1.2}
 For $0\leq r\leq 2R$, let us define the neck region between $\partial{D}_{1}$ and $\partial{D}_{2}$ by
 \begin{equation*}
 	\Omega_r:=\left\{(x',x_{d})\in \Omega~:~ -\frac{\varepsilon}{2}-h_2(x')<x_{d}<\frac{\varepsilon}{2}+h_1(x'),~|x'|<r\right\}.
 \end{equation*}
We adapt the iteration approach developed in \cite{BLL,BLL2,LX2} to nonlinear Navier-Stokes equations, overcoming the difficulties caused by the nonlinear terms ${\bf u}\cdot\nabla{\bf u}$, and derive the following piontwise upper bound estimates.

\begin{theorem}\label{mainthm}(Upper Bounds in 3D)
	Assume that $D_1,D_2,D$ and $\varepsilon$ are defined as above, with a given ${\boldsymbol\varphi}\in C^{2,\alpha}(\partial D;\mathbb R^3)$, $0<\alpha<1$. Let ${\bf u}\in W^{1,2}(D;\mathbb R^3)\cap C^2(\bar{\Omega};\mathbb R^3)$ and $p\in L^2(D)\cap C^1(\bar{\Omega})$ be the solution to \eqref{sto}--\eqref{compatibility}. Then for sufficiently small $0<\varepsilon<1/2$, we have the following assertions:
	
(i) For the gradient of ${\bf u}$ and $p$, 
\begin{equation*}
|\nabla{\bf u}(x)|\leq
		\frac{C (1+|\log\varepsilon||x'|)}{|\log\varepsilon|(\varepsilon+|x'|^{2})}\|{\boldsymbol\varphi}\|_{C^{2,\alpha}(\partial D)}, \quad~x\in\Omega_{R},
\end{equation*}
and
\begin{equation}\label{xt2p}
\inf_{c\in\mathbb{R}}\|p+c\|_{C^0(\bar{\Omega}_{R})}\leq \frac{C}{\varepsilon|\log\varepsilon|}\|{\boldsymbol\varphi}\|_{C^{2,\alpha}(\partial D)},
\end{equation}
and $\|\nabla{\bf u}\|_{L^{\infty}(\Omega\setminus\Omega_{R})}+\inf_{c\in\mathbb{R}}\|p+c\|_{L^{\infty}(\Omega\setminus\Omega_{R})}\leq\,C\|{\boldsymbol\varphi}\|_{C^{2,\alpha}(\partial D)}$, where $C$ is a universal constant; 
	
	(ii) For the second order derivatives of ${\bf u}$ and the gradient of $p$,
	\begin{align*}
		|\nabla^2{\bf u}(x)|+|\nabla p(x)|\leq
		\frac{C(1+|\log\varepsilon||x'|)}{|\log\varepsilon|(\varepsilon+|x'|^2)^{2}}\|{\boldsymbol\varphi}\|_{C^{2,\alpha}(\partial D)},\quad~x\in\Omega_{R},
	\end{align*}
	and $\|\nabla^2{\bf u}\|_{L^{\infty}(\Omega\setminus\Omega_{R})}+\|\nabla p\|_{L^{\infty}(\Omega\setminus\Omega_{R})}\leq C\|{\boldsymbol\varphi}\|_{C^{2,\alpha}(\partial D)},$
where $C$ is a universal constant. 
\end{theorem}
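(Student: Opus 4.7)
The plan is to adapt the dyadic energy iteration of \cite{BLL,BLL2,LX,LX2}, originally developed for the linear Stokes system, to the nonlinear problem \eqref{sto}, treating the convection term $\mathbf{u}\cdot\nabla\mathbf{u}$ as a forcing term controlled by a bootstrap. Since $\mathbf{e}(\mathbf{u})=0$ in each $D_{i}$, the trace $\mathbf{u}|_{\partial D_{i}}$ is a rigid displacement $\sum_{j=1}^{d(d+1)/2} C_{i}^{j}\boldsymbol{\psi}_{j}$ for unknown coefficients $C_{i}^{j}\in\mathbb{R}$, and I would write
$$\mathbf{u}=\sum_{i=1}^{2}\sum_{j=1}^{d(d+1)/2} C_{i}^{j}\mathbf{u}_{i}^{j}+\mathbf{u}_{0}+\mathbf{u}_{\mathrm{nl}},\qquad p=\sum_{i,j}C_{i}^{j}p_{i}^{j}+p_{0}+p_{\mathrm{nl}},$$
where $(\mathbf{u}_{i}^{j},p_{i}^{j})$ solves the linear Stokes problem in $\Omega$ with boundary value $\boldsymbol{\psi}_{j}$ on $\partial D_{i}$ and vanishing on the other two boundary components, $(\mathbf{u}_{0},p_{0})$ is the Stokes flow with data $\boldsymbol{\varphi}$ on $\partial D$ and zero on the particles, and $(\mathbf{u}_{\mathrm{nl}},p_{\mathrm{nl}})$ is the Stokes solution with zero boundary data driven by the source $-\mathbf{u}\cdot\nabla\mathbf{u}$. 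The coefficients $C_{i}^{j}$ are then pinned down by imposing the force-torque balance \eqref{sto-2}.

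For the linear building blocks I would import from \cite{LX,LX2} the pointwise bounds
$$|\nabla\mathbf{u}_{i}^{j}(x)|+|\nabla\mathbf{u}_{0}(x)|\lesssim \frac{1+|\log\varepsilon||x'|}{|\log\varepsilon|(\varepsilon+|x'|^{2})},\qquad x\in\Omega_{R},$$
together with the analogous second-derivative and pressure bounds. These are proved by constructing an explicit divergence-free competitor $\bar{\mathbf{u}}_{i}^{j}$ interpolating the rigid displacement linearly in $x_{d}$ across the gap, corrected via the Bogovskii operator to preserve incompressibility, and then running an iterated Caccioppoli-type estimate for the Stokes system on dyadic cylinders $\Omega_{\delta(z')}(z')$ of half-width $\delta(z')\sim\sqrt{\varepsilon+|z'|^{2}}$ to upgrade the $L^{2}$ energy bound on $\mathbf{u}_{i}^{j}-\bar{\mathbf{u}}_{i}^{j}$ to the stated pointwise $L^{\infty}$ bound. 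Substituting the decomposition into \eqref{sto-2} yields a linear system for the $C_{i}^{j}$ whose coefficient matrix, formed from $\int_{\partial D_{i}}\boldsymbol{\psi}_{j}\cdot\sigma[\mathbf{u}_{i'}^{j'},p_{i'}^{j'}]\nu$, is nearly singular but possesses an explicit asymptotic structure ensuring uniform invertibility in $\varepsilon$, giving $|C_{i}^{j}|\lesssim\|\boldsymbol{\varphi}\|_{C^{2,\alpha}(\partial D)}$ up to a controlled nonlinear correction.

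To close the argument on the nonlinear piece I would run a bootstrap, estimating the forcing $\mathbf{u}\cdot\nabla\mathbf{u}$ in a weighted $L^{q}$ norm scale-by-scale on each dyadic cylinder and then applying the Stokes estimates of \cite{LX2} to $(\mathbf{u}_{\mathrm{nl}},p_{\mathrm{nl}})$ with this source. The key observation is that although $|\nabla\mathbf{u}|$ may blow up as fast as $(\varepsilon|\log\varepsilon|)^{-1}$ at the origin, $|\mathbf{u}|$ itself remains uniformly bounded in $L^{\infty}(\Omega)$ by $\|\boldsymbol{\varphi}\|_{C^{2,\alpha}(\partial D)}$, so the forcing on each dyadic cylinder is strictly subcritical relative to the Stokes output at that scale; hence $\mathbf{u}_{\mathrm{nl}}$ enjoys strictly better pointwise decay than the leading linear terms, and the bootstrap closes. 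The second-derivative and pressure bounds in part (ii) then follow by differentiating the decomposed Stokes systems one more time and rerunning the dyadic iteration at the next regularity level, again absorbing the nonlinearity as a source. The main obstacle I anticipate is the scale-by-scale bookkeeping needed to preserve the sharp three-dimensional rate $(\varepsilon|\log\varepsilon|)^{-1}$: the logarithmic enhancement is delicate and can be destroyed if the convection forcing contributes at leading order on even a single dyadic annulus, so one must carefully trade off the uniform bound on $\mathbf{u}$ against the weighted gradient bound at every scale, and also verify that the nonlinear perturbation does not destroy the invertibility of the coefficient matrix in the force-torque system.
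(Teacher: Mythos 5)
Your proposal is broadly in the right spirit (decompose, exploit Stokes estimates, handle the convection by iteration), but there are two structural points on which it diverges from what actually works, and the second constitutes a genuine gap.

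First, the decomposition is genuinely different. You split off a separate piece $\mathbf{u}_{\mathrm{nl}}$ with zero boundary data and source $-\mathbf{u}\cdot\nabla\mathbf{u}$, so that every building block solves a \emph{linear} Stokes problem. The paper instead folds the convection term into exactly one of the boundary-value pieces, namely $(\mathbf{u}_1^d,p_1^d)$, so that $(\mathbf{u}_1^3,p_1^3)$ solves the genuinely nonlinear system \eqref{equ_u13}; the residual $\mathbf{w}_3=\mathbf{u}_1^3-\mathbf{v}_1^3$ then carries the nonlinearity, and the Caccioppoli inequality and rescaled $W^{2,q}$ estimates of Section~\ref{sec3} are proved \emph{for the nonlinear system} with the quadratic term controlled via Galdi's trilinear form lemmas (\cite[Lemma IX.1.1, Lemma IX.2.1]{GaldiBook}) and the smallness of $\delta(z')$ in the narrow region. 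Your decomposition is circular in the same way that the paper's is (both have $\mathbf{u}$ appearing on both sides), but yours would additionally require a fixed-point argument to pin down $\mathbf{u}_{\mathrm{nl}}$, whereas the paper sidesteps this by deriving a priori estimates directly from the PDE for $\mathbf{w}_3$.

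Second — and this is the genuine gap — your bootstrap hinges on the "key observation" that $|\mathbf{u}|$ is uniformly bounded in $L^\infty(\Omega)$. That statement is true at the end of the proof, but it is \emph{not} an a priori fact, and it is not what the paper's argument actually exploits. A pointwise bound $|\mathbf{u}|\leq C$ in the gap requires first establishing $|\nabla\mathbf{u}|\lesssim\delta(x')^{-1}$ (then integrating across the gap from $\partial D_1$) and moreover requires the coefficient cancellation $|C_1^3-C_2^3|\lesssim\varepsilon$ of Proposition~\ref{lemCialpha3D}, since the individual piece $\mathbf{u}_1^3$ satisfies only $|\mathbf{u}_1^3(x)|\lesssim |x'|/\delta(x')+1$ (see \eqref{boundofv13}), which is as large as $\varepsilon^{-1/2}$ — it is the combination $C_1^3\mathbf{u}_1^3+C_2^3\mathbf{u}_2^3$ that is bounded, and that cancellation comes from the force--torque balance which itself involves the convection term. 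The paper never uses $|\mathbf{u}|\leq C$ in its energy iteration; instead it uses $|\widetilde{\mathbf{v}}|\lesssim\delta^{-1/2}$, $|\nabla\widetilde{\mathbf{v}}|\lesssim\delta^{-3/2}$ (estimates \eqref{js1}--\eqref{js2}), together with Hardy's inequality and the smallness of $\sqrt{\varepsilon+R^2}$, to absorb the trilinear terms into the energy at every scale (Lemma~\ref{lemmaenergy}, the Caccioppoli inequality \eqref{nsiterating1}, and Proposition~\ref{nsdpro}). If you start your bootstrap from the only genuinely a priori input — the global energy bound $\|\nabla\mathbf{u}\|_{L^2(\Omega)}\leq C$ — you will not get a pointwise bound on the forcing $\mathbf{u}\cdot\nabla\mathbf{u}$ without first running the Caccioppoli iteration on the full nonlinear system, which is precisely the work your proposal tries to avoid by declaring the forcing "strictly subcritical" at the outset. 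You should also note that even after $|\mathbf{u}|\leq C$ is available, $|\mathbf{u}\cdot\nabla\mathbf{u}|$ is of the same order $\delta^{-2}$ (near $|x'|\sim\sqrt{\delta}$) as the residual Stokes source $\mu\Delta\mathbf{v}_1^3-\nabla\overline{p}_1^3$, so the gain for $\mathbf{u}_{\mathrm{nl}}$ comes from its vanishing boundary data, not from a smaller source; this makes the careful construction of the divergence-free competitor $\mathbf{v}_1^3$ and pressure $\overline{p}_1^3$ in \eqref{v133D}--\eqref{p133D} (so that the genuinely singular parts of $\mu\Delta\mathbf{v}_1^3-\nabla\overline{p}_1^3$ cancel, see \eqref{estv133p13}) an indispensable ingredient that your proposal does not address.
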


\begin{remark}\label{mainthmstokes}
We give some remarks on Theorem \ref{mainthm}. 

$(i)$ The estimate of $p$ in \eqref{xt2p} is optimal, because its order now matches that of $|\nabla{\bf u}|$. It is worth pointing out that \eqref{xt2p} is also holds for the Stokes equations, thereby improving the previously obtained order in \cite{LX2} from $(\epsilon^{3/2}|\log\epsilon|)^{-1}$ to $(\epsilon|\log\epsilon|)^{-1}$.

$(ii)$ In particular, we have
	\begin{equation*}
		\|\nabla{\bf u}\|_{L^{\infty}(\Omega)}\leq \frac{C}{\varepsilon|\log\varepsilon|}\|{\boldsymbol\varphi}\|_{C^{2,\alpha}(\partial D)}.
	\end{equation*}
The optimality of this blow up rate of $|\nabla{\bf u}|$ will be proved by Theorem \ref{theolow}  below. 
\end{remark}

As an immediate consequence of Theorem \ref{mainthm}, we obtain the blow up estimate of the Cauchy stress tensor 
$$\sigma[{\bf u},p]=2\mu\, {\bf e}({\bf u})-p\,\mathbb{I}.$$

\begin{corollary}\label{mainthmsigma}(Estimates of the Cauchy Stress)
Under the assumptions in Theorem \ref{mainthm}, we have  
	\begin{equation*}
		\inf_{c\in\mathbb{R}}|\sigma[{\bf u},p+c]|\leq 
	\frac{C}{\varepsilon|\log\varepsilon|}\|{\boldsymbol\varphi}\|_{C^{2,\alpha}(\partial D)},\quad~\text{in}~~\Omega_R.
\end{equation*}
\end{corollary}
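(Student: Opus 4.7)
The plan is to reduce the statement directly to the two pointwise bounds already established in Theorem \ref{mainthm}(i)---the weighted gradient estimate for ${\bf u}$ and the constant-adjusted $L^\infty$ bound \eqref{xt2p} for $p$. From the definition $\sigma[{\bf u},\,p+c] = 2\mu\,{\bf e}({\bf u}) - (p+c)\,\mathbb{I}$ together with $|{\bf e}({\bf u})| \leq |\nabla{\bf u}|$ and $|\mathbb{I}| = \sqrt{d}$, the triangle inequality yields
$$|\sigma[{\bf u},\,p+c]| \;\leq\; 2\mu\,|\nabla{\bf u}| \;+\; \sqrt{d}\,|p+c|,$$
so it suffices to control each summand on the right by $C(\varepsilon|\log\varepsilon|)^{-1}\|{\boldsymbol\varphi}\|_{C^{2,\alpha}(\partial D)}$ uniformly on $\Omega_R$.

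For the pressure term I would take $c = c^{\ast}$ to be a near-minimizer in \eqref{xt2p}; that estimate directly produces a single constant for which $\|p + c^{\ast}\|_{C^0(\bar{\Omega}_R)}$ has exactly the asserted order. The substantive, though still routine, step is to verify that the $x$-dependent gradient bound from Theorem \ref{mainthm}(i) attains this same order in the $L^\infty$ sense on $\Omega_R$. Writing $t := |x'| \in [0,R]$, I would split
$$\frac{1+|\log\varepsilon|\,t}{|\log\varepsilon|(\varepsilon + t^2)} \;=\; \frac{1}{|\log\varepsilon|(\varepsilon + t^2)} \;+\; \frac{t}{\varepsilon + t^2},$$
bound the first summand trivially by $(|\log\varepsilon|\varepsilon)^{-1}$, and use the elementary AM--GM inequality $\varepsilon + t^2 \geq 2t\sqrt{\varepsilon}$ to bound the second by $1/(2\sqrt{\varepsilon})$. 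Since $|\log\varepsilon|\sqrt{\varepsilon} \to 0$ as $\varepsilon \to 0^{+}$, the latter term is absorbed into $C(\varepsilon|\log\varepsilon|)^{-1}$ for $\varepsilon$ sufficiently small, giving $\|\nabla{\bf u}\|_{L^{\infty}(\Omega_R)} \leq C(\varepsilon|\log\varepsilon|)^{-1}\|{\boldsymbol\varphi}\|_{C^{2,\alpha}(\partial D)}$.

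Combining these two estimates with the single choice $c = c^{\ast}$ yields the corollary. I do not anticipate a genuine obstacle here: the result is essentially an algebraic consequence of Theorem \ref{mainthm}(i) together with a one-variable optimization. The only subtle point worth emphasizing is that the infimum over $c$ in \eqref{xt2p} yields a \emph{uniform} constant $c^{\ast}$ that simultaneously controls $p$ at every point of $\bar{\Omega}_R$; this is what makes the triangle-inequality decomposition of $\sigma$ compatible with the pointwise statement $\inf_{c \in \mathbb{R}}|\sigma[{\bf u},\,p+c]| \leq C(\varepsilon|\log\varepsilon|)^{-1}\|{\boldsymbol\varphi}\|_{C^{2,\alpha}(\partial D)}$ asserted in $\Omega_R$.
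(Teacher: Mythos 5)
Your argument is correct and is precisely the intended route: the paper treats the corollary as an ``immediate consequence'' of Theorem \ref{mainthm} and gives no separate proof, and your decomposition $|\sigma[{\bf u},p+c]|\leq 2\mu|\nabla{\bf u}|+\sqrt{d}\,|p+c|$ combined with \eqref{xt2p} and the uniform bound $\|\nabla{\bf u}\|_{L^\infty(\Omega)}\leq C(\varepsilon|\log\varepsilon|)^{-1}\|{\boldsymbol\varphi}\|_{C^{2,\alpha}(\partial D)}$ (stated in Remark \ref{mainthmstokes}(ii), which you rederive via AM--GM and the fact that $\sqrt{\varepsilon}|\log\varepsilon|\to 0$) is exactly what makes it immediate. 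No gaps.
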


\subsection{The Optimality of the Blow Up Rates}

The lower bounds of $|\nabla {\bf u}(x)|$ presented below demonstrate the optimality of the blow up rates of $|\nabla {\bf u}(x)|$ as established in Theorem \ref{mainthm}. To this end, we require additional symmetric assumptions on the domain and the prescribed boundary data. 

\begin{theorem}(Lower Bounds)\label{theolow}
	Assume that $D=B_4(0)$, and $D_1=B_{1}(0',1+\frac{\varepsilon}{2})$ and $D_2=B_{1}(0',-1-\frac{\varepsilon}{2})$ are two unit balls. Let ${\bf u}\in W^{1,2}(D;\mathbb R^{3})\cap C^2(\bar{\Omega};\mathbb R^{3})$ and $p\in L^2(D)\cap C^1(\bar{\Omega})$ be the solution to \eqref{sto}--\eqref{compatibility} with $\boldsymbol{\varphi}=(0',x_{3})$. Then for sufficiently small $0<\varepsilon<1/2$, there holds
	\begin{equation*}
		|\nabla {\bf u}(0',x_3)|\ge \frac{1}{C\varepsilon|\log\varepsilon|},\quad\quad\quad~~\text{for}~|x_{3}|\leq \varepsilon.
	\end{equation*}
\end{theorem}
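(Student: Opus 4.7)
The plan is to treat the Navier--Stokes solution as a perturbation of its linear Stokes counterpart with the same boundary data, invoke the sharp Stokes lower bound in that case, and control the nonlinear correction; the gain must be sharp enough to survive the triangle inequality.

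To start, the configuration $(D, D_1, D_2, \boldsymbol{\varphi})$ is invariant under rotations about the $x_3$-axis and under the map $(x, \mathbf{u}, p) \mapsto (Mx, M\mathbf{u}(Mx), p(Mx))$ with $M = \mathrm{diag}(1, 1, -1)$, and so is the steady Navier--Stokes system. Uniqueness of the weak solution forces $\mathbf{u}$ to share these symmetries, and combined with $\mathbf{e}(\mathbf{u}) = 0$ in each $D_i$, the rigid motions collapse to $\mathbf{u}|_{D_1} = c\,\mathbf{e}_3 = -\mathbf{u}|_{D_2}$ for a single scalar $c = c(\varepsilon) \in \mathbb{R}$. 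The same reduction applies to the Stokes solution $(\mathbf{u}_*, p_*)$ with scalar $c_*$.

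Next, I use the linear Stokes lower bound. With $(\mathbf{u}_*, p_*)$ solving the Stokes version of \eqref{sto}--\eqref{compatibility} with the same datum, the result of Li--Xu \cite{LX2} provides $|\nabla\mathbf{u}_*(0', x_3)| \geq 1/(C\varepsilon|\log\varepsilon|)$ in this symmetric axisymmetric setting. Its proof proceeds via the corrector decomposition $\mathbf{u}_* = c_*(\mathbf{v}_1^{\mathbf{e}_3} - \mathbf{v}_2^{\mathbf{e}_3}) + \mathbf{u}_b$: the force-balance equation on $\partial D_1$ tested against $\mathbf{e}_3$ determines $c_*$, and tracking the leading order of $\partial_3 u_{*,3}(0', x_3)$ through this decomposition identifies a universal constant realizing the matching upper bound in Theorem \ref{mainthm}.

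Now set $\mathbf{w} = \mathbf{u} - \mathbf{u}_*$. The pair $(\mathbf{w}, p - p_*)$ solves the linear Stokes system with source $-\mathbf{u}\cdot\nabla\mathbf{u}$, zero data on $\partial D$, and rigid boundary values $\pm (c - c_*)\mathbf{e}_3$ on $\partial D_i$. The upper bound $|\nabla\mathbf{u}| \leq C/(\varepsilon|\log\varepsilon|)$ in $\Omega_R$ of Theorem \ref{mainthm}, together with a thin-gap Poincar\'e inequality (the velocity equals $\pm c\mathbf{e}_3$ on the two faces of the gap of thickness $\varepsilon + |x'|^2$), yields $|\mathbf{u}| \leq C/|\log\varepsilon|$ in $\Omega_R$ and hence $|\mathbf{u}\cdot\nabla\mathbf{u}| \leq C/(\varepsilon|\log\varepsilon|^2)$ there. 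Rerunning the iteration of Section \ref{sub1.2} on the Stokes problem for $\mathbf{w}$ with this sharpened source, and verifying via the perturbed force balance that $|c - c_*| = o(\varepsilon)$, gives $|\nabla\mathbf{w}(0', x_3)| \leq C/(\varepsilon|\log\varepsilon|^{2})$, one logarithm smaller than the leading Stokes contribution. The triangle inequality then delivers $|\nabla\mathbf{u}(0', x_3)| \geq 1/(2C\varepsilon|\log\varepsilon|)$ for sufficiently small $\varepsilon$.

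The hard part is producing the extra $|\log\varepsilon|^{-1}$ gain in the perturbation step. Because the upper bound of Theorem \ref{mainthm} already sits precisely at the order of the lower bound we want, a naive Stokes estimate for $\mathbf{w}$ (which would cost only an $O(1)$ multiplicative constant and leave the output at the critical scale) cannot close the triangle inequality. Extracting the missing logarithm requires combining the thin-gap Poincar\'e smallness of $|\mathbf{u}|$ in $\Omega_R$, the lubrication-driven largeness of the stiffness coefficient in the force-balance equation satisfied by $c - c_*$, and the pointwise Green-function estimates in the narrow gap developed in \cite{BLL2, LX2}; orchestrating these three refinements into a single sharp iteration is the technical heart of the argument.
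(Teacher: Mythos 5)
Your overall strategy — treat the Navier--Stokes solution as a perturbation of the linear Stokes solution with the same data, invoke the Stokes lower bound of \cite{LX2}, and control the nonlinear correction — is exactly the paper's strategy. The symmetry reduction to a single scalar $c$ is a nice touch and is arguably needed to make the decomposition collapse the way the paper's \eqref{xjfz3} does. However, there are two concrete gaps in the way you carry out the perturbation step.

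First, the claim that a thin-gap Poincar\'e inequality yields $|\mathbf{u}|\leq C/|\log\varepsilon|$ \emph{in all of} $\Omega_R$ is false. The Poincar\'e argument gives $|\mathbf{u}(x)-c\,\mathbf{e}_3|\lesssim \delta(x')\,\|\nabla\mathbf{u}\|_{L^\infty}$, and the factor $\delta(x')$ is only comparable to $\varepsilon$ for $|x'|\lesssim\sqrt{\varepsilon}$; at $|x'|\sim R$ one has $\delta\sim R^2$ and $|\mathbf{u}|=O(R^2)$, which is much larger than $1/|\log\varepsilon|$. The paper does not assert a uniform smallness of $|\mathbf{u}|$; its intermediate estimate is the $\delta$-weighted bound $|\mathbf{u}\cdot\nabla\mathbf{u}(x)|\leq C/(|\log\varepsilon|\,\delta(x'))$, which is what the regularity machinery actually consumes (it enters through the term $\delta(z')\,\|\mathbf{f}\|_{L^\infty(\Omega_{\delta(z')}(z))}$ in Proposition \ref{nsdpro}, so the extra $\delta$ is provided structurally, not by making $\mathbf{u}$ uniformly small).

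Second, you overestimate the difficulty by targeting $|\nabla\mathbf{w}(0',x_3)|\leq C/(\varepsilon|\log\varepsilon|^2)$ — gaining exactly one logarithm — and then sketch, without carrying out, a ``sharp iteration'' that is said to require orchestrating three refinements. The paper's route is both simpler and stronger: it compares componentwise, observing that $\mathbf{u}_i^\alpha=\widetilde{\mathbf{u}}_i^\alpha$ for $(i,\alpha)\neq(1,3)$, so that only $\widehat{\mathbf{u}}_1^3=\mathbf{u}_1^3-\widetilde{\mathbf{u}}_1^3$ carries the nonlinear effect; this difference solves a \emph{Stokes} system with zero boundary data and forcing $\mathbf{u}\cdot\nabla\mathbf{u}$, and Proposition \ref{nsdpro} with the $\delta$-weighted source bound yields $|\nabla\widehat{\mathbf{u}}_1^3|\leq C/|\log\varepsilon|$. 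Combined with $|C_1^3-C_2^3|,\,|\widetilde{C}_1^3-\widetilde{C}_2^3|\leq C\varepsilon$ this gives $|\nabla(\mathbf{u}-\mathbf{u}_{st})|\leq C$ in $\Omega_R$, a gain of a whole factor $\varepsilon|\log\varepsilon|$ over the Stokes lower bound; the triangle inequality then closes trivially. You should replace the uniform Poincar\'e claim by the $\delta$-weighted source estimate, state and verify the force-balance estimate $|c-c_*|$ (it follows from $|C_1^3-C_2^3|\leq C\varepsilon$ for both NS and Stokes systems, which is Proposition \ref{lemCialpha3D} and its Stokes counterpart from \cite{LX2}), and actually write out the local energy iteration feeding Proposition \ref{nsdpro} instead of asserting it.
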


The main results for two-dimensional case are presented in Section \ref{sec5}. For dimensions $d\ge 4$, with minor modifications and in conjunction with the auxiliary functions constructed in \cite{LX2}, we can obtain the following estimates as well
$$||\nabla{\bf u}||_{L^\infty(\Omega)}\leq \frac{C}{\varepsilon^{3/2}}\|{\boldsymbol\varphi}\|_{C^{2,\alpha}(\partial D)},\quad\,d\geq4.$$ 

The rest of this paper is organized as follows. In Section \ref{sec2}, we outline our strategy by decomposing the solution of \eqref{sto} into $d(d+1)+1$ parts, $({\bf u}_{i}^{\alpha},p_{i}^{\alpha})$ with $i=1,2$, $\alpha=1,2,\dots,\frac{d(d+1)}{2}$, and incorporating the nonlinear term ${\bf u}\cdot\nabla{\bf u}$ in the Navier-Stokes equations that $({\bf u}_{1}^{d},p_{1}^{d})$ satisfies, while other components satisfy the Stokes equations with different boundary data. We will establish the gradient estimates and the second order derivatives estimates for each component $({\bf u}_{i}^{\alpha},p_{i}^{\alpha})$ and discuss  the main challenges that need to be addressed when adopting the previous iteration method in \cite{LX} to this nonlinear Navier-Stokes equations. Subsequently, we list the main ingredients and use them to prove Theorem \ref{mainthm} at the end of Section \ref{sec2}. In Section \ref{sec3}, we prove some fundamental framework results for the Navier-Stokes equations in order to apply the iteration argument.  We derive the $W^{1,\infty}$ and $W^{2,\infty}$ estimates in the narrow region by employing a bootstrap argument and  rescaling technique. In Section \ref{sec4}, we apply the results in Section \ref{sec3} to prove the estimates for $({\bf u}_{1}^{3},p_{1}^{3})$ and establish the estimates for $|C_{1}^{\alpha}-C_{2}^{\alpha}|$. Section \ref{sec6} is dedicated to proving Theorems \ref{theolow} concerning the lower bounds of $|\nabla {\bf u}|$ in dimension three by comparing with the lower bound results for the Stokes system. In Section \ref{sec5}, we present the key  estimates for 2D and prove the Theorem \ref{mainthm2D} for the upper bounds and Theorem \ref{theolow2d} for the lower bounds.

\section{Proof Strategy of Theorem \ref{mainthm} and New Ingredients in 3D}\label{sec2}

In this section we adopt the strategy outlined in \cite{LX,LX2} to solve the suspension problem of the Navier-Stokes equations \ref{sto}. We divide the solution into $d(d+1)+1$ parts, with the nonlinear term ${\bf u}\cdot\nabla{\bf u}$ only included in the equation of $({\bf u}_{1}^{d},p_{1}^{d})$, for $d=2,3$. However, adapting our approach, initially developed for linear elliptic equations and systems, to deal with this nonlinear equation of $({\bf u}_{1}^{d},p_{1}^{d})$ presents significant technical challenges. This results in numerous new and critical obstacles, which constitute the primary contribution of this paper. We demonstrate these difficulties using the three-dimensional case as an example in the following subsections. There is another interesting issue in two-dimensional case discussed in Section \ref{sec5}.

\subsection{Decomposition of the Solution}
It follows from the fourth line in \eqref{sto}, ${\bf e}({\bf u})=0$, that ${\bf u}$ is a linear combination of ${\boldsymbol\psi}_{\alpha}$ in each inclusion $D_i$, $i=1,2$, where $\{{\boldsymbol\psi}_{\alpha}\}_{\alpha=1}^{6}$ is a basis of $\Psi$:
{\small $${\boldsymbol\psi}_{1}=\begin{pmatrix}
	1 \\
	0\\
	0
\end{pmatrix},
{\boldsymbol\psi}_{2}=\begin{pmatrix}
	0\\
	1\\
	0
\end{pmatrix},
{\boldsymbol\psi}_{3}=\begin{pmatrix}
	0\\
	0\\
	1
\end{pmatrix},
{\boldsymbol\psi}_{4}=\begin{pmatrix}
	x_{2}\\
	-x_{1}\\
	0
\end{pmatrix},
{\boldsymbol\psi}_{5}=\begin{pmatrix}
	x_{3}\\
	0\\
	-x_{1}
\end{pmatrix},
{\boldsymbol\psi}_{6}=\begin{pmatrix}
	0\\
	x_{3}\\
	-x_{2}
\end{pmatrix}.
$$}
Namely,
\begin{equation}\label{introC}
	{\bf u}=\sum_{\alpha=1}^{6}C_{i}^{\alpha}{\boldsymbol\psi}_{\alpha},\quad\mbox{in}~D_i,\quad i=1,2.
\end{equation}
Note that these twelve constants $C_{i}^{\alpha}$ in \eqref{introC} are free, which will be  determined later by the solution $({\bf u},p)$. Due to the continuity of the transmission condition on $\partial{D}_{i}$, the solution to equation \eqref{sto} in $\Omega$ can be decomposed into thirteen parts in the following way
\begin{align}\label{udecom}
	{\bf u}(x)&=\sum_{i=1}^{2}\sum_{\alpha=1}^{6}C_i^{\alpha}{\bf u}_{i}^{\alpha}(x)+{\bf u}_{0}(x),\quad~ \mbox{and}~
	p(x)=\sum_{i=1}^{2}\sum_{\alpha=1}^{6}C_i^{\alpha}p_{i}^{\alpha}(x)+p_{0}(x),
\end{align}
where, especially, ${\bf u}_{1}^{3}\in{C}^{2}(\overline{\Omega};\mathbb R^3),p_{1}^{3}\in{C}^{1}(\overline{\Omega})$ satisfy the Navier-Stokes equations
\begin{equation}\label{equ_v13}
	\begin{cases}
		\nabla\cdot\sigma[{\bf u}_{1}^3,p_{1}^{3}]={\bf u}\cdot\nabla{\bf u},\quad\nabla\cdot {\bf u}_{1}^{3}=0,&\mathrm{in}~\Omega,\\
		{\bf u}_{1}^{3}={\boldsymbol\psi}_{3},&\mathrm{on}~\partial{D}_{1},\\
		{\bf u}_{1}^{3}=0,&\mathrm{on}~\partial{D_{2}}\cup\partial{D}, 
	\end{cases}
\end{equation}
 while, for $(i,\alpha)\neq(1,3)$, ${\bf u}_{i}^{\alpha}\in{C}^{2}(\overline{\Omega};\mathbb R^3),p_{i}^{\alpha}\in{C}^{1}(\overline{\Omega})$, respectively, satisfy the Stokes equations 
\begin{equation}\label{equ_v1}
	\begin{cases}
		\nabla\cdot\sigma[{\bf u}_{i}^\alpha,p_{i}^{\alpha}]=0,~\nabla\cdot {\bf u}_{i}^{\alpha}=0,&\mathrm{in}~\Omega,\\
		{\bf u}_{i}^{\alpha}={\boldsymbol\psi}_{\alpha},\quad\,&\mathrm{on}~\partial{D}_{i},\\
		{\bf u}_{i}^{\alpha}=0,\quad\quad&\mathrm{on}~\partial{D_{j}}\cup\partial{D},j\neq i,
	\end{cases}~~i=1,2,
\end{equation}
and ${\bf u}_{0}\in{C}^{2}(\overline{\Omega};\mathbb R^3),p_0\in{C}^{1}(\overline{\Omega})$ satisfy 
\begin{equation}\label{equ_u0}
	\begin{cases}
		\nabla\cdot\sigma[{\bf u}_{0},p_0]=0,\quad\nabla\cdot {\bf u}_{0}=0,&\mathrm{in}~\Omega,\\
		{\bf u}_{0}=0,&\mathrm{on}~\partial{D}_{1}\cup\partial{D_{2}},\\
		{\bf u}_{0}={\boldsymbol\varphi},&\mathrm{on}~\partial{D}.
	\end{cases}
\end{equation}

Here we only consider the special case that $C_{1}^{3}=1$ to illustrate our method. For the other cases, a different decomposition may be required. To solve nonlinear problem \eqref{equ_v13}, we recombine \eqref{udecom} as
\begin{align*}
	{\bf u}(x)=C_1^{3}{\bf u}_{1}^{3}(x)+{\bf u}^{\#3}(x),\quad~ \mbox{and}~
	p(x)=C_1^{3}p_{1}^{3}(x)+p^{\#3}(x),
\end{align*}
where
\begin{align}\label{udecom3}
	{\bf u}^{\#3}(x):=\sum_{(i,\alpha)\neq (1,3)}C_i^{\alpha}{\bf u}_{i}^{\alpha}(x)+{\bf u}_{0}(x),\quad~ \mbox{and}~
	p^{\#3}(x):=\sum_{(i,\alpha)\neq (1,3)}C_i^{\alpha}p_{i}^{\alpha}(x)+p_{0}(x),
\end{align}
represent the sum of all components which satisfy the linear Stokes equations. Thus, the Navier-Stokes equations \eqref{equ_v13} that $({\bf u}_{1}^{3},p_{1}^{3})$ satisfies can be rewritten as 
\begin{equation}\label{equ_u13}
	\left\{\begin{split}
\mu \Delta{\bf u}_{1}^3=&\,\nabla p_{1}^{3}+(C_1^{3})^{2}{\bf u}_{1}^{3}\cdot\nabla\,{\bf u}_{1}^{3}+C_1^{3}{\bf u}_{1}^{3}\cdot\nabla{\bf u}^{\#3}\\
		&\quad\quad+C_1^{3}{\bf u}^{\#3}\cdot\nabla\,{\bf u}_{1}^{3}+{\bf u}^{\#3}\cdot\nabla{\bf u}^{\#3},&\mathrm{in}&~\Omega,\\
		\nabla\cdot {\bf u}_{1}^{3}=&\,0,&\mathrm{in}&~\Omega,\\
		{\bf u}_{1}^{3}=&\,{\boldsymbol\psi}_{3},~\mathrm{on}~\partial{D}_{1},
		\quad{\bf u}_{1}^{3}=0,~\mathrm{on}~\partial{D_{2}}\cup\partial{D}.
	\end{split}\right.
\end{equation}
Therefore, based on the definitions of ${\bf u}^{\#3}$ and $p^{\#3}$ in \eqref{udecom3}, in order to estimate $({\bf u}_{1}^{3},p_{1}^{3})$,  we need to first establish precise estimates for $C_{i}^{\alpha}$ and $({\bf u}_{i}^{\alpha},p_{i}^{\alpha})_{(i,\alpha)\neq (1,3)}$ as well as $({\bf u}_{0},p_{0})$.

Firstly, by the trace theorem along with the fact that
$\|{\bf u}\|_{H^{1}(\Omega\setminus\Omega_{R})}\leq\,C$, similarly as in \cite{BLL}, we have
\begin{prop}\label{lemma11}
	Let $C_i^\alpha$ be defined in \eqref{udecom}. Then
	$$|C_i^\alpha|\leq C,~i=1,2;~~\alpha=1,2,\dots,6.$$
\end{prop}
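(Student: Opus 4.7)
The plan is to follow the three-step strategy established in \cite{BLL} for linear elasticity: first, derive an a priori $H^1$ bound for ${\bf u}$ on a region whose geometry is independent of $\varepsilon$; second, use the trace theorem to transfer this to a bound on the traces of $\sum_{\alpha} C_i^\alpha {\boldsymbol\psi}_\alpha$ on $\partial D_i$; and third, invert the bounded linear map $\vec{C}\mapsto \sum_\alpha C_\alpha {\boldsymbol\psi}_\alpha|_{\partial D_i}$, which is well conditioned uniformly in $\varepsilon$.

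For the first step, I would establish $\|{\bf u}\|_{H^1(\Omega\setminus\Omega_R)} \leq C\|{\boldsymbol\varphi}\|_{C^{2,\alpha}(\partial D)}$ from the weak formulation of \eqref{sto}--\eqref{sto-2}. Using the compatibility condition \eqref{compatibility} together with $\dist(D_1\cup D_2,\partial D)>\kappa_0$, construct a divergence-free lifting $\tilde{\boldsymbol\varphi}\in H^1(D)$ of the boundary datum supported in a collar neighborhood of $\partial D$ that is disjoint from $\overline{D}_1\cup\overline{D}_2$. The test field ${\bf u}-\tilde{\boldsymbol\varphi}$ is admissible: it is divergence-free in $D$, vanishes on $\partial D$, and reduces to the rigid motion $\sum_\alpha C_i^\alpha {\boldsymbol\psi}_\alpha$ on each $D_i$, so the rigid-body equilibrium conditions \eqref{sto-2} annihilate all boundary terms at $\partial D_i$. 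The new feature compared to \cite{BLL,LX,LX2} is the convective term $\int_\Omega ({\bf u}\cdot\nabla){\bf u}\cdot({\bf u}-\tilde{\boldsymbol\varphi})$: integrating by parts and using $\nabla\cdot{\bf u}=0$, the piece pairing with ${\bf u}$ reduces to a boundary integral that vanishes on each $\partial D_i$ (since $\nabla {\boldsymbol\psi}_\alpha$ is antisymmetric, so $\nabla\cdot(|{\bf u}|^2 {\bf u})=0$ in $D_i$ and the divergence theorem on $D_i$ gives zero flux) and is controlled on $\partial D$ by $\|{\boldsymbol\varphi}\|_{L^\infty(\partial D)}^3$, while the piece pairing with $\tilde{\boldsymbol\varphi}$ is handled by Cauchy--Schwarz and Young's inequality, absorbing $\|\nabla{\bf u}\|_{L^2}^2$ into the left-hand side.

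For the second step, the trace theorem on the $\varepsilon$-independent Lipschitz domain $\Omega\setminus\Omega_R$, together with the transmission condition ${\bf u}|_+={\bf u}|_-=\sum_\alpha C_i^\alpha{\boldsymbol\psi}_\alpha$ on $\partial D_i$, yields
$$\Big\|\sum_{\alpha=1}^{6} C_i^\alpha{\boldsymbol\psi}_\alpha\Big\|_{L^2(\partial D_i\setminus\overline{\Omega}_R)} \leq C\|{\bf u}\|_{H^1(\Omega\setminus\Omega_R)} \leq C.$$
For the third step, since $D_i$ is a rigid translate of the fixed domain $D_i^0$, a change of variables turns this into an $L^2$ norm over a fixed non-flat piece of $\partial D_i^0$ with integrands depending smoothly on $\varepsilon$, and the six rigid displacements are linearly independent on any non-flat piece of a smooth surface. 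The associated $6\times 6$ Gram matrix therefore has smallest eigenvalue bounded below by a universal constant on $\varepsilon\in[0,1/2]$, and inversion gives $|C_i^\alpha|\leq C$.

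The main obstacle is the first step: producing an $\varepsilon$-uniform energy estimate in the presence of the nonlinear convective term. Once that is in hand, the remaining two steps are essentially the linear argument of \cite{BLL}, and the whole argument is non-quantitative in $\varepsilon$ so the resulting bound depends only on the universal constants $d,\mu,\kappa_0,\kappa_1,\kappa$ and $\|{\boldsymbol\varphi}\|_{C^{2,\alpha}(\partial D)}$.
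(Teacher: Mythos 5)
Your three-step scheme (uniform $H^1$ energy bound away from the neck, trace theorem on the fixed-geometry region $\Omega\setminus\Omega_R$, inversion of the well-conditioned rigid-motion Gram matrix) is exactly the route the paper takes: it records $\|{\bf u}\|_{H^1(\Omega\setminus\Omega_R)}\leq C$ as a known fact and then invokes the trace argument ``similarly as in [BLL].'' Your added detail for the energy step is the standard Leray--Hopf treatment of the convective term, and it is in order (the Hopf extension works with constants uniform in $\varepsilon$ precisely because the collar neighborhood of $\partial D$ is disjoint from $\overline{D}_1\cup\overline{D}_2$ thanks to $\mathrm{dist}(D_1\cup D_2,\partial D)>\kappa_0$); the only caveat is that absorbing $\int_\Omega({\bf u}\cdot\nabla){\bf u}\cdot\tilde{\boldsymbol\varphi}$ into the viscous term requires the Hopf cut-off trick rather than a bare Cauchy--Schwarz/Young estimate, since that bilinear form carries the same homogeneity as $\|\nabla{\bf u}\|_{L^2}^2$.
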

Secondly, it is important to note that the components $({\bf u}_{i}^{\alpha},p_{i}^{\alpha})_{(i,\alpha)\neq (1,3)}$ and $({\bf u}_{0},p_{0})$ all satisfy the Stokes equations, and their gradient estimates have been established in the previous work of the first author and Xu \cite{LX2}. Therefore, we can consider the terms involving ${\bf u}^{\#3}$ appearing in \eqref{equ_u13} as known terms. However, despite this, there are still several difficulties in studying the singular behavior of the gradient of the solution to the nonlinear Navier-Stokes equations of type \eqref{equ_u13}. Due to its independent interest, we focus on discussing this problem in the next two sections, Section \ref{sec3} and \ref{sec4}.

To estimate ${\bf u}^{\#3}$ and $p^{\#3}$, for the reader's convenience, we present the estimates for the Stokes equations \eqref{equ_v1} and \eqref{equ_u0} as derived in \cite{LX2}. These include the explicit auxiliary functions $({\bf v}_{i}^{\alpha},\overline{p}_i^{\alpha})$ in $\Omega_{2R}$, as well as some optimal improvements on the estimates of the pressure $p_{i}^{\alpha}$. Since the regularity theory in $\Omega\setminus\Omega_{R}$ is standard, our focus is on establishing the blow up estimates of $(\nabla{\bf u}_{i}^{\alpha},p_{i}^{\alpha})$ and $(\nabla^{2}{\bf u}_{i}^{\alpha},\nabla p_{i}^{\alpha})$ in the narrow region $\Omega_{2R}$.

\subsection{Improved Estimates for $p_{i}^{\alpha}$ in the Linear Parts}
In what follows, we use $\delta(x')$ to represent the vertical distance between $\partial{D}_{1}$ and $\partial{D}_{2}$ at position $x'$, namely,
\begin{align*}
	\delta(x'):=\epsilon+h_{1}(x')+h_{2}(x'),\quad\mbox{for}\, |x'|\leq 2R.
\end{align*}
For $x\in\Omega_{R}$ and $r\leq\,R$, denote the narrow region by
$$\Omega_{r}(x):=\left\{(y',y_{d})\big| -\frac{\varepsilon}{2}-h_{2}(y')<y_{d}
<\frac{\varepsilon}{2}+h_{1}(y'),\,|y'-x'|<r \right\}.$$
To clearly illustrate our idea and avoid unnecessary computational difficulties, we assume, for simplicity, that $h_{1}$ and $h_{2}$ are quadratic and symmetric with respect to the plane $\{x_{3}=0\}$, say, $h_1(x')=h_2(x')=\frac{1}{2}|x'|^2$ for $|x'|\leq 2R$.  

We introduce the Keller-type function $k\in\,C^{3}(\mathbb{R}^{3})$ satisfying
\begin{equation*}
	k(x)=\frac{x_{3}}{\delta(x')},\quad\hbox{in}\ \Omega_{2R},
\end{equation*}
and  $k(x)=\frac{1}{2}$ on $\partial D_{1}$,  $k(x)=-\frac{1}{2}$ on $\partial D_{2}$, $k(x)=0$ on $\partial D$ with $\|k(x)\|_{C^{3}(\mathbb{R}^{3}\backslash\Omega_{R})}\leq C$. As in \cite{LX2}, we use this Keller-type function to construct  a family of auxiliary functions ${\bf v}_{1}^{\alpha}\in C^{3}(\Omega;\mathbb R^3)$, such that ${\bf v}_{1}^{\alpha}={\bf u}_{1}^{\alpha}={\boldsymbol\psi}_{\alpha}$ on $\partial{D}_{1}$ and ${\bf v}_{1}^{\alpha}={\bf u}_{1}^{\alpha}=0$ on $\partial{D}_{2}\cup\partial{D}$. Specifically, we construct  
\begin{align}\label{v1alpha}
	{\bf v}_{1}^{\alpha}=\boldsymbol\psi_{\alpha}\Big(k(x)+\frac{1}{2}\Big)+\boldsymbol\psi_{3}x_{\alpha}\Big(k(x)^2-\frac{1}{4}\Big),\quad\hbox{in}\ \Omega_{2R}, \quad\alpha=1,2,
\end{align}
which satisfies $\nabla\cdot{\bf v}_1^\alpha=0$ in $\Omega_{2R}$, and the associated pressure function $\overline{p}_1^{\alpha}\in C^{1}(\Omega)$ with
\begin{equation}\label{defp113D}
	\overline{p}_1^{\alpha}=\frac{2\mu x_{\alpha}}{\delta(x')}k(x),\quad\mbox{in}~\Omega_{2R},
\end{equation}
to make $|\mu\Delta{\bf v}_{1}^{\alpha}-\nabla\overline{p}_1^{\alpha}|$ be as small as possible. 
For ${\bf v}_{2}^{\alpha}$, we only need to replace $k(x)=\frac{x_3}{\delta(x')}$ by $\widetilde{k}(x)=-\frac{x_3}{\delta(x')}$ in $\Omega_{2R}$. Therefore, in the sequel we only discuss the case for $i=1$, and omit the case for $i=2$. The following results are from \cite{LX2}, except an improved estimate of $p_{i}^{\alpha}$.

\begin{prop}( An Improvement of \cite[Proposition 2.1]{LX2}\label{propu113D1})
	Let ${\bf u}_{i}^{\alpha}\in{C}^{2}(\Omega;\mathbb R^3),~p_{i}^{\alpha}\in{C}^{1}(\Omega)$ be the solution to \eqref{equ_v1} for $i,\alpha=1,2$. Then there holds
	\begin{equation*}
		\|\nabla({\bf u}_{i}^{\alpha}-{\bf v}_{i}^{\alpha})\|_{L^{\infty}(\Omega_{\delta(x')/2}(x))}\leq C,\quad \,x\in\Omega_{R},
	\end{equation*}
	and 
	\begin{equation*}
		\|\nabla^2({\bf u}_{i}^{\alpha}-{\bf v}_{i}^{\alpha})\|_{L^{\infty}(\Omega_{\delta(x')/2}(x))}+\|\nabla (p_{1}^{\alpha}-\overline{p}_{1}^{\alpha})\|_{L^{\infty}(\Omega_{\delta(x')/2}(x))}\leq\frac{C}{\delta(x')},\quad \,x\in\Omega_{R}.
\end{equation*}

Consequently, in view of \eqref{v1alpha} and \eqref{defp113D},
	\begin{align*}
		\frac{1}{C\delta(x')}\leq|\nabla {\bf u}_{i}^{\alpha}(x)|\leq \frac{C}{\delta(x')},\quad|\nabla^2 {\bf u}_{i}^{\alpha}(x)|\leq C\Big(\frac{1}{\delta(x')}+\frac{|x'|}{\delta(x')^2}\Big),\quad x\in\Omega_{R},
	\end{align*}
	and for some $|z'|=R$,
	\begin{align}\label{improveof_p}
|p_i^\alpha-p_i^\alpha(z',0)|\leq \frac{C}{\varepsilon},	\quad|\nabla p_{i}^{\alpha}(x)|\leq C\Big(\frac{1}{\delta(x')}+\frac{|x'|}{\delta(x')^2}\Big),\quad x\in\Omega_{R}.
	\end{align}
\end{prop}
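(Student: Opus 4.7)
The plan is to obtain the first three asserted estimates exactly as in \cite{LX2} and derive the genuinely new ingredient --- the improved bound $|p_i^\alpha - p_i^\alpha(z',0)|\leq C/\varepsilon$ --- by integrating the pointwise pressure-gradient estimate along a well-chosen path. Setting $w := {\bf u}_i^\alpha - {\bf v}_i^\alpha$ and $q := p_i^\alpha - \overline{p}_i^\alpha$, the pair $(w, q)$ solves a Stokes system in $\Omega$ with source $-\mu\Delta{\bf v}_i^\alpha + \nabla\overline{p}_i^\alpha$, which by direct computation from \eqref{v1alpha}--\eqref{defp113D} is pointwise bounded in $\Omega_{2R}$, and with zero boundary data. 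Rescaling each cylinder $\Omega_{\delta(x')/2}(x)$ to unit size and invoking standard boundary Stokes regularity yields $\|\nabla w\|_{L^\infty}\leq C$ and $\|\nabla^2 w\|_{L^\infty}+\|\nabla q\|_{L^\infty}\leq C/\delta(x')$ on each cell, exactly as in \cite{LX2}. Combined with the explicit forms \eqref{v1alpha}, \eqref{defp113D}, these immediately produce the pointwise bounds on $|\nabla{\bf u}_i^\alpha|$, $|\nabla^2{\bf u}_i^\alpha|$ and the gradient bound on $p_i^\alpha$ in \eqref{improveof_p}.

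For the new pressure estimate, I would fix $x\in\Omega_R$, choose $z' := R\,x'/|x'|$ (or any fixed direction if $x'=0$), and connect $x$ to $(z',0)$ by a two-piece path: a vertical leg from $(x',x_3)$ to $(x',0)$, then a radial horizontal leg in $\{y_3=0\}$ from $(x',0)$ to $(z',0)$. The vertical contribution, using $|\nabla p_i^\alpha(x',t)|\leq C(\delta(x')^{-1}+|x'|\delta(x')^{-2})$ and segment length $\leq \delta(x')$, is at most $C(1+|x'|/\delta(x'))\leq C/\sqrt{\varepsilon}$, where I use $|x'|/\delta(x')\leq 1/(2\sqrt{\varepsilon})$ from $\delta(x')\sim\varepsilon+|x'|^2$. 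Along the horizontal leg, parameterized by $s=|y'|$ running from $|x'|$ to $R$,
\begin{align*}
\int_{|x'|}^{R}\!\left(\frac{1}{\varepsilon+s^2}+\frac{s}{(\varepsilon+s^2)^2}\right)ds \leq \frac{C}{\sqrt{\varepsilon}} + \frac{1}{2}\!\left(\frac{1}{\varepsilon}-\frac{1}{\varepsilon+R^2}\right)\leq \frac{C}{\varepsilon},
\end{align*}
where the sharp $1/\varepsilon$ order comes from the exact antiderivative of the second integrand. Summing the two legs (and, if one wishes a single $x$-independent $z'$, bridging different radial endpoints by a further segment along $\{|y'|=R,\,y_3=0\}$, where $|\nabla p_i^\alpha|$ is universally bounded) produces the claim.

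The key observation underlying the improvement, and the main technical point to identify, is that the $|x'|\delta(x')^{-2}$ contribution in the pointwise bound on $|\nabla p_i^\alpha|$ --- which arises from differentiating the $1/\delta(x')$ factor in $\overline{p}_i^\alpha = 2\mu x_\alpha k(x)/\delta(x')$ --- admits an exact antiderivative of sharp order $1/\varepsilon$ when integrated in the horizontal direction. The earlier estimate in \cite{LX2} bounded the pressure via a cruder route (a gradient bound times the diameter of $\Omega_R$) and produced only order $\varepsilon^{-3/2}|\log\varepsilon|^{-1}$; exploiting the one-dimensional cancellation along the path above is what brings the pressure estimate in line with the velocity gradient estimate, ultimately yielding the improved bound \eqref{xt2p} flagged in Remark \ref{mainthmstokes}(i).
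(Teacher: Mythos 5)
Your derivation of the pressure bound $|p_i^\alpha - p_i^\alpha(z',0)|\leq C/\varepsilon$ is correct, but the route is genuinely different from the paper's. The paper's proof (Subsection 4.4) exploits the fact that $p_i^\alpha$ is harmonic in $\Omega$ (take divergence of the Stokes equation and use $\nabla\cdot{\bf u}_i^\alpha = 0$): it writes $|p_i^\alpha(x',0)-p_i^\alpha(z',0)|$ as a mean value over $B_{\delta(x')/4}(x',0)$, splits into a $q_i^\alpha = p_i^\alpha - \overline{p}_i^\alpha$ piece and an $\overline{p}_i^\alpha$ piece, and kills the $\overline{p}_i^\alpha$ average by parity in $x_3$ (the ball is centered on $\{x_3 = 0\}$ and $\overline{p}_1^\alpha = 2\mu x_\alpha k(x)/\delta(x')$ is odd in $x_3$), leaving a contribution controlled by the cited \cite{LX2} bound for $q_i^\alpha$. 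Your argument instead integrates the pointwise gradient estimate $|\nabla p_i^\alpha|\leq C(\delta(x')^{-1}+|x'|\delta(x')^{-2})$ along an explicit vertical-then-radial path and observes that the apparently dangerous $|x'|\delta^{-2}$ term has exact antiderivative $-\tfrac{1}{2}(\varepsilon+s^2)^{-1}$, integrating to $O(1/\varepsilon)$ instead of the naive $R\cdot\sup|\nabla p_i^\alpha|\sim\varepsilon^{-2}$. Both yield the claimed bound; yours is more elementary (no mean-value property needed) and geometrically transparent, while the paper's harmonicity argument sidesteps the horizontal integration entirely.

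Two smaller remarks. First, your assertion that the source $\mu\Delta{\bf v}_i^\alpha - \nabla\overline{p}_i^\alpha$ is pointwise bounded in $\Omega_{2R}$ is not quite right: a direct computation of $\Delta k$ and $\partial_{x_1}\overline{p}_1^1$ shows the residual is $O(\delta(x')^{-1})$, not $O(1)$ (this is exactly why the iteration/energy machinery of \cite{LX2} is needed rather than a one-shot Schauder estimate); it does not affect your argument since you defer that part to \cite{LX2}, but the claim as stated is false. Second, if you carry out your own path argument a bit more carefully you actually get a \emph{better} bound than the paper records for $\alpha=1,2$: along the horizontal leg in $\{x_3=0\}$ the horizontal gradient $\nabla'\overline{p}_i^\alpha$ vanishes identically (since $\overline{p}_i^\alpha\propto x_3$), so only $|\nabla' q_i^\alpha|\leq C/\delta$ contributes, giving $\int_{|x'|}^R C(\varepsilon+s^2)^{-1}\,ds\leq C/\sqrt{\varepsilon}$; together with your vertical-leg estimate $C/\sqrt{\varepsilon}$ this yields $|p_i^\alpha-p_i^\alpha(z',0)|\leq C/\sqrt{\varepsilon}$.
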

The proof of \eqref{improveof_p} will be given in Subsection \ref{sec4.4}.

For ${\bf u}_{1}^{4}$, since $\boldsymbol{\psi}_4$ does not include the component $x_3$, we directly choose
\begin{align*}
	{\bf v}_{1}^{4}=
	{\boldsymbol\psi}_{4}\Big(k(x)+\frac{1}{2}\Big),\quad\mbox{and}~\overline{p}_1^4=0,\quad \mbox{in}~\Omega_{2R}.
\end{align*}
Set $G(x)=-\frac{4x_{1}x_2}{\delta(x')}$, and for $i=1,2$,
\begin{equation*}
	\begin{split}
		F_i(x)=1-\frac{4x_{i}^2}{\delta(x')}-\frac{25x_{3}}{3}k(x),~
		H_i(x)=8x_i\left(\frac{2}{3}-\frac{|x'|^{2}}{\delta(x')}\right)k(x)-10x_ix_3k(x)^2.
	\end{split}
\end{equation*}
We construct ${\bf v}_{1}^{5}\in C^{3}(\Omega;\mathbb R^3)$, such that
\begin{align}\label{v15}
	{\bf v}_{1}^{5}={\boldsymbol\psi}_{5}\Big(k(x)+\frac{1}{2}\Big)
	+
	\frac{3}{5}\Big(k(x)^2-\frac{1}{4}\Big)(F_1(x),G(x),H_1(x))^{\mathrm T},\quad \mbox{in}~\Omega_{2R},
\end{align}
with $\|{\bf v}_{1}^{5}\|_{C^{3}(\Omega\setminus\Omega_{R})}\leq\,C$ and $\overline{p}_1^5\in C^{1}(\Omega)$, such that
\begin{equation*}
	\overline{p}_1^5=\frac{6\mu}{5}\frac{ x_1}{\delta(x')^2}+\frac{72\mu}{5}\frac{ x_1}{\delta(x')}\left(\frac{2}{3}-\frac{|x'|^{2}}{\delta(x')}\right)k(x)^{2},\quad\mbox{in}~\Omega_{2R},
\end{equation*}
with $\|\overline{p}_1^5\|_{C^{1}(\Omega\setminus\Omega_{R})}\leq C$. 
While, 
\begin{align*}
	{\bf v}_{1}^{6}={\boldsymbol\psi}_{6}\Big(k(x)+\frac{1}{2}\Big)
	+\frac{3}{5}\Big(k(x)^2-\frac{1}{4}\Big)(G(x),F_2(x),H_2(x))^{\mathrm T},\quad\mbox{in}~\Omega_{2R},
\end{align*}
with $\|{\bf v}_{1}^{6}\|_{C^{3}(\Omega\setminus\Omega_{R})}\leq\,C$, and
\begin{equation*}
	\overline{p}_1^6=\frac{6\mu}{5}\frac{ x_2}{\delta(x')^2}+\frac{72\mu}{5}\frac{ x_2}{\delta(x')}\left(\frac{2}{3}-\frac{|x'|^{2}}{\delta(x')}\right)k(x)^{2},\quad\mbox{in}~\Omega_{2R},
\end{equation*}
with $\|\overline{p}_1^6\|_{C^{1}(\Omega\setminus\Omega_{R})}\leq C$. Then 

\begin{prop}(An Improvement of \cite[Proposition 2.3]{LX2}\label{propu4563D})
	Let ${\bf u}_{i}^{\alpha}\in{C}^{2}(\Omega;\mathbb R^3),~p_{i}^{\alpha}\in{C}^{1}(\Omega)$ be the solution to \eqref{equ_v1}, $\alpha=4,5,6$. Then, for $x\in\Omega_{R}$,
	\begin{equation*}
		\|\nabla({\bf u}_{i}^{\alpha}-{\bf v}_{i}^{\alpha})\|_{L^{\infty}(\Omega_{\delta(x')/2}(x))}\leq\,C,
	\end{equation*}
and
	\begin{equation*}
		\|\nabla^2({\bf u}_{i}^{\alpha}-{\bf v}_{i}^{\alpha})\|_{L^{\infty}(\Omega_{\delta(x')/2}(x))}+\|\nabla (p_{i}^{\alpha}-\overline{p}_{i}^{\alpha})\|_{L^{\infty}(\Omega_{\delta(x')/2}(x))}\leq 
		\begin{cases}
			\frac{C}{\sqrt{\delta(x')}},&\alpha=4,\\
			\frac{C}{\delta(x')},&\alpha=5,6.
		\end{cases}
	\end{equation*}
	Consequently, for $x\in\Omega_{R}$,
	\begin{align*}
		|\nabla {\bf u}_{i}^{\alpha}(x)|\leq 
		\begin{cases}
			C\left(\frac{|x'|}{\delta(x')}+1\right),&\alpha=4,\\
			\frac{C}{\delta(x')},&\alpha=5,6,
		\end{cases}~
		|\nabla^2 {\bf u}_{i}^{\alpha}(x)|\leq
		\begin{cases}
			\frac{C}{\delta(x')},&\alpha=4,\\
			\frac{C}{\delta(x')^{2}},&\alpha=5,6,
		\end{cases}
	\end{align*}
	and for some $|z'|=R$,
	\begin{align*}
		|p_{i}^{\alpha}(x)-p_i^\alpha(z',0)|\leq
		\begin{cases}
			\frac{C}{\sqrt{\varepsilon}},&\alpha=4,\\
			\frac{C}{\varepsilon},&\alpha=5,6,
		\end{cases} ~
		|\nabla p_{i}^{\alpha}(x)|\leq
		\begin{cases}
			\frac{C}{\sqrt{\delta(x')}},&\alpha=4,\\
			\frac{C}{\delta(x')^{2}},&\alpha=5,6.
		\end{cases} 
	\end{align*}
\end{prop}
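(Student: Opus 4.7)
The plan is to follow the blueprint of Proposition \ref{propu113D1}: construct explicit divergence-free auxiliary pairs $({\bf v}_i^\alpha, \overline{p}_i^\alpha)$ that exactly carry the boundary data of $({\bf u}_i^\alpha, p_i^\alpha)$, and then control the remainder $({\bf w}, q) := ({\bf u}_i^\alpha - {\bf v}_i^\alpha, p_i^\alpha - \overline{p}_i^\alpha)$ by local Stokes regularity after rescaling the narrow neck.

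As a first step I would verify that each ${\bf v}_i^\alpha$ is divergence-free in $\Omega_{2R}$ and takes the value ${\boldsymbol\psi}_\alpha$ on $\partial D_1$ and $0$ on $\partial D_2 \cup \partial D$. For $\alpha = 4$ this is essentially automatic since ${\boldsymbol\psi}_4$ has no $x_3$-component, $(k+\tfrac12){\boldsymbol\psi}_4$ is already divergence-free, and one may take $\overline{p}_1^4 \equiv 0$. For $\alpha = 5, 6$ the quadratic $(k^2-\tfrac14)$ corrections built from $F_i, G, H_i$ in \eqref{v15} are engineered precisely to cancel the divergence generated by $(k+\tfrac12){\boldsymbol\psi}_\alpha$ and to carry the leading singular contribution of the pressure. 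A direct computation then gives the pointwise size of the defect $\mu\Delta{\bf v}_i^\alpha - \nabla\overline{p}_i^\alpha$, which dictates the exponents in the stated inequalities: it is small for $\alpha = 4$ and larger for $\alpha = 5, 6$.

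The remainder $({\bf w}, q)$ then solves a linear Stokes system in $\Omega$ with zero boundary data and source $-\mu\Delta{\bf v}_i^\alpha + \nabla\overline{p}_i^\alpha$. Combining a global $W^{1,2}$ energy bound with the local regularity framework set up in Section \ref{sec3}, I would rescale the cylinder $\Omega_{\delta(x')/2}(x)$ by $y = (x - \bar x)/\delta(x')$ to unit size; on the normalized domain the rescaled Stokes system has source and boundary data of controlled magnitude, and standard interior $W^{1,\infty}$ and $W^{2,\infty}$ estimates yield uniform bounds on the rescaled unknowns. Undoing the rescaling produces the two claimed pointwise inequalities for $\nabla({\bf u}_i^\alpha - {\bf v}_i^\alpha)$ and for $\nabla^2({\bf u}_i^\alpha - {\bf v}_i^\alpha) + \nabla(p_i^\alpha - \overline{p}_i^\alpha)$, with the factors $\delta(x')^{-1/2}$ and $\delta(x')^{-1}$ reflecting the two sizes of the defect. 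Combining this with direct differentiation of the explicit formulas for ${\bf v}_i^\alpha$ and $\overline{p}_i^\alpha$ then gives the pointwise bounds on $|\nabla{\bf u}_i^\alpha|$, $|\nabla^2{\bf u}_i^\alpha|$, and $|\nabla p_i^\alpha|$.

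The principal obstacle is the sharpened pointwise pressure bound $|p_i^\alpha(x) - p_i^\alpha(z', 0)| \leq C/\varepsilon$ for $\alpha = 5, 6$ (respectively $C/\sqrt{\varepsilon}$ for $\alpha = 4$): naively integrating $|\nabla p_i^\alpha| \leq C/\delta(x')^2$ from any point on $\{|z'| = R\}\times\{0\}$ to $x \in \Omega_R$ would only give the weaker order $\varepsilon^{-3/2}$ for $\alpha = 5, 6$, matching exactly the previous bound in \cite{LX2}. The improvement requires exploiting the explicit parity of the leading auxiliary pressure---$\overline{p}_1^5$ is odd in $x_1$, $\overline{p}_1^6$ is odd in $x_2$, and $\overline{p}_1^4 \equiv 0$---together with a careful choice of base point $z'$ on the coordinate axis where $\overline{p}_i^\alpha(z', 0) = 0$, and a refined path of integration on which the most singular contributions of $\nabla\overline{p}_i^\alpha$ combine with the corresponding terms in $\nabla(p_i^\alpha - \overline{p}_i^\alpha)$ to cancel. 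This is the same mechanism that underlies \eqref{improveof_p} of Proposition \ref{propu113D1}, whose detailed execution is deferred to Subsection \ref{sec4.4} and to which the present cases $\alpha = 4, 5, 6$ reduce in an entirely analogous way; organizing the cancellations uniformly across all $x \in \Omega_R$ and across the three angular components simultaneously is the main technical subtlety of this step.
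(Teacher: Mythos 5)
Your outline of the first two inequalities (verify that $({\bf v}_i^\alpha,\overline{p}_i^\alpha)$ carries the boundary data and is divergence-free, compute the defect $\mu\Delta{\bf v}_i^\alpha-\nabla\overline{p}_i^\alpha$, control the remainder $({\bf w},q)$ by the energy bound plus the rescaled local Stokes regularity of Section \ref{sec3}, then recover pointwise bounds by adding back the explicit auxiliary pair) matches the paper's scheme, which is carried out for $\alpha=1,2,3$ and applies verbatim here.

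The gap is in your treatment of the improved pointwise pressure bound $|p_i^\alpha(x)-p_i^\alpha(z',0)|$. You propose to beat the naive $\varepsilon^{-3/2}$ bound by choosing a ``refined path of integration on which the most singular contributions of $\nabla\overline{p}_i^\alpha$ combine with the corresponding terms in $\nabla(p_i^\alpha-\overline{p}_i^\alpha)$ to cancel.'' This cannot work as stated: $\nabla\overline{p}_i^\alpha+\nabla q_i^\alpha=\nabla p_i^\alpha$, and the proposition itself asserts $|\nabla p_i^\alpha|\sim\delta(x')^{-2}$ in the neck, so there is no cancellation between the two pieces---along \emph{any} path that connects $\{|x'|=R\}$ to the neck (and such a path must pass through the neck, since $p_i^\alpha$ is not defined outside $\Omega$), the line integral of $|\nabla p_i^\alpha|$ accumulates $\sim\varepsilon^{-3/2}$. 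No choice of path and no parity of the integrand rescues a pointwise path-integral estimate here. What you are missing is the key structural input the paper actually uses in Subsection \ref{sec4.4}: because $({\bf u}_i^\alpha,p_i^\alpha)$ solves the \emph{Stokes} system, $p_i^\alpha$ is harmonic ($\Delta p_i^\alpha=0$), and the proof first invokes the mean value property to write $p_i^\alpha(x',0)=\fint_{B_{\delta(x')/4}(x',0)}p_i^\alpha$, \emph{then} splits the ball average into $\fint_B\overline{p}_i^\alpha$ (controlled by the parity of $\overline{p}_i^\alpha$ with respect to $x_3$, over the $x_3$-symmetric ball) and $\fint_B q_i^\alpha$ (controlled by the already-established bound on $\nabla q_i^\alpha$). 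The parity of $\overline{p}_i^\alpha$ enters only after the mean-value step converts a pointwise value into a ball average over a symmetric set; without that step, observing that $\overline{p}_1^5$ is odd in $x_1$ is of no direct help, since the neck cylinder $\Omega_{\delta(x')/2}(x)$ is not symmetric in $x_1$ for general $x'$. You should also be careful that the replacement argument does not go through in the same way for the nonlinear piece $({\bf u}_1^3,p_1^3)$, precisely because $p_1^3$ fails to be harmonic there; the paper handles that case separately at the end of Subsection \ref{sec4.4}, which is further evidence that harmonicity is the operative mechanism rather than a clever integration path.
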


We directly use the following proposition from \cite{LX2}. For $\alpha\neq 3$, 
\begin{prop}\label{prop1.7}(\cite{LX2})
	Let ${\bf u}_{i}^{\alpha},{\bf u}_0\in{C}^{2}(\Omega;\mathbb R^3),~p_{i}^{\alpha},p_0\in{C}^{1}(\Omega)$ be the solution to \eqref{equ_v1} and \eqref{equ_u0}. If $\alpha\neq 3$, then we have
	\begin{equation*}
		\|\nabla^{k_1}{\bf u}_0\|_{L^{\infty}(\Omega)}+\|\nabla^{k_1}({\bf u}_{1}^{\alpha}+{\bf u}_{2}^{\alpha})\|_{L^{\infty}(\Omega)}\leq C,\quad k_1=1,2;
	\end{equation*}
	and
	\begin{equation*}
		\|\nabla^{k_2} p_0\|_{L^{\infty}(\Omega)}+\|\nabla^{k_2}(p_{1}^{\alpha}+p_{2}^{\alpha})\|_{L^{\infty}(\Omega)}\leq C,\quad k_2=0,1.
	\end{equation*}
\end{prop}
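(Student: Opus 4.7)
The plan is to reduce both bounds to a Stokes boundary-value problem whose data on the two particles does not force a squeeze flow, and then use a lubrication-type rescaling to obtain uniform $W^{2,\infty}$ bounds in the narrow neck.

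First I would reduce to homogeneous data on $\partial D_1\cup\partial D_2$. For ${\bf u}_0$, I would use the Bogovskii operator to construct a divergence-free $C^{2,\alpha}$ extension $\Phi$ of $\boldsymbol\varphi$ supported in $\Omega\setminus\Omega_{R/2}$, and set $w:={\bf u}_0-\Phi$. For ${\bf u}_1^\alpha+{\bf u}_2^\alpha$ with $\alpha\neq 3$, I would exploit that the rigid motion $\boldsymbol\psi_\alpha$ is affine on $\mathbb{R}^d$ with $\nabla\cdot\boldsymbol\psi_\alpha=0$ and $\Delta\boldsymbol\psi_\alpha=0$, and that the boundary data of ${\bf u}_1^\alpha+{\bf u}_2^\alpha$ on $\partial D_1\cup\partial D_2$ is exactly $\boldsymbol\psi_\alpha$; thus $w:={\bf u}_1^\alpha+{\bf u}_2^\alpha-\boldsymbol\psi_\alpha$ vanishes on both particles. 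In either case, $w$ solves a Stokes system in $\Omega$ with zero data on $\partial D_1\cup\partial D_2$, smooth data on $\partial D$, and a smooth source supported outside $\Omega_R$. The standard energy estimate, obtained by testing with $w$, yields $\|\nabla w\|_{L^2(\Omega)}\leq C$ uniformly in $\varepsilon$, and boundary Schauder theory for the Stokes system, applied near $\partial D$ and near the parts of $\partial D_i$ lying outside the narrow neck, gives $\|w\|_{C^{2,\alpha}(\Omega\setminus\Omega_{R/2})}\leq C$ with $C$ independent of $\varepsilon$ (the local geometry there being uniformly $C^3$). This supplies uniform "side" data $\|w\|_{L^\infty(\partial\Omega_{R/2}\cap\Omega)}\leq C$ for the problem in the neck.

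The crucial step is the lubrication-type estimate $|w(x)|\leq C\delta(x')^2$ throughout $\Omega_R$, which is the quantitative manifestation of the absence of a squeeze component. Because $w$ vanishes on both $\partial D_1$ and $\partial D_2$, integrating $\nabla\cdot w=0$ vertically across the gap and using the zero vertical boundary velocity forces the net vertical mass flux through any horizontal slice of the neck to be $O(\delta^3)$; hence the flow reduces at leading order to a Poiseuille profile driven by a bounded tangential pressure gradient, of amplitude $O(\delta^2)$. I would make this rigorous by a rescaling argument: at $x_0\in\Omega_R$, set $y=(x-(x_0',0))/\delta(x_0')$ and $\hat w(y):=\delta(x_0')^{-2}w(x)$ with an accompanying pressure rescaling. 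Then $\hat w$ satisfies a Stokes system on an $O(1)$-sized domain with $\varepsilon$-uniform $L^\infty$ bound, and standard Stokes Schauder theory gives $\|\hat w\|_{C^{2,\alpha}}\leq C$. Undoing the rescaling yields $|\nabla^k w|(x_0)\leq C\,\delta(x_0')^{2-k}\leq C$ for $k=1,2$.

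The pressure bound is then immediate from $\nabla q=\mu\Delta w$: $\|\nabla q\|_{L^\infty(\Omega)}\leq C$ gives $\|q-c\|_{L^\infty(\Omega)}\leq C$ for some constant $c$ since $\Omega$ has bounded diameter. The main technical obstacle is the $L^\infty$ bound $|w|\leq C\delta^2$; it is exactly this bound that distinguishes the cases treated here from the singular case $(i,\alpha)=(1,3)$, and it requires careful use of the divergence-free condition to rule out non-decaying Poiseuille-type modes with $O(1)$ amplitude that would otherwise blow up the pressure along the neck.
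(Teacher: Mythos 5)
The paper offers no proof of this proposition: it is quoted verbatim from \cite{LX2}, so there is no argument in the present text to compare against, and the review must be against what the standard machinery of \cite{LX2,LX,BLL,BLL2} actually does.

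Your reduction step is sound: since $\boldsymbol\psi_\alpha$ is affine, divergence-free, and harmonic, $w:={\bf u}_1^\alpha+{\bf u}_2^\alpha-\boldsymbol\psi_\alpha$ vanishes on $\partial D_1\cup\partial D_2$ and solves a homogeneous Stokes system, and the same device (a divergence-free extension of $\boldsymbol\varphi$) works for ${\bf u}_0$. The global energy bound, the Schauder estimates away from $\Omega_{R/2}$, and the final rescaling--plus--Schauder step are all fine \emph{granted} the key estimate.

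The gap is the crucial pointwise bound $|w|\le C\delta^2$ in $\Omega_R$, which you assert rather than prove. The heuristic you offer does not hold up. Flux conservation, obtained by integrating $\nabla\cdot w=0$ across the gap, says that the horizontal flux through a vertical slice of the neck is \emph{constant} along the neck; it does not by itself say that this constant is $O(\delta^3)$. Showing the flux is small is a genuinely global statement about how the neck flow matches the bulk flow as the two particles come into contact, and it is not a local consequence of incompressibility. Moreover, the claimed Poiseuille amplitude ``$O(\delta^2)$ driven by a bounded tangential pressure gradient'' presupposes exactly the pressure bound $\|\nabla(p_1^\alpha+p_2^\alpha)\|_{L^\infty(\Omega)}\le C$ that the proposition is asserting, so the argument is circular.

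What is actually needed (and what the cited paper does, and what the present paper mimics in the nonlinear case in Lemma \ref{nszyly333}) is a quantitative local energy estimate obtained from a Caccioppoli-type inequality in the narrow region and an iteration along the neck: one shows that for $z\in\Omega_R$,
\begin{equation*}
\int_{\Omega_{\delta(z')}(z)}|\nabla w|^2\,\le\,C\,\delta(z')^{\,d+2},
\end{equation*}
using that $w=0$ on \emph{both} $\partial D_1$ and $\partial D_2$ and that the source for $w$ vanishes, and then feeds this into a $W^{2,q}$/Schauder rescaling of the type in Proposition \ref{nsdpro}. In the paper's language, the reason this works for $\alpha\neq 3$ is precisely that the auxiliary functions telescope exactly, ${\bf v}_1^\alpha+{\bf v}_2^\alpha=\boldsymbol\psi_\alpha$ in $\Omega_{2R}$, so the leading singular profile cancels and the entire burden falls on the corrector; the iteration is what converts the global energy bound into the needed local $\delta$-power. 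Your proposal names the right target and reductions, but the decomposition by auxiliary functions, the Caccioppoli inequality in $\Omega_s(z)$, and the iteration in $s$ are missing, and those are what make the argument go through.
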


\subsection{Main Estimates for the Nonlinear Parts}\label{sec2.3}

For $\alpha=3$, although ${\bf u}_{1}^{3}+{\bf u}_{2}^{3}={\boldsymbol\psi}_{3}$ on $\partial D_1\cup\partial D_2$, there still being no potential difference on $\partial D_1$ and $\partial D_2$, like the assumptions in Proposition \ref{prop1.7}, the equation of ${\bf u}_1^3+{\bf u}_2^3$ is nonlinear. Nonetheless, we have the following control of ${\bf u}_1^3+{\bf u}_2^3$, including the undetermined differences $|C_1^\alpha-C_2^\alpha|$.

\begin{prop}\label{propuhe111}
	Let ${\bf u}_2^3, {\bf u}_1^3\in{C}^{2,\gamma}(\Omega;\mathbb R^2)$, $p_2^3, p_1^3\in{C}^{1,\gamma}(\Omega)$ be the solutions to \eqref{equ_v1} with $i=2,\alpha=3$, and \eqref{equ_v13}, respectively. Then
	\begin{equation*}
		\|\nabla({\bf u}_{1}^{3}+{\bf u}_{2}^{3})\|_{L^{\infty}(\Omega_{\delta(x')/2}(x))}\leq \frac{C|C_1^3-C_2^3|}{\sqrt{\delta(x')}}+C,
	\end{equation*}
	and 
	\begin{align*}
		&\|\nabla^2({\bf u}_{1}^{3}+{\bf u}_{2}^{3})(x)\|_{L^{\infty}(\Omega_{\delta(x')/2}(x'x))}+\|\nabla(p_1^3+p_2^3)\|_{L^{\infty}(\Omega_{\delta(x')/2}(x))}\\
	&\qquad\qquad\leq\,\frac{C\sum_{\alpha\neq4}|C_1^\alpha-C_2^\alpha|}{\delta(x')} +\frac{C|C_1^3-C_2^3||x'|}{\delta(x')^2}+\frac{C|C_1^4-C_2^4||x'|}{\delta(x')}+C.
	\end{align*}
\end{prop}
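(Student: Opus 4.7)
My plan is to reduce the problem for $({\bf u}_1^3+{\bf u}_2^3,\,p_1^3+p_2^3)$ to a standard Stokes system with zero Dirichlet data on $\partial\Omega$ by subtracting an explicit divergence-free extension of the common boundary value $\boldsymbol\psi_3$, and then estimate the residual via the rescaling/iteration argument of Section \ref{sec3}, combined with a sum–difference expansion of the nonlinear forcing that brings out the coefficients $|C_1^\alpha-C_2^\alpha|$.

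First I would construct ${\bf V}\in C^3(\overline{\Omega};\mathbb R^3)$ with $\nabla\cdot {\bf V}=0$, ${\bf V}=\boldsymbol\psi_3$ on $\partial D_1\cup\partial D_2$, ${\bf V}=0$ on $\partial D$, and $\|{\bf V}\|_{C^3(\Omega)}\le C$ independent of $\varepsilon$. Because $\boldsymbol\psi_3=e_3$ is a single constant common to both boundaries, a smooth cutoff of $e_3$ together with a Bogovskii correction supported in a fixed collar away from the narrow neck does the job with $\varepsilon$-independent constants. Setting ${\bf W}:={\bf u}_1^3+{\bf u}_2^3-{\bf V}$ and $P:=p_1^3+p_2^3$, the residual ${\bf W}$ is divergence-free, vanishes on $\partial\Omega$, and satisfies
\begin{equation*}
  \mu\Delta {\bf W}-\nabla P={\bf u}\cdot\nabla {\bf u}-\mu\Delta {\bf V},
\end{equation*}
so that the neck singularity can enter ${\bf W}$ only through the forcing ${\bf u}\cdot\nabla {\bf u}$.

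To bound this forcing with the correct dependence on the differences, I would regroup the expansion \eqref{udecom} of ${\bf u}$ as
\begin{equation*}
  C_1^\alpha{\bf u}_1^\alpha+C_2^\alpha{\bf u}_2^\alpha
  =C_2^\alpha({\bf u}_1^\alpha+{\bf u}_2^\alpha)+(C_1^\alpha-C_2^\alpha)\,{\bf u}_1^\alpha
\end{equation*}
for every $\alpha$. By Proposition \ref{prop1.7}, every sum piece with $\alpha\ne 3$ is $W^{1,\infty}$-bounded; the $\alpha=3$ sum piece is ${\bf W}+{\bf V}$ itself, to be absorbed on the left after the scaling step. The difference pieces are governed by Propositions \ref{propu113D1} and \ref{propu4563D}, which supply precisely the coefficients $1/\delta(x')$ for $\alpha\in\{1,2,5,6\}$ and the milder $|x'|/\delta(x')\le C/\sqrt{\delta(x')}$ for $\alpha=4$, together with the corresponding second-derivative and pressure-gradient bounds. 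With the forcing controlled pointwise in $\Omega_{2R}$, I would then dilate on each slab $\Omega_{\delta(x')/2}(x)$ by $\delta(x')^{-1}$ to a unit-size domain and invoke the interior $W^{1,\infty}$ and $W^{2,\infty}$ Stokes estimates on the rescaled ${\bf W}$, translating back to obtain the two claimed inequalities. The $\delta(x')^{-1/2}$ weight in the first-order bound and the extra $|x'|/\delta(x')^{2}$ weight in the second-order bound arise from these rescaling factors composed with the difference-piece bounds above, while the pressure normalisation needed for the $\nabla P$ term is exactly the one in the improved estimate \eqref{improveof_p}.

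The main obstacle is the self-referential character of the nonlinear forcing: ${\bf u}\cdot\nabla {\bf u}$ already contains $\nabla{\bf u}_1^3$, which is the object whose iterative control (Section \ref{sec4}) this proposition is meant to feed. To avoid circularity I would carry out the sum–difference regrouping \emph{before} any energy or interpolation step—otherwise the estimate inherits $|C_1^\alpha+C_2^\alpha|$ rather than $|C_1^\alpha-C_2^\alpha|$—and use a rough a priori bound for $\nabla{\bf u}_1^3$ obtained by treating the Navier–Stokes nonlinearity as data in the linear framework as the input; the sharp bound claimed in the proposition is then recovered once the outer bootstrap from Section \ref{sec4} closes. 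A secondary subtlety is ensuring that the pieces of ${\bf u}\cdot\nabla{\bf u}$ involving the $\alpha=4$ rotation, which carry only $1/\sqrt{\delta(x')}$ rather than $1/\delta(x')$, are matched against the correct rescaled Stokes norms so that the proof of the second-order bound does not artificially lose a factor of $\delta(x')^{-1/2}$.
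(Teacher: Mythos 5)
Your overall strategy matches the paper's: subtract the boundary value to put $\mathbf{u}_1^3+\mathbf{u}_2^3$ in a form that vanishes on $\partial D_1\cup\partial D_2$, rewrite the equation with $\mathbf{u}\cdot\nabla\mathbf{u}$ decomposed via $C_1^\alpha\mathbf{u}_1^\alpha+C_2^\alpha\mathbf{u}_2^\alpha=C_2^\alpha(\mathbf{u}_1^\alpha+\mathbf{u}_2^\alpha)+(C_1^\alpha-C_2^\alpha)\mathbf{u}_1^\alpha$, and then rescale. Two remarks on the differences from the paper's actual proof.

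First, the Bogovskii extension $\mathbf{V}$ is more elaborate than needed. Since $\boldsymbol\psi_3=e_3$ is a constant rigid motion, the paper simply sets $\mathbf{u}_\psi^3:=\mathbf{u}_1^3+\mathbf{u}_2^3-\boldsymbol\psi_3$, which is automatically divergence-free, has $\nabla\mathbf{u}_\psi^3=\nabla(\mathbf{u}_1^3+\mathbf{u}_2^3)$, and vanishes on $\partial D_1\cup\partial D_2$; the nonzero value $-\boldsymbol\psi_3$ on $\partial D$ is innocuous since that portion of the boundary is far from the neck and handled by standard interior regularity. Your construction is also correct, just heavier than necessary.

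Second, and more substantively: you propose to ``invoke the interior $W^{1,\infty}$ and $W^{2,\infty}$ Stokes estimates on the rescaled $\mathbf{W}$,'' and then deal with the self-referential term $(C_2^3)^2\mathbf{u}_\psi^3\cdot\nabla\mathbf{u}_\psi^3$ (plus cross terms) via a vaguely stated ``rough a priori bound $+$ outer bootstrap.'' The paper does not rely on such a two-stage bootstrap here; it applies Proposition \ref{nsdpro} directly, whose proof already incorporates the rescaled Stokes $W^{2,4}$ estimate (Theorem \ref{thmWmq}) together with the interpolation-plus-shrinking-$\varepsilon$ absorption of the quadratic and transport terms, under exactly the structural hypotheses \eqref{estv113D3} that $\mathbf{u}_2^{\#3}$ satisfies (verified in \eqref{dr1}--\eqref{dr2}). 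If you replace ``interior Stokes estimates'' by ``Proposition \ref{nsdpro} for the rewritten system \eqref{equ_v12Dd11?},'' the absorption is automatic and no separate bootstrap closure is needed. Relatedly, your worry about circularity (that $\mathbf{u}\cdot\nabla\mathbf{u}$ already contains $\nabla\mathbf{u}_1^3$) is correctly flagged but moot: Proposition \ref{propu133D} for $(\mathbf{u}_1^3,p_1^3)$ is proved in Subsection 4.1 \emph{before} this proposition, so $|\mathbf{w}_1^3|\le C\sqrt{\delta(x')}$ and $|\nabla\mathbf{u}_1^3|\le C(1/\delta+|x'|/\delta^2)$ are available inputs, and together with the mean-value estimates \eqref{wsc1}--\eqref{wsc2} for $\mathbf{w}_1^\alpha$ they yield the pointwise bound on $\mathbf{u}_2^{\#3}\cdot\nabla\mathbf{u}_2^{\#3}$ with exactly the $|C_1^\alpha-C_2^\alpha|$ prefactors claimed. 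Your observation about keeping the $\alpha=4$ contribution (which carries only $|x'|/\delta$ rather than $1/\delta$) separate to avoid an artificial $\delta^{-1/2}$ loss is precisely what the paper does.
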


The following Proposition holds for $i=2$ (see \cite[Proposition 2.2]{LX2}), since $({{\bf u}}_2^3, p_2^3)$ satisfies the Stokes equations as well. But, the case $i=1$ is the most important thing we need to prove. 
\begin{prop}\label{propu133D}
	Let ${\bf u}_{i}^{3}\in{C}^{2}(\Omega;\mathbb R^3),~p_{i}^{3}\in{C}^{1}(\Omega)$ be the solution to \eqref{equ_v1}, then we have
	\begin{equation*}
		\|\nabla({\bf u}_{i}^{3}-{\bf v}_{i}^{3})\|_{L^{\infty}(\Omega_{\delta(x')/2}(x))}\leq\frac{C}{\sqrt{\delta(x')}},\quad x\in\Omega_{R},
	\end{equation*}
	and
	\begin{equation*}
		\|\nabla^2({\bf u}_{i}^{3}-{\bf v}_{i}^{3})\|_{L^{\infty}(\Omega_{\delta(x')/2}(x))}+\|\nabla (p_{i}^{3}-\overline{p}_{i}^{3})\|_{L^{\infty}(\Omega_{\delta(x')/2}(x))}\leq \frac{C}{\delta(x')^{3/2}},\quad x\in\Omega_{R},
	\end{equation*}
where ${\bf v}_{2}^{3}$ and $\overline{p}_{2}^{3}$ are constructed similarly as ${\bf v}_{1}^{3}$ and $\overline{p}_{1}^{3}$ below, only replacing $k(x)$ by $\widetilde{k}(x)=-\frac{x_3}{\delta(x')}$. Consequently, for $x\in\Omega_R$,
	\begin{align*}
		\frac{1}{C\delta(x')}\leq|\nabla {\bf u}_{i}^{3}(x)|\leq C\Big(\frac{1}{\delta(x')}+\frac{|x'|}{\delta(x')^2}\Big),~|\nabla^2 {\bf u}_{i}^{3}(x)|\leq C\left(\frac{1}{\delta(x')^{2}}+\frac{|x'|}{\delta(x')^{3}}\right),
	\end{align*}
	and for some $|z'|=R$,
	\begin{align*}
		|p_{i}^{3}(x)-p_i^{3}(z',0)|\leq\frac{C}{\varepsilon^2},~|\nabla p_{i}^{3}(x)|\leq C\left(\frac{1}{\delta(x')^{2}}+\frac{|x'|}{\delta(x')^{3}}\right).
	\end{align*}
\end{prop}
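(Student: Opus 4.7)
The proposition collects estimates for both $i=1,2$, but as noted, the case $i=2$ reduces to \cite[Proposition 2.2]{LX2} since $({\bf u}_2^3, p_2^3)$ solves the linear Stokes system. My plan focuses on the nonlinear case $i=1$, for which $({\bf u}_1^3, p_1^3)$ satisfies the full nonlinear system \eqref{equ_u13}. First I would construct the auxiliary pair $({\bf v}_1^3, \overline{p}_1^3)$ in the spirit of \eqref{v1alpha}--\eqref{defp113D}, but adapted to the squeezing direction $\boldsymbol\psi_3 = (0,0,1)^T$: take ${\bf v}_1^3 = \boldsymbol\psi_3(k(x)+\tfrac12)$ corrected by horizontal components so as to enforce $\nabla\cdot {\bf v}_1^3 = 0$, with the matching pressure $\overline{p}_1^3$ chosen to kill the dominant part of $\mu\Delta{\bf v}_1^3$. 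By construction, ${\bf v}_1^3 = {\boldsymbol\psi}_3$ on $\partial D_1$ and ${\bf v}_1^3 = 0$ on $\partial D_2\cup \partial D$, and the remainder ${\bf w} := {\bf u}_1^3 - {\bf v}_1^3$, $q := p_1^3 - \overline{p}_1^3$ satisfies a Stokes system with zero boundary data and forcing
\begin{equation*}
\mu\Delta {\bf w} - \nabla q = {\bf u}\cdot\nabla {\bf u} - (\mu\Delta {\bf v}_1^3 - \nabla \overline{p}_1^3), \qquad \nabla\cdot {\bf w}=0.
\end{equation*}
The linear error $\mu\Delta {\bf v}_1^3 - \nabla\overline{p}_1^3$ is controlled in $\Omega_{2R}$ exactly as in \cite{LX2}.

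The new ingredient is bounding the nonlinear forcing ${\bf u}\cdot\nabla {\bf u}$. For this I would decompose ${\bf u} = C_1^3 {\bf u}_1^3 + {\bf u}^{\#3}$ as in \eqref{udecom3}, where ${\bf u}^{\#3}$ solves linear Stokes equations. By Proposition \ref{lemma11}, $|C_i^\alpha| \le C$; combining Propositions \ref{propu113D1}, \ref{propu4563D} and \ref{prop1.7} gives $\|{\bf u}^{\#3}\|_{L^\infty(\Omega)} \le C$ together with $|\nabla {\bf u}^{\#3}| \le C/\delta(x')$ on $\Omega_R$. Moreover the maximum principle applied via a Bogovskii-type argument gives $\|{\bf u}\|_{L^\infty(\Omega)} \le C$, since ${\bf u}$ is a rigid motion on each inclusion with bounded coefficients and equals the bounded $\boldsymbol\varphi$ on $\partial D$. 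The critical observation is that ${\bf u}\cdot\nabla{\bf u}$ scales like $\|{\bf u}\|_\infty |\nabla{\bf u}|$, not like $|\nabla{\bf u}|^2$, so it is essentially subcritical with respect to the Stokes framework. I would then run a bootstrap: starting from a crude a priori bound on $|\nabla{\bf u}_1^3|$, the resulting crude bound on ${\bf u}\cdot\nabla{\bf u}$ can be absorbed into the Stokes estimates from Section \ref{sec3} after rescaling in $\Omega_{\delta(x')/2}(x)$, yielding improved bounds on $\nabla {\bf w}$ and $\nabla^2 {\bf w}$, $\nabla q$, which are then fed back to tighten the nonlinear forcing until convergence to the claimed orders $C/\sqrt{\delta(x')}$ and $C/\delta(x')^{3/2}$.

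Once the $W^{1,\infty}$ and $W^{2,\infty}$ bounds for $({\bf w}, q)$ are established on $\Omega_{\delta(x')/2}(x)$, the asserted estimates for $|\nabla {\bf u}_1^3|$, $|\nabla^2 {\bf u}_1^3|$ and $|\nabla p_1^3|$ follow from the triangle inequality together with explicit computation of $\nabla^k {\bf v}_1^3$ and $\nabla \overline{p}_1^3$. The pointwise lower bound $|\nabla {\bf u}_i^3(x)| \ge 1/(C\delta(x'))$ comes from the fact that $|\partial_{x_3} {\bf v}_1^3| \sim 1/\delta(x')$ by direct computation while the correction has strictly smaller gradient, and the pressure estimate $|p_i^3(x) - p_i^3(z',0)| \le C/\varepsilon^2$ for $|z'|=R$ follows by integrating $\nabla p_i^3$ along a path from $x$ to $(z',0)$ through the fatter region $\Omega\setminus \Omega_R$, noting that the integrable singular order $1/\delta(x')^2$ integrates to $O(1/\varepsilon^2)$ at $x'=0'$. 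The main obstacle I expect is closing the bootstrap uniformly in $\varepsilon$: the nonlinear forcing must be shown to yield a contribution in the rescaled Stokes estimates that is strictly dominated by the linear construction error, so that the iteration converges without blow-up in the singular exponent. The subcriticality noted above, combined with the sharp control of ${\bf u}^{\#3}$ inherited from the linear estimates, is what makes this feasible.
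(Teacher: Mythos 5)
There are genuine gaps. The most serious is the claimed bound $\|{\bf u}\|_{L^\infty(\Omega)}\le C$, justified by ``the maximum principle applied via a Bogovskii-type argument.'' There is no maximum principle for the Stokes or Navier--Stokes system, and the bound cannot be read off from the boundary data: in the narrow region the local elliptic constants degenerate, and the only a priori pointwise information comes from the mean-value argument $|{\bf u}(x)|\le C+\delta(x')\|\nabla{\bf u}\|_{L^\infty}$, which is circular. The companion claims $\|{\bf u}^{\#3}\|_{L^\infty}\le C$ and $|\nabla{\bf u}^{\#3}|\le C/\delta(x')$ are also wrong: since ${\bf u}^{\#3}$ contains $C_2^3{\bf u}_2^3$ and $|{\bf v}_2^3|\sim |x'|/\delta(x')\sim 1/\sqrt{\delta(x')}$ near the origin, the correct orders are $|{\bf u}^{\#3}|\lesssim 1/\sqrt{\delta(x')}$ and $|\nabla{\bf u}^{\#3}|\lesssim 1/\delta(x')^{3/2}$, which is precisely what the paper derives in \eqref{js1}--\eqref{js2}. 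Consequently the ``subcriticality'' argument collapses: the paper shows $|{\bf u}\cdot\nabla{\bf u}|\sim |x'|/\delta^2+1/\delta$ in \eqref{?xjfz1}, the same order as the linear construction error $\mu\Delta{\bf v}_1^3-\nabla\overline{p}_1^3$ in \eqref{estf13}, so the nonlinear term cannot be treated as a strictly dominated perturbation.

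The second gap is structural: your outline skips the step that actually fixes the singular exponent. The paper first proves the global energy bound $\int_\Omega|\nabla{\bf w}_1^3|^2\le C$ by a trilinear-form argument (Lemma \ref{lemmaenergy}) that does \emph{not} use boundedness of ${\bf u}$; instead it exploits the exact cancellation $\int_\Omega {\bf u}\cdot\nabla{\bf w}\cdot{\bf w}=0$ and controls the remainder $({\bf w}-{\bf u})\cdot\nabla{\bf w}\cdot{\bf w}$ with Hardy's inequality under the structural hypothesis $|{\bf w}-{\bf u}|\lesssim 1/\sqrt{\delta}$. It then iterates a Caccioppoli-type inequality over $O(1/\sqrt{\delta(z')})$ nested scales (Lemma \ref{nszyly333}) to get the local energy estimate $\int_{\Omega_{\delta(z')}(z)}|\nabla{\bf w}|^2\le C\delta(z')^2$, and only then does the rescaling/interpolation argument of Proposition \ref{nsdpro} convert this $L^2$ information into pointwise $W^{1,\infty}$ and $W^{2,\infty}$ bounds. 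Your proposed ``bootstrap from a crude a priori bound, feed back until convergence'' neither names a valid starting bound nor explains the mechanism that lets the exponent settle at $1/\sqrt{\delta}$ uniformly in $\varepsilon$; the paper's iteration is over spatial scales inside the Caccioppoli inequality, not over singular exponents. The decomposition into ${\bf v}_1^3$, $\overline{p}_1^3$ and the final triangle-inequality step are in the right spirit, but without the energy/iteration machinery the proof does not close.
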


Next, we attempt to make advantage of the aforementioned estimates for $({\bf u}_{i}^{\alpha},p_{i}^{\alpha})_{(i,\alpha)\neq (1,3)}$ and $({\bf u}_{0},p_{0})$ to derive the estimates of $({\bf u}_1^3,p_{1}^{3})$, which satisfies the nonlinear equations \eqref{equ_u13}. We still choose, in $\Omega_{2R}$,
\begin{equation}\label{v133D}
	{\bf v}_{1}^{3}=\boldsymbol\psi_{3}\Big(k(x)+\frac{1}{2}\Big)+\frac{3}{\delta(x')}\left(\sum_{\alpha=1}^{2}x_{\alpha}\boldsymbol\psi_{\alpha}+2x_{3}\boldsymbol\psi_{3}\Big(\frac{|x'|^{2}}{\delta(x')}-\frac{1}{3}\Big)\right)\Big(k(x)^2-\frac{1}{4}\Big),
\end{equation}
and 
\begin{equation}\label{p133D}
	\overline{p}_1^3=-\frac{3}{2}\frac{\mu}{\delta(x')^2}+\frac{18\mu}{\delta(x')}\left(\frac{|x'|^{2}}{\delta(x')}-\frac{1}{3}\right)k(x)^{2},\quad\mbox{in}~\Omega_{2R},
\end{equation}
to prove Proposition \ref{propu133D} holds for $({\bf u}_1^3,p_{1}^{3})$ as well. This is one of the main contributions of this paper. 
To establish the estimates of $({\bf u}_1^3,p_{1}^{3})$, we set
\begin{align*}
	{\bf w}_{3}:={\bf u}_1^{3}-{\bf v}_{1}^{3},\quad\mbox{and}~~\quad~ q_{3}:=p_1^{3}-\overline{p}_{1}^{3},
\end{align*}
where ${\bf v}_{1}^{3}$ and $\overline{p}_{1}^{3}$ are defined in \eqref{v133D} and \eqref{p133D}. It follows from \eqref{equ_u13} that $({\bf w}_{3},q_3)$ verifies the following boundary value problem
\begin{equation}\label{w35}
	\left\{\begin{split}
		\mu\,\Delta {\bf w}_3&=\nabla q_3+(C_1^3)^2{\bf w}_3\cdot\nabla{\bf w}_3\\
		&\quad\quad\quad\quad+ C_1^3 (\widetilde{\bf v}\cdot\nabla {\bf w}_3 +{\bf w}_3\cdot\nabla\widetilde{\bf v}) -{\bf f},&\mathrm{in}&\;\Omega,\\
		\nabla\cdot{\bf{w}}_{3}&=0,&\mathrm{in}&\;\Omega_{2R},\\
		|\nabla\cdot{\bf{w}}_{3}|&\leq\,C,&\mathrm{in}&\;\Omega\setminus\Omega_{R},\\
		{\bf{w}}_{3}&=0,&\mathrm{on}&\;\partial \Omega,
\end{split}\right.
\end{equation}
where
\begin{equation}\label{f333}
{\bf f}:=\mu\,\Delta {\bf v}_1^3-C_1^3\,\widetilde{\bf v}\cdot \nabla {\bf v}_1^3-\nabla \overline{p}_1^3-\widetilde{\bf v}\cdot \nabla {\bf u}^{\#3},\quad\mbox{and}\quad~~~~
\widetilde{\bf v}:=C_1^3{\bf v}_1^3+ {\bf u}^{\#3},
\end{equation}
are known terms, based on the estimates presented in Propositions \ref{lemma11}-\ref{prop1.7} and Proposition \ref{propu133D} for $i=2$. The proof of Proposition \ref{propu133D} for $i=1$ and some preliminary results for the Navier-Stokes equations are given in Section \ref{sec3} and \ref{sec4}.

Based on the above pointwise estimates of $\nabla{\bf u}_{i}^{\alpha}$ and $p_{i}^{\alpha}$, by solving a linear system of $C_{i}^{\alpha}$ derived from  \eqref{sto-2}, we have

\begin{prop}\label{lemCialpha3D}
	Let $C_{i}^{\alpha}$ be defined in \eqref{udecom}. Then
	\begin{equation*}
		|C_1^{\alpha}-C_2^{\alpha}|\leq \frac{C}{|\log\varepsilon|}, ~\alpha=1,2,5,6,\quad |C_1^3-C_2^3|\leq C\varepsilon,~\mbox{and}~|C_1^4-C_2^4|\leq C.
	\end{equation*}
\end{prop}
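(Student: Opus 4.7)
The plan is to substitute the decomposition \eqref{udecom} into the twelve equilibrium conditions \eqref{sto-2}, turn them into a (nonlinear) algebraic system for $\{C_j^\beta\}$, and extract the sizes of the differences $C_1^\alpha-C_2^\alpha$ from the singular scaling of the associated coefficient matrix. Setting
\begin{equation*}
a_{ij}^{\alpha\beta}:=-\int_{\partial D_i}\boldsymbol\psi_\alpha\cdot\sigma[{\bf u}_j^\beta,p_j^\beta]\,\nu,\qquad b_i^\alpha:=\int_{\partial D_i}\boldsymbol\psi_\alpha\cdot\sigma[{\bf u}_0,p_0]\,\nu,
\end{equation*}
the conditions \eqref{sto-2} read $\sum_{j,\beta}a_{ij}^{\alpha\beta}C_j^\beta=b_i^\alpha$. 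Using ${\bf u}_i^\alpha$ as a divergence-free test function (equal to $\boldsymbol\psi_\alpha$ on $\partial D_i$ and vanishing on the rest of $\partial\Omega$), integration by parts together with $\nabla\cdot\sigma[{\bf u}_j^\beta,p_j^\beta]=0$ for $(j,\beta)\neq(1,3)$ gives the symmetric bilinear form $a_{ij}^{\alpha\beta}=\int_\Omega 2\mu\,e({\bf u}_i^\alpha):e({\bf u}_j^\beta)$, while for $(j,\beta)=(1,3)$ the Navier--Stokes identity $\nabla\cdot\sigma[{\bf u}_1^3,p_1^3]={\bf u}\cdot\nabla{\bf u}$ supplements that expression by the nonlinear contribution $\int_\Omega({\bf u}\cdot\nabla{\bf u})\cdot{\bf u}_i^\alpha$.

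Next I would compute the orders of the key ``difference'' combinations by using the explicit auxiliary profiles $({\bf v}_i^\alpha,\overline p_i^\alpha)$ together with Propositions \ref{propu113D1}, \ref{propu4563D} and \ref{propu133D}. For $\alpha\neq 3$ the combination $a_{11}^{\alpha\alpha}-a_{12}^{\alpha\alpha}-a_{21}^{\alpha\alpha}+a_{22}^{\alpha\alpha}=\int_\Omega 2\mu|e({\bf u}_1^\alpha-{\bf u}_2^\alpha)|^2$ has order $|\log\varepsilon|$ for $\alpha=1,2,5,6$ (from the three-dimensional narrow-region integral $\int_0^R r\,dr/(\varepsilon+r^2)\sim|\log\varepsilon|$) and order $O(1)$ for $\alpha=4$ (since $|\nabla{\bf u}_1^4|\sim|x'|/\delta(x')$ makes its energy integrable). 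For $\alpha=3$ the same combination equals $\int_\Omega 2\mu|e({\bf u}_1^3-{\bf u}_2^3)|^2$ plus a nonlinear piece, and the pressure singularity $\overline p_1^3\sim\delta(x')^{-2}$ in \eqref{p133D} yields order $\varepsilon^{-1}$. In contrast, the ``sum'' combination $a_{11}^{\alpha\beta}+a_{12}^{\alpha\beta}+a_{21}^{\alpha\beta}+a_{22}^{\alpha\beta}=\int_\Omega 2\mu\,e({\bf u}_1^\alpha+{\bf u}_2^\alpha):e({\bf u}_1^\beta+{\bf u}_2^\beta)$ is uniformly bounded, since ${\bf u}_1^\gamma+{\bf u}_2^\gamma$ carries the same boundary value $\boldsymbol\psi_\gamma$ on both inclusions and has bounded energy by Proposition \ref{propuhe111} (and by Proposition \ref{prop1.7} for $\gamma\neq 3$).

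With these orders in hand, introducing $\widetilde C^\alpha=C_1^\alpha+C_2^\alpha$ and $\widehat C^\alpha=C_1^\alpha-C_2^\alpha$ and adding/subtracting the equations at $i=1$ and $i=2$ decouples the system into a sum-block with uniformly bounded coefficients (for which Proposition \ref{lemma11} already furnishes $|\widetilde C^\alpha|\leq C$) and a difference-block whose principal matrix has the singular diagonal dominance displayed above; mixed sum/difference cross terms contribute at lower order via Cauchy--Schwarz combined with the elementary $x'$-integration symmetries $\int\psi_\alpha\cdot\psi_\beta\,dx'=0$ for the relevant index pairs. Inverting the principal difference-matrix, with blocks of sizes $|\log\varepsilon|$, $\varepsilon^{-1}$ and $O(1)$ on $\{1,2,5,6\}$, $\{3\}$ and $\{4\}$ respectively, against a bounded right-hand side (from the standard trace estimates on $({\bf u}_0,p_0)$ together with the sum-block contribution) yields precisely the announced bounds $|\widehat C^\alpha|\leq C/|\log\varepsilon|$ for $\alpha=1,2,5,6$, $|\widehat C^3|\leq C\varepsilon$, and $|\widehat C^4|\leq C$.

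The main obstacle is the Navier--Stokes correction $\int_\Omega({\bf u}\cdot\nabla{\bf u})\cdot{\bf u}_i^\alpha$ appearing in the $(1,3)$ column, in which ${\bf u}$ itself contains the unknowns $C_j^\beta$ one wishes to estimate. The plan is to close this circularity by a bootstrap: starting from the a priori bound $|C_j^\beta|\leq C$ of Proposition \ref{lemma11} together with the pointwise gradient estimates of Propositions \ref{propu113D1}--\ref{propu133D}, one shows that $\|{\bf u}\cdot\nabla{\bf u}\|_{L^1(\Omega)}$ is at most of order $|\log\varepsilon|^{-1}$, which is strictly smaller than each singular entry of the principal difference-matrix identified above, so that the nonlinear term can be absorbed as a small perturbation without altering the leading-order inversion.
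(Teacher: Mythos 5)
Your overall framework matches the paper's: derive a $12\times12$ (or after reduction, $6\times6$) linear system from \eqref{sto-2}, recognize that for $(j,\beta)\neq(1,3)$ integration by parts gives the symmetric energy form while $(1,3)$ contributes an extra term $\int_\Omega({\bf u}\cdot\nabla{\bf u})\cdot{\bf u}_j^\beta$, compute the orders of the diagonal entries ($|\log\varepsilon|$, $\varepsilon^{-1}$, $O(1)$ on $\{1,2,5,6\}$, $\{3\}$, $\{4\}$) and the boundedness of the sum combinations $a_{11}^{\alpha\beta}+a_{21}^{\alpha\beta}$, and invert via Cramer's rule against a bounded right-hand side, using Proposition \ref{lemma11} to control $C_2$. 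That is essentially the paper's Subsection \ref{subsec43}.

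The bootstrap step, however, has a genuine gap. You claim that the a priori bound $|C_j^\beta|\leq C$ together with Propositions \ref{propu113D1}--\ref{propu133D} yields $\|{\bf u}\cdot\nabla{\bf u}\|_{L^1(\Omega)}\lesssim|\log\varepsilon|^{-1}$. This is not correct: with only $|C_j^\beta|\leq C$ one has $|{\bf u}\cdot\nabla{\bf u}|\lesssim |x'|/\delta^2+1/\delta$ (cf.\ \eqref{?xjfz1}), and integrating over $\Omega_R$ with $dx\sim r\,\delta\,dr$ gives an $L^1$ norm of order $O(1)$, not $O(1/|\log\varepsilon|)$; the improved $O(1/|\log\varepsilon|)$ bound requires the conclusion of Theorem \ref{mainthm}, which in turn rests on the very proposition you are proving. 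More importantly, $\|{\bf u}\cdot\nabla{\bf u}\|_{L^1}$ is not the quantity that perturbs the coefficient matrix: you need $|\int_\Omega({\bf u}\cdot\nabla{\bf u})\cdot{\bf u}_j^\beta|$, and ${\bf u}_j^\beta$ is not uniformly bounded (from \eqref{boundofv13}, $|{\bf v}_1^3|\lesssim |x'|/\delta+1$ can be as large as $\varepsilon^{-1/2}$). The paper therefore estimates $|\int({\bf u}\cdot\nabla{\bf u})\cdot{\bf u}_1^3|\leq C|\log\varepsilon|$ and $|\int({\bf u}\cdot\nabla{\bf u})\cdot{\bf u}_1^\beta|\leq C$ for $\beta\neq 3$ (Lemmas \ref{lema113D}--\ref{lema114563D}), and notices that the sum entry $a_{11}^{33}+a_{21}^{33}$ still retains an undetermined contribution $C|C_1^3-C_2^3||\log\varepsilon|$. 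The closure is then not a blanket smallness argument but an absorption: Cramer's rule with $\mathrm{cof}(A)_{33}/\det A\sim\varepsilon$ gives $|C_1^3-C_2^3|\leq C\varepsilon+C\varepsilon|C_1^3-C_2^3|$, which is a contraction and yields $|C_1^3-C_2^3|\leq C\varepsilon$, after which the remaining equations are solved with a bounded $f$. Your proposal needs this explicit cofactor bookkeeping and the self-improving inequality in the $\alpha=3$ slot; the assertion that the nonlinear term is ``strictly smaller than each singular entry'' is both quantitatively false and, given the ill-conditioning of $a_{11}$ (diagonals spanning from $O(1)$ to $\varepsilon^{-1}$), insufficient even if it were true.
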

The proof is given in Subsection \ref{subsec43}.

\subsection{The Completion of the Proof of Theorem \ref{mainthm}}
We are now in a position to complete the proof of Theorem \ref{mainthm}.

\begin{proof}[Proof of Theorem \ref{mainthm}]
	We  recombine \eqref{udecom} as
	\begin{equation*}
		{\bf u}=\sum_{\alpha=1}^{6}(C_{1}^{\alpha}-C_{2}^{\alpha}){\bf u}_{1}^{\alpha}
		+ {\bf u}_{b},\quad\mbox{and }~
		p=\sum_{\alpha=1}^{6}(C_{1}^{\alpha}-C_{2}^{\alpha})p_{1}^{\alpha}+p_{b},\quad\mbox{in}~\Omega,
	\end{equation*}
	where
	\begin{equation*}
		{\bf u}_{b}:=\sum_{\alpha=1}^{6}C_{2}^{\alpha}({\bf u}_{1}^{\alpha}+{\bf u}_{2}^{\alpha})+{\bf u}_{0},\quad p_{b}:=\sum_{\alpha=1}^{6}C_{2}^{\alpha}(p_{1}^{\alpha}+p_{2}^{\alpha})+p_{0}.
	\end{equation*}
	
$(i)$ By virtue of  Propositions \ref{propu113D1}--\ref{lemCialpha3D}, 
	\begin{align*}
		|\nabla{\bf u}(x)|\leq&\,\sum_{\alpha=1}^{6}\left|(C_{1}^{\alpha}-C_{2}^{\alpha}\right)\nabla{\bf u}_{1}^{\alpha}(x)|+C\nonumber\\
		\leq&\,\left|(C_{1}^{3}-C_{2}^{3}\right)\nabla{\bf u}_{1}^{3}(x)|+\sum_{\alpha=1,2,5,6}\left|(C_{1}^{\alpha}-C_{2}^{\alpha}\right)\nabla{\bf u}_{1}^{\alpha}(x)|+C|\nabla{\bf u}_{1}^{4}(x)|+C\nonumber\\
		\leq&\,C\varepsilon\Big(\frac{1}{\delta(x')}+\frac{|x'|}{\delta(x')^2}\Big)+\frac{C}{|\log\varepsilon|\delta(x')}+C\left(\frac{|x'|}{\delta(x')}+1\right)\nonumber\\
		\leq&\, \frac{C(1+|\log\varepsilon||x'|)}{|\log\varepsilon|(\varepsilon+|x'|^2)}.
	\end{align*}
For $p$, by using Proposition \ref{lemma11} and Proposition \ref{prop1.7}, we have, for $|z'|=R$,
	\begin{align*}
	|p_b(x)-p_b(z',0)|\leq C\sum_{\alpha=1}^6|\nabla (p_1^\alpha+p_2^\alpha)|+C|\nabla p_0|\leq \frac{C}{|\log\varepsilon|\varepsilon}.
\end{align*}
So that, by virtue of Propositions \ref{propu113D1}--\ref{lemCialpha3D} again,
\begin{align*}
	|p(x)-p(z',0)|
	\leq&\, C\sum_{\alpha=1}^{6}|C_{1}^{\alpha}-C_{2}^{\alpha}|\Big(\delta(x')|\nabla p_{1}^{\alpha}(x)|+|p_{1}^{\alpha}(x',0)-p_{1}^{\alpha}(z',0)|\Big)+|p_b(x)-p_b(z',0)|.\\
	\leq&\,\frac{C}{|\log\varepsilon|\varepsilon}.
\end{align*}

$(ii)$ For the second order derivatives, 
	\begin{align*}
	|\nabla^2{\bf u}(x)|
	\leq&\,\sum_{\alpha=1,2}\left|(C_{1}^{\alpha}-C_{2}^{\alpha}\right)\nabla^2{\bf u}_{1}^{\alpha}(x)|+\left|(C_{1}^{3}-C_{2}^{3}\right)\nabla^2{\bf u}_{1}^{3}(x)|+C|\nabla^2{\bf u}_{1}^{4}(x)|\nonumber\\
	&\quad\quad\quad+\sum_{\alpha=5,6}\left|(C_{1}^{\alpha}-C_{2}^{\alpha}\right)\nabla^2{\bf u}_{1}^{\alpha}(x)|+|\nabla^2{\bf u}_{b}(x)|\nonumber\\
	\leq&\,\frac{C}{|\log\varepsilon|}\Big(\frac{1}{\delta(x')}+\frac{|x'|}{\delta(x')^2}\Big)+C\varepsilon\left(\frac{1}{\delta(x')^{2}}+\frac{|x'|}{\delta(x')^{3}}\right)+\frac{C}{\delta(x')}+\frac{C}{|\log\varepsilon|\delta(x')^2}\nonumber\\
	\leq&\,\frac{C(1+|\log\varepsilon||x'|)}{|\log\varepsilon|(\varepsilon+|x'|^2)^{2}},
\end{align*}
	and 
	\begin{align*}
|\nabla p(x)|\leq&\,\sum_{\alpha=1,2}\left|(C_{1}^{\alpha}-C_{2}^{\alpha}\right)\nabla p_{1}^{\alpha}(x)|+\left|(C_{1}^{3}-C_{2}^{3}\right)\nabla p_{1}^{3}(x)|+C|\nabla p_{1}^{4}(x)|\nonumber\\
		&\quad\quad+\sum_{\alpha=5,6}\left|(C_{1}^{\alpha}-C_{2}^{\alpha}\right)\nabla p_{1}^{\alpha}(x)|+|\nabla p_{b}(x)|\nonumber\\
		\leq&\,\frac{C}{|\log\varepsilon|}\left(\frac{1}{\delta(x')}+\frac{|x'|}{\delta(x')^{2}}\right)+\left(\frac{C\epsilon}{\delta(x')^{2}}+\frac{C\varepsilon|x'|}{\delta(x')^{3}}\right)+\frac{C}{\sqrt{\delta(x')}}+\frac{C}{|\log\varepsilon|\delta(x')^2}\nonumber\\
		\leq&\, \frac{C(1+|\log\varepsilon||x'|)}{|\log\varepsilon|(\varepsilon+|x'|^2)^{2}}.
	\end{align*}
Thus, the proof of Theorem \ref{mainthm} is completed.
\end{proof}

\begin{proof}[On Remark \ref{mainthmstokes}]
For the Stokes equations, by \cite[Propositions 2.4, 2.5]{LX2}, we have
	\begin{align*}
		|C_1^{\alpha}-C_2^{\alpha}|\leq \frac{C}{|\log\varepsilon|}, ~\alpha=1,2,5,6,\quad |C_1^3-C_2^3|\leq C\varepsilon,~\mbox{and}~|C_1^4-C_2^4|\leq C,
	\end{align*}
	and
	$$|C_2^\alpha|,~\|\nabla (p_1^\beta+p_2^\beta)\|_{L^\infty(\Omega)}\leq C,~~\beta=1,2,\dots 6.$$
For $z=(z',0)\in\Omega$ with $|z'|=R$, by mean value theorem, 
	\begin{align*}
		|p_b(x)-p_b(z',0)|\leq C\sum_{\beta=1}^6\|\nabla (p_1^\beta+p_2^\beta)\|_{L^\infty(\Omega_{R})}+C\|\nabla p_0\|_{L^\infty(\Omega_{R})}\leq C.
	\end{align*}
Hence, using mean value theorem again, we derive
	\begin{align*}
		|p(x)- p(z',0)|
		\leq&\,\sum_{\alpha=1}^{6}|(C_{1}^{\alpha}- C_{2}^{\alpha})|\Big(|\delta(x')\|\nabla p_{1}^{\alpha}\|_{L^\infty(\Omega_{R})}+|p_1^\alpha(x',0)- p_1^\alpha(z',0)|\Big)+ C\\
		\leq&\,\frac{C}{|\log\varepsilon|\varepsilon}.
	\end{align*}
We finish the proof on Remark \ref{mainthmstokes}.
\end{proof}

\begin{remark}\label{rem28}
Since ${\bf u}_i^\alpha-{\bf v}_i^\alpha=0$ on $\partial D_i$, it follows from Propositions \ref{propu113D1} and \ref{propu4563D}, Proposition \ref{propu133D} for $({\bf u}_2^3,p_2^3)$ and the mean-value theorem that
	\begin{align}\label{zydfzgs11}
		|{\bf u}_i^\alpha(x)-{\bf v}_i^\alpha(x)|\leq C\sqrt{\delta(x')}, \quad\mbox{for}~ (i,\alpha)\neq (1,3),~x\in \Omega_{R}.
	\end{align}
While, it is easy to see from \eqref{v133D} that  
\begin{align}\label{boundofv13}
|{\bf v}_{1}^{3}(x)|\leq\frac{C|x'|}{\delta(x')}+C,\quad\mbox{for}~x\in \Omega_{R},
\end{align}
 which implies that it may blow up near the origin.
\end{remark}

\section{Some Technical Results on the Navier-Stokes Equations}\label{sec3}

In this section we prove some preliminary results for the Navier-Stokes equations of type
\begin{align}\label{w36}
	\begin{cases}
		\mu\,\Delta {\bf w}=\nabla q+{\bf w}\cdot\nabla{\bf w}+\widetilde{\bf v}\cdot\nabla {\bf w} +{\bf w}\cdot\nabla\widetilde{\bf v}-{\bf f},&\mathrm{in}\;\Omega,\\
		\nabla\cdot{\bf{w}}=0,&\mathrm{in}\;\Omega_{2R},\\
		|\nabla\cdot{\bf{w}}|\leq\,C,&\mathrm{in}\;\Omega\setminus\Omega_{R},\\
		{\bf{w}}=0,&\mathrm{on}\;\partial \Omega,
	\end{cases}
\end{align}
where $\widetilde{\bf v}$ and ${\bf f}$ are known terms, to make some preparations for applying the iteration technique in the next section.

\subsection{Boundedness of the Total Energy}

We have the following boundedness of the total energy of ${\bf w}$ in $\Omega$.
\begin{lemma}\label{lemmaenergy}
	 Assume that
	\begin{equation}\label{estv113D3}
		|\widetilde{\bf v}(x)|\leq\frac{C}{\sqrt{\delta(x')}},\quad |\nabla\widetilde{\bf v}(x)|\leq\frac{C}{\delta(x')^{3/2}}, ~\text{in}~\Omega_{R},
	\end{equation}
and
\begin{equation}\label{estv113d1}
	\|\widetilde{\bf v}\|_{C^{2}(\Omega\setminus\Omega_R)}+\|{\bf f}\|_{L^{\infty}(\Omega\setminus\Omega_R)}+\|\nabla q\|_{C^{0}(\Omega\setminus\Omega_R)}\leq\,C.
\end{equation}
Let $({\bf w},q)$ be the solution to \eqref{w36}. If further
	\begin{equation}\label{int-fw}
		\Big| \int_{\Omega_{R}}\sum_{j=1}^{d}{\bf f}^{(j)}{\bf w}^{(j)}\mathrm{d}x\Big|\leq\,C\left(\int_{\Omega}|\nabla {\bf w}|^2\mathrm{d}x\right)^{1/2},
	\end{equation}
and, for a given divergence-free function ${\bf u}$ in $\Omega$, 
\begin{align}\label{jstj}
|{\bf w}(x)-{\bf u}(x)|\leq \frac{C}{\sqrt{\delta(x')}},\quad\quad~~\mbox{in}~~\Omega_{R},
\end{align}
then for sufficiently small $0<\epsilon<1/2$, we have
	\begin{align}\label{est111}
		\int_{\Omega}|\nabla {\bf w}|^{2}\mathrm{d}x\leq C.
	\end{align}
\end{lemma}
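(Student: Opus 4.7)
The plan is to execute the standard energy estimate: test \eqref{w36} against $\mathbf{w}$ and integrate over $\Omega$, using $\mathbf{w}=0$ on $\partial\Omega$ so that integration by parts turns the viscous term into $\mu\|\nabla\mathbf{w}\|_{L^2(\Omega)}^2$. The remaining task is to show that the right-hand side is controlled by a constant plus a sub-quadratic (hence absorbable) expression in $\|\nabla\mathbf{w}\|_{L^2(\Omega)}$.

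For the pressure term, since $\nabla\cdot\mathbf{w}=0$ in $\Omega_{2R}$ and $\int_\Omega\nabla\cdot\mathbf{w}=0$ by the divergence theorem, we may subtract a constant from $q$ and choose it so that $\|q-c\|_{L^\infty(\Omega\setminus\Omega_R)}\leq C$ (possible by \eqref{estv113d1} together with the boundedness of $\Omega\setminus\Omega_R$); this yields an $O(1)$ bound. The forcing term is controlled directly by hypothesis \eqref{int-fw}, giving $C\|\nabla\mathbf{w}\|_{L^2}$. The self-convection $(\mathbf{w}\cdot\nabla\mathbf{w})\cdot\mathbf{w}=\tfrac12\mathbf{w}\cdot\nabla|\mathbf{w}|^2$ integrates to $\tfrac12\int_{\Omega\setminus\Omega_{2R}}|\mathbf{w}|^2\nabla\cdot\mathbf{w}$ and is controlled by the three-dimensional Gagliardo--Nirenberg inequality as a sub-quadratic term.

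The heart of the proof lies in the cross terms involving the singular background $\widetilde{\mathbf{v}}$. A second integration by parts transforms $\int(\widetilde{\mathbf{v}}\cdot\nabla\mathbf{w})\cdot\mathbf{w}$ into $-\tfrac12\int|\mathbf{w}|^2\nabla\cdot\widetilde{\mathbf{v}}$, so both this and $\int(\mathbf{w}\cdot\nabla\widetilde{\mathbf{v}})\cdot\mathbf{w}$ are pointwise bounded by $C|\mathbf{w}|^2|\nabla\widetilde{\mathbf{v}}|\leq C|\mathbf{w}|^2/\delta(x')^{3/2}$ inside the neck $\Omega_R$. The decisive tool is a Hardy--Poincar\'e inequality obtained from the vanishing of $\mathbf{w}$ on $\partial D_1\cup\partial D_2$: for each fixed $x'$, a one-dimensional Poincar\'e inequality in $x_d$ yields $|\mathbf{w}(x',x_d)|^2\leq C\delta(x')\int|\partial_{x_d}\mathbf{w}(x',s)|^2\,ds$, and integrating in $x'$ gives $\int_{\Omega_R}|\mathbf{w}|^2/\delta^2\,dx\leq C\|\nabla\mathbf{w}\|_{L^2(\Omega)}^2$. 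Combined with \eqref{jstj}---which forces $\|\mathbf{w}\|_{L^2(\Omega_R)}\leq C$ since $\int_{\Omega_R}\delta^{-1}\,dx$ is universally bounded in three dimensions---H\"older's inequality with exponents $4/3$ and $4$ produces
\begin{equation*}
\int_{\Omega_R}\frac{|\mathbf{w}|^2}{\delta(x')^{3/2}}\,dx\leq\Big(\int_{\Omega_R}\frac{|\mathbf{w}|^2}{\delta^2}\,dx\Big)^{3/4}\Big(\int_{\Omega_R}|\mathbf{w}|^2\,dx\Big)^{1/4}\leq C\|\nabla\mathbf{w}\|_{L^2(\Omega)}^{3/2}.
\end{equation*}

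Assembling all these pieces, the energy identity becomes $\mu\|\nabla\mathbf{w}\|_{L^2}^2\leq C+C\|\nabla\mathbf{w}\|_{L^2}+C\|\nabla\mathbf{w}\|_{L^2}^{3/2}$, and a single application of Young's inequality yields \eqref{est111}. The principal obstacle is the last term just discussed: the singularity $|\nabla\widetilde{\mathbf{v}}|\lesssim\delta^{-3/2}$ is exactly borderline, and only the joint use of the Hardy--Poincar\'e inequality (provided by the zero boundary condition of $\mathbf{w}$) together with the a priori pointwise closeness \eqref{jstj} of $\mathbf{w}$ to $\mathbf{u}$ in the neck suffices to extract a sub-quadratic bound. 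This is the genuinely new difficulty introduced by the Navier--Stokes nonlinearity compared with the Stokes analysis of \cite{LX,LX2}.
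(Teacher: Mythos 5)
Your treatment of the self-convection term $\int_\Omega(\mathbf{w}\cdot\nabla\mathbf{w})\cdot\mathbf{w}$ is where the argument breaks down, and this is the one place where the hypothesis \eqref{jstj} is genuinely indispensable. After your integration by parts you are left with $-\tfrac12\int_{\Omega\setminus\Omega_{2R}}|\mathbf{w}|^2\,\nabla\cdot\mathbf{w}$, which, using $|\nabla\cdot\mathbf{w}|\leq C$ there, is bounded by $C\int_{\Omega\setminus\Omega_{2R}}|\mathbf{w}|^2$. Gagliardo--Nirenberg (or Sobolev--Poincar\'e) alone does \emph{not} make this sub-quadratic: with $\mathbf{w}=0$ on $\partial\Omega$ the only a priori information is $\|\mathbf{w}\|_{L^p}\lesssim\|\nabla\mathbf{w}\|_{L^2}$ for $2\leq p\leq 6$, and every interpolation you can run still outputs $C\|\nabla\mathbf{w}\|_{L^2(\Omega)}^2$ with a fixed, non-small constant $C$ (depending on the Poincar\'e/Sobolev constants of $\Omega$ and the bound on $\nabla\cdot\mathbf{w}$). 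Such a term cannot be absorbed into $\mu\|\nabla\mathbf{w}\|_{L^2}^2$, so the energy identity you assemble does not close. Notice also that you invoke \eqref{jstj} only for the cross terms, leaving it entirely unused for the self-convection --- a sign something is amiss, since that hypothesis is listed precisely to tame this term.

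The paper's route is to exploit the orthogonality $\int_\Omega\mathbf{u}\cdot\nabla\mathbf{w}\cdot\mathbf{w}=0$, valid for any divergence-free $\mathbf{u}$ because $\mathbf{w}$ vanishes on $\partial\Omega$ (see \cite[Lemma IX.2.1]{GaldiBook}), to rewrite $\int_\Omega\mathbf{w}\cdot\nabla\mathbf{w}\cdot\mathbf{w}=\int_\Omega(\mathbf{w}-\mathbf{u})\cdot\nabla\mathbf{w}\cdot\mathbf{w}$, and then on $\Omega_R$ use $|\mathbf{w}-\mathbf{u}|\leq C\delta^{-1/2}$ together with the Hardy inequality $\bigl\|\mathbf{w}/x_d\bigr\|_{L^2(\Omega_R)}\lesssim\|\nabla\mathbf{w}\|_{L^2}$ to produce the small prefactor $C\sqrt{\varepsilon+R^2}$ that makes the resulting quadratic term absorbable. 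Your interpolation argument for the cross terms --- H\"older between the Hardy bound $\int_{\Omega_R}|\mathbf{w}|^2\delta^{-2}\lesssim\|\nabla\mathbf{w}\|_{L^2}^2$ and the $O(1)$ bound $\int_{\Omega_R}|\mathbf{w}|^2\lesssim 1$ to get a $\|\nabla\mathbf{w}\|_{L^2}^{3/2}$ contribution --- is a legitimate alternative to the paper's small-constant absorption there, though it quietly needs $|\mathbf{u}|\lesssim\delta^{-1/2}$ in $\Omega_R$ (true in the application but not stated in the lemma's hypotheses). Fix the self-convection by the same $\mathbf{u}$-cancellation device and the proof goes through.
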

\begin{proof}
Since $({\bf w},q)$ is the solution to \eqref{w36}, it also verifies
\begin{align}\label{fzfc1}
	-\mu\,\Delta {\bf w}+\nabla (q-q(z',0))+{\bf w}\cdot\nabla{\bf w}+\widetilde{\bf v}\cdot\nabla {\bf w} +{\bf w}\cdot\nabla\widetilde{\bf v}={\bf f},
\end{align}
for some $z'$ with $|z'|=R$. Multiplying equation \eqref{fzfc1} by ${\bf w}$, integrating by parts, and using the divergence-free condition, we deduce
	\begin{align}\label{dsjzygj1}
		\mu\int_{\Omega}|\nabla {\bf w}|^2+\int_{\Omega}{\bf w}\cdot \nabla {\bf w} \cdot {\bf w}&+\int_{\Omega}\widetilde{\bf v}\cdot\nabla {\bf w}\cdot{\bf w}+\int_{\Omega}{\bf w}\cdot\nabla\widetilde{\bf v}\cdot{\bf w}\nonumber\\
		&+\int_{\Omega\setminus\Omega_R}(q-q(z',0))\nabla\cdot{\bf w}=\int_{\Omega} {\bf f}\cdot {\bf w}.
	\end{align}

For the second term on the left hand side of \eqref{dsjzygj1}, since ${\bf w}=0$ on $\partial \Omega$ and $\nabla\cdot{\bf u}=0$ in $\Omega$, it follows from \cite[Lemma IX.2.1]{GaldiBook}  that 
	\begin{align*}
     \int_{\Omega} {\bf u}\cdot \nabla {\bf w} \cdot {\bf w}=0.
	\end{align*}
On the other hand, by means of \eqref{jstj}, the H{\"o}lder's inequality and the Hardy inequality, we deduce
    \begin{align*}
    	 \Big| \int_{\Omega_R} ({\bf w}-{\bf u})\cdot \nabla {\bf w} \cdot {\bf w}\Big|\leq C\Big\|\sqrt{\delta(x')}\cdot\nabla {\bf w}\Big\|_{L^2{(\Omega_{R})}}\Big\|\frac{{\bf w}}{x_{d}}\Big\|_{L^2{(\Omega_{R})}}\leq C \sqrt{\varepsilon+R^2}\int_{\Omega}|\nabla{\bf w}|^2.
    \end{align*}
Hence, 
\begin{align}\label{dsjzygj2}
\Big|\int_{\Omega}{\bf w}\cdot \nabla {\bf w} \cdot {\bf w}\Big|\leq C \sqrt{\varepsilon+R^2}\int_{\Omega}|\nabla{\bf w}|^2+C.
\end{align}

For the third term in \eqref{dsjzygj1}, by using assumption \eqref{estv113D3}, the H{\"o}lder's inequality and the Hardy inequality, we have
  \begin{align*}
  	\Big|\int_{\Omega_R} \widetilde{\bf v}\cdot \nabla {\bf w}\cdot{\bf w} \Big|\leq&\, C\|\delta(x')\widetilde{\bf v}\cdot\nabla {\bf w}\|_{L^2{(\Omega_{R})}}\bigg\|\frac{{\bf w}}{x_{d}}\bigg\|_{L^2{(\Omega_{R})}}\leq C \sqrt{\varepsilon+R^2}\int_{\Omega}|\nabla{\bf w}|^2.
  \end{align*}
Since ${\bf w}=0$ on $\partial D_1\cup\partial D_2$, then 
\begin{align*}
	|{\bf w}(x',x_{d})|^2\leq \Big|\int_{-\frac{\varepsilon}{2}-\frac{1}{2}|x'|^2}^{\frac{\varepsilon}{2}+\frac{1}{2}|x'|^2}\partial_{x_{d}}{\bf w}(x',x_{d})\mathrm{d}x_{d}\Big|^2\leq \delta(x')\int_{-\frac{\varepsilon}{2}-\frac{1}{2}|x'|^2}^{\frac{\varepsilon}{2}+\frac{1}{2}|x'|^2}|\nabla {\bf w}|^2 \mathrm{d}x_{d}.
\end{align*}
Combining with \eqref{estv113D3} yields
   \begin{align*}
   \Big|\int_{\Omega_{R}}{\bf w}\cdot\nabla \widetilde{\bf v}\cdot{\bf w}\Big|\leq \int_{\Omega_{R}} \frac{C}{\delta(x')^{3/2}}|{\bf w}|^2\leq C\sqrt{\varepsilon+R^2}\int_{\Omega_{R}}|\nabla{\bf w}|^2.
   \end{align*} 
Then, after shrinking $\varepsilon$ and $R$ if necessary, yields
  	\begin{align}\label{gj11}
  	\Big|\int_{\Omega}\widetilde{\bf v}\cdot\nabla {\bf w}\cdot{\bf w}+{\bf w}\cdot\nabla\widetilde{\bf v}\cdot{\bf w}\Big|\leq \frac{\mu}{2} \int_{\Omega}|\nabla{\bf w}|^2+C.
  \end{align}

For the last term, by means of assumptions \eqref{estv113d1}
\begin{align}\label{gj12}
\Big|\int_{\Omega\setminus\Omega_R}(q-q(z',0))\nabla\cdot{\bf w}\Big|\leq C\|\nabla q\|_{L^{\infty}(\Omega\setminus\Omega_R)}\leq C.
\end{align}
Finally, by means of assumptions \eqref{estv113d1} and \eqref{int-fw}, and the Poincar{\'e} inequality, we deduce
	\begin{align}\label{gj22}
	\Big|\int_{\Omega}{\bf f}\cdot{\bf w}\Big|\leq C\Big|\int_{\Omega_R}{\bf f}\cdot{\bf w}\Big|+C\leq C \left(\int_{\Omega}|\nabla {\bf w}|^2\right)^{1/2}+C.
\end{align}
Thus, combining \eqref{dsjzygj1}-\eqref{gj22} leads to \eqref{est111}. The proof of Lemma \ref{lemmaenergy} is finished.
\end{proof}

\subsection{A Modified Iteration Formula}
The following Caccioppoli-type inequality is a key step to build an adapted version of iteration formula, used in \cite{LX2}.
\begin{lemma}
	(Caccioppoli-type Inequality) Let $({\bf w},q)$ be the solution to \eqref{w36}. Under the assumptions in Lemma \ref{lemmaenergy}, then for $|z'|<R$ and $0<t<s\leq R$, 	\begin{align}\label{nsiterating1}
\int_{\Omega_{t}(z')}|\nabla {\bf w}|^{2}
\leq&\,\Big(\frac{1}{4}+\frac{C\delta(z')^2}{(s-t)^{2}}+\frac{1}{8}\frac{\delta(z')}{s-t}\Big)\int_{\Omega_{s}(z')}|\nabla {\bf w}|^2+C\Big((s-t)^{2}+\delta(z')^{2}\Big)\int_{\Omega_{s}(z')}|{\bf f}|^{2}.
 \end{align}
\end{lemma}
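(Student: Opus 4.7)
The plan is to run a classical Caccioppoli procedure adapted to \eqref{w36}. I fix a smooth cutoff $\eta=\eta(x')\in C_c^{\infty}(B'_{s}(z'))$ with $\eta\equiv 1$ on $B'_{t}(z')$, $0\le\eta\le 1$ and $|\nabla\eta|\le C/(s-t)$, extended to be independent of $x_{d}$, and test the momentum equation in \eqref{w36} against $\eta^{2}{\bf w}$, integrating over $\Omega_{s}(z')$. All boundary contributions vanish because ${\bf w}=0$ on $\partial D_{1}\cup\partial D_{2}$ and $\eta\equiv 0$ on $\{|x'|=s\}$, so the viscous part produces
\[
\mu\int_{\Omega_{s}(z')}\eta^{2}|\nabla{\bf w}|^{2}+2\mu\int_{\Omega_{s}(z')}\eta\,\nabla{\bf w}:({\bf w}\otimes\nabla\eta).
\]

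For the pressure, since $\nabla\cdot{\bf w}=0$ in $\Omega_{2R}$ and $\eta^{2}{\bf w}$ is compactly supported, I rewrite
\[
\int_{\Omega_{s}(z')}\eta^{2}{\bf w}\cdot\nabla q \;=\; -2\int_{\Omega_{s}(z')}\eta\,(q-\bar q)\,{\bf w}\cdot\nabla\eta
\]
with $\bar q$ chosen as the average of $q$ on $\Omega_{s}(z')$, controlling $\|q-\bar q\|_{L^{2}(\Omega_{s}(z'))}$ via a Ne\v{c}as-type pressure estimate for the underlying Stokes problem (where the convection, nonlinearity and ${\bf f}$ are collected on the right-hand side). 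For the drift and nonlinear terms I use $\nabla\cdot{\bf w}=0$ together with $\nabla\cdot\widetilde{\bf v}=0$ (cf.\ \eqref{udecom3}) to recast
\[
\int\eta^{2}({\bf w}\cdot\nabla){\bf w}\cdot{\bf w}=-\int \eta|{\bf w}|^{2}\,{\bf w}\cdot\nabla\eta,\qquad \int\eta^{2}(\widetilde{\bf v}\cdot\nabla){\bf w}\cdot{\bf w}=-\int \eta|{\bf w}|^{2}\,\widetilde{\bf v}\cdot\nabla\eta,
\]
bounding them using $\|{\bf w}\|_{L^{6}(\Omega)}\le C$ (a consequence of the energy bound \eqref{est111} and Sobolev) and $|\widetilde{\bf v}|\le C\delta(x')^{-1/2}$ from \eqref{estv113D3}. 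The residual zero-order interaction $\int\eta^{2}({\bf w}\cdot\nabla\widetilde{\bf v})\cdot{\bf w}$ is handled pointwise via $|\nabla\widetilde{\bf v}|\le C\delta(x')^{-3/2}$.

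The central analytic input is the Poincar\'e--Wirtinger inequality in the thin vertical strip: because ${\bf w}$ vanishes at $x_{d}=\tfrac{\varepsilon}{2}+h_{1}(x')$ and $x_{d}=-\tfrac{\varepsilon}{2}-h_{2}(x')$,
\[
\|{\bf w}\|_{L^{2}(\Omega_{s}(z'))}\le \frac{\delta(z')}{\pi}\,\|\partial_{x_{d}}{\bf w}\|_{L^{2}(\Omega_{s}(z'))}.
\]
Combining this with $|\nabla\eta|\le C/(s-t)$ and Young's inequality with an appropriately tuned parameter converts the viscous cross-term and the pressure cross-term into $\tfrac{\mu}{4}\int\eta^{2}|\nabla{\bf w}|^{2}$ plus $C\delta(z')^{2}/(s-t)^{2}\int_{\Omega_{s}(z')}|\nabla{\bf w}|^{2}$; after dividing by $\mu$ and moving the $\eta^{2}$-piece to the left, this accounts for the explicit fraction $\tfrac{1}{4}$ and the $C\delta(z')^2/(s-t)^{2}$ in \eqref{nsiterating1}. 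The convective and nonlinear pieces, after one more application of Poincar\'e and the bound $|\widetilde{\bf v}|\le C\delta^{-1/2}$, produce the factor $\tfrac{1}{8}\,\delta(z')/(s-t)$. Finally the forcing is treated by Cauchy--Schwarz and Poincar\'e to yield $C((s-t)^{2}+\delta(z')^{2})\int_{\Omega_{s}(z')}|{\bf f}|^{2}$.

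The principal obstacle is the precise bookkeeping needed for the stated numerical constants $\tfrac{1}{4}$ and $\tfrac{1}{8}$: each Young's inequality must be applied with the correct small parameter, and the Ne\v{c}as pressure bound must be quantified in a scale-invariant manner so that the $\delta(z')/(s-t)$-expansion on the right matches exactly. A secondary difficulty is keeping the nonlinear self-interaction $\int\eta|{\bf w}|^{2}{\bf w}\cdot\nabla\eta$ under control on the scale $\delta(z')$, which forces us to invoke the global energy estimate \eqref{est111} rather than a purely local argument. Once the constants are pinned below $1$, the inequality \eqref{nsiterating1} can be iterated through a geometric sequence of radii in Section \ref{sec4} to close the Caccioppoli-type bound on $\|\nabla{\bf w}\|_{L^{2}(\Omega_{t}(z'))}$.
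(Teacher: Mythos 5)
Your proposal follows the same overall testing scheme as the paper (test against $\eta^{2}{\bf w}$, integrate by parts, and exploit the thin-domain Poincar\'e inequality $\|{\bf w}\|_{L^{2}(\Omega_{s}(z'))}\le C\delta(z')\|\nabla{\bf w}\|_{L^{2}(\Omega_{s}(z'))}$), but it diverges in how it treats the convective and drift terms. You integrate by parts using $\nabla\cdot{\bf w}=0$ and $\nabla\cdot\widetilde{\bf v}=0$, transferring derivatives onto $\nabla\eta$; the paper instead keeps both terms in their original form, bounding the drift term $\int\eta^{2}(\widetilde{\bf v}\cdot\nabla{\bf w})\cdot{\bf w}$ directly by Cauchy plus the pointwise bounds \eqref{estv113D3} and Poincar\'e, and controlling the convective self-interaction $\int\eta^{2}({\bf w}\cdot\nabla{\bf w})\cdot{\bf w}$ through the trilinear form estimate of \cite[Lemma IX.1.1]{GaldiBook} applied with $\eta^{2}{\bf w}$ as the third entry, then invoking the global energy bound \eqref{est111} to make the resulting coefficient small.

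The step I would flag as a genuine gap is your treatment of the cubic term after integration by parts. Bounding $\int\eta|{\bf w}|^{2}{\bf w}\cdot\nabla\eta$ through the global bound $\|{\bf w}\|_{L^{6}(\Omega)}\le C$ produces an additive constant in the Caccioppoli inequality, not a term proportional to $\int_{\Omega_{s}(z')}|\nabla{\bf w}|^{2}$. But the iteration that this lemma feeds (Lemma \ref{nszyly333}) needs every lower-order contribution other than ${\bf f}$ to be of the multiplicative form $\theta\big(\delta(z'),s-t\big)\int_{\Omega_{s}(z')}|\nabla{\bf w}|^{2}$ with $\theta<1$, so that after $k_{0}$ steps the contribution decays geometrically and the final bound $F(t_{0})\le C\delta(z')^{2}$ is recovered; an additive constant of size $O(\delta^{1/2})$ would ruin the sharp $\delta^{2}$ scaling. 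To repair this, you would have to keep a local gradient factor throughout: interpolate $\int_{\Omega_{s}}|{\bf w}|^{3}\le\|{\bf w}\|^{3/2}_{L^{6}(\Omega_{s})}\|{\bf w}\|^{3/2}_{L^{2}(\Omega_{s})}$, use the extension-by-zero Sobolev embedding $\|{\bf w}\|_{L^{6}(\Omega_{s})}\le C\|\nabla{\bf w}\|_{L^{2}(\Omega_{s})}$ (licit because ${\bf w}$ vanishes on $\partial D_{1}\cup\partial D_{2}$), apply the thin Poincar\'e inequality to the $L^{2}$ factor, and only then trade one surviving factor of $\|\nabla{\bf w}\|_{L^{2}(\Omega_{s})}$ for the global bound $\|\nabla{\bf w}\|_{L^{2}(\Omega)}\le C$. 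That yields $|I_{4}|\le\frac{C\delta(z')^{3/2}}{s-t}\int_{\Omega_{s}(z')}|\nabla{\bf w}|^{2}$, which is compatible with \eqref{nsiterating1}; in its present form, however, your sketch elides precisely this interpolation step, which is where the argument closes.
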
    	
\begin{proof}
For $0<t<s\leq \sqrt{\delta(z')}$, since ${\bf w}=0$ on $\partial D_1\cup\partial D_2$, we have the following Poincar\'e inequality
	\begin{align}\label{estwDwl2}
		\int_{\Omega_{s}(z')}|{\bf w}|^2\mathrm{d}x\leq
		C\delta(z')^2\int_{\Omega_{s}(z')}|\nabla {\bf w}|^2\mathrm{d}x.
	\end{align}
Let $\eta$ be a smooth cutoff function satisfying $\eta(x')=1$ if $|x'-z'|<t$, $\eta(x')=0$ if $|x'-z'|>s$, $0\leqslant\eta(x')\leqslant1$,
	and $|\nabla\eta|\leq \frac{2}{s-t}$. Denote $q_{s;z'}=\frac{1}{|\Omega_{s}(z')|}\int_{\Omega_{s}(z')}q$. 
Multiplying the equation 
	$$	-\mu\,\Delta {\bf w}+{\bf w}\cdot\nabla{\bf w}+\widetilde{\bf v}\cdot\nabla {\bf w} +{\bf w}\cdot\nabla\widetilde{\bf v}+\nabla(q-q_{s;z'})={\bf f}, \quad\mbox{in}~\Omega_{s}(z'),$$
by $\eta^{2}{\bf w}$, and integrating by parts, leads to
	\begin{align}\label{nsenergynarr}
\int_{\Omega_{s}(z')}\eta^{2}|\nabla {\bf w}|^{2}
=&-\int_{\Omega_{s}(z')}({\bf w}\nabla {\bf w})\cdot\nabla\eta^{2}+\frac{1}{\mu}\int_{\Omega_{s}(z')} \eta^{2}{\bf f}\cdot{\bf w}+\frac{1}{\mu}\int_{\Omega_{s}(z')}\nabla\cdot(\eta^{2}{\bf w})\left(q-q_{s;z'}\right)\nonumber\\
		&-\frac{1}{\mu}\int_{\Omega_{s}(z')}({\bf w}\cdot\nabla {\bf w})\cdot\eta^{2}{\bf w}-\frac{1}{\mu}\int_{\Omega_{s}(z')}(\widetilde{\bf v}\cdot\nabla {\bf w})\cdot\eta^{2}{\bf w}-\frac{1}{\mu}\int_{\Omega_{s}(z')}({\bf w}\cdot\nabla \widetilde{\bf v})\cdot\eta^{2}{\bf w}\nonumber\\
		:=&\,\mbox{I}_1+\mbox{I}_2+\mbox{I}_3+\mbox{I}_4+\mbox{I}_5+\mbox{I}_6.
	\end{align}

By using the argument as in the proof of \cite[Lemma 3.10]{LX} and \eqref{estwDwl2}, we derive
	\begin{align}\label{local1}
		|\mbox{I}_1+\mbox{I}_2+\mbox{I}_3|\leq&\, \Big(\frac{1}{16}+\frac{C\delta(z')^2}{(s-t)^{2}}\Big)\int_{\Omega_{s}(z')}|\nabla {\bf w}|^2+C\Big((s-t)^{2}+\delta(z')^{2}\Big)\int_{\Omega_{s}(z')}|{\bf f}|^2.
	\end{align}
Using \eqref{estwDwl2} again,
\begin{align*}
\|\eta^2{\bf w}\|_{W^{1,2}(\Omega_{s}(z'))}\leq&\,C\Big(\|{\bf w}\|_{L^{2}(\Omega_{s}(z'))}+\frac{2}{s-t}\|{\bf w}\|_{L^{2}(\Omega_{s}(z'))}+\|\nabla {\bf w}\|_{L^2(\Omega_{s}(z'))}\Big)\\
\leq&\,C\Big(\frac{1}{2}+\frac{\delta(z')}{s-t}\Big)\|\nabla{\bf w}\|_{L^2(\Omega_{s}(z'))}.
\end{align*}
By virtue of \cite[Lemma IX.1.1]{GaldiBook},
	\begin{align*}
		|\mbox{I}_4|=\Big|\int_{\Omega_{s}(z')} {\bf w}\cdot\nabla{\bf w}\cdot \eta^{2}{\bf w}\Big|
		\leq&\, \frac{2\sqrt{2}}{3}|\Omega_{s}(z')|^{1 / 6}\|{\bf w}\|^2_{W^{1,2}(\Omega_{s}(z'))}\|\eta^2{\bf w}\|_{W^{1,2}(\Omega_{s}(z'))}.
	\end{align*}
Hence
$$|\mbox{I}_4|\leq\, C\delta(z')^{1/3}\|\nabla{\bf w}\|_{L^2(\Omega)}\Big(\frac{1}{2}+\frac{\delta(z')}{s-t}\Big)\|\nabla{\bf w}\|^{2}_{L^2(\Omega_{s}(z'))}\leq\, \frac{1}{8}\Big(\frac{1}{2}+\frac{\delta(z')}{s-t}\Big) \int_{\Omega_{s}(z')} |\nabla{\bf w}|^2,$$
here we use \eqref{est111}, and shrink $\varepsilon$ and $R$ if necessary (for example, we choose sufficiently small $\epsilon$ and $R$ such that $C\delta(z')^{1/3}<1/8$ for $|z'|<R$, then fix it).
 
By the Cauchy inequality, and using \eqref{estv113D3} and \eqref{estwDwl2}, 
	\begin{align}\label{local3}
		|\mbox{I}_5|=&\,\Big|\int_{\Omega_{s}(z')}\widetilde{\bf v}\cdot\nabla{\bf w}\cdot \eta^{2}{\bf w}\Big|\leq \frac{1}{32} \int_{\Omega_{s}(z')} |\nabla {\bf w}|^2 + C\int_{\Omega_{s}(z')} |\widetilde{\bf v}|^2|\eta^2 {\bf w}|^2\nonumber\\
		\leq&\, \frac{1}{32} \int_{\Omega_{s}(z')} |\nabla {\bf w}|^2 + C\delta(z')\int_{\Omega_{s}(z')} |\nabla{\bf w}|^2
		\leq\,\frac{1}{16} \int_{\Omega_{s}(z')} |\nabla {\bf w}|^2,
	\end{align}
if $C\delta(z')<1/32$. By virtue of \eqref{estv113D3} and \eqref{estwDwl2} again, 
	\begin{align}\label{local4}
		|\mbox{I}_6|=\Big|\int_{\Omega_{s}(z')}{\bf w}\cdot\nabla\widetilde{\bf v}\cdot \eta^{2}{\bf w}\Big|\leq C\sqrt{\delta(z')}\int_{\Omega_{s}(z')} |\nabla{\bf w}|^2\leq\frac{1}{16}\int_{\Omega_{s}(z')} |\nabla{\bf w}|^2,
	\end{align}
if $C\sqrt{\delta(z')}<1/16$. Combining \eqref{local1}--\eqref{local4} with \eqref{nsenergynarr} yields \eqref{nsiterating1}.
\end{proof} 

\subsection{$W^{2,\infty}$ Estimates}
We recall a well known $L^{q}$-estimates for the Stokes system in a bounded domain with partially vanishing boundary data, see \cite[Theorem IV.5.1]{GaldiBook}.

\begin{theorem}\label{thmWmq}
	Let $\Omega$ be an arbitrary domain in $\mathbb R^{d}$, $d\geq2$, with a boundary portion $\sigma$ of class $C^{m+2}$, $m\geq0$. Let $\Omega_0$ be any bounded subdomain of $\Omega$ with $\partial\Omega_0\cap\partial\Omega=\sigma$. Further, let
	\begin{align*}
		{\bf u}\in W^{1,q}(\Omega_0), \quad p\in L^q(\Omega_0),\quad 1<q<\infty,
	\end{align*}
	be such that
	\begin{align*}
		(\nabla{\bf u},\nabla{\boldsymbol\psi})&=-\langle{\bf f},{\boldsymbol\psi}\rangle+(p,\nabla\cdot{\boldsymbol\psi}),\quad\mbox{for~all}~{\boldsymbol\psi}\in C_0^\infty(\Omega_0),\\
		({\bf u},\nabla\varphi)&=0,\quad \mbox{for~all}~\varphi\in C_0^\infty(\Omega_0),\\
		{\bf u}&=0,\quad \mbox{at}~\sigma.
	\end{align*}
	Then, if ${\bf f}\in W^{m,q}(\Omega_0)$, we have
	$${\bf u}\in W^{m+2,q}(\Omega'),\quad p\in W^{m+1,q}(\Omega'),$$
	for any $\Omega'$ satisfying 
	\begin{enumerate}
		\item $\Omega'\subset\Omega$,
		\item $\partial\Omega'\cap\partial\Omega$ is a strictly interior subregion of $\sigma$.
	\end{enumerate}
	Finally, the following estimate holds
	\begin{align*}
		\|{\bf u}\|_{W^{m+2,q}(\Omega')}+\|p\|_{W^{m+1,q}(\Omega')}\leq C\left(\|{\bf f}\|_{W^{m,q}(\Omega_0)}+\|{\bf u}\|_{W^{1,q}(\Omega_0)}+\|p\|_{L^{q}(\Omega_0)}\right),
	\end{align*}
	where $C=C(d,m,q,\Omega',\Omega_0)$.
\end{theorem}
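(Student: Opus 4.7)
The plan is to prove this classical $L^q$-regularity result by the standard localize-and-flatten procedure, reducing matters to the half-space Stokes problem. First I would cover $\overline{\Omega'}$ by finitely many balls: interior balls compactly contained in $\Omega_0$, and boundary balls $B_k$ centered at points of $\sigma$, each so small that, after an orthogonal rotation, $\partial\Omega\cap B_k$ is a $C^{m+2}$ graph. A subordinate partition of unity $\{\zeta_k\}$ then allows one to analyze each piece $(\zeta_k{\bf u},\zeta_k p)$ separately. The interior pieces reduce to a Stokes system on $\mathbb{R}^d$ with compactly supported right-hand sides, for which the classical Cattabriga $L^q$-theory based on the fundamental Oseen tensor directly yields a $W^{m+2,q}$ estimate.

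For each boundary patch I would apply a $C^{m+2}$-diffeomorphism straightening $\sigma$ to a piece of $\{x_d=0\}$. In the new coordinates the Stokes operator becomes an elliptic system with $C^{m+1}$ coefficients, and after multiplication by $\zeta_k$ the solution extends by zero to the half-space with homogeneous Dirichlet data. Multiplication, however, destroys the divergence-free constraint: $\nabla\cdot(\zeta_k{\bf u})={\bf u}\cdot\nabla\zeta_k=:g$. I would then subtract a Bogovskii corrector ${\bf B}g$ satisfying $\nabla\cdot({\bf B}g)=g$ and $\|{\bf B}g\|_{W^{1,q}}\leq C\|g\|_{L^q}$ in order to restore incompressibility. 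The resulting homogeneous half-space Stokes problem with zero boundary data can be solved via its explicit Green's tensor, and the Calder\'on--Zygmund bounds on the associated singular integrals deliver the base case $m=0$.

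For $m\geq 1$ I would argue by induction. Since $\partial_{x_i}$ with $i<d$ commutes with the flat Dirichlet condition on $\{x_d=0\}$, tangential differentiation of the flattened system preserves the boundary data, so the $m=0$ result applied to each tangential derivative yields $W^{m+2,q}$-control of all purely tangential derivatives of ${\bf u}$ and $W^{m+1,q}$-control of those of $p$. The normal derivatives are then recovered algebraically from the PDE itself: the divergence-free constraint gives $\partial_{x_d}{\bf u}^{(d)}=-\sum_{i<d}\partial_{x_i}{\bf u}^{(i)}$, and the momentum equation determines $\partial_{x_d}^2{\bf u}^{(i)}$ and $\partial_{x_d} p$ from the already-controlled tangential derivatives. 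Transferring back via the straightening diffeomorphism and summing the partition of unity then produces the global estimate on $\Omega'$.

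The hard part will be controlling the error terms generated by the commutators $[\zeta_k,\Delta]{\bf u}$ and by the variable-coefficient perturbations introduced by flattening: both contribute lower-order contributions in $\nabla{\bf u}$ and $p$ supported on an annular collar around $\mathrm{supp}\,\zeta_k$. These are absorbed by a standard enlargement of the localization neighborhood at each induction step, which is exactly why the right-hand side of the conclusion features $\|{\bf u}\|_{W^{1,q}(\Omega_0)}+\|p\|_{L^q(\Omega_0)}$ on the larger set $\Omega_0$. Refining the cover so that each neighborhood has sufficiently small diameter keeps all absorption constants uniform, depending only on $d$, $m$, $q$, and the geometry of $\Omega'$ and $\Omega_0$.
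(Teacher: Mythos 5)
The paper does not prove Theorem~\ref{thmWmq}: it is quoted verbatim as a known result from Galdi \cite[Theorem IV.5.1]{GaldiBook}, with no proof supplied, so there is no in-paper argument against which your sketch can be compared. Your outline (partition of unity, Cattabriga interior $L^q$-estimates, boundary flattening followed by a Bogovskii corrector to restore the divergence constraint after cut-off, half-space Stokes bounds via the Green's tensor and Calder\'on--Zygmund theory for the base case $m=0$, and the standard tangential-then-normal induction for $m\ge 1$) is the classical route and is essentially the argument Galdi gives in his book; the main ideas are sound. Just be aware that for the purposes of this paper the theorem is treated as an imported black box, so supplying a proof here would be reproducing textbook material rather than filling a genuine gap in the paper.
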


The following lemma is needed, which is from \cite[Chapter V, Lemma 3.1]{M1}.
\begin{lemma}\label{itLemma}
 Let $f(t)$ be a nonnegative bounded function defined in $[\tau_{0},\tau_{1}]$, $\tau_{0}\geq 0$. Suppose that for $\tau_{0}\leq t<s\leq\tau_{1}$, we have
 $$f(t)\leq [A(s-t)^{-a}+B]+\theta\,f(s),$$
 where $A,B, a,\theta$ are nonnegative constants with $0\leq \theta<1$. Then for all  $\tau_{0}\leq \rho<\tau_{1}$ there holds
 $$f(\rho)\leq\,c\,[A(R-\rho)^{-a}+B],$$
 where $c$ is a constant depending on $a$ and $\theta$.
\end{lemma}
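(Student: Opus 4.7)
The plan is to telescope the recursive inequality onto a geometric sequence that exhausts the interval $(\rho, R)$. Since $0\leq\theta<1$, I would first fix an auxiliary parameter $\tau\in(0,1)$ with $\theta\tau^{-a}<1$ (for $a>0$ one may take any $\tau\in(\theta^{1/a},1)$; for $a=0$ any $\tau\in(0,1)$ suffices). Then I would set $t_0=\rho$ and define recursively
$$
t_{i+1}=t_i+(1-\tau)\tau^i(R-\rho),
$$
so that $t_{i+1}-t_i=(1-\tau)\tau^i(R-\rho)$ and $t_i\nearrow R$ as $i\to\infty$.

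Applying the hypothesis with $(t,s)=(t_i,t_{i+1})$ gives
$$
f(t_i)\leq A(1-\tau)^{-a}\tau^{-ia}(R-\rho)^{-a}+B+\theta f(t_{i+1}).
$$
A straightforward induction on $k$ then produces
$$
f(\rho)=f(t_0)\leq\sum_{i=0}^{k-1}\theta^i\Bigl[A(1-\tau)^{-a}\tau^{-ia}(R-\rho)^{-a}+B\Bigr]+\theta^k f(t_k).
$$
Because $f$ is bounded on $[\tau_0,\tau_1]$ and $\theta<1$, the remainder $\theta^k f(t_k)$ tends to zero as $k\to\infty$. The remaining series split into two geometric sums: one with ratio $\theta\tau^{-a}<1$, the other with ratio $\theta<1$, both convergent. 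Passing to the limit yields
$$
f(\rho)\leq\frac{A(1-\tau)^{-a}}{1-\theta\tau^{-a}}(R-\rho)^{-a}+\frac{B}{1-\theta},
$$
so setting $c:=\max\bigl\{(1-\tau)^{-a}/(1-\theta\tau^{-a}),\,1/(1-\theta)\bigr\}$ gives the claimed estimate, with $c$ depending only on $a$ and $\theta$.

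The only delicate point—and the reason the boundedness hypothesis on $f$ is needed—is justifying the vanishing of the tail $\theta^k f(t_k)$; without \emph{a priori} boundedness the iteration would only yield a monotone partial sum without any control on the error at scale $t_k$. The balancing of $\tau$ (small enough that $(1-\tau)^{-a}$ remains tame, large enough that $\theta\tau^{-a}<1$) is routine once $\theta<1$ is in force. No other obstacle arises: the argument is entirely combinatorial.
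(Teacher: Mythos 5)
Your proof is correct and is essentially the standard telescoping argument for this classical iteration lemma of Giaquinta, which is exactly what the paper cites (it refers to \cite[Chapter V, Lemma 3.1]{M1} and does not reprove it). The geometric decomposition $t_{i+1}-t_i=(1-\tau)\tau^{i}(R-\rho)$ with $\tau$ chosen so that $\theta\tau^{-a}<1$, the induction producing the partial sums, the use of boundedness to kill $\theta^{k}f(t_k)$, and the summation of the two geometric series are precisely the steps in the reference; your handling of $R$ (read as the right endpoint, or as any $R\in(\rho,\tau_1]$, where the paper's statement leaves $R$ implicit) is the correct reading of the evident typo in the lemma as reproduced.
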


By a rescaling argument, we have the following Proposition.
\begin{prop}\label{nsdpro}
Let $({\bf w},q)$ be the solution to \eqref{w36}. Under the assumptions in Lemma \ref{lemmaenergy}, if additionally for $z\in \Omega_{R}$,  
	\begin{equation}\label{fzzyj1}
		\int_{\Omega_{\delta(z')}(z)} |\nabla {\bf w}|^2\leq C \delta(z')^2,
	\end{equation}
then
	\begin{equation}\label{nsW2pstokes}
		\|\nabla {\bf w}\|_{L^{\infty}(\Omega_{\delta(z')/2}(z))}\leq\,
		\,C\left(\delta(z')^{-d/2}\|\nabla {\bf w}\|_{L^{2}(\Omega_{\delta(z')}(z))}+\delta(z')\|{\bf f}\|_{L^{\infty}(\Omega_{\delta(z')}(z))} \right),
	\end{equation}
  and
  \begin{align}\label{Wmpstokes}
  	&\|\nabla^{2}{\bf w}\|_{L^{\infty}(\Omega_{\delta(z')/2}(z))}+\|\nabla q\|_{L^{\infty}(\Omega_{\delta(z')/2}(z))}\nonumber\\
  	\leq&\, C\left(\delta(z')^{-1-{d}/{2}}\|\nabla {\bf w}\|_{L^{2}(\Omega_{\delta(z')}(z))}+\|{\bf f}\|_{L^{\infty}(\Omega_{\delta(z')}(z))} +\delta(z')\|\nabla{\bf f}\|_{L^{\infty}(\Omega_{\delta(z')}(z))} \right).
  \end{align}
\end{prop}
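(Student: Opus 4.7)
The proof plan is a rescaling argument that reduces Proposition \ref{nsdpro} to the standard $W^{m,q}$ theory of Theorem \ref{thmWmq}. Write $\delta=\delta(z')$ and set $y=(x-z)/\delta$, together with
\[
\hat{\bf w}(y) = {\bf w}(z+\delta y),\quad \tilde q(y) = \delta\, q(z+\delta y),\quad \hat{\widetilde{\bf v}}(y) = \widetilde{\bf v}(z+\delta y),\quad \hat{\bf f}(y) = {\bf f}(z+\delta y).
\]
By the curvature assumption \eqref{h1h14} and the fact that $h_i(z')/\delta\leq 1$, the image $\hat{\Omega}_1$ of $\Omega_{\delta}(z)$ under this rescaling has $C^3$-boundary with norms bounded uniformly in $\delta$. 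A direct computation transforms \eqref{w36} into the Stokes-type system
\[
\mu\,\Delta_y\hat{\bf w} - \nabla_y\tilde q = \delta\,\hat{\bf w}\cdot\nabla_y\hat{\bf w} + \delta\,\hat{\widetilde{\bf v}}\cdot\nabla_y\hat{\bf w} + \delta\,\hat{\bf w}\cdot\nabla_y\hat{\widetilde{\bf v}} - \delta^2\hat{\bf f},\qquad \nabla_y\cdot\hat{\bf w}=0\ \ \text{in}\ \hat{\Omega}_1,
\]
with $\hat{\bf w}=0$ on the rescaled portion of $\partial D_1\cup\partial D_2$. Crucially, the nonlinear and transport terms carry the small prefactor $\delta$; moreover the hypothesis \eqref{estv113D3} translates to $\|\delta\hat{\widetilde{\bf v}}\|_{L^\infty}+\|\delta\nabla_y\hat{\widetilde{\bf v}}\|_{L^\infty}\leq C\delta^{1/2}$, and \eqref{fzzyj1} to $\|\nabla_y\hat{\bf w}\|_{L^{2}(\hat{\Omega}_1)}\leq C\delta^{(4-d)/2}$, both going to zero as $\delta\to 0$.

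Next I bootstrap. Treat the rescaled equation as a Stokes system with forcing $\hat F = \delta\hat{\bf w}\cdot\nabla_y\hat{\bf w} + \delta\hat{\widetilde{\bf v}}\cdot\nabla_y\hat{\bf w} + \delta\hat{\bf w}\cdot\nabla_y\hat{\widetilde{\bf v}} - \delta^2\hat{\bf f}$. Starting from $\hat{\bf w}\in W^{1,2}(\hat\Omega_1)$, Sobolev embedding gives $\hat{\bf w}\in L^{q_1}$ with $q_1=2d/(d-2)$; H\"older then yields $\hat F\in L^{r}$ for some $r>1$, and Theorem \ref{thmWmq} applied on a concentric subdomain produces $\hat{\bf w}\in W^{2,r}$, which Sobolev promotes to $\nabla_y\hat{\bf w}\in L^{q_2}$ with $q_2>q_1$. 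After finitely many iterations one reaches $\hat{\bf w}\in W^{2,q}$ for some $q>d$, whence $\nabla_y\hat{\bf w}\in L^\infty(\hat{\Omega}_{1/2})$. A further application of Theorem \ref{thmWmq} with $m=1$, using the $C^1$-regularity of $\hat{\bf f}$ and $\hat{\widetilde{\bf v}}$, upgrades this to $\hat{\bf w}\in W^{3,q}$ and $\tilde q\in W^{2,q}$, delivering the $L^\infty$ bounds on $\nabla_y^2\hat{\bf w}$ and $\nabla_y\tilde q$. Unscaling via $\|\nabla_x^k{\bf w}\|_{L^\infty(\Omega_{\delta/2}(z))}=\delta^{-k}\|\nabla_y^k\hat{\bf w}\|_{L^\infty(\hat{\Omega}_{1/2})}$ and the analogous scaling $\|\nabla_x q\|_{L^\infty}=\delta^{-2}\|\nabla_y\tilde q\|_{L^\infty}$, together with $\|\nabla_y\hat{\bf w}\|_{L^2(\hat\Omega_1)}=\delta^{(2-d)/2}\|\nabla_x{\bf w}\|_{L^2(\Omega_\delta(z))}$, recovers the desired estimates \eqref{nsW2pstokes} and \eqref{Wmpstokes}.

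The main obstacle is controlling the lower-order contributions $\|\hat{\bf w}\|_{W^{1,q}}+\|\tilde q\|_{L^q}$ that appear on the right-hand side of Theorem \ref{thmWmq}; in particular the pressure carries no a priori normalization. I plan to handle this by localizing on nested shells between radii $\delta/2$ and $\delta$ and invoking the Caccioppoli-type inequality \eqref{nsiterating1} together with Morrey's iteration Lemma \ref{itLemma} to absorb the $\|\nabla\hat{\bf w}\|_{L^q}$-term into the left-hand side, while the pressure is normalized by subtracting its mean on the annular shell (paralleling the use of $q-q(z',0)$ in Lemma \ref{lemmaenergy}). The smallness of the prefactor $\delta$ on the nonlinear and transport terms is exactly what lets the iteration close uniformly in $\delta$: at every stage, the nonlinear contribution $\delta\|\hat{\bf w}\|_{L^{2q}}\|\nabla_y\hat{\bf w}\|_{L^{2q}}$ remains strictly smaller than the linear part, so the bootstrap does not degenerate as $\delta\to 0$.
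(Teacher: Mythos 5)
Your proposal follows the same skeleton as the paper's proof — rescale by $\delta(z')$ to a unit-sized domain, view the rescaled system as a Stokes system whose forcing carries small $\delta$-prefactors on the nonlinear and transport terms, and invoke the interior $W^{m,q}$ theory of Theorem \ref{thmWmq} — but there are two places where your sketch is either a detour or a genuine gap.

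First, the paper does not bootstrap through a chain of increasing Sobolev exponents. Since the solution is already assumed to be in $C^2(\bar\Omega)$ (so all the Sobolev norms in question are a priori finite), and since $W^{2,4}$ already embeds into $C^{1,\gamma}$ in dimension $d\le 3$, the paper applies Theorem \ref{thmWmq} directly with $m=0$, $q=4$ and uses the interpolation inequality $\|\mathcal{W}\|_{W^{1,4}}\le\theta_0\|\mathcal{W}\|_{W^{2,4}}+C(\theta_0)\|\mathcal{W}\|_{L^4}$ to absorb the lower-order velocity term. Your exponent iteration is not wrong, but what is actually needed is a quantitative estimate with uniformly bounded constants, not a qualitative regularity promotion; choosing $q=4$ once is cleaner and is what the paper does.

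Second, and this is the genuine gap: Theorem \ref{thmWmq} has $\|p\|_{L^q(\Omega_0)}$ on the right-hand side, and subtracting the mean $\mathcal{G}_1$ over $Q_1$ normalizes the pressure but does not bound $\|\mathcal{G}-\mathcal{G}_1\|_{L^4(Q_1)}$. Your plan mentions only the normalization and the Caccioppoli inequality \eqref{nsiterating1}, which controls $\|\nabla\mathcal{W}\|_{L^2}$ but says nothing about the pressure. The paper closes this loop by (i) interpolating $\|\mathcal{G}-\mathcal{G}_1\|_{L^4}\leq\theta_1\|\mathcal{G}-\mathcal{G}_1\|_{L^\infty}+C\theta_1^{-1}\|\mathcal{G}-\mathcal{G}_1\|_{L^2}$, (ii) invoking the Bogovskii-type bound of \cite[Lemma 3.2]{LX} to estimate $\|\mathcal{G}-\mathcal{G}_1\|_{L^2(Q_1)}$ in terms of $\|\mathcal{F}-\mathcal{M}\|_{L^2(Q_1)}$ and $\|\nabla\mathcal{W}\|_{L^2(Q_1)}$, and (iii) absorbing the $\|\mathcal{G}-\mathcal{G}_1\|_{L^\infty}$ term via Sobolev embedding and Lemma \ref{itLemma} after choosing $\theta_1$ small. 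Without step (ii) the pressure term cannot be closed, and this needs to be supplied explicitly.

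Finally, a quantitative remark on the nonlinear term: the absorption hinges not merely on the prefactor $\delta$ but on the observation that $\|\delta\mathcal{W}\cdot\nabla\mathcal{W}\|_{L^4(Q_1)}\le C\delta\|\nabla\mathcal{W}\|_{L^2(Q_1)}\|\mathcal{W}\|_{W^{2,4}(Q_1)}\le C\delta^{(6-d)/2}\|\mathcal{W}\|_{W^{2,4}(Q_1)}$ using the local energy bound \eqref{fzzyj1}. It is precisely the coefficient $C\delta^{(6-d)/2}<1$ (after shrinking $\varepsilon,R$) that permits Lemma \ref{itLemma} to close; your statement that the nonlinear contribution ``remains strictly smaller than the linear part'' is correct in spirit, but to make it a proof you need to exhibit this factor explicitly, since the $L^4$ norm of the quadratic term is itself comparable to $\|\mathcal{W}\|_{W^{2,4}(Q_1)}$ up to that small coefficient.
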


\begin{proof} 
For $z=(z', z_{d})\in \Omega_{R}$, we take the change of variables as before
	\begin{equation*}
		\left\{
		\begin{aligned}
			&x'-z'=\delta(z') y',\\
			&x_d=\delta(z') y_{d},
		\end{aligned}
		\right.
	\end{equation*}
	which transform $\Omega_{\delta(z')}(z)$ into a nearly unit size domain 
	$Q_{1}$, where 
	\begin{equation*}
		Q_{r}=\left\{y\in\mathbb{R}^{d}\Big|~|y_{d}|
		<\frac{\varepsilon}{2\delta(z')}+\frac{(z'+\delta(z')\,y')^2}{2\delta(z')},~|y'|<r\right\},
	\end{equation*}
with top and bottom boundaries denoted by $\hat\Gamma_{1}^{+}$ and $\hat\Gamma_{1}^{-}$. For $y\in{Q}_{1}$, we set
	\begin{align*}
		\mathcal{W}(y', y_{d})&:={\bf w}(z'+\delta(z')\,y',\delta(z')\,y_{d}),\quad~
		\mathcal{G}(y', y_{d}):=\delta(z') q(z'+\delta(z')\,y',\delta(z')\,y_{d}),\\
		\mathcal{F}(y', y_{d})&:={\bf f}(z'+\delta(z')\,y',\delta(z')\,y_{d}),\quad
		\widetilde{\mathcal{V}}(y', y_{d}):=\widetilde{\bf v}(z'+\delta(z')\,y',\delta(z')\,y_{d}),
	\end{align*}
 and  $\mathcal{G}_1(y', y_{d}):=\frac{1}{|Q_1|}\int_{Q_1}	\mathcal{G}$. It follows from \eqref{w36} that
	\begin{align*}
		\begin{cases}
			-\mu\Delta\mathcal{W}+\nabla(\mathcal{G}-\mathcal{G}_1)=\mathcal{F}-\mathcal{M},
			& \mathrm{in}\,Q_1,\\
			\nabla \cdot \mathcal{W} =0,&\mathrm{in}\,Q_1,\\
			\mathcal{W}=0,&\mathrm{on}\, \hat{\Gamma}^+\cup\hat\Gamma^{-},
		\end{cases}
	\end{align*}
where $\mathcal{M}:=\delta(z')(	\widetilde{\mathcal{V}}\cdot\nabla\mathcal{W}+\mathcal{W}\cdot\nabla\widetilde{\mathcal{V}}+\mathcal{W}\cdot\nabla\mathcal{W})$. We estimate the terms in $\mathcal{M}$. By virtue of \eqref{estv113D3}, 
	\begin{align*}
		\|\delta(z'){\widetilde{\mathcal{V}}}\cdot\nabla\mathcal{W}\|_{L^4(Q_1)}\leq C\|\delta(z')\widetilde{\mathcal{V}}\|_{L^\infty(Q_1)}\|\nabla\mathcal{W}\|_{L^4(Q_1)}\leq C\|\nabla\mathcal{W}\|_{L^4(Q_1)},
	\end{align*}
and
\begin{align*}
		\|\delta(z')\mathcal{W}\cdot\nabla\widetilde{\mathcal{V}}\|_{L^4(Q_1)}\leq C \|\delta(z')\nabla\widetilde{\mathcal{V}}\|_{L^{\infty}(Q_1)}\|\mathcal{W}\|_{L^{4}(Q_1)}\leq C\|\mathcal{W}\|_{L^{4}(Q_1)}.
	\end{align*}
For the last term, using the Poincar\'e inequality $ \|\mathcal{W}\|_{W^{1,2}(Q_1)}\leq\,C \|\nabla\mathcal{W}\|_{L^2(Q_1)}$, it follows from the Sobolev embedding theorem, the H\"{o}lder’s inequality that
	\begin{align*}
		&\|\delta(z')\mathcal{W}\cdot\nabla\mathcal{W}\|_{L^4(Q_1)}\leq\, C\delta(z') \|\mathcal{W}\|_{L^6(Q_1)}\|\nabla\mathcal{W}\|_{L^{12}(Q_1)}\nonumber\\
		\leq&\, C\delta(z') \|\mathcal{W}\|_{W^{1,2}(Q_1)}\|\nabla\mathcal{W}\|_{W^{1,4}(Q_1)}
		\leq\, C\delta(z') \|\nabla\mathcal{W}\|_{L^2(Q_1)}\|\mathcal{W}\|_{W^{2,4}(Q_1)}.
	\end{align*}
By means of \eqref{fzzyj1},
$$ \delta(z')\|\nabla\mathcal{W}\|_{L^{2}(Q_1)}=\delta(z')^{1/2}\|\nabla{\bf w}\|_{L^{2}\big(\Omega_{\delta(z')}(z)\big)}\leq C\delta(z')^{3/2},$$
thus
	\begin{align*}
	\|\mathcal{M}\|_{L^{4}(Q_1)}\leq C\delta(z')^{3/2}\|\mathcal{W}\|_{W^{2,4}(Q_1)}+C\|\mathcal{W}\|_{W^{1,4}(Q_1)}.
\end{align*}

Now applying Theorem \ref{thmWmq} with $m=0$ and $q=4$,
\begin{align*}
	&\|\mathcal{W}\|_{W^{2,4}(Q_{1/2})}+\|\mathcal{G}-\mathcal{G}_{1}\|_{W^{1,4}(Q_{1/2})}\nonumber\\
	\leq&\, C\left(\|\mathcal{F}-\mathcal{M}\|_{L^{4}(Q_1)}+\|\mathcal{W}\|_{W^{1,4}(Q_1)}+\|\mathcal{G}-\mathcal{G}_{1}\|_{L^{4}(Q_1)}\right)\nonumber\\
	\leq&\, C\left(\|\mathcal{F}\|_{L^{4}(Q_1)}+\delta(z')^{3/2}\|\mathcal{W}\|_{W^{2,4}(Q_1)}+\|\mathcal{W}\|_{W^{1,4}(Q_1)}+\|\mathcal{G}-\mathcal{G}_{1}\|_{L^{4}(Q_1)}\right).
	\end{align*}
Together with the interpolation inequality
\begin{equation*}
\|\mathcal{W}\|_{W^{1,4}(Q_{1})}\leq \theta_0\|\mathcal{W}\|_{W^{2,4}(Q_{1})} +C(\theta_{0})\|\mathcal{W}\|_{L^{4}(Q_{1})}, \quad0<\theta_0<1,
\end{equation*}
we have
\begin{align*}
	\|\mathcal{W}\|_{W^{2,4}(Q_{1/2})}+&\|\mathcal{G}-\mathcal{G}_{1}\|_{W^{1,4}(Q_{1/2})}\leq\,C\left(\|\mathcal{F}\|_{L^{4}(Q_1)}+\|\mathcal{G}-\mathcal{G}_{1}\|_{L^{4}(Q_1)}\right)\nonumber\\
&\,\quad\quad\quad +(C\delta(z')^{3/2}+C\theta_{0})\|\mathcal{W}\|_{W^{2,4}(Q_1)}+C(\theta_{0})\|\mathcal{W}\|_{L^{4}(Q_1)}.
	\end{align*}
Choosing $\theta_0$ small enough such that $C\theta_0\leq\frac{1}{4}$, and after shrinking $\epsilon,R$ if necessary, such that $C\delta(z')^{3/2}\leq\frac{1}{4}$, then 
\begin{align*}
	\|\mathcal{W}\|_{W^{2,4}(Q_{1/2})}+\|\mathcal{G}-\mathcal{G}_{1}\|_{W^{1,4}(Q_{1/2})}
\leq&\,\frac{1}{2}\|\mathcal{W}\|_{W^{2,4}(Q_1)}+C\left(\|\mathcal{W}\|_{L^{4}(Q_1)}+\|\mathcal{F}\|_{L^{4}(Q_1)}+\|\mathcal{G}-\mathcal{G}_{1}\|_{L^{4}(Q_1)}\right).
	\end{align*}
By replacing the cutoff function, we can control the left hand side estimates on $Q_{s}$ with those on $Q_{t}$ for any $0<s<t<1$. Making use of Lemma \ref{itLemma}, together with $\|\mathcal{W}\|_{L^{4}(Q_{1})}\leq\,C\|\nabla\mathcal{W}\|_{L^{2}(Q_1)}$, we get
\begin{align}\label{wmpfz1}
\|\mathcal{W}\|_{W^{2,4}(Q_{1/2})}+\|\mathcal{G}-\mathcal{G}_{1}\|_{W^{1,4}(Q_{1/2})}
\leq&\, C\left(\|\nabla\mathcal{W}\|_{L^{2}(Q_1)}+\|\mathcal{F}\|_{L^{4}(Q_1)}+\|\mathcal{G}-\mathcal{G}_{1}\|_{L^{4}(Q_1)}\right).
\end{align}

In order to control the last term involving $\mathcal{G}$, we use the interpolation inequality,  
	\begin{equation}\label{sz2}
		\|\mathcal{G}-\mathcal{G}_{1}\|_{L^{4}(Q_1)}\leq \theta_{1} \|\mathcal{G}-\mathcal{G}_{1}\|_{L^{\infty}(Q_1)}+\frac{C}{\theta_{1}}\|\mathcal{G}-\mathcal{G}_{1}\|_{L^{2}(Q_1)},
\end{equation}	
for some small $\theta_{1}$ to be determined later. By virtue of \cite[Lemma 3.2]{LX}, we have
	\begin{equation}\label{sz3}
	\|\mathcal{G}-\mathcal{G}_{1}\|_{L^{2}(Q_1)}\leq C\|\mathcal{F}-\mathcal{M}\|_{L^{2}(Q_1)}+C\|\nabla\mathcal{W}\|_{L^{2}(Q_1)}.
	\end{equation}
By the same argument as before, we have
\begin{align*}
	\|\delta(z'){\widetilde{\mathcal{V}}}\cdot\nabla\mathcal{W}\|_{L^2(Q_1)}+	\|\delta(z')\mathcal{W}\cdot\nabla\widetilde{\mathcal{V}}\|_{L^2(Q_1)}\leq C\|\nabla\mathcal{W}\|_{L^2(Q_1)},
\end{align*}
and
\begin{align*}
	\|\delta(z')\mathcal{W}\cdot\nabla\mathcal{W}\|_{L^2(Q_1)}\leq\, C\delta(z') \|\nabla\mathcal{W}\|_{L^2(Q_1)}\|\mathcal{W}\|_{L^{\infty}(Q_1)}\leq  C\delta(z')^{3/2}\|\mathcal{W}\|_{W^{1,4}(Q_1)}.
\end{align*} 
Using the interpolation inequality, 
\begin{equation*}
	\|\mathcal{W}\|_{W^{1,4}(Q_{1})}\leq \theta_1\|\mathcal{W}\|_{W^{2,4}(Q_{1})} +\frac{C}{\theta_{1}}\|\mathcal{W}\|_{L^{4}(Q_{1})},
\end{equation*}
where $\theta_1$ is chosen to be the same constant as in \eqref{sz2}, together with $\|\mathcal{W}\|_{L^{4}(Q_{1})}\leq\,C\|\nabla\mathcal{W}\|_{L^{2}(Q_1)}$, we have
\begin{align*}
	\|\mathcal{F}-\mathcal{M}\|_{L^{2}(Q_1)}\leq C\theta_1\delta(z')^{3/2}\|\mathcal{W}\|_{W^{2,4}(Q_1)}+C(\theta_1)\|\nabla\mathcal{W}\|_{L^{2}(Q_1)}+C\|\mathcal{F}\|_{L^{2}(Q_1)}.
\end{align*}
Substituting this into \eqref{sz3}, and combining with \eqref{sz2} leads to
\begin{align*}
	\|\mathcal{G}-\mathcal{G}_{1}\|_{L^{4}(Q_1)}&\leq\theta_{1} \|\mathcal{G}-\mathcal{G}_{1}\|_{L^{\infty}(Q_1)}+C\delta(z')^{3/2}\|\mathcal{W}\|_{W^{2,4}(Q_1)}+C(\theta_1)\Big(\|\mathcal{F}\|_{L^{2}(Q_1)}+\|\nabla\mathcal{W}\|_{L^{2}(Q_1)}\Big).
\end{align*}
In view of \eqref{wmpfz1},
\begin{align*}
&\|\mathcal{W}\|_{W^{2,4}(Q_{1/2})}+\|\mathcal{G}-\mathcal{G}_{1}\|_{W^{1,4}(Q_{1/2})}\leq \,C\theta_1\|\mathcal{G}-\mathcal{G}_{1}\|_{L^{\infty}(Q_1)}\nonumber\\
&\quad\quad\quad\quad+C\delta(z')^{3/2}\|\mathcal{W}\|_{W^{2,4}(Q_1)}+C(\theta_1)\left(\|\nabla\mathcal{W}\|_{L^{2}(Q_1)}+\|\mathcal{F}\|_{L^{4}(Q_1)}\right).
\end{align*}
After shrinking $\epsilon,R$ if necessary, such that $C\delta(z')^{3/2}\leq \frac{1}{2}$, it follows from Lemma \ref{itLemma} that
\begin{align*}
	&\|\mathcal{W}\|_{W^{2,4}(Q_{1/2})}+\|\mathcal{G}-\mathcal{G}_{1}\|_{W^{1,4}(Q_{1/2})}
	\leq\, C\theta_1\|\mathcal{G}-\mathcal{G}_{1}\|_{L^{\infty}(Q_1)}+C(\theta_1)\left(\|\nabla\mathcal{W}\|_{L^{2}(Q_1)}+\|\mathcal{F}\|_{L^{4}(Q_1)}\right).
\end{align*}
By the Sobolev embedding theorem, 
	\begin{align*}
		\|\nabla\mathcal{W}\|_{L^{\infty}(Q_{1/2})}+&\|\mathcal{G}-\mathcal{G}_{1}\|_{L^{\infty}(Q_{1/2})}\leq\, C\big(\|\mathcal{W}\|_{W^{2,4}(Q_{1/2})}+\|\mathcal{G}-\mathcal{G}_{1}\|_{W^{1,4}(Q_{1/2})}\big).
	\end{align*}
Now choosing $\theta_{1}$ such that $C\theta_1=\frac{1}{2}$, and making use of Lemma \ref{itLemma} again yields  
\begin{align*}
		\|\nabla\mathcal{W}\|_{L^{\infty}(Q_{1/2})}+\|\mathcal{G}-\mathcal{G}_{1}\|_{L^{\infty}(Q_{1/2})}\leq C\left(	\|\nabla\mathcal{W}\|_{L^{2}(Q_1)}+\|\mathcal{F}\|_{L^{\infty}(Q_1)}\right).
	\end{align*}
Rescaling  to $({\bf w},q)$, there holds 
	\begin{align*}
		&\|\nabla {\bf w}\|_{L^{\infty}(\Omega_{\delta(z')/2}(z))}+\|q-q_{\delta(z');z'}\|_{L^{\infty}(\Omega_{\delta(z')/2}(z))}\nonumber\\
		\leq&\,
		\frac{C}{\delta(z')}\left(\delta(z')^{1-{d}/{2}}\|\nabla {\bf w}\|_{L^{2}(\Omega_{\delta(z')}(z))}+\delta(z')^2\|{\bf f}\|_{L^{\infty}(\Omega_{\delta(z')}(z))} \right)\nonumber\\
		\leq&\,
		C\left(\delta(z')^{-{d}/{2}}\|\nabla {\bf w}\|_{L^{2}(\Omega_{\delta(z')}(z))}+\delta(z')\|{\bf f}\|_{L^{\infty}(\Omega_{\delta(z')}(z))} \right).
	\end{align*}
Thus, \eqref{nsW2pstokes} holds. 

Applying Theorem \ref{thmWmq} with $m=1$ and $q=4$, 
	\begin{align*}
		&\|\mathcal{W}\|_{W^{3,4}(Q_{1/2})}+\|\mathcal{G}-\mathcal{G}_{1}\|_{W^{2,4}(Q_{1/2})}\nonumber\\
		\leq&\, C\left(\|\mathcal{F}\|_{W^{1,4}(Q_{2/3})}+\|\mathcal{M}\|_{W^{1,4}(Q_{2/3})}+\|\mathcal{W}\|_{W^{2,4}(Q_{2/3})}+\|\mathcal{G}-\mathcal{G}_{1}\|_{W^{1,4}(Q_{2/3})}\right).
	\end{align*}
By repeating the process above, we conclude that \eqref{Wmpstokes} holds. The proof of Proposition \ref{nsdpro} is finished.
\end{proof}

\section{Proofs for the Main Estimates in 3D}\label{sec4}
This section is dedicated to proving the main estimates in Proposition \ref{propu133D}, \ref{propuhe111} and \ref{lemCialpha3D}, by utilizing the framework established in Section \ref{sec3}. 

\subsection{Proof of Proposition \ref{propu133D} for $({\bf u}_1^3,p_1^3)$: the Nonlinear Part }
By some direct calculations, we have
\begin{align*}
	\partial_{x_j}k(x)=-\frac{2x_{j}}{\delta(x')}k(x),\quad\,j=1,2,\quad
	\partial_{x_3}k(x)
	=\frac{1}{\delta(x')},\quad\hbox{in}\ \Omega_{2R}.
\end{align*}
Further, recalling the definition of ${\bf v}_{1}^{3}$, \eqref{v133D}, in $\Omega_{2R}$,
\begin{align}
	\partial_{x_{j}}({\bf v}_{1}^{3})^{(j)}&=\left(\frac{3}{\delta(x')}-\frac{18x_{j}^{2}}{\delta(x')^{2}}\right)k(x)^{2}-\frac{1}{4}\left(\frac{3}{\delta(x')}-\frac{6x_{j}^{2}}{\delta(x')^{2}}\right),\quad\,j=1,2;\label{estv131}\\
	\partial_{x_{l}}({\bf v}_{1}^{3})^{(j)}&=\frac{18x_{1}x_{2}}{\delta(x')^{2}}k(x)^{2}+\frac{3}{2}\frac{x_{1}x_{2}}{\delta(x')^{2}},~ \partial_{x_3}({\bf v}_{1}^{3})^{(j)}=\frac{6x_{j}}{\delta(x')^2}k(x),~j,l=1,2,j\neq\,l;\nonumber
\end{align}
while, for $j=3$,
\begin{equation}\label{estv133}
	|\partial_{x_l}({\bf v}_{1}^{3})^{(3)}|\leq\frac{C|x_l|}{\delta(x')},\quad l=1,2,
\end{equation}
and
\begin{equation*}\label{estv1333}
	\partial_{x_3}({\bf v}_{1}^{3})^{(3)}=\left(\frac{18|x'|^{2}}{\delta(x')^{2}}-\frac{6}{\delta(x')}\right)k(x)^{2}-\frac{3}{2}\left(\frac{|x'|^{2}}{\delta(x')^{2}}-\frac{1}{\delta(x')}\right).
\end{equation*}
Hence,
\begin{equation}\label{estv133311}
	|\nabla{\bf v}_{1}^{3}|\leq\,C\left(\frac{|x'|}{\delta(x')^{2}}+\frac{1}{\delta(x')}\right).
\end{equation}

On the other hand, by virtue of the construction of ${\bar p}_1^3$ in \eqref{p133D}, we have
\begin{equation*}
	\mu\partial_{x_3}({\bf v}_{1}^{3})^{(3)}-\overline{p}_1^3=\frac{3}{2}\frac{\mu}{\delta(x')^2}-\frac{3}{2}\mu\left(\frac{|x'|^{2}}{\delta(x')^{2}}-\frac{1}{\delta(x')}\right),
\end{equation*}
and further differentiating it with respect to $x_{3}$, 
\begin{equation}\label{estv133p13}
	\mu\partial_{x_3x_3} ({\bf v}_{1}^3)^{(3)}-\partial_{x_3}\overline{p}_1^3=0.
\end{equation}
Here we would like to emphasize that this is the main reason for constructing this type of $\overline{p}_1^3$. While,
\begin{equation*}
	\partial_{x_j}\overline{p}_1^3=\frac{6\mu x_j}{\delta(x')^{3}}+\frac{72x_j\mu }{\delta(x')^2}\left(1-\frac{2|x'|^{2}}{\delta(x')}\right)k(x)^{2}, \quad j=1,2,
\end{equation*}
and 
$$\partial_{x_3x_{3}}({\bf v}_{1}^{3})^{(j)}=\frac{6x_{j}}{\delta(x')^{3}}.$$
So that
\begin{equation}\label{estv131p13}
	\Big|\mu\partial_{x_3x_3} ({\bf v}_{1}^3)^{(j)}-\partial_{x_j}\overline{p}_1^3\Big|=\left|-\frac{72x_j\mu}{\delta(x')^2}\left(1-\frac{2|x'|^{2}}{\delta(x')}\right)k(x)^{2}\right|\leq\frac{C|x'|}{\delta(x')^2}.
\end{equation}
For other second order derivatives, we have 
\begin{equation*}
	|\partial_{x_1x_1}({\bf v}^{3}_{1})^{(j)}|,|\partial_{x_2x_2}({\bf v}^{3}_{1})^{(j)}|\leq\frac{C|x'|}{\delta(x')^2},\quad |\partial_{x_1x_1}({\bf v}^{3}_{1})^{(3)}|,|\partial_{x_2x_2}({\bf v}^{3}_{1})^{(3)}|\leq\frac{C}{\delta(x')}.
\end{equation*}

\begin{lemma}Let $({\bf w}_{3},q_{3})$ be the solution to \eqref{w35}. Then
\begin{equation}\label{ztnl3d}
\int_{\Omega}|\nabla {\bf w}_1^3|^2\leq C
\end{equation}
\end{lemma}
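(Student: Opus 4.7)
The plan is to invoke Lemma \ref{lemmaenergy} on the pair $({\bf w}_3,q_3)$ governed by \eqref{w35}, with ${\bf f}$ as in \eqref{f333}. Hypotheses \eqref{estv113D3}--\eqref{jstj} must be verified. The size bounds $|\widetilde{\bf v}|\le C/\sqrt{\delta(x')}$ and $|\nabla\widetilde{\bf v}|\le C/\delta(x')^{3/2}$ follow by writing $\widetilde{\bf v}=C_1^3{\bf v}_1^3+{\bf u}^{\#3}$: the only pieces with genuine singular growth are $C_1^3{\bf v}_1^3$ and the $(i,\alpha)=(2,3)$ term in ${\bf u}^{\#3}$, both controlled by \eqref{boundofv13}, \eqref{estv133311} and their symmetric analogues for ${\bf v}_2^3$, while the remaining pieces from Proposition \ref{prop1.7} are bounded; the elementary inequality $|x'|/\delta(x')\le 1/\sqrt{\delta(x')}$ then absorbs the $|x'|/\delta$-growth. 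For \eqref{jstj} I would take ${\bf u}:={\bf u}_1^3$, which is divergence free by \eqref{equ_v13}; since ${\bf w}_3-{\bf u}_1^3=-{\bf v}_1^3$, the required bound is immediate from \eqref{boundofv13}. The $C^2$ and $L^\infty$ estimates outside $\Omega_R$ in \eqref{estv113d1} are standard interior regularity.

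The core task is to establish \eqref{int-fw}, i.e.
\begin{equation*}
\Big|\int_{\Omega_R}{\bf f}\cdot{\bf w}_3\,dx\Big|\le C\Big(\int_\Omega|\nabla{\bf w}_3|^2\,dx\Big)^{1/2}.
\end{equation*}
I would first observe that \eqref{f333} and the definition of $\widetilde{\bf v}$ yield the clean identity
$${\bf f}=\bigl(\mu\Delta{\bf v}_1^3-\nabla\overline{p}_1^3\bigr)-\widetilde{\bf v}\cdot\nabla\widetilde{\bf v},$$
splitting the integral into a linear residual and a nonlinear convective contribution. For the residual, the cancellation \eqref{estv133p13} together with $|\partial_{x_lx_l}({\bf v}_1^3)^{(3)}|\le C/\delta(x')$ reduces the $x_3$-component of $\mu\Delta{\bf v}_1^3-\nabla\overline{p}_1^3$ to $O(1/\delta(x'))$, and \eqref{estv131p13} together with the analogous bounds on $\partial_{x_lx_l}({\bf v}_1^3)^{(j)}$ give the $x_j$-components ($j=1,2$) of size $O(|x'|/\delta(x')^2)$. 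Then Hardy's inequality $\int_{\Omega_R}|{\bf w}_3|^2/\delta(x')^2\le C\int_\Omega|\nabla{\bf w}_3|^2$ (valid because ${\bf w}_3=0$ on $\partial D_1\cup\partial D_2$), combined with the elementary bounds $\int_{\Omega_R}1\,dx\le C$ and $\int_{\Omega_R}|x'|^2/\delta(x')^2\,dx\le C$ (using $|x'|^2\le C\delta(x')$), and Cauchy--Schwarz componentwise, would control this contribution by $C\|\nabla{\bf w}_3\|_{L^2(\Omega)}$.

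The hard part will be the convective term $\int_{\Omega_R}\widetilde{\bf v}\cdot\nabla\widetilde{\bf v}\cdot{\bf w}_3$. A naive pointwise bound yields only $|\widetilde{\bf v}\cdot\nabla\widetilde{\bf v}|\le C/\delta(x')^2$, and since $\int_{\Omega_R}1/\delta(x')^2\,dx\sim|\log\varepsilon|$ this produces at best a $C|\log\varepsilon|^{1/2}\|\nabla{\bf w}_3\|_{L^2}$ bound, which cannot close the Lemma \ref{lemmaenergy} argument. My remedy is a refined componentwise analysis. Direct inspection of \eqref{v133D} (and the symmetric construction for ${\bf v}_2^3$) shows $({\bf v}_i^3)^{(3)}=O(1)$ while $({\bf v}_i^3)^{(j)}=O(|x'|/\delta(x'))$ for $j=1,2$, so after adding the bounded contributions supplied by Proposition \ref{prop1.7} one has $\widetilde{\bf v}^{(3)}=O(1)$ and $\widetilde{\bf v}^{(j)}=O(|x'|/\delta(x'))$ for $j=1,2$. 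Similarly, the explicit computations around \eqref{estv131}--\eqref{estv133} (and their ${\bf v}_2^3$ analogues) give $\partial_{x_l}\widetilde{\bf v}^{(k)}=O(1/\delta(x'))$ for $k,l\in\{1,2\}$, $\partial_{x_3}\widetilde{\bf v}^{(j)}=O(|x'|/\delta(x')^2)$ for $j=1,2$, $\nabla_{x'}\widetilde{\bf v}^{(3)}=O(|x'|/\delta(x'))$, and $\partial_{x_3}\widetilde{\bf v}^{(3)}=O(1/\delta(x'))$. Multiplying these out and using $|x'|^2\le C\delta(x')$ yields
$$(\widetilde{\bf v}\cdot\nabla\widetilde{\bf v})^{(j)}=O(|x'|/\delta(x')^2),\quad j=1,2,\qquad(\widetilde{\bf v}\cdot\nabla\widetilde{\bf v})^{(3)}=O(1/\delta(x')),$$
exactly mirroring the componentwise structure of the linear residual above. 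The same Hardy/Cauchy--Schwarz argument therefore bounds this contribution by $C\|\nabla{\bf w}_3\|_{L^2(\Omega)}$, verifying \eqref{int-fw}; an application of Lemma \ref{lemmaenergy} then gives the required energy bound.
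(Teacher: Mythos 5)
Your plan follows the paper's own route exactly: both invoke Lemma \ref{lemmaenergy}, verify \eqref{estv113D3}--\eqref{jstj}, and reduce \eqref{int-fw} to showing $|{\bf f}|\lesssim |x'|/\delta(x')^2+1/\delta(x')$ so that $\|\delta(x'){\bf f}\|_{L^2(\Omega_R)}\leq C$ followed by Hardy and Cauchy--Schwarz. Your choice ${\bf u}:={\bf u}_1^3$ for \eqref{jstj} and the tidy identity ${\bf f}=(\mu\Delta{\bf v}_1^3-\nabla\overline{p}_1^3)-\widetilde{\bf v}\cdot\nabla\widetilde{\bf v}$ are valid and slightly cleaner than the paper's term-by-term treatment of \eqref{f333}. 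One caveat: several of your stated componentwise bounds are too optimistic. For $j=1,2$ the horizontal components $\widetilde{\bf v}^{(j)}$ pick up $O(1)$ pieces (e.g. $({\bf v}_1^j)^{(j)}=k+\tfrac12$ and $({\bf v}_1^5)^{(1)}$), and $\partial_{x_3}\widetilde{\bf v}^{(j)}$ picks up $O(1/\delta)$ pieces (from $\partial_{x_3}(k+\tfrac12)=1/\delta$), so the correct bounds are $\widetilde{\bf v}^{(j)}=O(|x'|/\delta+1)$, $\partial_{x_3}\widetilde{\bf v}^{(j)}=O(|x'|/\delta^2+1/\delta)$, and hence $(\widetilde{\bf v}\cdot\nabla\widetilde{\bf v})^{(j)}=O(|x'|/\delta^2+1/\delta)$, not $O(|x'|/\delta^2)$. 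Fortunately the extra $1/\delta$ is exactly the growth the Hardy/Cauchy--Schwarz step tolerates, so your conclusion $|{\bf T}|\leq C\|\nabla{\bf w}_3\|_{L^2}$ survives once the bookkeeping is corrected.
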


\begin{proof}
Recalling \eqref{f333},
$${\bf f}=\mu\,\Delta {\bf v}_1^3-\nabla \overline{p}_1^3-C_1^3\,\widetilde{\bf v}\cdot \nabla {\bf v}_1^3-\widetilde{\bf v}\cdot \nabla {\bf u}^{\#3},\quad\mbox{and}\quad~
\widetilde{\bf v}:=C_1^3{\bf v}_1^3+ {\bf u}^{\#3}.$$
We estimate the components in ${\bf f}$. First, it follows from \eqref{estv133p13}--\eqref{estv131p13} that 
\begin{equation}\label{estf13}
	\left|\mu\Delta{\bf v}_{1}^3-\nabla\overline{p}_1^3\right|\leq\frac{C|x'|}{\delta(x')^{2}},\quad \mbox{in}~\Omega_{2R}.
\end{equation}
For other terms, set
$${\bf u}^{\#3}:={\bf v}^{\#3}+{\bf w}^{\#3},$$
where
$${\bf v}^{\#3}=\sum_{(i,\alpha)\neq (1,3)}{\bf v}_i^\alpha,\quad\mbox{and}~~ {\bf w}^{\#3}=\sum_{(i,\alpha)\neq (1,3)}({\bf u}_i^\alpha-{\bf v}_i^\alpha)+{\bf u}_0.$$

Note that from \eqref{v133D},
\begin{align*}
|{\bf v}_1^3\cdot\nabla{\bf v}_1^3|=&\sum_{j=1,2}\Big(|({\bf v}_1^3)^{(3)}\partial_{x_{3}}({\bf v}_1^3)^{j}|+\sum_{i=1,2}|({\bf v}_1^3)^{(i)}\cdot\nabla_{x_{i}}({\bf v}_1^3)^{(j)}|\Big)+|{\bf v}_1^3\cdot\nabla({\bf v}_1^3)^{(3)}|\\
\leq&\,C\Big(\frac{|x'|}{\delta(x')^2}+\frac{1}{\delta(x')}\Big).
\end{align*}
Similarly,
$$|{\bf v}_2^3\cdot\nabla{\bf v}_1^3(x)|\leq C\Big(\frac{|x'|}{\delta(x')^2}+\frac{1}{\delta(x')}\Big).$$
By virtue of \eqref{zydfzgs11} in Remark \ref{rem28}, 
$$|{\bf u}^{\#3}-C_2^3{\bf u}_2^3|\leq|{\bf w}^{\#3}-C_2^3{\bf w}_2^3|+|{\bf v}^{\#3}-C_2^3{\bf v}_2^3|\leq\,C.$$
Then, combining with \eqref{boundofv13} and \eqref{estv133311} yields 
\begin{align*}
	|{\bf u}^{\#3}\cdot\nabla{\bf v}_1^3|\leq \Big( |{\bf u}^{\#3}-C_2^3{\bf u}_2^3|+|C_2^3{\bf w}_2^3|\Big)|\nabla{\bf v}_1^3|+|C_2^3{\bf v}_2^3\cdot\nabla{\bf v}_1^3|\leq C\Big(\frac{|x'|}{\delta(x')^2}+\frac{1}{\delta(x')}\Big).
\end{align*}
Hence
\begin{align}\label{fdgj3D}
	|C_1^3\,\widetilde{\bf v}\cdot \nabla {\bf v}_1^3(x)|&\leq C|{\bf v}_1^3\cdot\nabla{\bf v}_1^3|+C|{\bf u}^{\#3}\cdot\nabla{\bf v}_1^3|\leq C\Big(\frac{|x'|}{\delta(x')^2}+\frac{1}{\delta(x')}\Big).
\end{align}
Similarly, 
\begin{equation}\label{fdgj3D2}
	|\widetilde{\bf v}\cdot \nabla {\bf u}^{\#3}(x)|\leq C|{\bf v}_1^3\cdot \nabla {\bf u}^{\#3}|+C|{\bf u}^{\#3}\cdot \nabla {\bf u}^{\#3}| \leq C\Big(\frac{|x'|}{\delta(x')^2}+\frac{1}{\delta(x')}\Big).
\end{equation}
By means of \eqref{estf13}, \eqref{fdgj3D} and \eqref{fdgj3D2}, we deduce
\begin{align}\label{fz9}
	|{\bf f}(x)|\leq\,C\Big(\frac{|x'|}{\delta(x')^2}+\frac{1}{\delta(x')}\Big).
\end{align}

By using the H{\"o}lder's inequality and the Hardy inequality, it follows from \eqref{fz9} that
\begin{align}\label{int-fw?}
	\Big| \int_{\Omega_{R}}\sum_{j=1}^{3}{\bf f}^{(j)}({\bf w}_1^3)^{(j)}\Big|\leq\|\delta(x'){\bf f}\|_{L^2(\Omega_{R})}\Big\|\frac{{\bf w}_1^3}{x_3}\Big\|_{L^2(\Omega_{R})}\leq 
	C\left(\int_{\Omega}|\nabla {\bf w}_1^3|^2\mathrm{d}x\right)^{1/2}.
\end{align}
Moreover, by using \eqref{zydfzgs11}, we have, in $\Omega_{R}$,
\begin{equation*}
|{\bf u}^{\#3}(x)|\leq \sum_{(i,\alpha)\neq (1,3)}|{\bf u}_i^\alpha-{\bf v}_i^\alpha|+|{\bf u}_0|+\sum_{(i,\alpha)\neq (1,3)}|{\bf v}_i^\alpha|\leq \frac{C}{\sqrt{\delta(x')}},
\end{equation*}
and
\begin{equation*}
	|\nabla{\bf u}^{\#3}(x)|\leq \sum_{(i,\alpha)\neq (1,3)}|\nabla({\bf u}_i^\alpha-{\bf v}_i^\alpha)|+|\nabla{\bf u}_0|+\sum_{(i,\alpha)\neq (1,3)}|\nabla{\bf v}_i^\alpha|\leq \frac{C}{\delta(z')^{3/2}}.
\end{equation*}
Hence, by using Proposition \ref{lemma11} and \eqref{v133D}, we obtain, in $\Omega_{R}$,
\begin{align}\label{js1}
	|\widetilde{\bf v}(x)|=|( C_1^3{\bf v}_1^3+ {\bf u}^{\#3})|\leq C|{\bf v}_1^3|+ |{\bf u}^{\#3}|\leq \frac{C}{\sqrt{\delta(x')}},
\end{align}
and
\begin{align}\label{js2}
	|\nabla\widetilde{\bf v}(x)|=|( C_1^3\nabla{\bf v}_1^3+ \nabla{\bf u}^{\#3})|\leq C|\nabla{\bf v}_1^3|+ |\nabla{\bf u}^{\#3}|\leq  \frac{C}{\delta(x')^{3/2}}.
\end{align}
Hence, by making use of \eqref{int-fw?}--\eqref{js2} and Lemma \ref{lemmaenergy}, we deduce
\eqref{ztnl3d} holds.
\end{proof}

Using \eqref{fz9}, we have
	\begin{align}\label{L2_f13???}
		\int_{\Omega_{s}(z')}|{\bf f}_{1}^{3}(x)|^{2}\mathrm{d}x\leq\frac{Cs^2}{\delta(z')^{3}}(s^2+|z'|^2+\delta(z')^2).
\end{align}
By virtue of \eqref{L2_f13???}, we derive the following local energy estimate.
\begin{lemma}\label{nszyly333}
	Let $({\bf w}, q)$ be the solution of \eqref{w35}. Then 
	\begin{equation}\label{nssfz}
		\int_{\Omega_\delta(z')} |\nabla {\bf w}|^2\leq C \delta(z')^2.
	\end{equation}
\end{lemma}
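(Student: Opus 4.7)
The plan is to iterate the Caccioppoli-type inequality \eqref{nsiterating1} starting from the global energy bound \eqref{ztnl3d}, absorbing the right-hand side using the forcing estimate \eqref{L2_f13???}. Write $\delta := \delta(z')$ and $F(r) := \int_{\Omega_r(z')}|\nabla{\bf w}|^2$. The key observation is that the contraction factor in \eqref{nsiterating1},
$$\frac14 + \frac{C\delta^2}{(s-t)^2} + \frac{\delta}{8(s-t)},$$
tends to $\tfrac14$ as $(s-t)/\delta \to \infty$. Hence I would fix a universal constant $K_0$ (large, depending only on $C$) such that
$$\gamma := \frac14 + \frac{C}{K_0^2} + \frac{1}{8K_0} < \frac12,$$
so that \eqref{nsiterating1} becomes a genuine contraction whenever $s - t \geq K_0\delta$.

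Next I would define the scales $r_j := jK_0\delta$ for $j = 1,\dots,N$ with $N := \lfloor R/(K_0\delta)\rfloor$, and apply \eqref{nsiterating1} successively with $s = r_{j+1}$, $t = r_j$:
$$F(r_j) \leq \gamma\, F(r_{j+1}) + CK_0^2\delta^2\int_{\Omega_{r_{j+1}}(z')}|{\bf f}|^2.$$
Iterating from $j=1$ to $j=N-1$ and invoking \eqref{ztnl3d} to control $F(r_N) \leq \int_\Omega|\nabla{\bf w}|^2 \leq C$, one gets
$$F(r_1) \leq \gamma^{N-1}C + CK_0^2\delta^2\sum_{j=1}^{N-1}\gamma^{j-1}\int_{\Omega_{r_{j+1}}(z')}|{\bf f}|^2.$$
Since $N-1 \geq cR/\delta$, the first term decays super-polynomially in $1/\delta$ and is in particular bounded by $C\delta^2$. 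For the forcing sum, substituting \eqref{L2_f13???} and using $|z'|^2 \leq 2\delta$ gives
$$CK_0^2\delta^2\int_{\Omega_{r_{j+1}}(z')}|{\bf f}|^2 \leq CK_0^6(j+1)^4\delta^3 + CK_0^4(j+1)^2\delta^2.$$
Because $\gamma < 1$, the series $\sum_j\gamma^{j-1}(j+1)^k$ converge absolutely with bounds independent of $N$, so the forcing contribution is also bounded by $C\delta^2$. Combining the two, $F(K_0\delta) = F(r_1) \leq C\delta^2$, and the desired inequality \eqref{nssfz} follows from the inclusion $\Omega_\delta(z') \subset \Omega_{K_0\delta}(z')$.

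The principal technical point is balancing the contraction factor against the polynomial growth of the forcing across successive scales: since \eqref{L2_f13???} grows like $s^4/\delta^3$, the per-step contribution is $O((j+1)^4\delta^3)$, and only the geometric decay in $\gamma^j$ allows summation without destroying the desired $\delta^2$ scaling. A secondary delicacy is that $K_0$ must be chosen universally, independent of $\varepsilon$ and $z'$; this relies on the constant $C$ appearing in \eqref{nsiterating1} being itself universal (depending only on $d,\mu,\kappa_0,\kappa_1,\kappa$). Finally, one must ensure $\varepsilon$ is small enough that $K_0\delta \leq R$, so that the iteration actually reaches the global scale — this is precisely the sort of smallness already assumed elsewhere in the section.
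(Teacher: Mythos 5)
Your argument is correct and essentially the same as the paper's: both iterate the Caccioppoli-type inequality \eqref{nsiterating1} with a fixed contraction factor, substitute the forcing bound \eqref{L2_f13???}, and close the induction by appealing to the global energy bound \eqref{ztnl3d}. The only (immaterial) parametric difference is that the paper stops after $k_0\sim 1/(c_0\sqrt{\delta(z')})$ steps at the intermediate scale $\sim\sqrt{\delta(z')}$, while you run $\sim R/(K_0\delta(z'))$ steps all the way to scale $R$; both terminate in $C\delta(z')^2$ because the exponential decay of $\gamma^j$ dominates the polynomial growth in the forcing.
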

\begin{proof}
Here we adopt the iteration scheme, which is in spirit similar to that used in \cite[Lemma 3.4]{LX2}.
	Denote $F(t):=\int_{\Omega_{t}(z_1)}|\nabla {\bf w}|^{2}$. Substituting \eqref{L2_f13???} into \eqref{nsiterating1}, we have
		\begin{align}\label{nsiteration3D}
			F(t)\leq &\Big(\frac{1}{4}+(\frac{c_{0}\delta(z')}{s-t})^{2}+\frac{1}{8}\frac{\delta(z')}{s-t}\Big) F(s)+C\Big(\!(s-t)^{2}\!\!+\!\delta(z')^2\Big)\frac{s^2(s^2\!\!+\!|z'|^2\!+\!\delta(z')^2)}{\delta(z')^3},
		\end{align}
where constant $c_{0}>1$ now is fixed.  Let $k_{0}=\left[\frac{1}{4c_{0}\sqrt{\delta(z')}}\right]+1$ and $t_{i}=\delta(z')+2c_{0}i\delta(z'), i=0,1,2,\dots,k_{0}$. Then we take $s=t_{i+1}$ and $t=t_{i}$ in \eqref{nsiteration3D} to derive an iteration formula
		\begin{equation*}
			F(t_{i})\leq \frac{9}{16}F(t_{i+1})+C(i+1)^2\delta(z')^2.
		\end{equation*}
		After $k_{0}$ iterations, and using \eqref{ztnl3d}, we obtain, for sufficiently small $\varepsilon$ and $|z'|$,
		\begin{equation*}
			F(t_0)\leq \left(\frac{9}{16}\right)^{k_{0}}F(t_{k_{0}})
			+C\delta(z')^2\sum\limits_{l=0}^{k_{0}-1}\left(\frac{9}{16}\right)^{l}(l+1)^2\leq C\delta(z')^2.
	\end{equation*}
The proof of Lemma \ref{nszyly333} is completed.
\end{proof}

We now are  ready to prove Proposition \ref{propu133D} for the nonlinear part $({\bf u}_1^3,p_1^3)$.
\begin{proof}[Proof of Proposition \ref{propu133D} for $({\bf u}_1^3,p_1^3)$.]
  Taking advantage of Proposition \ref{nsdpro}, \eqref{fz9} and \eqref{nssfz}, we have
	\begin{align*}
		\|\nabla {\bf w}_1^3\|_{L^{\infty}(\Omega_{\delta(z')/2}(z))}
		\leq&\,
		\,C\left(\delta(z')^{-3/2}\|\nabla {\bf w}_1^3\|_{L^{2}(\Omega_{\delta(z')}(z))}+\delta(z')\|{\bf f}_1^3\|_{L^{\infty}(\Omega_{\delta(z')}(z))} \right)\nonumber\\
		\leq&\,\,C\left(\delta(z')^{-1/2}+\frac{|z'|}{\delta(z')}\right)
		\leq\,\frac{C}{\sqrt{\delta(z')}},
	\end{align*}
	and 
	\begin{align*}
		&\|\nabla^2 {\bf w}_1^3\|_{L^{\infty}(\Omega_{\delta(z')/2}(z))}+\|\nabla q_1^3\|_{L^{\infty}(\Omega_{\delta(z')/2}(z))}\nonumber\\
		\leq&\,
		\,C\Big(\delta(z')^{-5/2}\|\nabla {\bf w}_1^3\|_{L^{2}(\Omega_{\delta(z')}(z))}+\|{\bf f}_1^3\|_{L^{\infty}(\Omega_{\delta(z')}(z))}+\delta(z')\|\nabla{\bf f}_1^3\|_{L^{\infty}(\Omega_{\delta(z')}(z))}\Big)\nonumber\\
		\leq&\,\,\frac{C}{\delta(z')^{3/2}},
	\end{align*}
	Recalling  ${\bf w}_1^3={\bf u}_1^3-{\bf v}_1^3$, $q_1^3=p_1^3-\bar p_1^3$, and using \eqref{estv131}--\eqref{estv133}, we complete the proof of Proposition \ref{propu133D}. 
\end{proof}

\subsection{Proof of Proposition \ref{propuhe111}}
By using the same argument as in Proposition \ref{propu133D}, we prove Proposition \ref{propuhe111}.

\begin{proof}[Proof of Proposition \ref{propuhe111}]
	Given a point $z=(z', z_{3})\in \Omega_{R}$,
	denote ${\bf u}_{\psi}^{3}={\bf u}_1^3+{\bf u}_2^3-\boldsymbol{\psi}_3$ and $p_3=p_1^3+p_2^3$, then $\nabla{\bf u}_{\psi}^{3}=\nabla({\bf u}_1^3+{\bf u}_2^3)$. Moreover, by virtue of \eqref{equ_v1} and \eqref{equ_v13}, we can derive $({\bf u}_{\psi}^{3}, p_3)$ verify the following boundary value problem,
	\begin{equation}\label{equ_v12Dd11?}
		\begin{cases}
			\nabla\cdot\sigma[{\bf u}_{\psi}^{3},p_3]={\bf u}\cdot\nabla{\bf u},\quad\nabla\cdot {\bf u}_{\psi}^{3}=0,&\mathrm{in}~\Omega,\\
			{\bf u}_{\psi}^{3}=0,&\mathrm{on}~\partial{D}_{1}\cup\partial{D_{2}},\\
			{\bf u}_{\psi}^{3}=-\boldsymbol{\psi}_3,&\mathrm{on}~\partial{D}.
		\end{cases}
	\end{equation}
	Since ${\bf u}_{\psi}^{3}=0$ on $\partial D_1\cup\partial D_2$, by using the Poincar\'e inequality, we have 
	\begin{align*}\label{estwDwl21}
		\int_{\Omega_{s}(z')}|{{\bf u}_{\psi}^{3}}|^2 \leq
		C\delta(z')^2\int_{\Omega_{s}(z_1)}|\nabla {{\bf u}_{\psi}^{3}}|^2.
	\end{align*}

Set $${\bf u}_{2}^{\#3}={\bf u}_a+\sum_{\alpha\neq 3}C_{2}^{\alpha}({\bf u}_{1}^{\alpha}+{\bf u}_{2}^{\alpha})+C_2^3\boldsymbol{\psi}_3,$$ 
	where ${\bf u}_a=\sum_{\alpha=1}^{6}|C_{1}^{\alpha}-C_{2}^{\alpha}|{\bf u}_{1}^{\alpha}+{\bf u}_{0}.$
	Then, we obtain
	\begin{equation*}
		{\bf u}\cdot\nabla{\bf u}=({\bf u}_{2}^{\#3}+C_2^3{\bf u}_{\psi}^{3})\cdot\nabla({\bf u}_{2}^{\#3}+C_2^3{\bf u}_{\psi}^{3}).
	\end{equation*}
Then, we can rewrite \eqref{equ_v12Dd11?} as
\begin{equation*}
\left\{\begin{split}
\mu \Delta {\bf u}_{\psi}^{3}&=(C_2^3)^2{\bf u}_{\psi}^{3} \cdot\nabla{\bf u}_{\psi}^{3}+C_2^3{\bf u}_{2}^{\#3} \cdot\nabla{\bf u}_{\psi}^{3}\\
&\quad\quad\quad\quad+C_2^3{\bf u}_{\psi}^{3} \cdot\nabla{\bf u}_{2}^{\#3}+{\bf u}_{2}^{\#3}\cdot\nabla{\bf u}_{2}^{\#3},&\mathrm{in}&~\Omega,\\
			\nabla\cdot {\bf u}_{\psi}^{3}&=0,&\mathrm{in}&~\Omega,\\
		{\bf u}_{\psi}^{3}&=0,~\mathrm{on}~\partial{D}_{1}\cup\partial{D_{2}},\quad
			{\bf u}_{\psi}^{3}=-\boldsymbol{\psi}_3,\mathrm{on}~\partial{D}.
\end{split}\right.
\end{equation*}
 By virtue of Propositions \ref{propu113D1}, \ref{propu4563D}--\ref{prop1.7}, \ref{propu133D} and \eqref{zydfzgs11}, we can derive
    	\begin{align}\label{dr1}
    		|C_2^3{\bf u}_{2}^{\#3}|\leq C\sum_{\alpha=1}^6 |{\bf u}_1^\alpha-{\bf v}_1^\alpha|+C\sum_{\alpha\neq 3}|{\bf u}_1^\alpha+{\bf u}_2^\alpha|+C\sum_{\alpha=1}^6|{\bf v}_1^\alpha|+C\leq \frac{C}{\sqrt{\delta(x')}}.
    	\end{align}
    	Similarly, one can verify
    	\begin{align}\label{dr2}
    		|C_2^3\nabla{\bf u}_{2}^{\#3}(x)|\leq \frac{C}{\delta(x')^{3/2}}.
    	\end{align}
    	Next, we pay our attention to the non-homogeneous terms ${\bf u}_{2}^{\#3}\cdot\nabla{\bf u}_{2}^{\#3}$.
	By using Propositions \ref{propu113D1}, \ref{propu4563D}, \ref{propu133D} and mean-value theorem, since ${\bf w}_1^\alpha=0$ on $\partial D_1$, a direct calculation yields, $\alpha\neq 3$, $x\in\Omega_{R}$,
	\begin{align}\label{wsc1}
		|{\bf w}_1^\alpha(x)|&=	|{\bf w}_1^\alpha(x)-{\bf w}_1^\alpha(x',\varepsilon/2+|x'|^2/2)|\leq C \delta(x'),
	\end{align}
and
	\begin{align}\label{wsc2}
	|{\bf w}_1^3(x)|&=	|{\bf w}_1^3(x)-{\bf w}_1^3(x',\varepsilon/2+|x'|^2/2)|\leq C \sqrt{\delta(x')}.
\end{align}
Thus, by using \eqref{wsc1}, we deduce
	\begin{align*}
\Big|\sum_{\alpha\neq 3}|C_{1}^{\alpha}-C_{2}^{\alpha}|{\bf u}_{1}^{\alpha}\cdot\nabla {\bf u}_1^3\Big|
\leq&\, C\Big|\sum_{\alpha\neq 3}{\bf v}_{1}^{\alpha}\cdot\nabla {\bf v}_1^3\Big|+C\Big|\sum_{\alpha\neq 3}{\bf w}_{1}^{\alpha}\cdot\nabla {\bf v}_1^3\Big|+C\Big|\sum_{\alpha\neq 3}{\bf v}_{1}^{\alpha}\cdot\nabla {\bf w}_1^3\Big|\\
		\leq&\, C\Big|\sum_{\alpha\neq 3}{\bf v}_{1}^{\alpha}\cdot\nabla {\bf v}_1^3\Big|+\frac{C|x'|}{\delta(x')}\leq \frac{C\sum_{\alpha\neq3,4}|C_1^\alpha-C_2^\alpha|}{\delta(x')}+\frac{C}{\sqrt{\delta(x')}}.
	\end{align*}
	where we use the fact that $|\partial_{x_3}({\bf v}_1^3)^{(j)}|\sim \frac{C|x'|}{\delta(x')^2}$, $j=1,2$ are the biggest term in the gradient matrix of ${\bf v}_1^3$, and $|({\bf v}_1^\alpha)^{(3)}| \leq C|x'|$, $\alpha\neq 3$. Similarly, in view of \eqref{wsc2}, we have
	\begin{align*}
		\Big|\left(C_{1}^{3}-C_{2}^{3}\right){\bf u}_{1}^{3}\cdot\nabla {\bf u}_1^3\Big|\leq \frac{C|C_{1}^{3}-C_{2}^{3}||x'|}{\delta(x')^2}+\frac{C|C_{1}^{3}-C_{2}^{3}|}{\delta(x')}+C.
	\end{align*}
	By using mean-value theorem and Proposition \ref{prop1.7}, we have
	\begin{equation*}
		\Big|{\bf u}_0+\sum_{\alpha\neq 3}C_{2}^{\alpha}({\bf u}_{1}^{\alpha}+{\bf u}_{2}^{\alpha}-\boldsymbol{\psi}_\alpha)\Big|\leq C\delta(x').
	\end{equation*}
 Hence,
	\begin{equation*}
		\Big|\Big({\bf u}_0+\sum_{\alpha\neq 3}C_{2}^{\alpha}({\bf u}_{1}^{\alpha}+{\bf u}_{2}^{\alpha})\Big)\cdot \nabla {\bf u}_1^3\Big|\leq C\Big|\sum_{\alpha\neq 3}\boldsymbol{\psi}_\alpha\cdot \nabla{\bf v}_1^3\Big|+\frac{C|x'|}{\delta(x')}\leq\frac{C}{\delta(x')}.
	\end{equation*}
	The other terms can be estimated similarly. Hence, 
	$$|{\bf u}_{2}^{\#3}\cdot\nabla{\bf u}_{2}^{\#3}| \leq\frac{C\sum_{\alpha\neq4}|C_1^\alpha-C_2^\alpha|}{\delta(x')} +\frac{C|C_1^3-C_2^3||x'|}{\delta(x')^2}+\frac{C|C_1^4-C_2^4||x'|}{\delta(x')}+C.$$
	Then, by using \eqref{dr1}, \eqref{dr2} and the same argument as that in Proposition \ref{propu133D}, we derive
	$$\|\nabla {\bf u}_{\psi}^{3}\|_{L^{\infty}(\Omega_{\delta(z')/2}(x'))}\leq \frac{C|C_1^3-C_2^3|}{\sqrt{\delta(x')}}+C\leq \frac{C}{\sqrt{\delta(x')}},$$
	and
	\begin{align*}
		&\|\nabla^2{\bf u}_{\psi}^{3}\|_{L^{\infty}(\Omega_{\delta(z')/2}(x'))}+	\|\nabla p_3\|_{L^{\infty}(\Omega_{\delta(z')/2}(x'))}\\
		\leq&\, \frac{C\sum_{\alpha\neq4}|C_1^\alpha-C_2^\alpha|}{\delta(x')} +\frac{C|C_1^3-C_2^3||x'|}{\delta(x')^2}+\frac{C|C_1^4-C_2^4||x'|}{\delta(x')}+C.
	\end{align*}
	We thus complete the proof.
\end{proof}

\subsection{Proof of Proposition \ref{lemCialpha3D} on $|C_1^\alpha-C_2^\alpha|$}\label{subsec43}

From \eqref{sto-2} and decomposition \eqref{introC}, we have the following linear system of $C_{i}^{\alpha}$:
\begin{equation}\label{equ-decompositon?}
	\sum_{i=1}^2\sum\limits_{\alpha=1}^{6} C_{i}^{\alpha}
	\int_{\partial D_j}{\boldsymbol\psi}_\beta\cdot\sigma[{\bf u}_{i}^\alpha,p_{i}^{\alpha}]\nu
	+\int_{\partial D_j}{\boldsymbol\psi}_\beta\cdot\sigma[{\bf u}_{0},p_{0}]\nu=0,~~\beta= 1,\dots,6,
\end{equation}
where $j=1,2$. Define
\begin{align*}
	a_{ij}^{\alpha\beta}:=-
	\int_{\partial D_j}{\boldsymbol\psi}_\beta\cdot\sigma[{\bf u}_{i}^\alpha,p_{i}^{\alpha}]\nu,\quad
	b_{j}^{\beta}:=
	\int_{\partial D_j}{\boldsymbol\psi}_\beta\cdot\sigma[{\bf u}_{0},p_{0}]\nu.
\end{align*}
Using the integration by parts, \eqref{equ_v1}, and \eqref{equ_u0}, we have, for $(i,\alpha)\neq (1,3)$,
\begin{align*}
	a_{ij}^{\alpha\beta}=\int_{\Omega} \left(2\mu e({\bf u}_{i}^{\alpha}), e({\bf u}_{j}^{\beta})\right)\mathrm{d}x,\quad
	b_{j}^{\beta}=-\int_{\Omega} \left(2\mu e({\bf u}_{0}),e({\bf u}_{j}^\beta)\right)\mathrm{d}x,
\end{align*}
while,
\begin{align*}
	a_{1j}^{3\beta}=\int_{\Omega} \left(2\mu e({\bf u}_{1}^{3}), e({\bf u}_{j}^{\beta})\right)\mathrm{d}x+\int_{\Omega}{\bf u}\cdot\nabla{\bf u}\cdot{\bf u}_j^\beta\mathrm{d}x.
\end{align*}
Thus, \eqref{equ-decompositon?} can be rewritten as
\begin{align}\label{systemC}
	\begin{cases}
		\sum\limits_{\alpha=1}^{6}C_{1}^{\alpha}a_{11}^{\alpha\beta}
		+\sum\limits_{\alpha=1}^{6}C_{2}^{\alpha}a_{21}^{\alpha\beta}
		-b_{1}^{\beta}=0,&\\\\
		\sum\limits_{\alpha=1}^{6}C_{1}^{\alpha}a_{12}^{\alpha\beta}
		+\sum\limits_{\alpha=1}^{6}C_{2}^{\alpha}a_{22}^{\alpha\beta}
		-b_{2}^{\beta}=0.
	\end{cases}\quad \beta=1,\dots,6.
\end{align}
To prove Proposition \ref{lemCialpha3D}, we first estimate  $a_{11}^{\alpha\beta}$ and $b_1^\beta$, $\alpha,\beta=1,\dots,6$. First, for the diagonal elements $a_{11}^{\alpha\alpha}$, $\alpha=1,\dots,6$.
\begin{lemma}\label{lema113D}
	We have
	\begin{align*}
		\frac{1}{C}|\log\varepsilon|\leq a_{11}^{\alpha\alpha}\leq C|\log\varepsilon|,\quad\alpha=1,2,5,6,~~
		\frac{1}{C\varepsilon}\leq a_{11}^{33}\leq \frac{C}{\varepsilon},
		~\mbox{and}~\frac{1}{C}\leq a_{11}^{44}\leq C.
	\end{align*}
\end{lemma}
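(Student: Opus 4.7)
The plan is to convert $a_{11}^{\alpha\alpha}$ into a strain-energy integral and bracket it via the auxiliary function ${\bf v}_1^\alpha$. Testing the equations satisfied by $({\bf u}_1^\alpha, p_1^\alpha)$ against ${\bf u}_1^\alpha$ itself, integrating by parts, and using the boundary data ${\bf u}_1^\alpha=\boldsymbol\psi_\alpha$ on $\partial D_1$ and ${\bf u}_1^\alpha=0$ on $\partial D_2\cup\partial D$, the divergence-free condition, and the symmetry of $e$, one obtains
\[
a_{11}^{\alpha\alpha}=\int_\Omega 2\mu\,|e({\bf u}_1^\alpha)|^2\,dx\qquad(\alpha\neq 3),
\]
while for $\alpha=3$ the identity acquires an extra nonlinear contribution $\int_\Omega ({\bf u}\cdot\nabla{\bf u})\cdot{\bf u}_1^3\,dx$. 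I would bound this nonlinear piece by H\"older and the Sobolev embedding $H^1\hookrightarrow L^4$ in dimension three, using $\|\nabla{\bf u}\|_{L^2(\Omega)}\le C$ (from the Ladyzhenskaya theory together with Proposition \ref{lemma11}) and $\|\nabla{\bf u}_1^3\|_{L^2(\Omega)}\le C\varepsilon^{-1/2}$ (a direct consequence of the pointwise bounds in Proposition \ref{propu133D}), producing a bound of $C\varepsilon^{-1/2}=o(\varepsilon^{-1})$, which is strictly lower order than the principal term.

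For the upper bound, I would decompose $\Omega=\Omega_R\cup(\Omega\setminus\Omega_R)$. Outside $\Omega_R$ all relevant quantities are $O(1)$, so that contribution is negligible. Inside $\Omega_R$, write ${\bf u}_1^\alpha={\bf v}_1^\alpha+({\bf u}_1^\alpha-{\bf v}_1^\alpha)$ and use Propositions \ref{propu113D1}, \ref{propu4563D}, \ref{propu133D} to control the remainder: its gradient is bounded by $C$ (respectively $C/\sqrt{\delta(x')}$ for $\alpha=3$), whose $L^2(\Omega_R)$-norm is $O(1)$ since $|\Omega_R|\le C$. The principal piece $\int_{\Omega_R}|\nabla{\bf v}_1^\alpha|^2\,dx$ is evaluated directly from the explicit formulas \eqref{v1alpha}, \eqref{v15}, \eqref{v133D}. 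Integrating first in $x_3$ (which produces a factor $\delta(x')$) and using $\delta(x')\sim\varepsilon+|x'|^2$, the representative integrals
\[
\int_{\Omega_R}\!\frac{dx}{\delta(x')^{2}}\sim|\log\varepsilon|,\quad \int_{\Omega_R}\!\frac{|x'|^2\,dx}{\delta(x')^{4}}\sim\frac{1}{\varepsilon},\quad \int_{\Omega_R}\!\frac{|x'|^2\,dx}{\delta(x')^{2}}\sim 1,
\]
dominate for $\alpha\in\{1,2,5,6\}$, $\alpha=3$, and $\alpha=4$ respectively, giving the three upper bounds.

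For the lower bounds on $\alpha\in\{1,2,3,5,6\}$, I would apply the triangle inequality
\[
\|e({\bf u}_1^\alpha)\|_{L^2(\Omega_R)}\ge\|e({\bf v}_1^\alpha)\|_{L^2(\Omega_R)}-\|e({\bf u}_1^\alpha-{\bf v}_1^\alpha)\|_{L^2(\Omega_R)},
\]
noting that the remainder term is $O(1)$ as above. Identifying the leading strain component of ${\bf v}_1^\alpha$---the Keller-type term $e({\bf v}_1^\alpha)^{(\alpha,3)}\sim 1/(2\delta)$ for $\alpha=1,2$, the $x_j/\delta^2$ contribution coming from $\partial_{x_3}({\bf v}_1^3)^{(j)}$ for $\alpha=3$, and analogous nondegenerate components for $\alpha=5,6$---one obtains lower bounds of the same order as the upper bounds, since the blow-up strictly dominates $O(1)$. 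For $\alpha=4$ the principal term is already of order one, so in place of the triangle argument I would invoke Korn's inequality together with the trace inequality to obtain $\|e({\bf u}_1^4)\|_{L^2(\Omega)}\gtrsim \|{\bf u}_1^4\|_{H^{1/2}(\partial D_1)}=\|\boldsymbol\psi_4\|_{H^{1/2}(\partial D_1)}$, which is a positive geometric constant independent of $\varepsilon$.

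The main obstacle is the case $\alpha=3$, where one must simultaneously verify that the nonlinear correction is $o(\varepsilon^{-1})$ (via the $L^4$-$L^2$-$L^4$ H\"older argument above) and extract the correct $1/\varepsilon$ coefficient from the multi-term formula \eqref{v133D} for ${\bf v}_1^3$, ensuring that the competing quadratic pieces do not cancel the leading Keller-type contribution. The remaining cases then parallel the Stokes analysis carried out in \cite{LX2}.
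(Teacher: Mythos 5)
Your approach matches the paper's in its essentials: convert $a_{11}^{\alpha\alpha}$ into a strain-energy integral plus (for $\alpha=3$) a trilinear correction, evaluate the quadratic part via the explicit ${\bf v}_1^\alpha$, and show the nonlinear piece is lower order. The paper handles this very tersely, citing \cite[Lemma 3.8 and (3.49)]{LX2} for every quadratic integral and simply observing $\bigl|\int_\Omega {\bf u}\cdot\nabla{\bf u}\cdot{\bf u}_1^3\bigr|\leq C\int_{\Omega_R}\delta(x')^{-2}\leq C|\log\varepsilon|$ by the pointwise bounds of Propositions \ref{propu113D1}--\ref{propu133D}. Two places where your route differs, neither fatal but both worth tightening: (i) For the trilinear term you invoke H\"older--Sobolev to get $C\varepsilon^{-1/2}$, which is sufficient but much cruder than the paper's $C|\log\varepsilon|$; the paper's sharper bound comes from the pointwise cancellation inherent in $|{\bf v}_i^3\cdot\nabla{\bf v}_j^3|\lesssim |x'|/\delta^2+1/\delta$ rather than $|{\bf v}_i^3||\nabla{\bf v}_j^3|\lesssim 1/\delta^2$. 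More importantly, your bound requires $\|\nabla{\bf u}\|_{L^2(\Omega)}\leq C$ \emph{uniformly in $\varepsilon$}; this is true but does not follow from Proposition \ref{lemma11} alone (which gives only $\|{\bf u}\|_{H^1(\Omega\setminus\Omega_R)}\leq C$), and one cannot deduce it from the decomposition either since $\|\nabla{\bf u}_1^3\|_{L^2}\sim\varepsilon^{-1/2}$ and $|C_1^3-C_2^3|$ is not yet controlled at this stage. It needs the a priori energy estimate for the full system \eqref{sto} with boundary data only on the far boundary $\partial D$, which the paper circumvents by working pointwise. (ii) For the $\alpha=4$ lower bound via Korn plus trace, the Korn and trace constants must be uniform in $\varepsilon$ even though $\Omega$ degenerates near the gap; this is fine provided you take the trace on a fixed compact subset of $\partial D_1$ away from the neck (a fixed collar exists there), rather than on all of $\partial D_1$ as written.
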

\begin{proof}
	By using the same argument as that in \cite[(3.49)]{LX2}, we have
	\begin{align*}
\frac{1}{C\varepsilon}\leq \Big|\int_{\Omega} \left(2\mu e({\bf u}_{1}^{3}), e({\bf u}_{1}^{3})\right)\Big|\leq \frac{C}{\varepsilon}
	\end{align*}
In addition, by using Propositions \ref{propu113D1}--\ref{propu133D}, \eqref{v1alpha}, \eqref{v15}, we have
	\begin{align*}
		\Big|\int_{\Omega}{\bf u}\cdot\nabla{\bf u}\cdot{\bf u}_1^3\Big|\leq C\int_{\Omega_{R}}\frac{1}{\delta(x')^2}\leq C|\log\varepsilon|.
	\end{align*}
	Hence, we have $\frac{1}{C\varepsilon}\leq a_{11}^{33}\leq \frac{C}{\varepsilon}$.

The other terms are the same as that in \cite[Lemma 3.8]{LX2}. We omit the proof here.
\end{proof}

For the other elements in $(a_{11}^{\alpha\beta})_{6\times6}$, and $b_1^\beta$, 
\begin{lemma}\label{lema114563D}
	We have
	\begin{align*}
		&|a_{11}^{\alpha\beta}|, ~|a_{11}^{\beta\alpha}|\leq C,\quad\quad\alpha,\beta=1,\dots,6,~\alpha\neq\beta;\\
		&|a_{11}^{\alpha\beta}+a_{21}^{\alpha\beta}|\leq C,\quad\quad\alpha,\beta=1,\dots,6,~(\alpha,\beta)\neq (3,3);\\
	   &| a_{11}^{33}+a_{21}^{33}|\leq  C|C_1^3-C_2^3||\log\varepsilon|+C;
	\end{align*}
	and
	\begin{equation*}
		|b_1^\beta|\leq C,\quad\beta=1,\dots,6.
	\end{equation*}
\end{lemma}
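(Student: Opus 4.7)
\medskip

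The plan is to convert each matrix entry into an integral over $\Omega$ via integration by parts, so that $a_{ij}^{\alpha\beta}=\int_{\Omega}2\mu\, e(\vu_i^\alpha):e(\vu_j^\beta)\,dx$ (plus the nonlinear correction $\int_\Omega\vu\cdot\nabla\vu\cdot\vu_j^\beta\,dx$ when $(i,\alpha)=(1,3)$), and then to bound every resulting integral by combining the pointwise gradient estimates from Propositions \ref{propu113D1}, \ref{propu4563D}, \ref{prop1.7}, \ref{propuhe111}, and \ref{propu133D} with the elementary $\delta$-integrals $\int_{\Omega_R}\delta(x')^{-1}dx\le C$ and $\int_{\Omega_R}\delta(x')^{-3/2}dx\le C$ already exploited in Section \ref{sec2}.

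For the off-diagonal entries $|a_{11}^{\alpha\beta}|$ and $|a_{11}^{\beta\alpha}|$ with $\alpha\ne\beta$, a direct Cauchy--Schwarz application gives only $O(|\log\varepsilon|)$, so I would decompose $\vu_1^\gamma=\vv_1^\gamma+\vw_1^\gamma$ and split the bilinear form into a $vv$, a mixed, and a $ww$ piece. The mixed and $ww$ pieces are immediately $O(1)$ from the residual bounds $\|\nabla\vw_1^\gamma\|_{L^\infty}\le C$ combined with $\int_{\Omega_R}\delta^{-1}\le C$. For the dominant $vv$ piece I would use the explicit forms \eqref{v1alpha}, \eqref{v15} and their analogues: distinct $\vv_1^\alpha$ and $\vv_1^\beta$ carry opposite parities in $x_1$ or $x_2$, so the leading singular parts of $e(\vv_1^\alpha):e(\vv_1^\beta)$ are odd functions of $x'$ and their integrals over the symmetric region $|x'|<R$ vanish, leaving a remainder of order $O(1)$. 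When $\alpha=3$ one has the extra nonlinear contribution, which I handle as in the next paragraph.

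For the sums $|a_{11}^{\alpha\beta}+a_{21}^{\alpha\beta}|$ with $(\alpha,\beta)\ne(3,3)$, I would combine the two entries into $\int_\Omega 2\mu\, e(\vu_1^\alpha+\vu_2^\alpha):e(\vu_1^\beta)\,dx$ plus a nonlinear correction $\int_\Omega \vu\cdot\nabla\vu\cdot\vu_1^\beta\,dx$ when $\alpha=3$. If $\alpha\ne 3$, Proposition \ref{prop1.7} provides $|\nabla(\vu_1^\alpha+\vu_2^\alpha)|\le C$ pointwise, and $\int_{\Omega_R}|\nabla\vu_1^\beta|\le C$ closes the estimate. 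If $\alpha=3,\beta\ne 3$, Proposition \ref{propuhe111} yields $|\nabla(\vu_1^3+\vu_2^3)|\le C|C_1^3-C_2^3|/\sqrt{\delta(x')}+C$, and coupling with $|\nabla \vu_1^\beta|\le C/\delta(x')$ and $\int\delta^{-3/2}\le C$ gives a bound $\le C$. For the nonlinear correction I would integrate by parts using $\nabla\cdot\sigma[\vu,p]=\vu\cdot\nabla\vu$ and the equilibrium identity \eqref{sto-2}, which makes all boundary contributions vanish, to rewrite $\int_\Omega \vu\cdot\nabla\vu\cdot\vu_1^\beta\,dx=-\int_\Omega 2\mu\, e(\vu):e(\vu_1^\beta)\,dx$; then I decompose $\vu$ by isolating its bounded contributions from the $(C_1^3-C_2^3)\vu_1^3$ piece and bound each separately.

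For $|a_{11}^{33}+a_{21}^{33}|$ the same rewriting applies, but now pairing Proposition \ref{propuhe111} with the $|\nabla \vu_1^3|\le C(1/\delta+|x'|/\delta^2)$ estimate from Proposition \ref{propu133D} produces the integrand $|C_1^3-C_2^3||x'|\delta^{-5/2}$, whose integral is precisely of order $|\log\varepsilon|$ in three dimensions; this is the mechanism that yields the $C|C_1^3-C_2^3||\log\varepsilon|+C$ term. Finally, $|b_1^\beta|=|\int_\Omega 2\mu\, e(\vu_0):e(\vu_1^\beta)\,dx|\le C\int_{\Omega_R}|\nabla\vu_1^\beta|\le C$ since $\|\nabla \vu_0\|_{L^\infty}\le C$ by Proposition \ref{prop1.7}. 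The hardest step will be the nonlinear correction in the $\alpha=3,\beta\ne 3$ sum: naive pointwise estimates only yield $O(|\log\varepsilon|)$, so one must use the equilibrium-based rewriting and the split $\vu=(C_1^3-C_2^3)\vu_1^3+\vu_{\mathrm{bd}}$ to absorb the borderline singular contribution, using $|C_i^\alpha|\le C$ (Proposition \ref{lemma11}) rather than any yet-unproven sharper bound on $|C_1^3-C_2^3|$.
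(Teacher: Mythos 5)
Your treatment of the bilinear (Stokes) pieces largely agrees with what the paper does (the paper cites \cite[Lemma 3.9]{LX2}, which indeed exploits the $\vv/{\bf w}$ split and the parity of the leading singular parts of $e(\vv_1^\alpha):e(\vv_1^\beta)$). The trouble is entirely in how you propose to handle the nonlinear correction $\int_\Omega{\bf u}\cdot\nabla{\bf u}\cdot{\bf u}_1^\beta$ for $\beta\neq3$; there your proposal has a genuine gap, in two respects.

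First, the premise that ``naive pointwise estimates only yield $O(|\log\varepsilon|)$'' is false: it would be true only if one used the crude product bound $|{\bf u}||\nabla{\bf u}|\lesssim\delta^{-1/2}\cdot\delta^{-1}$. The paper instead uses the \emph{structural} estimate $|{\bf u}\cdot\nabla{\bf u}|\le C\big(|x'|/\delta(x')^2+1/\delta(x')\big)$, recorded as \eqref{?xjfz1}, which is strictly better because the dominant (vertical) component of ${\bf u}$ hits the mild horizontal derivatives while the modest horizontal components hit the steep $\partial_{x_3}$-derivative. Since $|{\bf u}_1^\beta|\le C$ for $\beta\ne3$, one gets
\begin{equation*}
\Big|\int_{\Omega_R}{\bf u}\cdot\nabla{\bf u}\cdot{\bf u}_1^\beta\Big|\le C\int_{\Omega_R}\Big(\frac{|x'|}{\delta(x')^2}+\frac{1}{\delta(x')}\Big)\,dx=C\int_{|x'|<R}\Big(\frac{|x'|}{\delta(x')}+1\Big)\,dx'\le C,
\end{equation*}
directly in dimension three. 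This \emph{is} the paper's argument — no equilibrium rewriting is needed.

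Second, the equilibrium-based identity $\int_\Omega{\bf u}\cdot\nabla{\bf u}\cdot{\bf u}_1^\beta=-\int_\Omega2\mu\,e({\bf u}):e({\bf u}_1^\beta)$, while valid, is exactly an algebraic restatement of the $\beta$-th ($j=1$) equation in \eqref{systemC}: expanding $e({\bf u})$ by \eqref{udecom} turns the right-hand side into $-\sum_{i,\alpha}C_i^\alpha\int_\Omega2\mu\,e({\bf u}_i^\alpha):e({\bf u}_1^\beta)+b_1^\beta$, which is the very linear relation defining the unknowns. It therefore brings no new estimate. Concretely, if you insert ${\bf u}=\sum_\alpha(C_1^\alpha-C_2^\alpha){\bf u}_1^\alpha+{\bf u}_b$ you meet $(C_1^\beta-C_2^\beta)\,a_{11}^{\beta\beta}$, which for $\beta\in\{1,2,5,6\}$ is of size $|\log\varepsilon|$ using only $|C_i^\alpha|\le C$ (Proposition \ref{lemma11}); killing the $\log$ would require $|C_1^\beta-C_2^\beta|\le C/|\log\varepsilon|$, which is the \emph{conclusion} of Proposition \ref{lemCialpha3D}, whose proof takes the present lemma as input. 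So that route is circular. As a minor point: $\|\nabla{\bf w}_1^3\|_{L^\infty(\Omega_{\delta(x')/2}(x))}\le C/\sqrt{\delta(x')}$ (Proposition \ref{propu133D}), not $\le C$ as you state; your mixed and $ww$ integrals still converge since $\int_{\Omega_R}\delta(x')^{-3/2}\,dx\le C$, but the reason given is not the right one.
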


\begin{proof}
Here we mainly deal with the additional integral term in $a_{11}^{3\beta}$. By making use of Propositions \ref{propu113D1}--\ref{propu133D}, \eqref{v1alpha}, \eqref{v15}, we derive, for $\beta\neq 3$,
 \begin{align*}
 \Big|\int_{\Omega}{\bf u}\cdot\nabla{\bf u}\cdot{\bf u}_1^\beta\Big|\leq C\int_{\Omega_{R}}\frac{|x'|}{\delta(x')^2}\leq C.
 \end{align*}
 In view of \cite[Lemma 3.9]{LX2}, we have
 \begin{align*}
 	|b_1^\beta|,~|a_{11}^{\alpha\beta}|, ~|a_{11}^{\beta\alpha}|\leq C,~\alpha,\beta=1,\dots,6,~\alpha\neq\beta.
 \end{align*}
 
 For $a_{11}^{33}+a_{21}^{33}$, by using Propositions \ref{propu113D1}--\ref{propu133D}, we have
 \begin{align*}
| a_{11}^{33}+a_{21}^{33}|=&\int_{\Omega_R}\left(2\mu e({\bf u}_{1}^{3}+{\bf u}_2^3), e({\bf u}_{1}^{3})\right)+\int_{\Omega_R}{\bf u}\cdot \nabla{\bf u}\cdot {\bf u}_{1}^{3}+C
\leq\, C|C_1^3-C_2^3||\log\varepsilon|+C,
 \end{align*}
where we use the fact that
\begin{align*}
\bigg|\int_{\Omega_R}{\bf u}\cdot \nabla{\bf u}\cdot {\bf u}_{1}^{3}\bigg|\leq C|C_1^3-C_2^3|\bigg|\int_{\Omega_R}{\bf v}_1^3\cdot \nabla{\bf v}_1^3\cdot {\bf v}_{1}^{3}\bigg|+C\leq  C|C_1^3-C_2^3||\log\varepsilon|+C.
\end{align*}
The remaining terms can be estimated as before, so we omit the details.
 \end{proof}

\begin{proof}[Proof of Proposition \ref{lemCialpha3D}.] To solve $|C_1^\alpha-C_2^\alpha|$, we use the first six equations in \eqref{systemC}:
	\begin{align*}
		a_{11}(C_{1}-C_{2})=f:=b_1-(a_{11}+a_{21})C_{2},
	\end{align*}
	where $a_{11}:=(a_{11}^{\alpha\beta})_{\alpha,\beta=1}^6$, $C_{i}:=(C_{i}^{1}, \dots, C_i^6)^{\mathrm{T}}$, and  $b_1:=(b_1^1,\dots, b_1^6)^{\mathrm{T}}$. From \cite{BLL2}, the matrix $a_{11}$  is positive definite. By virtue of Lemma \ref{lema114563D}, we have
	\begin{align}\label{fdgj1}
	 |f^\beta|\leq C, i\neq 3,~~|f^3|\leq C|C_1^3-C_2^3||\log\varepsilon|+C.
	\end{align}
	Denote $\mathrm{cof}(A)_{\alpha\beta}$ by the cofactor of $A:=(a_{11}^{\alpha\beta})_{6\times6}$ for simplicity of notation. By  Lemma \ref{lema113D} and Lemma \ref{lema114563D}, we have 
	\begin{equation*}
		\frac{1}{C\varepsilon}|\log\varepsilon|^4\leq\det{A}\leq\frac{C}{\varepsilon}|\log\varepsilon|^4,
	\end{equation*}
and
	\begin{align*}
		\frac{1}{C\varepsilon}|\log\varepsilon|^3\leq&\,\mathrm{cof}(A)_{\alpha\alpha}\leq \frac{C}{\varepsilon}|\log\varepsilon|^3,\quad\alpha=1,2,5,6,\\
		\frac{1}{C}|\log\varepsilon|^4\leq&\,\mathrm{cof}(A)_{33}\leq C|\log\varepsilon|^4,\quad\frac{1}{C\varepsilon}|\log\varepsilon|^4\leq\mathrm{cof}(A)_{44}\leq\frac{C}{\varepsilon}|\log\varepsilon|^4;
\end{align*}
\begin{align*}		
		|\mathrm{cof}(A)_{\alpha\beta}|&\leq \frac{C}{\varepsilon}|\log\varepsilon|^2,\quad\quad\alpha,\beta=1,2,5,6,~\alpha\neq\beta,\\
		|\mathrm{cof}(A)_{\alpha 3}|, |\mathrm{cof}(A)_{3\alpha}|&\leq C|\log\varepsilon|^3,\quad\quad\alpha=1,2,5,6,\\
		|\mathrm{cof}(A)_{43}|, |\mathrm{cof}(A)_{34}|&\leq C|\log\varepsilon|^4,\\
		|\mathrm{cof}(A)_{\alpha 4}|, |\mathrm{cof}(A)_{4\alpha}|&\leq \frac{C}{\varepsilon}|\log\varepsilon|^3,\quad\quad\alpha=1,2,5,6.
	\end{align*}
	Thus, by the Cramer's rule, we obtain
\begin{equation*}
		|C_1^3-C_2^3|\leq\frac{C}{\det{A}}\Big(\sum_{\alpha\neq 3}\mathrm{cof}(A)_{\alpha3}+|f^3|\mathrm{cof}(A)_{33}\Big)\leq C\varepsilon+C\varepsilon|C_1^3-C_2^3|.
\end{equation*}
For sufficiently small $\varepsilon$ such that $C\varepsilon\leq \frac{1}{2}$, we have
\begin{equation*}
	|C_1^3-C_2^3|\leq C\varepsilon.
\end{equation*}
Hence, by using \eqref{fdgj1}, 
\begin{equation*}
	|f^\beta|\leq C, ~i=1,\dots,6.
\end{equation*}
Then
	\begin{equation*}
		|C_1^\alpha-C_2^\alpha|\leq\frac{C}{|\log\varepsilon|},~\alpha=1,2,5,6,\quad |C_1^4-C_2^4|\leq\frac{C\,\mathrm{cof}(A)_{44}}{\det{A}}\leq C.
	\end{equation*}
	Proposition \ref{lemCialpha3D} is proved.
\end{proof}

\subsection{Proof of the Improved Estimates on $p_i^\alpha$}\label{sec4.4}

For $(i,\alpha)\neq (1,3)$, we only prove the improved estimate \eqref{improveof_p} in Proposition \ref{propu113D1}, since the others are the same.
\begin{proof}[Proof of \eqref{improveof_p} in Proposition \ref{propu113D1}]
	For $({\bf u}_1^1,p_1^1)$, in view of \eqref{equ_v12D}, since $\nabla\cdot{\bf u}_1^1=0$, it follows that for  $x\in B_{\delta(x')/4}$, and a fixed point $(z',0)$ with $|z'|=R$,
	\begin{align*}
		\Delta\big(p_i^\alpha(x)-p_i^\alpha(z',0)\big)=0.
	\end{align*}
	By using mean value property, we have
	\begin{align}\label{pgj3}
		|p_i^\alpha(x',0)-p_i^\alpha(z',0)|=&\,\Big|\fint_{B_{\delta(x')/4}(x',0)} p_i^\alpha-p_i^\alpha(z',0)\Big|\nonumber\\\leq&\, \Big|\fint_{B_{\delta(x')/4}(x',0)} (p_i^\alpha-p_i^\alpha(z',0)-\overline{p}_i^\alpha)\Big|+\Big|\fint_{B_{\delta(x')/4}(x',0)}\overline{p}_i^\alpha\Big|\nonumber\\
		\leq&\, C\fint_{B_{\delta(x')/4}(x',0)}|\nabla q_i^\alpha| +\Big|\fint_{B_{\delta(x')/4}(x',0)}\overline{p}_1^\alpha\Big|+C\nonumber\\
		:=&\,\mbox{III}_1+\mbox{III}_2+C,
	\end{align}
where $q_{1}^{\alpha}:=p_{1}^{\alpha}-\overline{p}_{1}^{\alpha}$.
By virtue of \cite[Proposition 2.1]{LX2} 
\begin{align}\label{LX_p}
		|p_{i}^{\alpha}(x)-(q_{i}^\alpha)_{R}|\leq\frac{C}{\varepsilon},\quad|\nabla p_{i}^{\alpha}(x)|\leq C\Big(\frac{1}{\delta(x')}+\frac{|x'|}{\delta(x')^2}\Big),\quad x\in\Omega_{R},
\end{align}
where $(q_{i}^\alpha)_{R}:=\frac{1}{|\Omega_{R}\setminus\Omega_{R/2}|}\int_{\Omega_{R}\setminus\Omega_{R/2}}q_{i}^\alpha(x)\mathrm{d}x$, which is independent of $\varepsilon$. Hence $|\mbox{III}_1|\leq \frac{1}{\varepsilon}$. Recalling the definition of $\overline{p}_1^1$, it is easy to see that it is an odd function with respect to $x_1$ or $x_2$, hence we have
	\begin{align}\label{pgj1}
		|\mbox{III}_2|\leq C.
	\end{align}
	Therefore, by making use of \eqref{pgj3}--\eqref{pgj1}, we get
	\begin{align}\label{pgj4}
		|p_i^\alpha(x',0)-p_i^\alpha(z',0)|\leq \frac{1}{\varepsilon}.
	\end{align}
	Finally, by means of mean value theorem, \eqref{pgj4} and  \eqref{LX_p} again, we derive
	\begin{align*}
		|p_i^\alpha(x)-p_i^\alpha(z',0)|&=|p_i^\alpha(x)-p_i^\alpha(x',0)|+|p_i^\alpha(x',0)-p_i^\alpha(z',0)|\\
		\leq&\, \delta(x')|\nabla p_i^\alpha|+|p_i^\alpha(x',0)-p_i^\alpha(z',0)|\leq 
			\frac{1}{\varepsilon}.
	\end{align*}
	
For $p_1^3$, we can derive that
	\begin{equation}\label{pgj5}
		|\widetilde{p}_1^3(x)-\widetilde{p}_1^3(z',0)|\leq \frac{C}{\varepsilon^2},
	\end{equation}
	where $(\widetilde{\bf u}_1^3, \widetilde{p}_1^3)$ satisfy the following linear equations,
	\begin{equation}\label{?1equ_v12D}
		\begin{cases}
			\nabla\cdot\sigma[\widetilde{\bf u}_{1}^3,\widetilde{p}_{1}^{3}]=0,\quad\nabla\cdot \widetilde{\bf u}_{1}^{3}=0,&\mathrm{in}~\Omega,\\
			\widetilde{\bf u}_{1}^{3}={\boldsymbol\psi}_{3},&\mathrm{on}~\partial{D}_{1},\\
			\widetilde{\bf u}_{1}^{3}=0,&\mathrm{on}~\partial{D_{2}}\cup\partial{D},
		\end{cases}
	\end{equation}

Denote 
	$$\overline{\bf u}_1^3={\bf u}_1^3-\widetilde{\bf u}_1^3,~\overline{p}_1^3=p_1^3-\widetilde{p}_1^3,$$
	it follows from \eqref{equ_v13} and \eqref{?1equ_v12D} that
	\begin{equation*}
		\begin{cases}
			\nabla\cdot\sigma[\overline{\bf u}_{1}^2,\overline{p}_{1}^{2}]={\bf u}\cdot\nabla{\bf u},\quad\nabla\cdot \overline{\bf u}_{i}^{\alpha}=0,&\mathrm{in}~\Omega,\\
			\overline{\bf u}_{i}^{\alpha}=0,&\mathrm{on}~\partial{\Omega}.
		\end{cases}
	\end{equation*}
By using Proposition \ref{lemma11} and the estimates for ${\bf u}_i^\alpha$ in Propositions \ref{propu113D1}, \ref{propu4563D}, \ref{propu133D}, we have 
	\begin{align}\label{?xjfz1}
		|{\bf u}\cdot\nabla{\bf u}|&\leq \Big| \Big(C_1^{3}{\bf u}_{1}^{3}+{\bf u}^{\#3}\Big)\cdot\nabla\Big(C_1^{3}{\bf u}_{1}^{3}+{\bf u}^{\#3}\Big)\Big|\nonumber\\
		&\leq \Big|\Big(C_1^{3}{\bf w}_{1}^{3}+C_1^{3}{\bf v}_{1}^{3}+{\bf w}^{\#3}+{\bf v}^{\#3}\Big)\cdot\nabla\Big(C_1^{3}{\bf w}_{1}^{3}+C_1^{3}{\bf v}_{1}^{3}+{\bf w}^{\#3}+{\bf v}^{\#3}\Big)\Big|\nonumber\\
		&\leq \sum_{i,j=1,2}|{\bf v}_i^3\cdot\nabla{\bf v}_j^3|+\frac{C|x'|}{\delta(x')^2}+\frac{C}{\delta(x')}\leq C\Big(\frac{|x'|}{\delta(x')^2}+\frac{1}{\delta(x')}\Big).
	\end{align}
By means of Proposition \ref{nsdpro} and \eqref{?xjfz1}, we deduce
	\begin{equation}\label{pfz1}
		|\overline{p}_1^3(x)-\overline{p}_1^3(z',0)|\leq C\|\nabla\overline{p}_1^3\|_{L^{\infty}(\Omega_{R})}\leq \frac{C}{\varepsilon^{3/2}}.
	\end{equation}
	Combining \eqref{pgj5} and \eqref{pfz1}, we have
	\begin{equation*}
		|{p}_1^3(x)-{p}_1^3(z',0)|\leq \frac{C}{\varepsilon^2}.
	\end{equation*}
	We thus complete the proof.
\end{proof}

\section{Proof of Theorems \ref{theolow}: Lower Bounds}\label{sec6}

Our proof is based on comparing the gradient of the solutions to the Navier-Stokes equations with that of the Stokes equations.
\begin{proof}[Proof of Theorem \ref{theolow}]
Under the same assumption as in Theorem \ref{theolow}, it had been proven in \cite[Theorem 1.4]{LX2} that,
\begin{align}\label{xj1}
|\nabla {\bf u}_{st} (0',x_3)|\ge \frac{1}{C|\log\varepsilon|{\varepsilon}},\quad |x_2|\leq \varepsilon,
\end{align}
where $( {\bf u}_{st}, p_{st})$ is the solution to the following Stokes equations
\begin{align}\label{Stokessys}
	\begin{cases}
		\nabla\cdot\sigma[{\bf u}_{st},p_{st}]=0,~~\nabla\cdot {\bf u}_{st}=0,&\hbox{in}~~\Omega,\\
		~{\bf u}_{st}|_{+}={\bf u}_{st}|_{-},&\hbox{on}~~\partial{D_{i}},~i=1,2,\\
		e({\bf u}_{st})=0, &\hbox{in}~~D_{i},~i=1,2,\\
		\int_{\partial{D}_{i}}\sigma[{\bf u}_{st},p_{st}]\cdot{\boldsymbol\psi}_{\alpha}
		\nu=0
		,&i=1,2,\alpha=1,2,\dots,6,\\
		{\bf u}_{st}={\boldsymbol\varphi}, &\hbox{on}~~\partial{D}.
	\end{cases}
\end{align}
We decompose the solution of \eqref{Stokessys} as follows:
\begin{equation*}
 {{\bf u}_{st}}=\sum_{\alpha=1}^{6}\left(\widetilde{C}_{1}^{\alpha}-\widetilde{C}_{2}^{\alpha}\right)\widetilde{\bf u}_{1}^{\alpha}
	+\nabla \widetilde{\bf u}_{b},~~p_{st}(x)=\sum_{i=1}^{2}\sum_{\alpha=1}^{6}\widetilde{C}_i^{\alpha}\widetilde{p}_{i}^{\alpha}(x)+\widetilde{p}_{0}(x),
\end{equation*}
where
$ 	\widetilde{\bf u}_{b}:=\sum_{\alpha=1}^{6}\widetilde{C}_{2}^{\alpha}(\widetilde{\bf u}_{1}^{\alpha}+\widetilde{\bf u}_{2}^{\alpha})+\widetilde{\bf u}_{0},
$ and 
 $\widetilde{\bf u}_{i}^{\alpha},\widetilde{\bf u}_{0}\in{C}^{2}(\Omega;\mathbb R^3),~\widetilde{p}_{i}^{\alpha}, \widetilde{p}_0\in{C}^{1}(\Omega)$, respectively, satisfy
\begin{equation}\label{1equ_v12D}
	\begin{cases}
		\nabla\cdot\sigma[\widetilde{\bf u}_{i}^\alpha,\widetilde{p}_{i}^{\alpha}]=0,\quad\nabla\cdot \widetilde{\bf u}_{i}^{\alpha}=0,&\mathrm{in}~\Omega,\\
		\widetilde{\bf u}_{i}^{\alpha}={\boldsymbol\psi}_{\alpha},&\mathrm{on}~\partial{D}_{i},\\
		\widetilde{\bf u}_{i}^{\alpha}=0,&\mathrm{on}~\partial{D_{j}}\cup\partial{D},~j\neq i,
	\end{cases}i=1,2,
\end{equation}
and
\begin{equation}\label{1equ_u02D}
	\begin{cases}
		\nabla\cdot\sigma[\widetilde{\bf u}_{0},\widetilde{p}_0]=0,\quad\nabla\cdot \widetilde{\bf u}_{0}=0,&\mathrm{in}~\Omega,\\
		\widetilde{\bf u}_{0}=0,&\mathrm{on}~\partial{D}_{1}\cup\partial{D_{2}},\\
		\widetilde{\bf u}_{0}={\boldsymbol\varphi},&\mathrm{on}~\partial{D}.
	\end{cases}
\end{equation}

Denote 
$$\widehat{\bf u}_i^\alpha={\bf u}_i^\alpha-\widetilde{\bf u}_i^\alpha,~\widehat{p}_1^\alpha=p_i^\alpha-\widetilde{p}_i^\alpha$$
 it follows from \eqref{equ_v1}, \eqref{equ_u0}, \eqref{equ_v12Dd}, \eqref{1equ_v12D} and \eqref{1equ_u02D} that, for $(i,\alpha)\neq (1,3)$, $\overline{\bf u}_i^\alpha=0$ and 
 \begin{equation*}
 	\begin{cases}
 		\nabla\cdot\sigma[\widehat{\bf u}_{1}^3,\widehat{p}_{1}^{3}]+{\bf u}\cdot\nabla{\bf u}=0,\quad\nabla\cdot \widehat{\bf u}_{i}^{\alpha}=0,&\mathrm{in}~\Omega,\\
 		\widehat{\bf u}_{i}^{\alpha}=0,&\mathrm{on}~\partial{\Omega}.
 	\end{cases}
 \end{equation*}
 By using Theorem \ref{mainthm} and \eqref{zydfzgs11} we have
 \begin{align}\label{xjfz1}
 |{\bf u}(x)\cdot\nabla{\bf u}(x)|\leq \frac{C}{|\log\varepsilon|{\delta(x')}},\quad \,x\in\Omega_{R}.
 \end{align}
In view of Proposition \ref{nsdpro} and \eqref{xjfz1}, we deduce
\begin{align}\label{xjfz2}
|\nabla \widehat{\bf u}_1^3(x)|\leq \frac{C}{|\log\varepsilon|},\quad \,x\in\Omega_{R}.
\end{align}
Recalling the decomposition of the solution in \eqref{udecom}, by making use of Propositions \ref{lemma11}, \ref{prop1.7} and \eqref{xjfz2}, we derive
\begin{align}\label{xjfz3}
| \nabla({\bf u}-{{\bf u}_{st}})(x)|&=\Big|\sum_{\alpha=1}^{6}\left({C}_{1}^{\alpha}-{C}_{2}^{\alpha}-(\widetilde{C}_{1}^{\alpha}-\widetilde{C}_{2}^{\alpha})\right)\widehat{\bf u}_{1}^{\alpha}(x)
+ \overline{\bf u}_{b}(x)\Big|\nonumber\\
&=\left|{C}_{1}^{3}-{C}_{2}^{3}-(\widetilde{C}_{1}^{3}-\widetilde{C}_{2}^{3})\right||\nabla\widehat{\bf u}_{1}^{3}(x)|+C\leq C,\quad \,x\in\Omega_{R}.
\end{align}
By using \eqref{xj1} and \eqref{xjfz3}, we get, for $|x_3|\leq \varepsilon$,
\begin{align*}
|\nabla{\bf u} (0',x_3)|\ge |\nabla {\bf u}_{st} (0',x_3)|-| \nabla({\bf u}-{{\bf u}_{st}})(0',x_3)|\ge \frac{1}{C|\log\varepsilon|{\varepsilon}}-C\ge \frac{1}{C|\log\varepsilon|{\varepsilon}}.
\end{align*}
Thus, Theorem \ref{theolow} is proved.
\end{proof}

 \section{Main Results in 2D}\label{sec5}
Our approach works well for the problem in two dimensions. We have
\begin{theorem}\label{mainthm2D}(Upper Bounds in 2D)
 	Assume that $D_1,D_2,D$, and $\varepsilon$ are defined as in Subsection \ref{subsec1.1}, with given ${\boldsymbol\varphi}\in C^{2,\alpha}(\partial D;\mathbb R^2)$ for some $0<\alpha<1$. Let ${\bf u}\in W^{1,2}(D;\mathbb R^2)\cap C^1(\bar{\Omega};\mathbb R^2)$ and $p\in L^2(D)\cap C^0(\bar{\Omega})$ be the solution to \eqref{sto}-\eqref{compatibility}. Then for sufficiently small $0<\varepsilon<1$, the following assertions hold:
 	
 	(i) For $x\in \Omega_{R}$, 
 	\begin{align*}
 		&|\nabla{\bf u}(x)|\leq C\frac{\sqrt{\varepsilon}+|x_1|}{\varepsilon+x_1^2}\|{\boldsymbol\varphi}\|_{C^{2,\alpha}(\partial D)},\quad~\inf_{c\in\mathbb{R}}\|p+c\|_{C^0(\bar{\Omega}_{R})}\leq \frac{C}{\sqrt{\varepsilon}}\|{\boldsymbol\varphi}\|_{C^{2,\alpha}(\partial D)},
 	\end{align*}
 	and $\|\nabla{\bf u}\|_{L^{\infty}(\Omega\setminus\Omega_{R})}+\inf_{c\in\mathbb{R}}\|p+c\|_{L^{\infty}(\Omega\setminus\Omega_{R})}\leq\,C\|{\boldsymbol\varphi}\|_{C^{2,\alpha}(\partial D)},$ where $C$ is a universal constant. In particular, 
 	\begin{equation*}
 		\|\nabla{\bf u}\|_{L^{\infty}(\Omega)}\leq \frac{C}{\sqrt\varepsilon}\|{\boldsymbol\varphi}\|_{C^{2,\alpha}(\partial D)};
 	\end{equation*}
 	
 	(ii) For $x\in \Omega_{R}$, 
 	\begin{align*}
 		|\nabla^2{{\bf u}}(x)|+|\nabla p(x)|\leq\frac{C}{(\varepsilon+x_1^2)^{3/2}}\|{\boldsymbol\varphi}\|_{C^{2,\alpha}(\partial D)},
 	\end{align*}
 	and $\|\nabla^2{\bf u}\|_{L^{\infty}(\Omega\setminus\Omega_{R})}+\|\nabla p\|_{L^{\infty}(\Omega\setminus\Omega_{R})}\leq\,C\|{\boldsymbol\varphi}\|_{C^{2,\alpha}(\partial D)}$, where $C$ is a universal constant.
 \end{theorem}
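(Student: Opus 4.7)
The plan is to mirror the 3D strategy of Section \ref{sec2} with $\tfrac{d(d+1)}{2}=3$ in place of $6$. The basis of $\Psi$ in the plane is
$\boldsymbol{\psi}_{1}=(1,0)^{T}$, $\boldsymbol{\psi}_{2}=(0,1)^{T}$, $\boldsymbol{\psi}_{3}=(x_{2},-x_{1})^{T}$, so I decompose
$$
{\bf u}=\sum_{i=1}^{2}\sum_{\alpha=1}^{3}C_{i}^{\alpha}{\bf u}_{i}^{\alpha}+{\bf u}_{0},\qquad p=\sum_{i=1}^{2}\sum_{\alpha=1}^{3}C_{i}^{\alpha}p_{i}^{\alpha}+p_{0},
$$
and place the nonlinear term ${\bf u}\cdot\nabla{\bf u}$ inside the equation satisfied by $({\bf u}_{1}^{2},p_{1}^{2})$, since the vertical translation $\boldsymbol{\psi}_{2}$ carries the leading $\delta^{-1}$ singularity in 2D. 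All remaining components $(i,\alpha)\neq(1,2)$ and $({\bf u}_{0},p_{0})$ then solve linear Stokes problems, and their pointwise estimates, namely $|\nabla{\bf u}_{i}^{2}|\lesssim \delta(x_{1})^{-1}$, $|\nabla^{2}{\bf u}_{i}^{2}|\lesssim \delta(x_{1})^{-2}$, and $|\nabla{\bf u}_{i}^{\alpha}|\lesssim \delta(x_{1})^{-1/2}$ for $\alpha=1,3$, together with the sharpened pressure bounds in the spirit of \eqref{improveof_p}, can be imported directly from \cite{LX,LX2}.

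For the nonlinear component I would construct Keller-type auxiliaries ${\bf v}_{1}^{2}$, $\overline{p}_{1}^{2}$ in analogy with \eqref{v133D}, \eqref{p133D}, adapted to the planar geometry so that the principal singular part of $\mu\Delta{\bf v}_{1}^{2}-\nabla\overline{p}_{1}^{2}$ cancels and $\nabla\cdot{\bf v}_{1}^{2}=0$ in $\Omega_{2R}$. Setting ${\bf w}_{2}={\bf u}_{1}^{2}-{\bf v}_{1}^{2}$, $q_{2}=p_{1}^{2}-\overline{p}_{1}^{2}$, the pair $({\bf w}_{2},q_{2})$ satisfies an equation of the form \eqref{w36} with $\widetilde{\bf v}=C_{1}^{2}{\bf v}_{1}^{2}+{\bf u}^{\#2}$ and a known source ${\bf f}$. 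After checking hypotheses \eqref{estv113D3}--\eqref{jstj}, Lemma \ref{lemmaenergy} furnishes the global bound $\int_{\Omega}|\nabla{\bf w}_{2}|^{2}\leq C$, the Caccioppoli iteration \eqref{nsiterating1} run on narrow slabs upgrades it to $\int_{\Omega_{\delta(z_{1})}(z)}|\nabla{\bf w}_{2}|^{2}\leq C\delta(z_{1})$, and the rescaled $W^{2,4}$ estimate of Proposition \ref{nsdpro} (with $d=2$) converts this into pointwise control $|\nabla{\bf w}_{2}|\lesssim 1$ and $|\nabla^{2}{\bf w}_{2}|+|\nabla q_{2}|\lesssim \delta(x_{1})^{-1/2}$, yielding the 2D analogue of Proposition \ref{propu133D}. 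A parallel argument, in the spirit of Proposition \ref{propuhe111}, controls $\nabla({\bf u}_{1}^{2}+{\bf u}_{2}^{2})$ in terms of the differences $|C_{1}^{\alpha}-C_{2}^{\alpha}|$. Closing the $6\times 6$ linear system generated by the force/torque balances \eqref{sto-2} along the lines of Subsection \ref{subsec43} should then give $|C_{1}^{2}-C_{2}^{2}|\lesssim \sqrt{\varepsilon}$ and $|C_{1}^{\alpha}-C_{2}^{\alpha}|\lesssim 1$ for $\alpha=1,3$. Regrouping ${\bf u}=\sum_{\alpha=1}^{3}(C_{1}^{\alpha}-C_{2}^{\alpha}){\bf u}_{1}^{\alpha}+{\bf u}_{b}$ and substituting all component estimates delivers the stated bounds on $|\nabla{\bf u}|$, $|\nabla^{2}{\bf u}|$ and $|\nabla p|$, while $\inf_{c}\|p+c\|\lesssim \varepsilon^{-1/2}$ follows by coupling $|p_{1}^{2}(x)-p_{1}^{2}(z,0)|\lesssim \varepsilon^{-1}$ with $|C_{1}^{2}-C_{2}^{2}|\lesssim\sqrt{\varepsilon}$.

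The main obstacle is exactly the \emph{other interesting issue} advertised in Section \ref{sec2}: in 3D the factor $|\log\varepsilon|^{-1}$ appearing in $|C_{1}^{\alpha}-C_{2}^{\alpha}|$ for $\alpha\neq 3,4$ provided a logarithmic cushion that we relied on when absorbing the convection term ${\bf w}\cdot\nabla{\bf w}$ in both the energy identity and the rescaled bootstrap of Proposition \ref{nsdpro}. No such cushion is available in 2D; the pure $\varepsilon^{-1/2}$ rate has to absorb the nonlinearity on its own. Concretely, on the rescaled unit cell the $L^{4}$ control of $\delta(z_{1})\mathcal{W}\cdot\nabla\mathcal{W}$ used in the 3D proof is no longer automatic, because in 2D one only has $W^{1,q}\hookrightarrow L^{\infty}$ for $q>2$, so an additional interpolation and a different smallness exponent for $\delta(z_{1})$ will be needed to close the bootstrap. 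After shrinking $\varepsilon$ and $R$ if necessary, the structure of the argument carries over, but this step will require the most care.
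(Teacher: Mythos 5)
Your overall architecture (decomposition with the nonlinearity placed in $({\bf u}_{1}^{2},p_{1}^{2})$, Keller-type auxiliaries, energy and Caccioppoli iteration, rescaled $W^{2,4}$ bootstrap, linear system for the $C$'s) matches the paper's Section~5. However, two of the quantitative claims you feed into this machine are wrong in a way that does not cancel, and the $2$D-specific obstacle you identify is not the one that actually arises.

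First, the imported Stokes estimates for $\alpha=1,3$ are misquoted. You claim $|\nabla{\bf u}_{i}^{\alpha}|\lesssim\delta(x_{1})^{-1/2}$, but Proposition~\ref{propu11} (and the analogous Proposition~\ref{propu13}) give $|\nabla{\bf u}_{i}^{\alpha}|\sim\delta(x_{1})^{-1}$, with a matching lower bound $\geq \frac{1}{C\delta(x_1)}$, so a $\delta^{-1/2}$ bound is simply false. Second, the asserted output of the linear system, $|C_{1}^{\alpha}-C_{2}^{\alpha}|\lesssim 1$ for $\alpha=1,3$ and $|C_{1}^{2}-C_{2}^{2}|\lesssim\sqrt{\varepsilon}$, is too weak: Proposition~\ref{lemCialpha} actually gives $|C_{1}^{\alpha}-C_{2}^{\alpha}|\lesssim\sqrt{\varepsilon}$ for $\alpha=1,3$ and $|C_{1}^{2}-C_{2}^{2}|\lesssim\varepsilon^{3/2}$. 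With the correct gradient estimates and your $C$-differences you would get, at $x_{1}=0$, the contribution $1\cdot\delta^{-1}=\varepsilon^{-1}$ from $\alpha=1,3$, which overshoots the claimed $\varepsilon^{-1/2}$ rate by a factor of $\varepsilon^{-1/2}$. Your two errors happen to compensate numerically at $x_{1}=0$, which is why the final exponents looked right, but the proof does not close with either set of constants. A third small slip: the local energy bound you cite is $\int_{\Omega_{\delta}(z)}|\nabla{\bf w}_{2}|^{2}\leq C\delta(z_{1})$, whereas the paper obtains $\leq C\delta(z_{1})^{2}$ via the iteration of Lemma~\ref{nszyly333}; the weaker exponent would spoil the rescaled bootstrap constant.

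Finally, the genuinely new $2$D issue is not the Sobolev embedding in the rescaled bootstrap: in $d=2$ the relevant embeddings are, if anything, more forgiving than in $d=3$, and Proposition~\ref{nsdpro} carries over without modification. The real obstacle is that the trilinear convection term $\mathbf{T}({\bf u},{\bf u},{\bf u}_{1}^{\beta})$ entering the force/torque balances is genuinely singular in $2$D and must be controlled in terms of the unknown $|C_{1}^{2}-C_{2}^{2}|$ before the $6\times6$ system can be inverted. This is the content of Proposition~\ref{ewdgj}: one first shows $|\mathbf{T}({\bf u},{\bf u},{\bf u}_{1}^{\beta})|\lesssim|C_{1}^{2}-C_{2}^{2}||\log\varepsilon|+1$ for $\beta=1,3$ and $\lesssim|C_{1}^{2}-C_{2}^{2}|\varepsilon^{-1/2}+|\log\varepsilon|$ for $\beta=2$, then inserts these bounds into Lemma~\ref{lema11} where the diagonal entries behave like $\varepsilon^{-1/2}$ (for $\alpha=1,3$) and $\varepsilon^{-3/2}$ (for $\alpha=2$), and finally bootstraps via Cramer's rule to obtain the sharp $\sqrt{\varepsilon}$ and $\varepsilon^{3/2}$ differences. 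Without this self-referential estimate you cannot improve the crude $|C_{1}^{\alpha}-C_{2}^{\alpha}|\lesssim 1$ to $\lesssim\sqrt{\varepsilon}$, and the theorem does not follow.
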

 
 \begin{remark}
 	Here the blow up rate of $\nabla{\bf u}$ in dimension two is $\varepsilon^{-1/2}$, which is also optimal by Theorem \ref{theolow2d} below. 
 \end{remark}
The proof is very similar with the three dimensional case. We only present the main differences in this section. In dimension two, 
 $$\Psi=\mathrm{span}\Bigg\{{\boldsymbol\psi}_{1}=\begin{pmatrix}
 	1 \\
 	0
 \end{pmatrix},
 {\boldsymbol\psi}_{2}=\begin{pmatrix}
 	0\\
 	1
 \end{pmatrix},
 {\boldsymbol\psi}_{3}=\begin{pmatrix}
 	x_{2}\\
 	-x_{1}
 \end{pmatrix}
 \Bigg\}.$$
The solution of \eqref{sto} is decomposed as follows:
 \begin{align}
 	{\bf u}(x)&=\sum_{i=1}^{2}\sum_{\alpha=1}^{3}C_i^{\alpha}{\bf u}_{i}^{\alpha}(x)+{\bf u}_{0}(x),~
 	p(x)=\sum_{i=1}^{2}\sum_{\alpha=1}^{3}C_i^{\alpha}p_{i}^{\alpha}(x)+p_{0}(x),~ x\in\,\Omega,\label{ud}
 \end{align}
where we put the nonlinear term into the equation of $({\bf u}_1^2,p_1^2)$, such that $({\bf u}_1^2,p_1^2)$ satisfy the Navier-Stokes equations
  \begin{equation}\label{equ_v12Dd}
 	\begin{cases}
 		\nabla\cdot\sigma[{\bf u}_{1}^2,p_{1}^{2}]={\bf u}\cdot\nabla{\bf u},\quad\nabla\cdot {\bf u}_{1}^{2}=0,&\mathrm{in}~\Omega,\\
 		{\bf u}_{1}^{2}={\boldsymbol\psi}_{2},&\mathrm{on}~\partial{D}_{1},\\
 		{\bf u}_{1}^{2}=0,&\mathrm{on}~\partial{D_{2}}\cup\partial{D},
 	\end{cases}
 \end{equation}
 and for $(i,\alpha)\in \{(a,b)|a=1,2,b=1,2,3\}\setminus (1,2)$, ${\bf u}_{i}^{\alpha},{\bf u}_{0}\in{C}^{2}(\Omega;\mathbb R^2),~p_{i}^{\alpha}, p_0\in{C}^{1}(\Omega)$, respectively, satisfy, 
 \begin{equation}\label{equ_v12D}
 	\begin{cases}
 		\nabla\cdot\sigma[{\bf u}_{i}^\alpha,p_{i}^{\alpha}]=0,\quad\nabla\cdot {\bf u}_{i}^{\alpha}=0,&\mathrm{in}~\Omega,\\
 		{\bf u}_{i}^{\alpha}={\boldsymbol\psi}_{\alpha},&\mathrm{on}~\partial{D}_{i},\\
 		{\bf u}_{i}^{\alpha}=0,&\mathrm{on}~\partial{D_{j}}\cup\partial{D},~j\neq i,
 	\end{cases}i=1,2,
 \end{equation}
 and
 \begin{equation}\label{equ_u02D}
 	\begin{cases}
 		\nabla\cdot\sigma[{\bf u}_{0},p_0]=0,\quad\nabla\cdot {\bf u}_{0}=0,&\mathrm{in}~\Omega,\\
 		{\bf u}_{0}=0,&\mathrm{on}~\partial{D}_{1}\cup\partial{D_{2}},\\
 		{\bf u}_{0}={\boldsymbol\varphi},&\mathrm{on}~\partial{D}.
 	\end{cases}
 \end{equation}
 
Here we only consider the special case that $C_1^2=1$ and rewrite
 \begin{align*}
 	{\bf u}(x):=C_1^2{\bf u}_1^2(x)+{\bf u}^{\#2}(x),~\text{and}~p(x):=C_1^2\overline{p}_1^2(x)+p^{\#2}(x).
 \end{align*}
Then, the Navier-Stokes equation \eqref{equ_v12Dd} can be written as 
\begin{equation}\label{equ_v12Ddd}
	\left\{\begin{split}
		\mu \Delta{\bf u}_{1}^2=&\,\nabla p_{1}^{2}+(C_1^{2})^{2}{\bf u}_{1}^{2}\cdot\nabla\,{\bf u}_{1}^{2}+C_1^{2}{\bf u}_{1}^{2}\cdot\nabla{\bf u}^{\#2}\\
		&\quad\quad\quad\quad+C_1^{2}{\bf u}^{\#2}\cdot\nabla\,{\bf u}_{1}^{2}+{\bf u}^{\#2}\cdot\nabla{\bf u}^{\#2},\quad&\mathrm{in}&~\Omega,\\
		\nabla\cdot {\bf u}_{1}^{2}=&\,0,\quad\quad&\mathrm{in}&~\Omega,\\
		{\bf u}_{1}^{2}=&\,{\boldsymbol\psi}_{2},\quad\mathrm{on}~\partial{D}_{1},\quad
		{\bf u}_{1}^{2}=\,0,\quad\mathrm{on}~\partial{D_{2}}\cup\partial{D}.
	\end{split}\right.
\end{equation}
Thus, to estimate $({\bf u}_1^2,p_1^2)$, we need the estimates of $C_i^\alpha$ and of $({\bf u}_i^\alpha,p_i^\alpha)_{(i,\alpha)\neq(1,2)}$ and $({\bf u}_0,p_0)$. By using the trace theorem and the fact that $\|{\bf u}\|_{W^{1,2}(\Omega\setminus\Omega_R)}\leq C$, we have (c.f. \cite{BLL}) 
 \begin{prop}\label{lemma1}
 Let $C_i^\alpha$ be defined in \eqref{ud}. Then
 $$|C_i^\alpha|\leq C,~i=1,2;~~\alpha=1,2,3.$$
 \end{prop}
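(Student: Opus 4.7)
The plan is to follow the recipe that \cite{BLL} employs (and that underlies Proposition \ref{lemma11} in 3D): recover the rigid-motion coefficients $C_i^\alpha$ from the trace of ${\bf u}$ on a portion of $\partial D_i$ bounded away from the neck, starting from a global $W^{1,2}$-energy bound on ${\bf u}$.

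First, I would establish $\|{\bf u}\|_{W^{1,2}(D)} \leq C \|{\boldsymbol\varphi}\|_{C^{2,\alpha}(\partial D)}$. Since ${\boldsymbol\varphi}$ satisfies the compatibility condition \eqref{compatibility}, a Hopf-type construction (see \cite{GaldiBook}) supplies a divergence-free extension $\Phi \in W^{1,2}(D;\mathbb{R}^2)$ that agrees with ${\boldsymbol\varphi}$ on $\partial D$, vanishes on $\overline{D_1\cup D_2}$, and has $\|\Phi\|_{L^\infty(\Omega)}$ arbitrarily small in a prescribed neighborhood of $\partial D_1\cup\partial D_2$ (at the cost of enlarging $\|\Phi\|_{W^{1,2}}$). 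Because ${\bf u}$ is a rigid motion on each $D_i$ and the equilibrium relations \eqref{sto-2} kill the particle-boundary terms, testing the weak form of \eqref{sto} against ${\bf u}-\Phi$ is admissible. Standard manipulation, together with $\int_\Omega (({\bf u}-\Phi)\cdot\nabla)({\bf u}-\Phi)\cdot({\bf u}-\Phi)=0$, yields a bound of the form
\begin{equation*}
\mu\,\|\nabla{\bf u}\|_{L^2(\Omega)}^2 \leq C\|\Phi\|_{W^{1,2}(\Omega)}\,\|\nabla{\bf u}\|_{L^2(\Omega)} + C\|\Phi\|_{L^\infty(\Omega)}\,\|\nabla{\bf u}\|_{L^2(\Omega)}^2,
\end{equation*}
whose last term, coming from the convection ${\bf u}\cdot\nabla{\bf u}$, is absorbed into the left-hand side by choosing $\|\Phi\|_{L^\infty}$ sufficiently small. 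This gives the claimed energy bound and, in particular, $\|{\bf u}\|_{W^{1,2}(\Omega\setminus\Omega_R)}\leq C$.

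Next, fix an arc $\Sigma_i\subset\partial D_i$ lying a fixed distance from the neck, whose arc length is bounded below by a positive constant depending only on $\kappa_0,\kappa_1$. Applying the trace theorem on a smooth reference subdomain of $\Omega\setminus\Omega_R$ adjacent to $\Sigma_i$, whose geometry is $\varepsilon$-independent, gives
\begin{equation*}
\|{\bf u}\|_{L^2(\Sigma_i)} \leq C\,\|{\bf u}\|_{W^{1,2}(\Omega\setminus\Omega_R)} \leq C.
\end{equation*}

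Finally, on $\partial D_i$ we have ${\bf u}=\sum_{\alpha=1}^{3} C_i^\alpha{\boldsymbol\psi}_\alpha$, and $\{{\boldsymbol\psi}_1,{\boldsymbol\psi}_2,{\boldsymbol\psi}_3\}$ is linearly independent on any $\Sigma_i$ containing three non-collinear points (true here by the $C^3$-convex geometry of $\partial D_i$). The associated Gram matrix $\bigl(\int_{\Sigma_i}{\boldsymbol\psi}_\alpha\cdot{\boldsymbol\psi}_\beta\bigr)_{\alpha,\beta}$ is therefore invertible with spectrum bounded below uniformly in $\varepsilon$, and inversion yields $|C_i^\alpha|\leq C\|{\bf u}\|_{L^2(\Sigma_i)}\leq C$. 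The only analytic subtlety is the absorption of the convection term in step one, which is classical and completely $\varepsilon$-independent; the rest of the argument is linear and purely geometric, identical to the linear Stokes treatment in \cite{BLL}, and I do not expect any genuine obstacle.
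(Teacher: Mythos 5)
Your overall plan---energy bound, trace on an arc of $\partial D_i$ away from the neck, then linear-algebra recovery of the rigid-motion coefficients from the Gram matrix of $\{{\boldsymbol\psi}_1,{\boldsymbol\psi}_2,{\boldsymbol\psi}_3\}$ on that arc---is exactly the recipe the paper invokes (it simply refers to \cite{BLL} for the details), and the two final steps are sound, including the observation that the Gram matrix is uniformly invertible because $\Sigma_i$ is a fixed arc whose geometry does not degenerate as $\varepsilon\to0$. You also correctly identify the one genuinely new point compared to the linear Stokes/Lam\'e setting: the convection term must be handled in the energy estimate, and the equilibrium relations \eqref{sto-2} together with $\Phi|_{\partial D_i}=0$ make $u-\Phi$ an admissible test function.

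However, the absorption step is misstated in a way that does not actually work as written. The Hopf construction \emph{cannot} make $\|\Phi\|_{L^\infty(\Omega)}$ small: $\Phi$ must equal ${\boldsymbol\varphi}$ on $\partial D$, so $\|\Phi\|_{L^\infty}\ge\|{\boldsymbol\varphi}\|_{L^\infty(\partial D)}$ no matter how one extends. Moreover, the ``prescribed neighborhood'' in which the Hopf extension is tuned is a collar of $\partial D$, not of $\partial D_1\cup\partial D_2$ (near the latter, $\Phi$ is simply identically zero). What the Hopf--Ladyzhenskaya lemma actually provides is an extension $\Phi$ supported in a thin collar of $\partial D$ for which the trilinear form is small: for any $\eta>0$ one may choose $\Phi$ so that
\begin{equation*}
\Big|\int_{\Omega}({\bf v}\cdot\nabla\Phi)\cdot{\bf v}\Big|=\Big|\int_{\Omega}({\bf v}\cdot\nabla{\bf v})\cdot\Phi\Big|\le\eta\,\|\nabla{\bf v}\|_{L^2(\Omega)}^2,
\end{equation*}
the smallness coming from the thinness of the support together with the Hardy inequality $\int_\Omega |{\bf v}|^2\,\mathrm{dist}(x,\partial D)^{-2}\le C\|\nabla{\bf v}\|^2$, not from an $L^\infty$ bound on $\Phi$. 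Replacing your display with this and then absorbing $\eta\|\nabla{\bf v}\|^2$ into $\mu\|\nabla{\bf v}\|^2$ for $\eta<\mu$ repairs the step; since $\partial D$ is $\varepsilon$-independent and at distance $\ge\kappa_0$ from $D_1\cup D_2$, all constants remain uniform in $\varepsilon$. The rest of your argument then goes through as you describe.
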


The Keller-type function $k(x)\in C^3(\mathbb{R}^2)$ is
\begin{align*}
	k(x)=\frac{x_2}{\delta(x_{1})}, ~\text{in}~\Omega_{2R},\quad\mbox{where}~\delta(x_{1}):=\epsilon+h_{1}(x_1)+h_{2}(x_1).
\end{align*}
For simplicity we assume that $h_1(x_1)=h_2(x_1)=\frac{1}{2}x_1^2$ for $|x_1|\leq 2R$. We construct ${\bf v}_{1}^{1}\in C^{2}(\Omega;\mathbb R^2)$, such that ${\bf v}_1^1={\bf u}_{1}^1=\boldsymbol{\psi}_{1}$ on $\partial{D}_{1}$ and ${\bf v}_1^1={\bf u}_{1}^1=0$ on $\partial{D}_{2}\cup\partial{D}$. Namely, 
 \begin{align}\label{v11}
 	{\bf v}_{1}^{1}=\Big(k(x)+\frac{1}{2}\Big)\boldsymbol\psi_{1}+x_{1}\Big(k(x)^2-\frac{1}{4}\Big)\boldsymbol\psi_{2},\quad\hbox{in}\ \Omega_{2R},
 \end{align}
satisfying $\nabla \cdot {\bf v}_i=0$ in $\Omega_{2R}$, and choose
$\overline{p}_1^1=\frac{2\mu x_1}{\delta(x_{1})}k(x)$, in $\Omega_{2R}$.
Then 
 \begin{prop}\label{propu11}(An Improvement of \cite[Proposition 2.1]{LX})
 	Let ${\bf u}_{i}^{1}\in{C}^{2}(\Omega;\mathbb R^2),~p_{i}^{1}\in{C}^{1}(\Omega)$ be the solution to \eqref{equ_v12D}. Then
 	\begin{equation*}
 		\|\nabla({\bf u}_{i}^{1}-{\bf v}_{i}^{1})\|_{L^{\infty}(\Omega_{\delta(x_1)/2}(x_1))}\leq C,\quad \,x\in\Omega_{R},
 	\end{equation*}
 	and
 	\begin{equation*}
 		\|\nabla^2({\bf u}_{i}^{1}-{\bf v}_{i}^{1})\|_{L^{\infty}(\Omega_{\delta(x_1)/2}(x_1))}+\|\nabla q_i^1\|_{L^{\infty}(\Omega_{\delta(x_1)/2}(x_1))}\leq \frac{C}{\delta(x_{1})},~~ \,x\in\Omega_{R}.
 	\end{equation*}
 	Consequently, 
 	\begin{align*}
 		\frac{1}{C\delta(x_{1})}\leq|\nabla {\bf u}_{i}^1(x)|\leq \frac{C}{\delta(x_{1})},~|\nabla^2{\bf u}_{i}^1(x)|\leq  C\Big(\frac{1}{\delta(x_{1})}+\frac{|x_1|}{\delta(x_{1})^2}\Big),\quad\,x\in\Omega_{R},
 	\end{align*}
 	and 
 	$$|p_{i}^{1}(x)-p_i^1(R,0)|\leq\frac{C}{\varepsilon},~|\nabla p_{i}^{1}(x)|\leq C\Big(\frac{1}{\delta(x_{1})}+\frac{|x_1|}{\delta(x_{1})^2}\Big),\quad\,x\in\Omega_{R}.$$
 \end{prop}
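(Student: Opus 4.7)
The plan is to treat Proposition \ref{propu11} as an upgrade of \cite[Proposition 2.1]{LX}. Setting ${\bf w}_i^1 := {\bf u}_i^1-{\bf v}_i^1$ and $q_i^1 := p_i^1-\overline{p}_i^1$, the pair $({\bf w}_i^1, q_i^1)$ solves a linear Stokes system in $\Omega$ with zero Dirichlet data and known forcing ${\bf f} := \mu\Delta{\bf v}_i^1-\nabla\overline{p}_i^1$, whose pointwise size is controlled directly from the explicit formula \eqref{v11} and the ansatz $\overline{p}_1^1 = 2\mu x_1 k(x)/\delta(x_1)$. The gradient and second-order bounds on ${\bf w}_i^1$ and the bound $\|\nabla q_i^1\|_{L^\infty(\Omega_{\delta(x_1)/2}(x_1))} \le C/\delta(x_1)$, together with the consequential pointwise estimates on $|\nabla{\bf u}_i^1|$, $|\nabla^2{\bf u}_i^1|$ and $|\nabla p_i^1|$, are already established in \cite[Proposition 2.1]{LX} by a Caccioppoli iteration plus the $W^{2,q}$ rescaling argument---the linear Stokes analogue of the scheme we developed in Section \ref{sec3}. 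I would simply cite this portion rather than reproduce it.

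The genuinely new content is the pointwise pressure bound $|p_i^1(x) - p_i^1(R,0)| \le C/\varepsilon$, which sharpens the $L^\infty$-bound modulo the annular average $(q_i^1)_R := \frac{1}{|\Omega_R\setminus\Omega_{R/2}|}\int_{\Omega_R\setminus\Omega_{R/2}} q_i^1$ from \cite{LX}. Following precisely the template of Subsection \ref{sec4.4}, I first observe that $\Delta p_i^1 = 0$ in $\Omega$, since $\nabla\cdot{\bf u}_i^1 = 0$ and $\mu\Delta{\bf u}_i^1 = \nabla p_i^1$. Applying the mean value property on the ball $B_{\delta(x_1)/4}(x_1,0)\subset \Omega$ yields
\begin{align*}
|p_i^1(x_1,0) - p_i^1(R,0)| &= \Big|\fint_{B_{\delta(x_1)/4}(x_1,0)} \big(p_i^1 - p_i^1(R,0)\big)\Big|\\
&\le \fint_{B_{\delta(x_1)/4}(x_1,0)} |q_i^1 - (q_i^1)_R| + \Big|\fint_{B_{\delta(x_1)/4}(x_1,0)} \overline{p}_i^1\Big| + \big|p_i^1(R,0) - (q_i^1)_R\big|.
\end{align*}
The first and third terms are each of order $1/\varepsilon$ by the $L^\infty$-bound on $p_i^1-(q_i^1)_R$ established in \cite[Proposition 2.1]{LX}. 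The middle term vanishes identically: $\overline{p}_1^1(y_1,y_2) = 2\mu y_1 y_2/\delta(y_1)^2$ is odd in $y_2$, $\delta$ is independent of $y_2$, and the ball is symmetric about $y_2 = 0$. A final application of the mean value theorem with the gradient bound on $p_i^1$ absorbs the step from $(x_1,0)$ to a general point $x$ in the narrow slice.

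The main obstacle, and the crux of the improvement, is the exact cancellation $\fint \overline{p}_i^1 = 0$. This is precisely the structural reason for choosing the Keller-type ansatz $\overline{p}_1^1 = 2\mu x_1 x_2/\delta(x_1)^2$: the $y_2$-oddness together with the $y_2$-independence of $\delta$ delivers a pointwise pressure bound rather than merely an integral-averaged one, which is exactly the content of the improvement in \eqref{improveof_p} that makes Remark \ref{mainthmstokes} possible. A secondary check is the inclusion $B_{\delta(x_1)/4}(x_1,0)\subset\Omega$, which follows by comparing the radius $(\varepsilon+x_1^2)/4$ with the vertical half-width $\varepsilon/2+y_1^2/2$ of the neck over the relevant range of $y_1$, after shrinking $R$ if necessary.
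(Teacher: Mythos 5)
Your proposal is correct and follows the same route as the paper's own argument (Subsection \ref{sec4.4}, applied mutatis mutandis in 2D): quote the $W^{1,\infty}$, $W^{2,\infty}$, and $\nabla q_i^1$ estimates from \cite[Proposition 2.1]{LX}, and then upgrade the pressure bound by using harmonicity of $p_i^1$ to invoke the mean-value property on $B_{\delta(x_1)/4}(x_1,0)$, with the decisive point being that the $x_2$-oddness of $\overline{p}_i^1$ (and the $x_2$-independence of $\delta$) forces $\fint_{B_{\delta(x_1)/4}(x_1,0)}\overline{p}_i^1 = 0$. Your bookkeeping, which routes the estimate through the annular reference value $(q_i^1)_R$ from \cite{LX} rather than through $\fint |\nabla q_i^1|$ as the paper writes it, is a slightly tidier phrasing of the same triangle inequality and not a genuinely different method.
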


 For ${\bf u}_{i}^{3}$, we construct  ${\bf v}_{1}^{3}\in C^{2}(\Omega;\mathbb R^2)$, such that, in $\Omega_{2R}$, 
 \begin{align*}
 	{\bf v}_{1}^{3}=\boldsymbol\psi_{3}\Big(k(x)+\frac{1}{2}\Big)
 	+\begin{pmatrix}
 		1-\frac{4x_{1}^2}{\delta(x_{1})}-5x_{2}k(x)\\\\
 		2x_1k(x)\left(2-\frac{4x_{1}^{2}}{\delta(x_{1})}-3x_{2}k(x)\right)
 	\end{pmatrix}
 	\Big(k(x)^2-\frac{1}{4}\Big),
 \end{align*}
 and choose $\overline{p}_1^3=\frac{2\mu x_1}{\delta(x_{1})^2}+\frac{12\mu x_1 }{\delta(x_{1})}\Big(1-\frac{2 x_1^2}{\delta(x_{1})}\Big)k(x)^{2}$.
Then
 \begin{prop}\label{propu13}(An Improvement of \cite[Proposition 2.3]{LX})
 	Let ${\bf u}_{i}^{3}\in{C}^{2}(\Omega;\mathbb R^2),~p_{i}^{3}\in{C}^{1}(\Omega)$ be the solution to \eqref{equ_v12D}. Then 
 	\begin{equation*}
 		\|\nabla({\bf u}_{i}^{3}-{\bf v}_{i}^{3})\|_{L^{\infty}(\Omega_{\delta(x_1)/2}(x_1))}\leq C,\quad x\in\Omega_{R},
 	\end{equation*}
 	and 
 	\begin{equation*}
 		\|\nabla^2({\bf u}_{i}^{3}-{\bf v}_{i}^{3})\|_{L^{\infty}(\Omega_{\delta(x_1)/2}(x_1))}+\|\nabla q_{i}^{3}\|_{L^{\infty}(\Omega_{\delta(x_1)/2}(x_1))}\leq \frac{C}{\delta(x_{1})},\quad x\in\Omega_{R}.
 	\end{equation*}
 	Consequently,
 	\begin{equation*}
 		|\nabla {\bf u}_{i}^3(x)|\leq \frac{C}{\delta(x_{1})},~|\nabla^2 {\bf u}_{i}^3(x)|\leq\frac{C}{\delta(x_{1})^2},\quad\,x\in\Omega_{R},
 	\end{equation*}
 	and
 	\begin{equation*}
 		|p_{i}^{3}(x)-p_i^3(R,0)|\leq\frac{C}{\varepsilon},~|\nabla p_{i}^{3}(x)|\leq\frac{C}{\delta(x_1)^{2}},\quad\,x\in\Omega_{R}.
 	\end{equation*}
 \end{prop}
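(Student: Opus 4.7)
Writing ${\bf w} := {\bf u}_i^3 - {\bf v}_i^3$ and $q_i^3 := p_i^3 - \overline{p}_i^3$, the pair $({\bf w}, q_i^3)$ satisfies the linear Stokes system
\[
\mu\Delta {\bf w} = \nabla q_i^3 - {\bf f},\quad \nabla\cdot {\bf w}=0 \text{ in } \Omega_{2R},\quad {\bf w}=0 \text{ on } \partial\Omega,
\]
with residual ${\bf f} := \mu\Delta {\bf v}_i^3 - \nabla \overline{p}_i^3$. The plan is the linear analogue of the procedure carried out for the nonlinear part $({\bf u}_1^3,p_1^3)$ in Sections \ref{sec3}--\ref{sec4}, without the terms ${\bf w}\cdot\nabla{\bf w}$ and $\widetilde{\bf v}\cdot\nabla{\bf w}+{\bf w}\cdot\nabla\widetilde{\bf v}$. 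The first step is a direct computation: using $\partial_{x_1}k=-\tfrac{x_1}{\delta(x_1)}k$ and $\partial_{x_2}k=1/\delta(x_1)$, I would verify that the explicit construction of ${\bf v}_i^3$ and $\overline{p}_i^3$ is arranged so that the $\delta(x_1)^{-3}$ singularities inside $\mu\Delta {\bf v}_i^3$ and $\nabla\overline{p}_i^3$ cancel exactly, yielding the sharp pointwise bound
\[
|{\bf f}(x)|\leq C\bigl(\delta(x_1)^{-1}+|x_1|\delta(x_1)^{-2}\bigr),\quad x\in\Omega_{2R},
\]
and consequently $\int_{\Omega_s(z_1)}|{\bf f}|^2\leq Cs(s^2+z_1^2+\delta(z_1)^2)/\delta(z_1)^3$.

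Next, testing the equation against ${\bf w}$, using the Hardy inequality to absorb $\int {\bf f}\cdot{\bf w}$, and handling $\int_{\Omega\setminus\Omega_R}(q_i^3-\text{const})\nabla\cdot{\bf w}$ by standard interior regularity yields the global energy bound $\int_\Omega|\nabla {\bf w}|^2\leq C$. The linear version of the Caccioppoli inequality \eqref{nsiterating1} (without the $\widetilde{\bf v}$ and nonlinear contributions, which is simpler) gives, for $0<t<s\leq R$,
\[
\int_{\Omega_t(z_1)}|\nabla {\bf w}|^2 \leq \Bigl(\tfrac14+\tfrac{C\delta(z_1)^2}{(s-t)^2}\Bigr)\!\int_{\Omega_s(z_1)}\!|\nabla {\bf w}|^2 + C\bigl((s-t)^2+\delta(z_1)^2\bigr)\!\int_{\Omega_s(z_1)}\!|{\bf f}|^2.
\]
Iterating along $t_i=\delta+2c_0 i\delta$ for $i\leq k_0\sim 1/\sqrt{\delta(z_1)}$ yields the localized bound $\int_{\Omega_{\delta(z_1)}(z_1)}|\nabla {\bf w}|^2\leq C\delta(z_1)^2$. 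Rescaling $\Omega_{\delta(z_1)}(z_1)$ to a unit-size domain and invoking Theorem \ref{thmWmq} with $m=0,1$ and $q=4$ (no smallness-in-$\delta$ condition is needed in the linear case) converts this into
$\|\nabla {\bf w}\|_{L^\infty(\Omega_{\delta/2}(z_1))}\leq C\bigl(\delta^{-1}\|\nabla{\bf w}\|_{L^2(\Omega_\delta)}+\delta\|{\bf f}\|_{L^\infty}\bigr)\leq C$ and $\|\nabla^2 {\bf w}\|_{L^\infty}+\|\nabla q_i^3\|_{L^\infty}\leq C/\delta(z_1)$. Adding back the explicit estimates on $\nabla^k {\bf v}_i^3$ and $\nabla\overline{p}_i^3$ gives the consequent bounds on $|\nabla {\bf u}_i^3|$, $|\nabla^2 {\bf u}_i^3|$, and $|\nabla p_i^3|$.

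For the improved pressure bound $|p_i^3(x)-p_i^3(R,0)|\leq C/\varepsilon$, I would argue as in Section \ref{sec4.4}: since $\nabla\cdot {\bf u}_i^3=0$, taking divergence of $\mu\Delta {\bf u}_i^3=\nabla p_i^3$ shows that $p_i^3-p_i^3(R,0)$ is harmonic in $\Omega_R$. The mean-value property on $B_{\delta(x_1)/4}(x_1,0)$ gives
\[
|p_i^3(x_1,0)-p_i^3(R,0)|\leq \fint_{B_{\delta/4}(x_1,0)}|\nabla q_i^3| + \Bigl|\fint_{B_{\delta/4}(x_1,0)}\overline{p}_i^3\Bigr| + C,
\]
where the first term is $O(1/\varepsilon)$ by the previous step combined with the non-improved estimate from \cite{LX}, and the second is $O(1)$ because the leading $x_1/\delta^2$ part of $\overline{p}_i^3$ is odd in $x_2$ while the ball is symmetric. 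Extending to general $x\in\Omega_R$ uses the mean-value theorem with $|\nabla p_i^3|\leq C/\delta^2$, contributing at most $\delta\cdot C/\delta^2=C/\delta\leq C/\varepsilon$.

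The main obstacle is the first step: verifying the pointwise cancellation in ${\bf f}=\mu\Delta{\bf v}_i^3-\nabla\overline{p}_i^3$. Without the sharp remainder $|{\bf f}|\lesssim \delta^{-1}+|x_1|\delta^{-2}$, the iteration in Step 2 would close at a worse scale and the final bound on $\nabla {\bf w}$ would degrade from $O(1)$ to $O(1/\sqrt{\delta})$. The explicit corrector $\tfrac{3}{5}(k^2-1/4)(F,G,H)$-type terms in ${\bf v}_i^3$ and the $k^2$ correction in $\overline{p}_i^3$ must be chosen precisely so that the top-order singular contributions match, and this algebraic verification is where the work lies; the remaining steps are then mechanical adaptations of the 3D linear arguments in \cite{LX2}.
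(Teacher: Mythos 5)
Your overall plan is the right one: $({\bf u}_i^3,p_i^3)$ satisfies the \emph{linear} Stokes system, so the proof is the 3D template of Sections~\ref{sec3}--\ref{sec4} with the nonlinear and $\widetilde{\bf v}$-terms deleted (global energy bound, Caccioppoli iteration to get $\int_{\Omega_{\delta(z_1)}(z_1)}|\nabla {\bf w}|^2\leq C\delta(z_1)^2$, then the rescaled $W^{2,4}\hookrightarrow W^{1,\infty}$ estimates of Theorem~\ref{thmWmq}), and the improved pressure bound comes from the mean-value/harmonicity argument of Subsection~\ref{sec4.4}. That part is consistent with what the paper does.

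The gap is in your claimed pointwise bound on the residual ${\bf f}=\mu\Delta{\bf v}_i^3-\nabla\overline{p}_i^3$. You assert $|{\bf f}|\leq C\bigl(\delta^{-1}+|x_1|\delta^{-2}\bigr)$ and then conclude $\|\nabla{\bf w}\|_{L^\infty(\Omega_{\delta/2})}\leq C$, but these two statements are incompatible: feeding $|{\bf f}|\leq C(\delta^{-1}+|x_1|\delta^{-2})$ into \eqref{nsW2pstokes} with $d=2$ gives
\[
\|\nabla{\bf w}\|_{L^\infty(\Omega_{\delta/2}(z_1))}\leq C\Bigl(\delta^{-1}\|\nabla{\bf w}\|_{L^2(\Omega_\delta)}+\delta\|{\bf f}\|_{L^\infty(\Omega_\delta)}\Bigr)\leq C\Bigl(1+\frac{|z_1|}{\delta(z_1)}\Bigr)\leq\frac{C}{\sqrt{\delta(z_1)}},
\]
not $C$ (note $|z_1|/\delta(z_1)$ attains size $\varepsilon^{-1/2}$ near $|z_1|\sim\sqrt{\varepsilon}$). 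In fact the bound you wrote down is exactly the residual for the 2D \emph{squeeze} part $\alpha=2$ (cf.\ the remark after Proposition~\ref{propu12}, where the paper records $|{\bf f}|\leq C(|x_1|\delta^{-2}+\delta^{-1})$ and the correspondingly weaker estimate $\|\nabla({\bf u}_i^2-{\bf v}_i^2)\|\leq C/\sqrt{\delta}$), and for the 3D squeeze part $\alpha=3$ (\eqref{fz9}). For the 2D rotation $\alpha=3$ one must show the sharper
\[
|{\bf f}(x)|\leq\frac{C}{\delta(x_1)},\qquad|\nabla{\bf f}(x)|\leq\frac{C}{\delta(x_1)^2},
\]
i.e.\ the explicit corrector terms in ${\bf v}_1^3$ and the $k^2$-term in $\overline{p}_1^3$ cancel not only the $\delta^{-3}$ parts of $\mu\partial_{x_2x_2}({\bf v}_1^3)^{(1)}-\partial_{x_1}\overline p_1^3$ (via $\frac{2\mu}{\delta^2}-\frac{8\mu x_1^2}{\delta^3}$ appearing in both) but also the $|x_1|\delta^{-2}$ parts of $\mu\partial_{x_2x_2}({\bf v}_1^3)^{(2)}-\partial_{x_2}\overline p_1^3$ (via $\frac{24\mu x_1 k}{\delta^2}-\frac{48\mu x_1^3 k}{\delta^3}$). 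This is the point of the extra $3x_2 k$ term in the first corrector component and the choice of $\overline{p}_1^3$; the situation is parallel to the 3D $\alpha=5,6$ cases in Proposition~\ref{propu4563D}, not the 3D $\alpha=3$ case. Once $|{\bf f}|\leq C/\delta$ is in hand, the iteration gives $\int_{\Omega_\delta}|\nabla{\bf w}|^2\leq C\delta^2$ and both $\delta^{-1}\|\nabla{\bf w}\|_{L^2(\Omega_\delta)}$ and $\delta\|{\bf f}\|_{L^\infty}$ are $O(1)$, which is what the stated conclusion requires. The remaining steps (energy bound, Hardy/Poincar\'e, iteration lemma, rescaled Stokes $L^q$-estimates, and the mean-value argument for $|p_i^3(x)-p_i^3(R,0)|\leq C/\varepsilon$ using the odd symmetry of $\overline{p}_i^3$ in $x_2$) are fine as you set them up.
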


Similarly as in Proposition \ref{prop1.7},  we have the following results.
\begin{prop}\label{propu0} (\cite{LX})
	Let ${\bf u}_0, {\bf u}_i^\alpha\in{C}^{2}(\Omega;\mathbb R^2),~p_0,p_i^\alpha\in{C}^{1}(\Omega)$ be the solution to \eqref{equ_u02D} and  \eqref{equ_v12D}. Then, for $\alpha\neq 2$, we have
\begin{equation*}
	\|\nabla^{k_1}{\bf u}_0\|_{L^{\infty}(\Omega)}+\|\nabla^{k_1}({\bf u}_{1}^{\alpha}+{\bf u}_{2}^{\alpha})\|_{L^{\infty}(\Omega)}\leq C,\quad k_1=1,2,
\end{equation*}
and
\begin{equation*}
	\|\nabla^{k_2} p_0\|_{L^{\infty}(\Omega)}+\|\nabla^{k_2}(p_{1}^{\alpha}+p_{2}^{\alpha})\|_{L^{\infty}(\Omega)}\leq C,\quad k_2=0,1.
\end{equation*}
\end{prop}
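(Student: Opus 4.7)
The key observation is that for $\alpha\neq 2$, the sum ${\bf u}_1^\alpha+{\bf u}_2^\alpha$ takes the \emph{same} boundary value $\boldsymbol{\psi}_\alpha$ on $\partial D_1\cup\partial D_2$ and vanishes on $\partial D$. Unlike the individual components (which must interpolate between $\boldsymbol{\psi}_\alpha$ and $0$ across the narrow gap, forcing a singular gradient), the sum has continuous boundary data across the gap, so no blow-up is induced. Moreover, both $\boldsymbol{\psi}_1=(1,0)^{\mathrm T}$ and $\boldsymbol{\psi}_3=(x_2,-x_1)^{\mathrm T}$ are divergence-free and harmonic on all of $\mathbb{R}^2$. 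An entirely parallel argument handles $({\bf u}_0,p_0)$, whose boundary data is $\boldsymbol{\varphi}$ on $\partial D$ and $0$ on $\partial D_1\cup\partial D_2$.

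\textbf{Step 1: Uniform divergence-free lifting.} I would construct ${\bf V}_\alpha\in C^\infty(\overline{D};\mathbb{R}^2)$ such that ${\bf V}_\alpha=\boldsymbol{\psi}_\alpha$ on a fixed (i.e., $\varepsilon$-independent) neighborhood of $\overline{D_1\cup D_2}$ and ${\bf V}_\alpha\equiv 0$ near $\partial D$. In 2D this is immediate using the stream function: set ${\bf V}_\alpha=\nabla^\perp(\chi\phi_\alpha)$ where $\phi_1=x_2$, $\phi_3=\tfrac12|x|^2$, and $\chi$ is a cutoff supported in a $\kappa_0/2$-neighborhood of $\overline{D_1\cup D_2}$. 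Since $\kappa_0=\operatorname{dist}(D_1\cup D_2,\partial D)>0$ is independent of $\varepsilon$, the lifting satisfies $\|{\bf V}_\alpha\|_{C^m(\overline{D})}\leq C$ uniformly for every $m$. For ${\bf u}_0$, the compatibility condition \eqref{compatibility} allows an analogous $\varepsilon$-uniform divergence-free lifting ${\bf V}_0$ of $\boldsymbol{\varphi}$ with support bounded away from $\overline{D_1\cup D_2}$.

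\textbf{Step 2: Reduction to a benign Stokes problem.} Setting $({\bf U},P):=({\bf u}_1^\alpha+{\bf u}_2^\alpha-{\bf V}_\alpha,\,p_1^\alpha+p_2^\alpha)$, the pair solves
\begin{equation*}
-\mu\Delta{\bf U}+\nabla P={\bf f}_\alpha:=\mu\Delta {\bf V}_\alpha,\quad\nabla\cdot{\bf U}=0,\ \text{in }\Omega,\qquad {\bf U}=0\ \text{on }\partial\Omega.
\end{equation*}
Because $\Delta\boldsymbol{\psi}_\alpha=0$, the source ${\bf f}_\alpha$ is supported only where $\nabla\chi\neq 0$, i.e. uniformly away from the narrow neck $\Omega_R$, and $\|{\bf f}_\alpha\|_{C^m}\leq C$. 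The energy estimate therefore gives $\|{\bf U}\|_{H^1(\Omega)}\leq C$ and, by Ne\v{c}as' inequality (or the Bogovski\u\i\ operator), $\inf_c\|P+c\|_{L^2(\Omega)}\leq C$, both independent of $\varepsilon$.

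\textbf{Step 3: Upgrading to $W^{2,\infty}$ uniformly in $\varepsilon$.} Away from $\Omega_R$ the domain has $C^3$ regularity uniform in $\varepsilon$, so Theorem \ref{thmWmq} (with $m=1$, $q$ large) yields $\|{\bf U}\|_{W^{2,\infty}(\Omega\setminus\Omega_R)}+\inf_c\|P+c\|_{W^{1,\infty}(\Omega\setminus\Omega_R)}\leq C$. Inside $\Omega_R$, ${\bf f}_\alpha\equiv 0$, so on each cylinder $\Omega_{\delta(x_1)}(x_1)$ the pair $({\bf U},P)$ satisfies homogeneous Stokes with zero data on the top and bottom of the cylinder. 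Applying the rescaling $y=(x-(x_1,0))/\delta(x_1)$, as in Proposition \ref{nsdpro} and in \cite{LX}, transfers the problem to a domain of unit size where classical Stokes regularity applies, producing $\|\nabla{\bf U}\|_{L^\infty(\Omega_{\delta(x_1)/2}(x_1))}\leq C\delta(x_1)^{-1}\|{\bf U}\|_{L^2(\Omega_{\delta(x_1)}(x_1))}\leq C$ and similarly for $\nabla^2{\bf U}$ and $\nabla P$; the $L^2$ norm on the right is controlled by Step 2 and the Poincar\'e inequality ${\bf U}|_{\partial D_i}=0$. Combining both regions and adding back the smooth lifting ${\bf V}_\alpha$ (respectively ${\bf V}_0$) yields the claimed bounds.

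\textbf{Main obstacle.} The delicate point is obtaining $\varepsilon$-uniform $W^{2,\infty}$ bounds in a domain whose geometry degenerates as $\varepsilon\to 0$. The classical Stokes regularity constants a priori depend on $\Omega$. This is resolved exactly because the subtraction of ${\bf V}_\alpha$ removes all boundary discrepancy across the narrow gap: the resulting $({\bf U},P)$ sees \emph{homogeneous} Stokes with \emph{zero} data in the degenerating neck, so one can rescale each slab $\Omega_{\delta(x_1)}(x_1)$ to unit size and absorb all constants, just as in the proof of Proposition \ref{nsdpro} but with no forcing term to track. Everything else is routine once this uniform lifting/rescaling scheme is in place.
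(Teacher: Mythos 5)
Your overall framework is attractive: subtracting a uniform divergence-free lifting ${\bf V}_\alpha=\nabla^\perp(\chi\phi_\alpha)$ to reduce to a homogeneous Stokes problem in the neck, with ${\bf f}_\alpha$ supported away from $\Omega_R$, is a clean way to exploit the fact that the sum ${\bf u}_1^\alpha+{\bf u}_2^\alpha$ has no jump across the gap. Steps~1 and~2 are sound.

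The gap is in Step~3. After rescaling $\Omega_{\delta(x_1)}(x_1)$ to unit size, homogeneous Stokes regularity yields (as in Proposition~\ref{nsdpro}, eq.~\eqref{nsW2pstokes} with ${\bf f}\equiv 0$)
\begin{equation*}
\|\nabla{\bf U}\|_{L^\infty(\Omega_{\delta(x_1)/2}(x_1))}\ \leq\ C\,\delta(x_1)^{-d/2}\,\|\nabla{\bf U}\|_{L^2(\Omega_{\delta(x_1)}(x_1))},
\end{equation*}
i.e. $\leq C\delta(x_1)^{-1}\|\nabla{\bf U}\|_{L^2(\Omega_{\delta(x_1)}(x_1))}$ in $d=2$, not $C\delta(x_1)^{-1}\|{\bf U}\|_{L^2}$ as you wrote. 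To conclude boundedness you need the \emph{local} energy decay
\begin{equation*}
\int_{\Omega_{\delta(x_1)}(x_1)}|\nabla{\bf U}|^2\ \leq\ C\,\delta(x_1)^2,
\end{equation*}
but the global bound $\int_\Omega|\nabla{\bf U}|^2\leq C$ combined with Poincar\'e only gives $\int_{\Omega_{\delta}}|\nabla{\bf U}|^2\leq C$, hence $\|\nabla{\bf U}\|_{L^\infty(\Omega_{\delta/2})}\leq C\delta^{-1}$, i.e. the blow-up rate rather than boundedness. Poincar\'e controls $\|{\bf U}\|_{L^2(\Omega_\delta)}$ in terms of $\|\nabla{\bf U}\|_{L^2(\Omega_\delta)}$; it does nothing to improve the size of $\|\nabla{\bf U}\|_{L^2(\Omega_\delta)}$ itself. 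The needed decay is precisely what the Caccioppoli-type inequality plus the dyadic iteration (the analogue of the argument in Lemma~\ref{nszyly333}, using Lemma~\ref{itLemma}) supplies, and it is the genuinely nontrivial ingredient of the approach in \cite{LX,LX2}: one iterates $F(t_i)\leq\theta F(t_{i+1})$ with $\theta<1$ over $\sim 1/\sqrt{\delta}$ scales from $t\sim\delta$ out to $t\sim\sqrt{\delta}$, and only then does the global energy bound seed the estimate. Your sentence ``the $L^2$ norm on the right is controlled by Step~2 and the Poincar\'e inequality'' skips this entirely, and without it the argument stalls at the wrong power of $\delta$.
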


For $\alpha=2$, similarly as in Proposition \ref{propuhe111}, 
\begin{prop}\label{propuhe11}
	Let ${\bf u}_2^2, {\bf u}_1^2\in{C}^{2}(\Omega;\mathbb R^2),~p_2^2, p_1^2\in{C}^{1}(\Omega)$ be the solution to \eqref{equ_v12D} and \eqref{equ_v12Dd}. Then
	\begin{equation*}
		\|\nabla({\bf u}_{1}^{2}+{\bf u}_{2}^{2})\|_{L^{\infty}(\Omega_{\delta(x_1)/2}(x_1))}\leq \frac{C|C_1^2-C_2^2|}{\sqrt{\delta(x_{1})}}+C,
	\end{equation*}
and 
	\begin{align*}
	&\|\nabla^2({\bf u}_{1}^{2}+{\bf u}_{2}^{2})\|_{L^{\infty}(\Omega_{\delta(x_1)/2}(x_1))}+\|\nabla(p_1^2+p_2^2)\|_{L^{\infty}(\Omega_{\delta(x_1)/2}(x_1))}\\
	\leq&\,\frac{C|C_1^2-C_2^2||x_1|}{\delta(x_{1})^2}+\frac{C\sum_{\alpha=1}^{3}|C_{1}^{\alpha}-C_{2}^{\alpha}|}{\delta(x_{1})}+C.
\end{align*}
\end{prop}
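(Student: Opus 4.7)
The plan is to mimic the proof of Proposition \ref{propuhe111} in dimension two, replacing the 3D linear-part estimates (Propositions \ref{propu113D1}--\ref{propu133D}) by their 2D counterparts (Propositions \ref{propu11}, \ref{propu13}, \ref{propu0} and the 2D analogue of Proposition \ref{propu133D} applied to $({\bf u}_2^2, p_2^2)$), and using the local energy scaling appropriate to $d=2$. First I would set ${\bf u}_\psi^2 := {\bf u}_1^2 + {\bf u}_2^2 - \boldsymbol{\psi}_2$ and $q := p_1^2 + p_2^2$. Since ${\bf u}_1^2 + {\bf u}_2^2 = \boldsymbol{\psi}_2$ on $\partial D_1 \cup \partial D_2$, we have $\nabla {\bf u}_\psi^2 = \nabla({\bf u}_1^2 + {\bf u}_2^2)$ and ${\bf u}_\psi^2 = 0$ on both inclusions. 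Combining \eqref{equ_v12D} with $(i,\alpha)=(2,2)$ and \eqref{equ_v12Dd} shows that $({\bf u}_\psi^2, q)$ solves
\begin{equation*}
\nabla\cdot\sigma[{\bf u}_\psi^2, q] = {\bf u}\cdot\nabla{\bf u}, \quad \nabla\cdot{\bf u}_\psi^2 = 0 \;\;\mbox{in}\;\Omega,\qquad {\bf u}_\psi^2 = 0 \;\;\mbox{on}\;\partial D_1\cup\partial D_2, \quad {\bf u}_\psi^2 = -\boldsymbol{\psi}_2 \;\;\mbox{on}\;\partial D.
\end{equation*}

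To expose the singular factor, I would write ${\bf u} = {\bf u}_2^{\#2} + C_2^2 {\bf u}_\psi^2$, where
\begin{equation*}
{\bf u}_2^{\#2} := \sum_{\alpha=1}^{3}(C_1^\alpha - C_2^\alpha){\bf u}_1^\alpha + {\bf u}_0 + \sum_{\alpha\neq 2} C_2^\alpha({\bf u}_1^\alpha + {\bf u}_2^\alpha) + C_2^2\boldsymbol{\psi}_2.
\end{equation*}
Expanding the convective term, the system for ${\bf u}_\psi^2$ reads
\begin{equation*}
\mu\Delta {\bf u}_\psi^2 = \nabla q + (C_2^2)^2 {\bf u}_\psi^2\cdot\nabla {\bf u}_\psi^2 + C_2^2\big({\bf u}_2^{\#2}\cdot\nabla {\bf u}_\psi^2 + {\bf u}_\psi^2\cdot\nabla {\bf u}_2^{\#2}\big) + {\bf u}_2^{\#2}\cdot\nabla {\bf u}_2^{\#2},
\end{equation*}
which matches the template \eqref{w36} with $\widetilde{\bf v} = C_2^2 {\bf u}_2^{\#2}$ and $-{\bf f} = {\bf u}_2^{\#2}\cdot\nabla {\bf u}_2^{\#2}$. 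Using Propositions \ref{lemma1}, \ref{propu11}, \ref{propu13}, \ref{propu0} and the 2D analogue of \eqref{zydfzgs11} (the vanishing of ${\bf u}_i^\alpha - {\bf v}_i^\alpha$ on $\partial D_i$ combined with the mean-value theorem), I would verify $|\widetilde{\bf v}(x)|\leq C/\sqrt{\delta(x_1)}$ and $|\nabla\widetilde{\bf v}(x)|\leq C/\delta(x_1)^{3/2}$ in $\Omega_R$, which are precisely the hypotheses \eqref{estv113D3} needed to invoke Section \ref{sec3}.

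The main technical step is bounding ${\bf f}$ with the correct $|C_1^\alpha - C_2^\alpha|$-dependence. Splitting each ${\bf u}_1^\alpha = {\bf v}_1^\alpha + {\bf w}_1^\alpha$ with ${\bf w}_1^\alpha = 0$ on $\partial D_1$, Propositions \ref{propu11}, \ref{propu13} and the mean-value theorem give $|{\bf w}_1^\alpha(x)|\leq C\delta(x_1)$ for $\alpha\neq 2$ and $|{\bf w}_1^2(x)|\leq C\sqrt{\delta(x_1)}$ in $\Omega_R$. The dominant singular contributions come from $|\nabla {\bf v}_1^2|\sim |x_1|/\delta(x_1)^2 + 1/\delta(x_1)$ and $|{\bf v}_1^2|\lesssim |x_1|/\delta(x_1) + 1$, while for $\alpha\neq 2$ one has $|\nabla {\bf v}_1^\alpha|\lesssim 1/\delta(x_1)$ and $|({\bf v}_1^\alpha)^{(2)}|\leq C|x_1|$. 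A term-by-term expansion, paralleling the 3D case, yields
\begin{equation*}
|{\bf u}_2^{\#2}\cdot\nabla {\bf u}_2^{\#2}(x)| \leq \frac{C|C_1^2-C_2^2||x_1|}{\delta(x_1)^2} + \frac{C\sum_{\alpha=1}^{3}|C_1^\alpha - C_2^\alpha|}{\delta(x_1)} + C.
\end{equation*}

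Finally I would invoke the framework of Section \ref{sec3}: Lemma \ref{lemmaenergy} gives $\int_\Omega|\nabla {\bf u}_\psi^2|^2\leq C$; an iteration based on \eqref{nsiterating1} with the $L^2$-bound on ${\bf f}$ above produces the 2D local energy estimate $\int_{\Omega_{\delta(x_1)}(x_1)}|\nabla {\bf u}_\psi^2|^2\leq C\delta(x_1)$; Proposition \ref{nsdpro} with $d=2$ then converts this into the desired $W^{1,\infty}$ and $W^{2,\infty}$ bounds, using $|x_1|\leq\sqrt{\delta(x_1)}$ to absorb the factor $\delta(x_1)\|{\bf f}\|_\infty$ into the claimed form. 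The main obstacle is the bookkeeping for ${\bf f}$: in 2D only the $\boldsymbol{\psi}_2$-direction carries a genuinely singular gradient, so the interactions $\boldsymbol{\psi}_2\cdot\nabla {\bf v}_1^2$ and ${\bf v}_1^2\cdot\nabla {\bf v}_1^2$ must be carefully isolated from the other cross terms in order to correctly attribute the $|x_1|/\delta(x_1)^2$ blow-up to the coefficient $|C_1^2-C_2^2|$ alone and avoid spurious worse singularities.
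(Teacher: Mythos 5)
Your overall strategy is the correct one and matches the paper's 3D argument (Proposition \ref{propuhe111}) step by step: the decomposition ${\bf u}_\psi^2={\bf u}_1^2+{\bf u}_2^2-\boldsymbol{\psi}_2$, the identification of $\widetilde{\bf v}=C_2^2{\bf u}_2^{\#2}$ and $-{\bf f}={\bf u}_2^{\#2}\cdot\nabla{\bf u}_2^{\#2}$ to fit template \eqref{w36}, the verification of \eqref{estv113D3}, and the careful term-by-term bound on ${\bf f}$ are all what the paper intends (it only says ``similarly as in Proposition \ref{propuhe111}'').

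There is, however, a genuine gap in the last step. You claim the iteration yields $\int_{\Omega_{\delta(x_1)}(x_1)}|\nabla{\bf u}_\psi^2|^2\leq C\delta(x_1)$ and then ``Proposition \ref{nsdpro} with $d=2$ converts this into the desired $W^{1,\infty}$ bound''. It does not: with $d=2$, \eqref{nsW2pstokes} produces the term $\delta(x_1)^{-1}\|\nabla{\bf u}_\psi^2\|_{L^2(\Omega_{\delta})}\lesssim\delta(x_1)^{-1}\sqrt{\delta(x_1)}=\delta(x_1)^{-1/2}$, which dominates the $\delta\|{\bf f}\|_\infty$ term and only gives the trivial bound $\|\nabla{\bf u}_\psi^2\|_\infty\leq C\delta^{-1/2}$, not the required $C|C_1^2-C_2^2|\delta^{-1/2}+C$. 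The factor $|C_1^2-C_2^2|$ is the entire content of this proposition: without it you cannot run Lemma \ref{lema11} and Proposition \ref{lemCialpha} to conclude $|C_1^2-C_2^2|\leq C\varepsilon^{3/2}$. The fix is that the local energy estimate must itself carry the $|C_1^\alpha-C_2^\alpha|$ dependence. Since the initial value $F(t_{k_0})=\int_{\Omega_{t_{k_0}}}|\nabla{\bf u}_\psi^2|^2\leq C$ is suppressed by $(9/16)^{k_0}$ with $k_0\sim\delta^{-1/2}$ (super-polynomially small), the surviving contribution to $F(\delta)$ comes entirely from the forcing term $C\delta^2\int_{\Omega_s}|{\bf f}|^2$, and one must track through the sum that it gives
\begin{equation*}
\int_{\Omega_{\delta(x_1)}(x_1)}|\nabla{\bf u}_\psi^2|^2\leq C|C_1^2-C_2^2|^2\,\delta(x_1)+C\,\delta(x_1)^2,
\end{equation*}
rather than the weaker $C\delta(x_1)$. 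With this, $\delta^{-1}\|\nabla{\bf u}_\psi^2\|_{L^2(\Omega_\delta)}\lesssim|C_1^2-C_2^2|\delta^{-1/2}+1$, and combined with $\delta\|{\bf f}\|_\infty\lesssim|C_1^2-C_2^2||x_1|/\delta+\sum_\alpha|C_1^\alpha-C_2^\alpha|+\delta\lesssim|C_1^2-C_2^2|\delta^{-1/2}+C$, Proposition \ref{nsdpro} yields the stated bound. The same refinement is needed for the second-order estimate.
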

To estimate $\nabla{\bf u}_{1}^{2}$, we  construct  ${\bf v}_{1}^{2}\in C^{2}(\Omega;\mathbb R^2)$, such that ${\bf v}_1^2={\bf u}_{1}^2=\boldsymbol{\psi}_{2}$ on $\partial{D}_{1}$ and ${\bf v}_1^2={\bf u}_{1}^2=0$ on $\partial{D}_{2}\cup\partial{D}$.  
\begin{align*}
	{\bf v}_{1}^{2}=\boldsymbol\psi_{2}\Big(k(x)+\frac{1}{2}\Big)
	+\frac{6}{\delta(x_{1})}\Big(\boldsymbol\psi_{1}x_1+\boldsymbol\psi_{2}x_{2}\big(\frac{2x_1^2}{\delta(x_{1})}-\frac{1}{3}\big)\Big)
	\Big(k(x)^2-\frac{1}{4}\Big),~\mbox{in}~\Omega_{2R},
\end{align*}
satisfying $\nabla \cdot {\bf v}_1^2=0$ in $\Omega_{2R}$, and $\|{\bf v}_{1}^{2}\|_{C^{3}(\Omega\setminus\Omega_{R})}\leq\,C$. 
Moreover, we choose a $\overline{p}_1^2\in C^{1}(\Omega)$ such that
\begin{equation*}
	\overline{p}_1^2=-\frac{3\mu}{\delta(x_{1})^2}+\frac{18\mu}{\delta(x_{1})}\Big(\frac{2x_{1}^{2}}{\delta(x_{1})}-\frac{1}{3}\Big)k(x)^{2},\quad\mbox{in}~\Omega_{2R},
\end{equation*}
and  $\|\overline{p}_1^2\|_{C^{1}(\Omega\setminus\Omega_{R})}\leq C$. 

It follows from \cite[Proposition 2.2]{LX} that the following Proposition holds for $({{\bf u}}_2^2,p_2^2)$, since $({{\bf u}}_2^2,p_2^2)$ is the solution to Stokes flow. Using the estimates  of ${\bf u}_i^\alpha$, $(i,\alpha)\neq (1,2)$ obtained before, we prove Proposition \ref{propu12} holds as well for  $({{\bf u}}_1^2,p_1^2)$.

\begin{prop}\label{propu12}
	Let ${\bf u}_{1}^{2}, {\bf u}_{2}^{2}\in{C}^{2}(\Omega;\mathbb R^2),~p_{1}^{2}, p_{2}^{2}\in{C}^{1}(\Omega)$ be the solution to \eqref{equ_v12Dd} and \eqref{equ_v12D}. Then we have
	\begin{equation*}
		\|\nabla({\bf u}_{i}^{2}-{\bf v}_{i}^{2})\|_{L^{\infty}(\Omega_{\delta(x_1)/2}(x_1))}\leq \frac{C}{\sqrt{\delta(x_{1})}},\quad\,x\in\Omega_{R},
	\end{equation*}
	and 
	\begin{equation*}
		\|\nabla^2({\bf u}_{i}^{2}-{\bf v}_{i}^{2})\|_{L^{\infty}(\Omega_{\delta(x_1)/2}(x_1))}+\|\nabla q_i^2\|_{L^{\infty}(\Omega_{\delta(x_1)/2}(x_1))}\leq C\Big(\frac{1}{\delta(x_{1})}+\frac{|x_1|}{\delta(x_{1})^2}\Big),\quad\,x\in\Omega_{R}.
	\end{equation*}
	Consequently, for $x\in\Omega_{R}$,
	\begin{align*}
		\frac{1}{C\delta(x_{1})}\leq|\nabla {\bf u}_{i}^2(x)|\leq C\Big(\frac{1}{\delta(x_{1})}+\frac{|x_1|}{\delta(x_{1})^2}\Big),~|\nabla^2 {\bf u}_{i}^2(x)|\leq C\Big(\frac{1}{\delta(x_{1})^2}+\frac{|x_1|}{\delta(x_1)^3}\Big),
	\end{align*}
	and
	$$|p_{i}^{2}(x)-p_i^2(R,0)|\leq\,\frac{C}{\varepsilon^2},~|\nabla p_{i}^{2}(x)|\leq\,C\Big(\frac{1}{\delta(x_{1})^2}+\frac{|x_1|}{\delta(x_1)^3}\Big).$$
\end{prop}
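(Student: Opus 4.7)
The assertion for $({\bf u}_2^2,p_2^2)$ is the Stokes result \cite[Proposition 2.2]{LX}, so I focus on the nonlinear pair $({\bf u}_1^2,p_1^2)$ solving \eqref{equ_v12Ddd}, adapting the three-dimensional scheme of Subsection \ref{sec2.3} and Section \ref{sec4} to two dimensions. Setting ${\bf w}_2:={\bf u}_1^2-{\bf v}_1^2$, $q_2:=p_1^2-\bar p_1^2$, the pair vanishes on $\partial\Omega$, is divergence-free in $\Omega_{2R}$, and substitution into \eqref{equ_v12Ddd} shows that $({\bf w}_2,q_2)$ solves a system of the form \eqref{w36} with
\[
{\bf f}:=\mu\Delta{\bf v}_1^2-\nabla\bar p_1^2-C_1^2\widetilde{\bf v}\cdot\nabla{\bf v}_1^2-\widetilde{\bf v}\cdot\nabla{\bf u}^{\#2},\qquad \widetilde{\bf v}:=C_1^2{\bf v}_1^2+{\bf u}^{\#2}.
\]

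My plan follows the four-step scheme of Section \ref{sec4}. \textbf{(i)} By direct computation using the explicit forms of ${\bf v}_1^2,\bar p_1^2$ (the latter being tailored to cancel the leading singular part of $\mu\Delta{\bf v}_1^2$), one checks $|\mu\Delta{\bf v}_1^2-\nabla\bar p_1^2|\le C(|x_1|/\delta(x_1)^2+1/\delta(x_1))$. Combining Propositions \ref{propu11}, \ref{propu13}, \ref{propu0}, \ref{propuhe11} with the mean-value bound $|{\bf u}_i^\alpha-{\bf v}_i^\alpha|\le C\sqrt{\delta(x_1)}$ (the 2D analog of \eqref{zydfzgs11}) then yields $|\widetilde{\bf v}|\le C/\sqrt{\delta(x_1)}$, $|\nabla\widetilde{\bf v}|\le C/\delta(x_1)^{3/2}$, and finally $|{\bf f}|\le C(|x_1|/\delta(x_1)^2+1/\delta(x_1))$, $|\nabla{\bf f}|\le C(|x_1|/\delta(x_1)^3+1/\delta(x_1)^2)$. \textbf{(ii)} The hypotheses \eqref{estv113D3}--\eqref{jstj} of Lemma \ref{lemmaenergy} are now all in force, with the divergence-free cancellation $\int{\bf u}_1^2\cdot\nabla{\bf w}_2\cdot{\bf w}_2=0$ used to absorb the genuinely quadratic contribution $(C_1^2)^2{\bf w}_2\cdot\nabla{\bf w}_2$ via the mean-value bound $|{\bf w}_2-{\bf u}_1^2|\le C/\sqrt{\delta(x_1)}$ and Hardy's inequality; this yields $\int_\Omega|\nabla{\bf w}_2|^2\le C$. \textbf{(iii)} The Caccioppoli iteration of \eqref{nsiteration3D}, rewritten in two dimensions with the narrow-strip volume $|\Omega_s(x_1)|\sim s\,\delta(x_1)$ in place of $|\Omega_s(z')|\sim s^2\delta(z')$ (so the driving term becomes $\sim s(s^2+x_1^2+\delta^2)/\delta^3$), after $\sim 1/\sqrt{\delta(x_1)}$ rounds delivers
\[
\int_{\Omega_{\delta(x_1)}(x_1)}|\nabla{\bf w}_2|^2\le C\big(x_1^2+\delta(x_1)^2\big).
\]
\textbf{(iv)} Feeding this local energy and the $L^\infty$ bounds on ${\bf f},\nabla{\bf f}$ into the $d=2$ version of Proposition \ref{nsdpro}, and using $|x_1|\le\sqrt{\delta(x_1)}$, produces
\[
\|\nabla{\bf w}_2\|_{L^\infty(\Omega_{\delta(x_1)/2}(x_1))}\le \frac{C}{\sqrt{\delta(x_1)}},\qquad \|\nabla^2{\bf w}_2\|_{L^\infty}+\|\nabla q_2\|_{L^\infty}\le C\Big(\frac{1}{\delta(x_1)}+\frac{|x_1|}{\delta(x_1)^2}\Big).
\]
Restoring ${\bf v}_1^2,\bar p_1^2$ recovers the stated consequences for $\nabla{\bf u}_1^2,\nabla^2{\bf u}_1^2,\nabla p_1^2$, and the boundary-trace improvement $|p_1^2(x)-p_1^2(R,0)|\le C/\varepsilon^2$ follows by the Liouville-type argument of Subsection \ref{sec4.4}.

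The technical heart is the refined local energy estimate in step (iii). The naive bound $\int_{\Omega_\delta}|\nabla{\bf w}_2|^2\le C\delta$, which would follow simply from the expected $|\nabla{\bf w}_2|\le C/\sqrt{\delta(x_1)}$ on a set of area $\sim\delta(x_1)^2$, is insufficient: it propagates through Proposition \ref{nsdpro} to only $\|\nabla^2{\bf w}_2\|_{L^\infty}\lesssim\delta(x_1)^{-3/2}$, which is strictly weaker than the claimed $1/\delta(x_1)+|x_1|/\delta(x_1)^2$ near $x_1=0$. The additional structural factor $x_1^2+\delta(x_1)^2$ must be preserved through all $\sim 1/\sqrt{\delta(x_1)}$ iterations by careful management of the geometric damping of the Caccioppoli constant, so that the residual sum $\sum_k\theta_0^k k(k^2\delta^2+x_1^2+\delta^2)$ remains bounded by $C(x_1^2+\delta^2)$ rather than degenerating to $C\delta$. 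A secondary subtlety is that the cancellation $\int{\bf u}\cdot\nabla{\bf u}\cdot{\bf u}=0$ must be applied against ${\bf u}_1^2$, not ${\bf w}_2$, in order to absorb the $O(\delta(x_1)^{-1/2})$ singularity carried by ${\bf v}_1^2$; that is exactly where the mean-value comparison $|{\bf w}_2-{\bf u}_1^2|\le C/\sqrt{\delta(x_1)}$ (2D analog of Remark \ref{rem28}) enters decisively.
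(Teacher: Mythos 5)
Your proposal follows the same overall scheme as the paper (set ${\bf w}_2 = {\bf u}_1^2 - {\bf v}_1^2$, $q_2 = p_1^2 - \bar p_1^2$; global energy via Lemma~\ref{lemmaenergy}; local Caccioppoli iteration; feed into the $d=2$ version of Proposition~\ref{nsdpro}). But your treatment of the iteration step is more careful than the paper's, and in fact corrects it. The paper writes $\int_{\Omega_{\delta(z_1)}(z_1)}|\nabla{\bf w}_2|^2\le C\delta(z_1)^2$, transcribing the 3D conclusion of Lemma~\ref{nszyly333} without modification. In 2D the strip volume is $|\Omega_s(z_1)|\sim s\,\delta(z_1)$ rather than $s^2\delta(z')$, so $\int_{\Omega_s}|{\bf f}|^2\lesssim s(s^2+z_1^2+\delta^2)/\delta^3$, the per-step driving term in the Caccioppoli iteration (after multiplying by $(s-t)^2+\delta^2\sim\delta^2$ and taking $s\sim(i+1)\delta$) is $\sim(i+1)\bigl((i+1)^2\delta^2+z_1^2+\delta^2\bigr)$, and the geometric sum over $\sim 1/\sqrt\delta$ rounds gives $C(z_1^2+\delta^2)$, not $C\delta^2$. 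Since $z_1^2$ can be comparable to $\delta$ (e.g.\ $|z_1|\sim\sqrt\varepsilon$), $C\delta^2$ is not attainable by this argument; your bound $C(x_1^2+\delta^2)$ is the correct output. It does suffice: substituting $\|\nabla{\bf w}_2\|_{L^2(\Omega_\delta)}\lesssim|x_1|+\delta$ into \eqref{nsW2pstokes} and \eqref{Wmpstokes} with $d=2$, together with $\|{\bf f}\|_{L^\infty}\lesssim|x_1|/\delta^2+1/\delta$ and $|x_1|\le\sqrt\delta$, yields exactly the claimed $\|\nabla{\bf w}_2\|_{L^\infty}\lesssim\delta^{-1/2}$ and $\|\nabla^2{\bf w}_2\|_{L^\infty}+\|\nabla q_2\|_{L^\infty}\lesssim 1/\delta+|x_1|/\delta^2$. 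You are also right that the cruder $C\delta$ bound (the ``naive'' guess from $|\nabla{\bf w}_2|\lesssim\delta^{-1/2}$ on area $\sim\delta^2$) fails: it only delivers $\|\nabla^2{\bf w}_2\|_{L^\infty}\lesssim\delta^{-3/2}$, which is strictly weaker near $x_1=0$. Your remark about the divergence-free cancellation being applied against ${\bf u}_1^2$ via $|{\bf w}_2-{\bf u}_1^2|=|{\bf v}_1^2|\le C/\sqrt\delta$ is exactly condition~\eqref{jstj} of Lemma~\ref{lemmaenergy} and matches how the paper uses it. (One tiny slip of notation: the identity that gives the cancellation is $\int{\bf u}_1^2\cdot\nabla{\bf w}_2\cdot{\bf w}_2=0$, not $\int{\bf u}\cdot\nabla{\bf u}\cdot{\bf u}=0$.) Overall, a sound reconstruction that in one place sharpens the written text of the paper.
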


Denote
\begin{align*}
	{\bf w}_{2}:={\bf u}_1^{2}-{{\bf v}}_{1}^2,\quad\mbox{and}~ q_{2}:=p_1^{2}-\overline{p}_1^{2}.
\end{align*} 
Then $({\bf w}_2,q_2)$ verify the following boundary value problem
\begin{align*}
	\begin{cases}
		\mu\,\Delta {\bf w}_2=\nabla q_2+(C_1^2)^2{\bf w}_2\cdot\nabla {\bf w}_2+C_1^2\widetilde{\bf v}\cdot\nabla {\bf w}_2+ C_1^2{\bf w}_2\cdot\nabla\widetilde{\bf v}-{\bf f},&\mathrm{in}\;\Omega,\\
		\nabla\cdot{\bf{w}}_{2}=0,&\mathrm{in}\;\Omega_{R},\\
		|\nabla {\bf{w}}_{2}|\leq\,C,&\mathrm{in}\;\Omega\setminus\Omega_{R},\\
		{\bf{w}}_{2}=0,&\mathrm{on}\;\partial \Omega.
	\end{cases}
\end{align*}
where ${\bf f}=\mu\,\Delta {\bf v}_1^2-\nabla \overline{p}_1^2-C_1^2\widetilde{\bf v}_1^2\cdot \nabla {\bf v}_1^2-\widetilde{\bf v}\cdot \nabla {\bf u}^{\#2},~\text{and}~\widetilde{\bf v}= C_1^2{\bf v}_1^2 + {\bf u}^{\#2}.$
Apply Lemma \ref{lemmaenergy} and the iteration process in Lemma \ref{nszyly333}, we have $\int_{\Omega_\delta(z_1)}|\nabla{\bf w}_2|^2\leq C\delta(z_1)^2$. Using
$|{\bf f}(x)|\leq \,C\Big(\frac{|x_1|}{\delta(x_{1})^2}+\frac{1}{\delta(x_{1})}\Big)$, and \eqref{Wmpstokes}, we can prove Proposition \ref{propu12} holds.

\subsection{Estimates of $|C_1^\alpha-C_2^\alpha|$}\label{sec5.4}
By \eqref{sto-2} and decomposition \eqref{udecom}, instead of \eqref{equ-decompositon?}, we have
\begin{equation*}
	\sum_{i=1}^2\sum\limits_{\alpha=1}^{3} C_{i}^{\alpha}
	\int_{\partial D_j}{\boldsymbol\psi}_\beta\cdot\sigma[{\bf u}_{i}^\alpha,p_{i}^{\alpha}]\nu
	+\int_{\partial D_j}{\boldsymbol\psi}_\beta\cdot\sigma[{\bf u}_{0},p_{0}]\nu=0,~~\beta= 1,2,3,
\end{equation*}
where $j=1,2$. Similarly as \eqref{systemC} in 3D, we have
\begin{align*}
	\begin{cases}
		\sum\limits_{\alpha=1}^{3}C_{1}^{\alpha}a_{11}^{\alpha\beta}
		+\sum\limits_{\alpha=1}^{3}C_{2}^{\alpha}a_{21}^{\alpha\beta}
		-b_{1}^{\beta}=0,&\\
		\sum\limits_{\alpha=1}^{3}C_{1}^{\alpha}a_{12}^{\alpha\beta}
		+\sum\limits_{\alpha=1}^{3}C_{2}^{\alpha}a_{22}^{\alpha\beta}
		-b_{2}^{\beta}=0.
	\end{cases}\quad \beta=1,2,3.
\end{align*}
\subsection{Some Technical Results in 2D}

It is different with the 3D case. To estimate $a_{1j}^{2\beta}$, we have to estimate the trilinear form ${\bf T}({\bf u},{\bf u},{\bf u}_j^\beta)$, for  $\beta=1,2,3$, where  
$${\bf T}({\bf u},{\bf v},{\bf w})=\int_{\Omega}{\bf u}\cdot\nabla{\bf v}\cdot{\bf w},$$
because ${\bf T}({\bf u},{\bf u},{\bf u}_1^\beta)$ in dimension two may exhibit singularity. Specifically, 
\begin{prop}\label{ewdgj}
 We have
 \begin{equation*}
 |{\bf T}({\bf u},{\bf u},{\bf u}_1^\beta)|\leq C|C_1^2-C_2^2||\log\varepsilon|+C,~~\beta=1,3,
 \end{equation*}
and
\begin{equation*}
|{\bf T}({\bf u},{\bf u},{\bf u}_1^2)|\leq \frac{C|C_1^2-C_2^2|}{\sqrt{\varepsilon}}+C\,|\log\varepsilon|.
\end{equation*}
\end{prop}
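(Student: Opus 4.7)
The plan is to first recognize that the Navier–Stokes equation plus the equilibrium condition turns the trilinear form into a linear ``duality pairing'', then estimate this pairing via the existing pointwise and energy bounds. Writing \eqref{sto} as $\nabla\cdot\sigma[{\bf u},p]={\bf u}\cdot\nabla{\bf u}$, testing against ${\bf u}_1^\beta$, integrating by parts, and using $\nabla\cdot{\bf u}_1^\beta=0$ together with the boundary values ${\bf u}_1^\beta|_{\partial D\cup\partial D_2}=0$ and ${\bf u}_1^\beta|_{\partial D_1}={\boldsymbol\psi}_\beta$, the only surviving boundary term $\int_{\partial D_1}{\boldsymbol\psi}_\beta\cdot\sigma[{\bf u},p]\nu$ is killed by \eqref{sto-2}; this yields the key identity
\begin{equation*}
{\bf T}({\bf u},{\bf u},{\bf u}_1^\beta)\;=\;-2\mu\int_\Omega e({\bf u}):e({\bf u}_1^\beta)\,\mathrm{d}x,\qquad \beta=1,2,3.
\end{equation*}
This linearization is indispensable: a direct pointwise expansion of $|{\bf u}\cdot\nabla{\bf u}\cdot{\bf u}_1^\beta|$ using the rough bounds $|{\bf u}|\lesssim|C_1^2-C_2^2|/\sqrt{\delta(x_1)}+1$ and $|\nabla{\bf u}|\lesssim|C_1^2-C_2^2|(1/\delta+|x_1|/\delta^2)+1/\delta$ would produce a dominant $(C_1^2-C_2^2)^2/\sqrt{\varepsilon}$ term that exceeds the stated rate, whereas the pairing above depends only linearly on each $C_1^\alpha-C_2^\alpha$.

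Next, I would substitute ${\bf u}=\sum_{\alpha=1}^3(C_1^\alpha-C_2^\alpha){\bf u}_1^\alpha+{\bf u}_b$ with ${\bf u}_b=\sum_\alpha C_2^\alpha({\bf u}_1^\alpha+{\bf u}_2^\alpha)+{\bf u}_0$ into the identity and estimate term by term. For $\alpha\neq 2$ the Stokes integration by parts gives $2\mu\int e({\bf u}_1^\alpha):e({\bf u}_1^\beta)=a_{11}^{\alpha\beta}$, and the two-dimensional analogs of Lemmas \ref{lema113D}--\ref{lema114563D} produce $|a_{11}^{\beta\beta}|\sim|\log\varepsilon|$ for $\beta=1,3$ and bounded off-diagonal entries. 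For $\alpha=2$ I estimate $\int e({\bf u}_1^2):e({\bf u}_1^\beta)$ via the pointwise bounds on $|\nabla{\bf u}_1^2|$ from Proposition \ref{propu12} and on $|\nabla{\bf u}_1^\beta|$ from Propositions \ref{propu11} and \ref{propu13}, together with the 2D slab integration $\int_{\Omega_R}f(x_1)\,\mathrm{d}x\approx\int_{-R}^R f(x_1)\delta(x_1)\,\mathrm{d}x_1$: for $\beta\in\{1,3\}$ this yields $|\int e({\bf u}_1^2):e({\bf u}_1^\beta)|\leq C|\log\varepsilon|$, producing the principal term $C|C_1^2-C_2^2||\log\varepsilon|$; for $\beta=2$ the integrand carries an additional singular factor and the slab integral escalates to $1/\sqrt{\varepsilon}$-scale, which accounts for the worse rate in the second assertion. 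The tail $\int e({\bf u}_b):e({\bf u}_1^\beta)$ is handled by splitting ${\bf u}_b$ according to Propositions \ref{propu0} and \ref{propuhe11}, contributing an $O(1)$ piece plus an $O(|C_1^2-C_2^2|)$ piece.

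The main obstacle is the loose contribution $(C_1^\beta-C_2^\beta)a_{11}^{\beta\beta}$ for $\beta\in\{1,3\}$: with only the a priori bound $|C_1^\beta-C_2^\beta|\leq C$ from Proposition \ref{lemma1}, this produces $C|\log\varepsilon|$ rather than the cleaner $C|C_1^2-C_2^2||\log\varepsilon|+C$ in the claim. I would resolve this by a short bootstrap: the coarser intermediate estimate $|{\bf T}|\leq C|\log\varepsilon|$ already suffices to carry out the Cramer's-rule analysis sketched in Subsection \ref{sec5.4}, which sharpens to $|C_1^\beta-C_2^\beta|\leq C/|\log\varepsilon|$ for $\beta=1,3$ (and $|C_1^2-C_2^2|\leq C\sqrt{\varepsilon}$); reinserting these improved bounds, the diagonal contribution is absorbed into the additive constant while the $\alpha=2$ piece retains its singular $|C_1^2-C_2^2|$-prefactor, giving the sharp form of Proposition \ref{ewdgj}.
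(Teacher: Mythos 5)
Your integration-by-parts identity ${\bf T}({\bf u},{\bf u},{\bf u}_1^\beta)=-2\mu\int_\Omega e({\bf u}):e({\bf u}_1^\beta)$ is correct and is a genuinely different starting point from the paper, but the argument you build on it has several genuine gaps.

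First, the claim that the linearization is ``indispensable'' because a direct pointwise estimate would yield $(C_1^2-C_2^2)^2/\sqrt\varepsilon$ mischaracterizes the paper's route. The paper (\eqref{fz222}) does estimate $|{\bf u}\cdot\nabla{\bf u}|$ pointwise, but it splits ${\bf u}_1^\alpha={\bf w}_1^\alpha+{\bf v}_1^\alpha$ and exploits the \emph{directional} structure of the auxiliary field: since $({\bf v}_1^2)^{(1)}\sim x_1/\delta$ while $({\bf v}_1^2)^{(2)}\sim 1$, and the most singular entry of $\nabla{\bf v}_1^2$ is $\partial_{x_2}({\bf v}_1^2)^{(1)}\sim x_1/\delta^2$, the dot product $|{\bf v}_1^2\cdot\nabla{\bf v}_1^2|$ is only $O(\delta^{-3/2})$, not the naive $O(\delta^{-2})$ that $|{\bf v}_1^2||\nabla{\bf v}_1^2|$ suggests. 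After $(C_1^2-C_2^2)^2\le C|C_1^2-C_2^2|$, the slab integral $\int_{\Omega_R}\delta^{-3/2}\sim|\log\varepsilon|$ gives exactly the stated rate. So the direct route works; you just have to use the cancellation inside the dot product, not the product of norms.

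Second, and more seriously, your estimates are calibrated to the three-dimensional rates rather than the two-dimensional ones where this proposition lives. In 2D, Lemma \ref{lema11} gives $a_{11}^{\beta\beta}\sim\varepsilon^{-1/2}$ for $\beta=1,3$ (not $|\log\varepsilon|$), and Proposition \ref{lemCialpha} gives $|C_1^\beta-C_2^\beta|\le C\sqrt\varepsilon$ for $\beta=1,3$ (not $C/|\log\varepsilon|$). With the correct rate $a_{11}^{\beta\beta}\sim\varepsilon^{-1/2}$ and only the a priori bound $|C_1^\beta-C_2^\beta|\le C$ from Proposition \ref{lemma1}, the ``loose'' diagonal contribution $(C_1^\beta-C_2^\beta)a_{11}^{\beta\beta}$ in your identity is $O(\varepsilon^{-1/2})$, not $O(|\log\varepsilon|)$. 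Your claimed intermediate estimate $|{\bf T}|\le C|\log\varepsilon|$ therefore does not hold at the start of the bootstrap, and it is not clear that the Cramer analysis of Subsection \ref{sec5.4} can be carried out with the cruder input $|{\bf T}|\le C\varepsilon^{-1/2}$: the entries $a_{11}^{2\beta}+a_{21}^{2\beta}$ fed into that analysis (Lemma \ref{lema11}, \eqref{esta111123}) are themselves derived from Proposition \ref{ewdgj}, so the loop must be closed carefully, and as written it is not. In short, the identity is a nice observation but the bootstrap is both numerically miscalibrated for 2D and leaves the circularity between $|{\bf T}|$, the $a_{11}^{2\beta}$ entries, and $|C_1^\alpha-C_2^\alpha|$ unresolved, whereas the paper's direct pointwise argument only needs the crude $|C_1^\alpha-C_2^\alpha|\le C$ and so has no such circularity.
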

\begin{proof}
	We rewrite \eqref{ud} as
	$${\bf u}=\sum_{\alpha=1}^{3}|C_1^\alpha-C_2^\alpha|{\bf u}_1^\alpha+{\bf u}_{b},~\text{and}~{\bf u}_{b}:=\sum_{\alpha=1}^{3}C_{2}^{\alpha}({\bf u}_{1}^{\alpha}+{\bf u}_{2}^{\alpha})+{\bf u}_{0}.$$
By using Propositions \ref{lemma1}--\ref{propu0} and the mean value theorem, we derive
	\begin{align}\label{dszfz1}
		|{\bf u}_b|=\Big|\sum_{\alpha=1}^{3}C_{2}^{\alpha}\big({\bf u}_{1}^{\alpha}+{\bf u}_{2}^{\alpha}-({\bf v}_{1}^{\alpha}+{\bf v}_{2}^{\alpha})\big)+{\bf u}_{0}\Big|+\Big|\sum_{\alpha=1}^{3}C_{2}^{\alpha}({\bf v}_{1}^{\alpha}+{\bf v}_{2}^{\alpha}\Big|\leq C,
	\end{align}
	where we use the fact that $|{\bf v}_{1}^{\alpha}+{\bf v}_{2}^{\alpha}|\leq C$, $\alpha=1,2,3.$ 
	Moreover, in view of Propositions \ref{propu0} and \ref{propuhe11}, we have
	\begin{align}\label{dszfz2}
	|\nabla{\bf u}_b|\leq C\sum_{\alpha=1}^{3}\nabla({\bf u}_{1}^{\alpha}+{\bf u}_{2}^{\alpha})+\nabla {\bf u}_0\leq \frac{C|C_1^2-C_2^2|}{\sqrt{\delta(x_{1})}}+C.
	\end{align}
 It follows from Propositions \ref{lemma1}--\ref{propu0}, Proposition \ref{propu12}, \eqref{dszfz1} and \eqref{dszfz2} that
\begin{align}\label{fz222}
	|{\bf u}\cdot\nabla{\bf u}|\leq&\, \Big|\Big(\sum_{\alpha=1}^{3}(C_{1}^{\alpha}-C_{2}^{\alpha})({\bf w}_{1}^{\alpha}+{\bf v}_{1}^{\alpha})
	+{\bf u}_{b}\Big)\cdot\Big(\sum_{\alpha=1}^{3}(C_{1}^{\alpha}-C_{2}^{\alpha})\nabla({\bf w}_1^\alpha+{\bf v}_{1}^{\alpha})
	+\nabla {\bf u}_{b}\Big)\Big|\nonumber\\
	 \leq &\, \frac{C|C_1^2-C_2^2|}{\delta(x_1)^{3/2}}+\frac{C}{\delta(x_{1})}.
\end{align}
In view of Proposition \ref{propu11}, Proposition \ref{propu13} and the mean value theorem, we have
\begin{align}\label{fz333}
|{\bf u}_1^\beta|=|{\bf u}_1^\beta-{\bf v}_1^\beta|+|{\bf v}_1^\beta|\leq C,~i=1,3,~|{\bf u}_1^2|=|{\bf u}_1^2-{\bf v}_1^2|+|{\bf v}_1^2|\leq \frac{C}{\sqrt{\delta(x_{1})}},
\end{align}
where we use the fact that $|{\bf v}_1^\beta|\leq C$, $i=1,3$ and  $|{\bf v}_1^2|\leq  \frac{C}{\sqrt{\delta(x_{1})}}$.
Thus, combining \eqref{fz222} and \eqref{fz333}, we deduce, for $i=1,3$,
\begin{align*}
 |{\bf T}({\bf u},{\bf u},{\bf u}_1^\beta)|\leq &\int_{\Omega_{R}}{\bf u}\cdot\nabla{\bf u}\cdot{\bf u}_1^\beta+C\leq C|C_1^2-C_2^2|\int_{\Omega_{R}}\frac{1}{\delta(x_1)^{3/2}}+C
 \leq C|C_1^2-C_2^2||\log\varepsilon|+C,
\end{align*}
and 
\begin{align*}
	|{\bf T}({\bf u},{\bf u},{\bf u}_1^2)|\leq& \int_{\Omega_{R}}{\bf u}\cdot\nabla{\bf u}\cdot{\bf u}_1^2+C
	\leq\, C|C_1^2-C_2^2|\int_{\Omega_{R}}\frac{1}{\delta(x_1)^{2}}+ \int_{\Omega_{R}}\frac{C}{\delta(x_1)^{3/2}}\\
	\leq&\, \frac{C|C_1^2-C_2^2|}{\sqrt{\varepsilon}}+C|\log\varepsilon|.
\end{align*}
Hence, the proof of Proposition \ref{ewdgj} is finished.
\end{proof}

\begin{lemma}\label{lema11} In dimension two,
	\begin{align}
		\frac{1}{C\sqrt{\varepsilon}}\leq a_{11}^{11}\leq&\, \frac{C}{\sqrt{\varepsilon}},\quad\quad~~
		\frac{1}{C\varepsilon^{3/2}}\leq a_{11}^{22}\leq \frac{C}{\varepsilon^{3/2}},\quad\quad
		~~\frac{1}{C\sqrt{\varepsilon}}\leq a_{11}^{33}\leq \frac{C}{\sqrt{\varepsilon}},\label{esta1111}\\
		|a_{11}^{12}|,~|a_{11}^{23}|\leq&\, C|\log\varepsilon|,\quad\quad\quad~|a_{11}^{13}|\leq C,\label{esta1211}\\
		~|a_{11}^{\alpha\beta}+a_{21}^{\alpha\beta}|\leq&\, C,\quad\quad~\alpha=1,3,\quad\beta=1,2,3,~\alpha\neq\beta,\label{esta1112}\\
		|a_{11}^{2\beta}+a_{21}^{2\beta}|\leq&\, C|C_1^2-C_2^2||\log\varepsilon|+C, \quad\quad~\beta=1,3,\label{esta111123}\\
			|a_{11}^{22}+a_{21}^{22}|\leq&\, \frac{C|C_1^2-C_2^2|}{\varepsilon}+\frac{C\sum_{\alpha=1}^3|C_1^\alpha-C_2^\alpha|}{\sqrt{\varepsilon}}+C,\label{esta11112}
	\end{align}
   and
	\begin{equation}\label{estb1}
		|b_1^\beta|\leq C,\quad\quad~\beta=1,2,3.
	\end{equation}
\end{lemma}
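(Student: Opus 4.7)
The plan is to rewrite every entry in the lemma as a volume integral by integration by parts and separate out the only genuinely new ingredient beyond the Stokes analysis of \cite{LX}, namely the trilinear correction ${\bf T}({\bf u},{\bf u},{\bf u}_j^\beta)$, which appears precisely when the source index $\alpha$ equals $2$. Concretely, for $(i,\alpha)\neq(1,2)$ one has
\[
a_{ij}^{\alpha\beta}=\int_{\Omega}2\mu\,e({\bf u}_i^\alpha):e({\bf u}_j^\beta)\,\mathrm{d}x,\qquad b_j^\beta=-\int_{\Omega}2\mu\,e({\bf u}_0):e({\bf u}_j^\beta)\,\mathrm{d}x,
\]
whereas $a_{1j}^{2\beta}=\int_{\Omega}2\mu\,e({\bf u}_1^2):e({\bf u}_j^\beta)\,\mathrm{d}x+{\bf T}({\bf u},{\bf u},{\bf u}_j^\beta)$.

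First I would dispose of the items with no trilinear correction, namely the estimates in \eqref{esta1111} for $\alpha=1,3$, the bounds $|a_{11}^{12}|,|a_{11}^{13}|$ in \eqref{esta1211}, the sums \eqref{esta1112}, and $|b_1^\beta|\leq C$. Since Propositions \ref{propu11}, \ref{propu13} and \ref{propu0} supply the same pointwise control of $\nabla{\bf u}_1^1$, $\nabla{\bf u}_1^3$, $\nabla({\bf u}_1^\alpha+{\bf u}_2^\alpha)$ ($\alpha\neq 2$) and $\nabla{\bf u}_0$ as in the purely Stokes case, I would simply reproduce the arguments of \cite[Lemma 3.8 and Lemma 3.9]{LX}: they combine the pointwise bounds with the parity cancellation with respect to $x_1\mapsto -x_1$ that is needed, in particular, to kill the leading singular contribution to the off-diagonal $a_{11}^{12}$ and to $b_1^\beta$.

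The genuinely new items are the diagonal $a_{11}^{22}$ in \eqref{esta1111}, the off-diagonal $|a_{11}^{23}|\leq C|\log\varepsilon|$ in \eqref{esta1211}, and the sums \eqref{esta111123}, \eqref{esta11112}. In each case I would split the entry into its quadratic Stokes piece plus a ${\bf T}({\bf u},{\bf u},\cdot)$ correction. The quadratic piece is treated by substituting the explicit auxiliary fields ${\bf v}_1^\alpha$ and invoking Propositions \ref{propu12} and \ref{propuhe11}: for $a_{11}^{22}$ a direct computation with ${\bf v}_1^2$ yields the two-sided bound of order $\varepsilon^{-3/2}$; for $a_{11}^{23}$ the parity mismatch between the $(\mathrm{odd},\mathrm{even})$ profile of ${\bf v}_1^2$ and the $(\mathrm{even},\mathrm{odd})$ profile of ${\bf v}_1^3$ annihilates the leading singular contribution, leaving only the $O(|\log\varepsilon|)$ remainder from the errors ${\bf u}_1^\alpha-{\bf v}_1^\alpha$; and for \eqref{esta111123}--\eqref{esta11112} the bound of $|\nabla({\bf u}_1^2+{\bf u}_2^2)|$ in Proposition \ref{propuhe11}, paired with the $L^1$ size of $\nabla{\bf u}_1^\beta$ from Propositions \ref{propu11}, \ref{propu13}, \ref{propu12}, produces exactly the right hand sides claimed. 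The trilinear corrections are then absorbed via Proposition \ref{ewdgj}, using the crude bound $|C_1^\alpha-C_2^\alpha|\leq C$ from Proposition \ref{lemma1} to see that the trilinear contribution is always of strictly lower order than (or matches) the quadratic main term.

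The main obstacle is the careful bookkeeping in \eqref{esta11112}, whose right hand side is itself expressed in terms of the still-unknown differences $|C_1^\alpha-C_2^\alpha|$: one must trace each singular factor $\varepsilon^{-1}$ or $\varepsilon^{-1/2}$ back to the specific $|C_1^\alpha-C_2^\alpha|$ that produces it, so that the resulting bound remains compatible with the downstream use in Cramer's rule when solving the $3\times 3$ system for $(C_1^\alpha-C_2^\alpha)_{\alpha=1,2,3}$, in analogy with the three-dimensional proof of Proposition \ref{lemCialpha3D}.
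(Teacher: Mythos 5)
Your overall strategy — split each $a_{11}^{\alpha\beta}$ into its bilinear Stokes part plus the trilinear correction ${\bf T}({\bf u},{\bf u},{\bf u}_j^\beta)$ when the source index is $2$, dispose of the purely Stokes entries by invoking \cite{LX}, and absorb the trilinear corrections via Proposition \ref{ewdgj} — matches the paper's proof for \eqref{esta1111}, \eqref{esta1211}, \eqref{esta1112}, \eqref{esta111123} and \eqref{estb1}. In particular, for \eqref{esta111123} the paper does exactly the volume-integral split you describe: $a_{11}^{2\beta}+a_{21}^{2\beta}=\int_\Omega 2\mu\, e({\bf u}_1^2+{\bf u}_2^2):e({\bf u}_1^\beta)+{\bf T}({\bf u},{\bf u},{\bf u}_1^\beta)$, estimates the first term via Proposition \ref{propuhe11} and the second via Proposition \ref{ewdgj}, and gets $C|C_1^2-C_2^2||\log\varepsilon|+C$.

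Where you diverge is \eqref{esta11112}. The paper does \emph{not} pass to a volume integral there; it works directly with the boundary-integral representation
\[
a_{11}^{22}+a_{21}^{22}=\int_{\partial D_1}\boldsymbol\psi_2\cdot\sigma\bigl[{\bf u}_1^2+{\bf u}_2^2,\; p_1^2+p_2^2-(p_1^2+p_2^2)(R,x_2)\bigr]\nu,
\]
and then uses \emph{both} the gradient estimate and the pressure-gradient estimate from Proposition \ref{propuhe11}. This matters: the claimed right-hand side $\frac{C|C_1^2-C_2^2|}{\varepsilon}+\frac{C\sum_\alpha|C_1^\alpha-C_2^\alpha|}{\sqrt{\varepsilon}}+C$ is driven by the pressure, whose pointwise bound $\frac{C|C_1^2-C_2^2||x_1|}{\delta(x_1)^2}+\frac{C\sum_\alpha|C_1^\alpha-C_2^\alpha|}{\delta(x_1)}+C$ carries the $\varepsilon^{-1}$ and $\varepsilon^{-1/2}\sum|C_1^\alpha-C_2^\alpha|$ singularities. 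Your volume-integral route never sees the pressure, because ${\bf u}_1^2$ is divergence-free, so the quadratic piece $\int_\Omega 2\mu\, e({\bf u}_1^2+{\bf u}_2^2):e({\bf u}_1^2)$ paired with $|\nabla{\bf u}_1^2|\le C\bigl(\frac{1}{\delta}+\frac{|x_1|}{\delta^2}\bigr)$ yields $\frac{C|C_1^2-C_2^2|}{\sqrt{\varepsilon}}+C|\log\varepsilon|$, and the trilinear piece via Proposition \ref{ewdgj} is of the same order. That is \emph{not} the right-hand side claimed in \eqref{esta11112}; it is neither stronger nor weaker (the $C|\log\varepsilon|$ term is not dominated by $\frac{C\sum_\alpha|C_1^\alpha-C_2^\alpha|}{\sqrt{\varepsilon}}+C$), so the claim that your pairing ``produces exactly the right hand sides claimed'' is inaccurate. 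Concretely, if you run Cramer's rule with your form of the bound you obtain $|C_1^2-C_2^2|\le C\varepsilon^{3/2}|\log\varepsilon|$ rather than $C\varepsilon^{3/2}$; this still suffices for the final estimates of Theorem \ref{mainthm2D} after a short check, but it is a different (and slightly weaker) statement than Proposition \ref{lemCialpha}. To reproduce the lemma exactly, switch to the boundary-integral form and use the pressure estimate from Proposition \ref{propuhe11} together with a mean-value/path-integral argument, as the paper does.
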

\begin{proof}
 We know from Proposition \ref{ewdgj} and \cite[Lemma 4.8]{LX} that \eqref{esta1111}--\eqref{esta1112} and \eqref{estb1} hold. For \eqref{esta111123}, recalling the definition of $a_{ij}^{\alpha\beta}$, we obtain
		\begin{align*}
		a_{11}^{21}+a_{21}^{21}=\int_{\Omega}\left(2\mu e({\bf u}_{1}^{2}+{\bf u}_2^2), e({\bf u}_{1}^{1})\right)+\int_{\Omega}{\bf u}\cdot \nabla{\bf u}\cdot {\bf u}_{1}^{1}:=\mbox{II}_1+\mbox{II}_2.
	\end{align*}
By using Proposition \ref{propuhe11}, we have
  \begin{align*}
  |\mbox{II}_1|\leq C|C_1^2-C_2^2|\int_{\Omega_{R}}\frac{1}{\delta(z')^{3/2}(x_1)}+C\leq C|C_1^2-C_2^2||\log\varepsilon|+C,
  \end{align*}
  and it follows from Proposition \ref{ewdgj} that
  \begin{align*}
  |\mbox{II}_2| \leq C|C_1^2-C_2^2||\log\varepsilon|+C.
  \end{align*}
Hence, we have $|a_{11}^{21}+a_{21}^{21}|\leq C|C_1^2-C_2^2||\log\varepsilon|+C.$
Similarly,
	\begin{align*}
	a_{11}^{23}+a_{21}^{23}=\int_{\Omega}\left(2\mu e({\bf u}_{1}^{2}+{\bf u}_2^2), e({\bf u}_{1}^{1})\right)+\int_{\Omega}{\bf u}\cdot \nabla{\bf u}\cdot {\bf u}_{1}^{3}\leq C|C_1^2-C_2^2||\log\varepsilon|+C.
\end{align*}
 For \eqref{esta11112}, by making use of \eqref{propuhe11}, Proposition \ref{propuhe11} and mean-value theorem, we get
\begin{align*}
	|a_{11}^{22}+a_{21}^{22}|&=\Big|\int_{\partial D_1}\boldsymbol{\psi}_2\cdot\sigma[{\bf u}_1^2+{\bf u}_2^2,p_1^2+p_2^2-(p_1^2+p_2^2)(R,x_2)]\Big|\\
	&\leq\,C|C_1^2-C_2^2| \int_{\partial D_1\cap\bar{\Omega}_R}\frac{|x_1|}{\delta(x_{1})^2}+\frac{C\sum_{\alpha=1}^3|C_1^\alpha-C_2^\alpha|}{\delta(x')}+C\\
	&\leq \frac{C|C_1^2-C_2^2|}{\varepsilon}+\frac{C\sum_{\alpha=1}^3|C_1^\alpha-C_2^\alpha|}{\sqrt{\varepsilon}}+C.
\end{align*}
We thus complete the proof.
\end{proof}

By the Cramer's rule we can solve $|C_1^\alpha-C_2^\alpha|$, and have
 
 \begin{prop}\label{lemCialpha}
	Let $C_{i}^{\alpha}$ be defined in \eqref{ud}. Then
	\begin{equation*}
		|C_1^1-C_2^1|\leq C\sqrt{\varepsilon},\quad |C_1^2-C_2^2|\leq C\varepsilon^{3/2},\quad|C_1^3-C_2^3|\leq C\sqrt{\varepsilon}.
	\end{equation*}
\end{prop}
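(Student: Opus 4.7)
The plan is to mimic the strategy of Proposition~\ref{lemCialpha3D}. Using the first three equations of the linear system for $(C_i^\alpha)$, we write
\[
A(C_1 - C_2) = f := b_1 - (a_{11} + a_{21})C_2,
\]
where $A := (a_{11}^{\alpha\beta})_{\alpha,\beta=1}^{3}$, $C_i := (C_i^1, C_i^2, C_i^3)^{\mathrm{T}}$, and $b_1 := (b_1^1, b_1^2, b_1^3)^{\mathrm{T}}$. Since $A$ is symmetric and positive definite (as in \cite{BLL2}), each difference $|C_1^\alpha - C_2^\alpha|$ is extracted by Cramer's rule, so the task reduces to bounding $\det A$, the six distinct cofactors $\mathrm{cof}(A)_{\alpha\beta}$, and the components $f^\beta$.

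The diagonal estimates in \eqref{esta1111} combined with the off-diagonal bounds in \eqref{esta1211} give $\det A \sim \varepsilon^{-5/2}$ (the lower bound via positive definiteness, the upper bound via direct expansion). Expanding the six $2\times 2$ minors yields the sizes $\mathrm{cof}(A)_{11},\,\mathrm{cof}(A)_{33} \sim \varepsilon^{-2}$, $\mathrm{cof}(A)_{22} \sim \varepsilon^{-1}$, $|\mathrm{cof}(A)_{12}|,\,|\mathrm{cof}(A)_{23}| \leq C|\log\varepsilon|\,\varepsilon^{-1/2}$, and $|\mathrm{cof}(A)_{13}| \leq C\varepsilon^{-3/2}$. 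On the other hand, writing $X := |C_1^2 - C_2^2|$ and $Z := |C_1^1 - C_2^1| + |C_1^3 - C_2^3|$, the bounds \eqref{esta1112}--\eqref{estb1} together with Proposition~\ref{lemma1} give
\[
|f^1|,\ |f^3| \leq CX|\log\varepsilon| + C, \qquad |f^2| \leq CX/\varepsilon + CZ/\sqrt{\varepsilon} + C.
\]

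Plugging these into Cramer's rule and collecting only the leading contributions produces the coupled system
\[
X \leq C\varepsilon^{1/2} X + C\varepsilon Z + C\varepsilon^{3/2}, \qquad Z \leq C\varepsilon^{1/2}|\log\varepsilon|\,X + C\varepsilon^{3/2}|\log\varepsilon|\,Z + C\varepsilon^{1/2}.
\]
The main obstacle is closing this bootstrap: the nonlinear coupling through $|f^2|$ reintroduces $X$ and $Z$ on the right-hand side, and it is only the $\varepsilon^{5/2}$ factor from $1/\det A$ that forces the self-referential coefficients to be small. Absorbing them for sufficiently small $\varepsilon$ gives $X \leq C\varepsilon Z + C\varepsilon^{3/2}$ and $Z \leq C\varepsilon^{1/2}|\log\varepsilon|\,X + C\varepsilon^{1/2}$. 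Substituting the latter into the former yields $X \leq C\varepsilon^{3/2}|\log\varepsilon|\,X + C\varepsilon^{3/2}$; one further absorption gives $X \leq C\varepsilon^{3/2}$, and feeding this back produces $Z \leq C\sqrt{\varepsilon}$. Since $|C_1^1 - C_2^1|$ and $|C_1^3 - C_2^3|$ are each bounded by $Z$, this closes the argument.
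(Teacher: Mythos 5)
Your proposal is correct and follows exactly the route the paper intends: the paper's own proof is essentially the one line ``By the Cramer's rule we can solve $|C_1^\alpha-C_2^\alpha|$,'' and you have supplied the omitted details. Your bounds $\det A\sim\varepsilon^{-5/2}$ and the six cofactor sizes all follow correctly from Lemma~\ref{lema11} by direct expansion, the estimates for $f^\beta$ are exactly what \eqref{esta1112}--\eqref{estb1} and Proposition~\ref{lemma1} give, and Cramer's rule then yields the coupled system $X\le C\sqrt\varepsilon\,X+C\varepsilon Z+C\varepsilon^{3/2}$, $Z\le C\sqrt\varepsilon|\log\varepsilon|\,X+C\varepsilon^{3/2}|\log\varepsilon|\,Z+C\sqrt\varepsilon$. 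Your observation that the closure of this bootstrap is the nontrivial step is apt: unlike the 3D argument in Subsection~\ref{subsec43}, where $|f^3|$ depends only on $|C_1^3-C_2^3|$ itself, here $|f^2|$ couples $X$ to $Z$ and $|f^1|,|f^3|$ couple $Z$ back to $X$, so a single absorption does not suffice and the two-step substitution you carry out is genuinely needed. The only cosmetic gap, inherited from the paper rather than introduced by you, is that \eqref{esta1112} as stated excludes the diagonal terms $|a_{11}^{11}+a_{21}^{11}|$ and $|a_{11}^{33}+a_{21}^{33}|$, which are needed to bound $f^1$ and $f^3$; these are bounded by $C$ by the same integration-by-parts argument (cf.\ the 3D Lemma~\ref{lema114563D}, which does cover them), so this does not affect the conclusion.
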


\begin{proof}[Proof of Theorem \ref{mainthm2D}]
By using  Propositions \ref{lemma1}--\ref{lemCialpha}, Theorem \ref{mainthm2D} is proved.
\end{proof}

As an immediate consequence of Theorem \ref{mainthm2D}, we have the following estimate for the Cauchy stress tensor in dimension two.
\begin{corollary}\label{mainthmsigma2d}(Estimates of the Cauchy Stress)
Under the assumptions in Theorem \ref{mainthm2D}, we have  
	\begin{equation*}
		\inf_{c\in\mathbb{R}}|\sigma[{\bf u},p+c]|\leq 
		\frac{C}{\sqrt{\varepsilon}}\|{\boldsymbol\varphi}\|_{C^{2,\alpha}(\partial D)},\quad\quad~\text{in}~~\Omega_R.
	\end{equation*}
\end{corollary}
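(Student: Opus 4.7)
The plan is to read off the bound on $\sigma[\mathbf u, p+c]$ directly from its definition $\sigma[\mathbf u,p+c]=2\mu\,\mathbf e(\mathbf u)-(p+c)\mathbb I$ and the two estimates furnished by Theorem~\ref{mainthm2D}. Since $|\mathbf e(\mathbf u)|\le |\nabla\mathbf u|$, the triangle inequality reduces the task to bounding $|\nabla\mathbf u(x)|$ and $\inf_{c\in\mathbb R}|p(x)+c|$ separately on $\Omega_R$ by $C\varepsilon^{-1/2}\|\boldsymbol\varphi\|_{C^{2,\alpha}(\partial D)}$.

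The pressure contribution is already written in the exact form we need: Theorem~\ref{mainthm2D}(i) yields $\inf_{c\in\mathbb R}\|p+c\|_{C^0(\bar\Omega_R)}\le C\varepsilon^{-1/2}\|\boldsymbol\varphi\|_{C^{2,\alpha}(\partial D)}$, so the same constant $c$ can be used inside $\sigma$. For the velocity gradient, I will take the pointwise bound $|\nabla\mathbf u(x)|\le C\frac{\sqrt\varepsilon+|x_1|}{\varepsilon+x_1^2}\|\boldsymbol\varphi\|_{C^{2,\alpha}(\partial D)}$ from Theorem~\ref{mainthm2D}(i) and maximize the scalar function $g(t):=\frac{\sqrt\varepsilon+t}{\varepsilon+t^2}$ for $t\ge 0$. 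A quick case split settles it: for $0\le t\le \sqrt\varepsilon$ one has $g(t)\le \frac{2\sqrt\varepsilon}{\varepsilon}=2\varepsilon^{-1/2}$, while for $t\ge\sqrt\varepsilon$ one has $g(t)\le \frac{2t}{t^2}=\frac{2}{t}\le 2\varepsilon^{-1/2}$. Hence $|\nabla\mathbf u|\le C\varepsilon^{-1/2}\|\boldsymbol\varphi\|_{C^{2,\alpha}(\partial D)}$ uniformly in $\Omega_R$.

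Combining these two ingredients through the triangle inequality
\[
\inf_{c\in\mathbb R}|\sigma[\mathbf u,p+c](x)|\le 2\mu|\mathbf e(\mathbf u)(x)|+\inf_{c\in\mathbb R}|p(x)+c|
\]
and using $|\mathbf e(\mathbf u)|\le|\nabla\mathbf u|$ delivers the desired bound. There is no real obstacle here: the corollary is literally bookkeeping on top of Theorem~\ref{mainthm2D}, and the only small observation required is the elementary pointwise maximization of $g(t)$ above, which absorbs the $x_1$-dependence of the gradient bound into a clean $\varepsilon^{-1/2}$ blow-up rate matching the pressure bound.
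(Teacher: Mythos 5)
Your proof is correct and follows exactly the route the paper intends: the corollary is stated as an immediate consequence of Theorem~\ref{mainthm2D} with no separate argument, and combining the gradient bound (whose uniform $\varepsilon^{-1/2}$ form is already recorded in the ``In particular'' display of Theorem~\ref{mainthm2D}(i), so your maximization of $g(t)=(\sqrt\varepsilon+t)/(\varepsilon+t^2)$, while correct, is redundant) with the pressure bound via $|{\bf e}({\bf u})|\le|\nabla{\bf u}|$ and the triangle inequality is precisely the intended bookkeeping. One small notational caveat: in your final display the expression $\inf_{c\in\mathbb R}|p(x)+c|$ read literally as a pointwise infimum is $0$; what you mean, and correctly state in the preceding sentence, is to fix the single $c_0$ realizing the $C^0(\bar\Omega_R)$ bound of Theorem~\ref{mainthm2D}(i) and then bound $|\sigma[{\bf u},p+c_0](x)|\le 2\mu|\nabla{\bf u}(x)|+\sqrt{d}\,|p(x)+c_0|$ uniformly in $x\in\Omega_R$.
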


By using \cite[Theorem 1.6]{LX} and the same argument as in Theorem \ref{theolow}, we can derive the following lower bounds of $|\nabla {\bf u}(x)|$ in dimension two. 

\begin{theorem}(Lower Bounds in 2D)\label{theolow2d}
	Assume that $D=B_4(0)$, $D_1$ and $D_2$ be two unit balls $B_{1}(0,1+\frac{\varepsilon}{2})$ and $B_{1}(0,-1-\frac{\varepsilon}{2})$. Let ${\bf u}\in W^{1,2}(D;\mathbb R^{2})\cap C^2(\bar{\Omega};\mathbb R^{2})$ and $p\in L^2(D)\cap C^1(\bar{\Omega})$ be the solution to \eqref{sto}--\eqref{compatibility} with $\boldsymbol{\varphi}=(0,x_{2})$. Then for sufficiently small $0<\varepsilon<1/2$, 
	\begin{equation*}
		|\nabla {\bf u}(0,x_2)|\ge \frac{1}{C\sqrt{\varepsilon}},\quad\quad~~\text{for}~|x_{2}|\leq \varepsilon.
	\end{equation*}
\end{theorem}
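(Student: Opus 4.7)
My plan is to mirror the argument used for the three-dimensional lower bound (Theorem \ref{theolow}), transporting every step to the two-dimensional setting and appealing to the 2D analogue of \cite[Theorem 1.6]{LX} in place of the 3D Stokes lower bound from \cite{LX2}. The starting point is the estimate
\[
|\nabla {\bf u}_{st}(0,x_2)|\ge \frac{1}{C\sqrt{\varepsilon}},\qquad |x_2|\leq \varepsilon,
\]
where $({\bf u}_{st},p_{st})$ is the Stokes solution (with the same geometry, viscosity and boundary datum $\boldsymbol{\varphi}=(0,x_2)$) furnished by \cite[Theorem 1.6]{LX}. The idea is then to show that the passage from Stokes to Navier--Stokes produces only a bounded perturbation of $\nabla {\bf u}$ at the origin.

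To set up the comparison, I would decompose the Stokes solution via \eqref{1equ_v12D}--\eqref{1equ_u02D} as in the 3D proof, set $\widehat{\bf u}_i^\alpha:={\bf u}_i^\alpha-\widetilde{\bf u}_i^\alpha$ and $\widehat p_i^\alpha:=p_i^\alpha-\widetilde p_i^\alpha$, and note that $\widehat{\bf u}_i^\alpha\equiv 0$ for every $(i,\alpha)\neq (1,2)$ since those components solve identical linear Stokes problems. Only the ``nonlinear'' slot survives: $(\widehat{\bf u}_1^2,\widehat p_1^2)$ satisfies the inhomogeneous Stokes system
\[
\nabla\cdot\sigma[\widehat{\bf u}_1^2,\widehat p_1^2]+{\bf u}\cdot\nabla{\bf u}=0,\quad \nabla\cdot\widehat{\bf u}_1^2=0\ \mbox{in}\ \Omega,\qquad \widehat{\bf u}_1^2=0\ \mbox{on}\ \partial\Omega.
\]
From Theorem \ref{mainthm2D} together with the pointwise bound $|{\bf u}|\leq C$ (which follows from Proposition \ref{lemma1} and the fact that ${\bf u}$ is a bounded rigid motion in each $D_i$), the forcing obeys
\[
|{\bf u}\cdot\nabla{\bf u}(x)|\leq \frac{C(\sqrt{\varepsilon}+|x_1|)}{\varepsilon+x_1^2}\leq \frac{C}{\sqrt{\delta(x_1)}}\qquad\text{in }\Omega_R,
\]
which is much milder than the source that governed the Caccioppoli step in Section \ref{sec3}. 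Feeding this into the 2D version of Proposition \ref{nsdpro} (applied to the Stokes system, so no nonlinear corrections are needed) and combining with an energy estimate of the type $\int_{\Omega_{\delta(x_1)}(x)}|\nabla\widehat{\bf u}_1^2|^2\leq C\delta(x_1)^2$ obtained exactly as in Lemma \ref{nszyly333}, I expect to conclude
\[
\|\nabla\widehat{\bf u}_1^2\|_{L^\infty(\Omega)}\leq C.
\]

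With this in hand, the decomposition \eqref{ud} together with Propositions \ref{lemma1}, \ref{propu0}, \ref{propuhe11} and \ref{lemCialpha} shows
\[
|\nabla({\bf u}-{\bf u}_{st})(x)|\leq |(C_1^2-C_2^2)-(\widetilde C_1^2-\widetilde C_2^2)|\,|\nabla\widehat{\bf u}_1^2(x)|+C\leq C,\quad x\in\Omega_R,
\]
because the bounded differences of the constants are multiplied by the bounded $\nabla\widehat{\bf u}_1^2$, and all remaining pieces are identical in both systems. Therefore
\[
|\nabla {\bf u}(0,x_2)|\geq |\nabla {\bf u}_{st}(0,x_2)|-|\nabla({\bf u}-{\bf u}_{st})(0,x_2)|\geq \frac{1}{C\sqrt{\varepsilon}}-C\geq \frac{1}{C\sqrt{\varepsilon}},
\]
for $|x_2|\leq\varepsilon$ and sufficiently small $\varepsilon$. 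The main obstacle, and the only spot where 2D requires care beyond the 3D script, is verifying that the rescaled Stokes estimate for $\widehat{\bf u}_1^2$ indeed yields an $O(1)$ bound rather than a blow-up; this is delicate because the singular benchmark $\varepsilon^{-1/2}$ is weaker in 2D than $(\varepsilon|\log\varepsilon|)^{-1}$ was in 3D, leaving less room for the perturbation. The saving observation is that, since ${\bf u}$ itself is bounded in $\Omega$, the nonlinear forcing is only of order $\delta(x_1)^{-1/2}$ — which, after multiplication by $\delta(x_1)$ in the rescaling of Proposition \ref{nsdpro} and combined with the $L^2$ energy bound $\|\nabla\widehat{\bf u}_1^2\|_{L^2(\Omega)}\leq C$, suffices to close the estimate.
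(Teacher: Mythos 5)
Your proposal is correct and takes essentially the same route as the paper: the paper's own proof of Theorem~\ref{theolow2d} is the single sentence ``By using \cite[Theorem 1.6]{LX} and the same argument as in Theorem \ref{theolow}...'', and your write-up is a faithful transcription of the Theorem~\ref{theolow} argument to the two-dimensional setting (the Stokes comparison solution from \cite{LX}, the decomposition with $\widehat{\bf u}_1^2={\bf u}_1^2-\widetilde{\bf u}_1^2$, the forcing bound $|{\bf u}\cdot\nabla{\bf u}|\lesssim\delta(x_1)^{-1/2}$ coming from Theorem~\ref{mainthm2D}(i) plus boundedness of ${\bf u}$, and Proposition~\ref{nsdpro} to upgrade to an $L^\infty$ bound on $\nabla\widehat{\bf u}_1^2$ before invoking the triangle inequality). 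Your concluding sanity check — that the perturbation stays $O(1)$ even though the 2D benchmark $\varepsilon^{-1/2}$ is weaker than the 3D one — is a useful observation, and the arithmetic $\delta\cdot\|{\bf f}\|_\infty\lesssim\sqrt{\delta}$ together with $\delta^{-1}\|\nabla\widehat{\bf u}_1^2\|_{L^2(\Omega_\delta)}\lesssim 1$ does indeed close.
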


\noindent{\bf Acknowledgements.} The work of H. Li was partially Supported by Beijing Natural Science Foundation (No.1242006), the Fundamental Research Funds for the Central Universities (No.2233200015), and National Natural Science Foundation of China (No.12471191).

\noindent{\bf Conflict of Interest.} The authors declare that they have no conflict of relevant financial or non-financial interest.

\noindent{\bf Availability of data and material} No data applicable.

\noindent{\bf Code availability} No code applicable.

\end{document}